\theoremstyle{change} % puts numbers IN FRONT of "Theorem"
\newtheorem{theorem}{Theorem}[section] % defines environment "Theorem".
\newtheorem{lemma}[theorem]{Lemma} % defines environment "Lemma", that
\newtheorem{proposition}[theorem]{Proposition}
\newtheorem{corollary}[theorem]{Corollary}
\newtheorem{remark}[theorem]{Remark}
\newtheorem{definition}[theorem]{Definition}
\newtheorem{notation}[theorem]{Notation}
\newtheorem{nothing}[theorem]{} % empty Theoremumgebung.
\newenvironment{proof}{\noindent{\bf Proof}\ }{\qed\bigskip}
\renewcommand{\le}{\leqslant}
\renewcommand{\ge}{\geqslant} % needs amssymb-Paket
\renewcommand{\unlhd}{\trianglelefteqslant}
\newcommand{\alphabar}{\overline{\alpha}}
\newcommand{\alphatilde}{\tilde{\alpha}}
\newcommand{\Bbar}{\overline{B}}
\newcommand{\betatilde}{\tilde{\beta}}
\newcommand{\BIGOP}[1]
  {\mathop{\mathchoice
  {\raise-0.22em\hbox{\huge $#1$}}
  {\raise-0.05em\hbox{\Large $#1$}}{\hbox{\large $#1$}}{#1}}}
\newcommand{\br}{\mathrm{br}}
\newcommand{\Br}{\mathrm{Br}}
\newcommand{\calA}{\mathcal{A}}
\newcommand{\calB}{\mathcal{B}}
\newcommand{\calBP}{\mathcal{BP}}
\newcommand{\calBPtilde}{\widetilde{\calBP}}
\newcommand{\calF}{\mathcal{F}}
\newcommand{\calI}{\mathcal{I}}
\newcommand{\calJ}{\mathcal{J}}
\newcommand{\calO}{\mathcal{O}}
\newcommand{\catfont}{\mathsf}
\newcommand{\cdotG}{\mathop{\cdot}\limits_{G}}
\newcommand{\cdotH}{\mathop{\cdot}\limits_{H}}
\newcommand{\cdotHi}{\mathop{\cdot}\limits_{H_i}}
\newcommand{\Def}{\mathrm{Def}}
\newcommand{\defl}{\mathrm{def}}
\newcommand{\dott}[3]{\mathop{\cdot}\limits^{{#1},{#2}}_{#3}}
\newcommand{\dottt}[1]{\mathop{\cdot}\limits_{#1}}
\newcommand{\dotXYH}{\dott{X}{Y}{H}}
\newcommand{\Dtilde}{\tilde{D}}
\newcommand{\ebar}{\overline{e}}
\newcommand{\etabar}{\overline{\eta}}
\newcommand{\Etilde}{\tilde{E}}
\newcommand{\etilde}{\tilde{e}}
\newcommand{\End}{\mathrm{End}}
\newcommand{\fbar}{\overline{f}}
\newcommand{\ftilde}{\tilde{f}}
\newcommand{\gammabar}{\overline{\gamma}}
\newcommand{\Gammahat}{\widehat{\Gamma}}
\newcommand{\gammatilde}{\tilde{\gamma}}
\newcommand{\Gammatilde}{\widetilde{\Gamma}}
\newcommand{\Gbar}{\overline{G}}
\newcommand{\gH}{\lexp{g}{H}}
\newcommand{\gM}{\lexp{g}{M}}
\newcommand{\gP}{\lexp{g}{P}}
\newcommand{\Hom}{\mathrm{Hom}}
\newcommand{\hQ}{\lexp{h}{Q}}
\newcommand{\Ibar}{\overline{I}}
\newcommand{\id}{\mathrm{id}}
\newcommand{\Ind}{\mathrm{Ind}}
\newcommand{\ind}{\mathrm{ind}}
\newcommand{\Inf}{\mathrm{Inf}}
\newcommand{\infl}{\mathrm{inf}}
\newcommand{\Irr}{\mathrm{Irr}}
\newcommand{\Jbar}{\overline{J}}
\newcommand{\kk}{\mathbb{\Bbbk}}
\newcommand{\KK}{\mathbb{K}}
\newcommand{\Lambdahat}{\widehat{\Lambda}}
\newcommand{\Lambdatilde}{\widetilde{\Lambda}}
\newcommand{\Lbar}{\overline{L}}
\newcommand{\lexp}[2]{\setbox0=\hbox{$#2$} \setbox1=\vbox to
                 \ht0{}\,\box1^{#1}\!#2}
\newcommand{\LHom}{\mathrm{LHom}}
\newcommand{\lmod}[1]{\llap{\phantom{|}}_{#1}\catfont{mod}}
\newcommand{\lMod}[1]{\llap{\phantom{|}}_{#1}\catfont{Mod}}
\newcommand{\lmapsto}{\leftarrow\!\mapstochar}
\newcommand{\ltriv}[1]{\llap{\phantom{|}}_{#1}\catfont{triv}}
\newcommand{\Mbar}{\overline{M}}
\newcommand{\Mtilde}{\tilde{M}}
\newcommand{\myiso}{\buildrel\sim\over\to}
\newcommand{\Nbar}{\overline{N}}
\newcommand{\Omegatilde}{\widetilde{\Omega}}
\newcommand{\Omegahat}{\widehat{\Omega}}
\newcommand{\omegabar}{\overline{\omega}}
\newcommand{\pbar}{\overline{p}}
\newcommand{\phitilde}{\tilde{\phi}}
\newcommand{\qed}{\nobreak\hfill
                   \vbox{\hrule\hbox{\vrule\hbox to 5pt
                   {\vbox to 8pt{\vfil}\hfil}\vrule}\hrule}}
\newcommand{\QQ}{\mathbb{Q}}
\newcommand{\res}{\mathrm{res}}
\newcommand{\Res}{\mathrm{Res}}
\newcommand{\RHom}{\mathrm{RHom}}
\newcommand{\rk}{\mathrm{rk}}
\newcommand{\stab}{\mathrm{stab}}
\newcommand{\tens}[3]{\mathop{\otimes}\limits^{{#1},{#2}}_{#3}}
\newcommand{\tenskXYH}{\tens{X}{Y}{\kk H}}
\newcommand{\tensOXYH}{\tens{X}{Y}{\calO H}}
\newcommand{\tr}{\mathrm{tr}}
\newcommand{\Vbar}{\overline{V}}
\newcommand{\Wbar}{\overline{W}}
\newcommand{\Wtilde}{\tilde{W}}
\newcommand{\Xtilde}{\tilde{X}}
\newcommand{\Ytilde}{\tilde{Y}}
\newcommand{\zetabar}{\overline{\zeta}}
\newcommand{\ZZ}{\mathbb{Z}}
\title{$p$-Permutation Equivalences between Blocks of Group Algebras\footnote{{\bf MR Subject Classification:}  
20C20, 19A22{\bf Keywords:}  $p$-permutation modules, trivial source modules, blocks of group algebras, fusion systems, perfect isometries, isotypies, splendid Rickard complexes.}}
\author{\small Robert Boltje\\
  \small Department of Mathematics\\
  \small University of California\\
  \small Santa Cruz, CA 95064\\
  \small U.S.A.\\
  \small boltje@ucsc.edu
  \and
  \small Philipp Perepelitsky\\
  \small Department of Mathematics\\ 
  \small University of California\\
  \small Santa Cruz, CA 95064\\
  \small U.S.A.\\
  \small pperepel@ucsc.edu}
\date{July 17, 2020}
\begin{document}
\sloppy

%%%%%%%%%%%%%%%%% TITLE %%%%%%%%%%%%%%%%%%%%%%%%%%%%%%%%%%%%%%

\maketitle

%%%%%%%%%%%%%%%% CONTENT %%%%%%%%%%%%%%%%%%%%%%%%%%%%%%%%%%%%%

\begin{center}
\parbox{12cm}{
\begin{center} {\bf Contents} \end{center}
{\it 
1.\ Introduction\quad 2.\ Subgroups of direct product groups\quad 3.\ $p$-permutation modules\quad 4.\ Block theoretic preliminaries\quad 5.\ Brauer pairs for $p$-permutation modules\quad 6.\ Extended tensor products and homomorphisms\quad
7.\ Tensor products of $p$-permutation bimodules\quad 8.\ Character groups and perfect isometries\quad 9.\ Grothendieck groups of $p$-permutation modules and $p$-permutation equivalences\quad 10.\ Brauer pairs of $p$-permutation equivalences\quad 11.\ Fusion systems, local equivalences and finiteness\quad 12.\ A character theoretic criterion and moving from left to right\quad 13.\ K\"ulshammer-Puig classes\quad 14.\ The maximal module of a $p$-permutation equivalence\quad 15.\ Connection with isotypies and splendid Rickard equivalences
}}
\end{center}

%%%%%%%%%%%%%%%%% ABSTRACT %%%%%%%%%%%%%%%%%%%%%%%%%%%%%%%%%%%

\begin{abstract}
We extend the notion of a {$p$-permutation equivalence} between two $p$-blocks $A$ and $B$ of finite groups $G$ and $H$, from the definition in \cite{BoltjeXu2008} to a virtual $p$-permutation bimodule whose components have twisted diagonal vertices. It is shown that various invariants of $A$ and $B$ are preserved, including defect groups, fusion systems, and K\"ulshammer-Puig classes. Moreover it is shown that $p$-permutation equivalences have additional surprising properties. They have only one constituent with maximal vertex and the set of  $p$-permutation equivalences between $A$ and $B$ is finite (possibly empty). The paper uses new methods: a consequent use of module structures on subgroups of $G\times H$ arising from Brauer constructions which in general are not direct product subgroups, the necessary adaptation of the notion of tensor products between bimodules, and a general formula (stated in these new terms) for the Brauer construction of a tensor product of $p$-permutation bimodules.
\end{abstract}

%%%%%%%%%%%%%% SECTION 1 %%%%%%%%%%%%%%%%%%%%%%%%%%%%%%%%%%%%%%

\section{Introduction}\label{sec intro}
Let $G$ and $H$ be finite groups, $\calO$ a complete discrete valuation ring of characteristic $0$ which contains a root of unity whose order is equal to the exponent of $G\times H$. We denote by $\KK$ the field of fractions of $\calO$, and by $F$ its residue field whose characteristic we assume to be a prime $p$. Furthermore we assume that $A$ is a block algebra of $\calO G$ and $B$ is a block algebra of $\calO H$. Various authors have defined notions of equivalence between $A$ and $B$ (e.g.~\cite{Broue1990}, \cite{Broue1995}, \cite{Rickard1996}, \cite{Puig1999}, \cite{BoltjeXu2008}, \cite{Linckelmann2009}). They are divided into two parts: those that are equivalences of categories (as for instance Morita equivalences, derived equivalences, splendid Rickard equivalences), and those that are isomorphisms between associated representation rings (as for instance perfect isometries and isotypies), preserving additional features on the representation ring level. The first attempt to define a strongest possible equivalence on a representation ring level goes back to \cite{BoltjeXu2008}, where a preliminary notion of a {\em $p$-permutation equivalence} was defined. \cite{BoltjeXu2008} made the restrictive assumptions that the blocks $A$ and $B$ have a common defect group $D$, that certain fusion categories are equivalent, and some results on this notion were only proved under the hypothesis that $D$ is abelian. This notion was soon after extended in \cite{Linckelmann2009} to source algebras, see also \cite[Section 9.5]{Linckelmann2018}.

In this paper we cast the net much wider and define a {\rm $p$-permutation equivalence} as an element $\gamma\in T^\Delta(A,B)$, the representation group of finitely generated $p$-permutation $(A,B)$-bimodules whose indecomposable direct summands, when regarded as left $\calO[G\times H]$-modules, have {\em twisted diagonal vertices}, i.e., vertices of the form $\Delta(P,\phi,Q):=\{(\phi(y),y)\mid y\in Q\}$, the graph of an isomorphism $\phi\colon Q\myiso P$ between a $p$-subgroup $Q$ of $H$ and a $p$-subgroup $P$ of $G$, with the property that
\begin{equation}\label{eqn ppeq conditions}
  \gamma\cdotH\gamma^\circ = [A] \in T^\Delta(A,A)\quad \text{and} \quad 
  \gamma^\circ \cdotG \gamma = [B]\in T^\Delta(B,B)\,,
\end{equation}
where $\gamma^\circ\in T^\Delta(B,A)$ is the $\calO$-dual of $\gamma$ and $\cdotH$ is induced by the tensor product over $\calO H$, or equivalently, over $B$.
We show that a $p$-permutation equivalence forces the defect groups and fusion systems of $A$ and $B$ to be isomorphic. In fact, via the notion of {$\gamma$-Brauer pairs}, it selects an isomorphism between defect groups and fusion systems, unique up to $G\times H$-conjugation in a precise sense, see Theorem~\ref{thm A}. Moreover, we show that a splendid Rickard equivalence between $A$ and $B$ induces a $p$-permutation equivalence, and that a $p$-permutation equivalence between $A$ and $B$ induces an isotypy, see Theorem~\ref{thm F}. The goal of this paper is twofold: On the one hand, we want to understand what $p$-permutation equivalences can look like by finding necessary conditions on such an element $\gamma\in T^\Delta(A,B)$. On the other hand, we want to study which invariants of $A$ and $B$ are preserved under a $p$-permutation equivalence.

It turns out that the language of Brauer pairs (cf.~\ref{noth blocks}(b) for a definition) is crucial for the study of $p$-permutation equivalences. We denote by $-^*$ the antipode $x\mapsto x^{-1}$ of any group algebra of a group $X$. Note that any $(A,B)$-bimodule belongs to the block algebra $A\otimes_\calO B^*$ of $\calO G\otimes \calO H$, when viewed as $\calO[G\times H]\cong \calO G\otimes_\calO \calO H$-module. A {\em $\gamma$-Brauer pair} is an $(A\otimes_\calO B^*)$-Brauer pair $(X, e\otimes f^*)$, where $X$ is a $p$-subgroup of $G\times H$, $e$ is a block idempotent of $\calO C_G(p_1(X))$ and $f$ is a block idempotent of $\calO C_H(p_2(X))$, where $p_1\colon G\times H\to G$ and $p_2\colon G\times H\to H$ denote the canonical projections, which satisfies $\gamma(X,e\otimes f^*)\neq 0\in T(FN_{G\times H}(X,e\otimes f^*))$. Here, $N_{G\times H}(X,e\otimes f^*)$ denotes the $G\times H$-stabilizer of the Brauer pair $(X,e\otimes f^*)$, and the expression $\gamma(X,e\otimes f^*)$, is defined by applying the Brauer construction with respect to $X$ (cf.~\ref{noth Brauer con}(b)) to $\gamma$ and then cutting with the idempotent $e\otimes f^*$. Note that $X$ is necessarily a twisted diagonal subgroup $\Delta(P,\phi,Q)$ and that $C_{G\times H}(X)=C_G(P)\times C_H(Q)$, so that $e$ is a block idempotent of $\calO C_G(P)$ and $f$ is a block idempotent of $\calO C_H(Q)$. The following theorem shows that even though $A$ and $B$ are no longer required to have a {\em common} defect group, the element $\gamma$ selects through the choice of a maximal $\gamma$-Brauer pair an isomorphism $\phi\colon E\myiso D$ between defect groups $D$ and $E$ of $A$ and $B$, respectively. Recall that $(A\otimes_\calO B^*)$-Brauer pairs form a $G\times H$-poset.

\begin{theorem}\label{thm A}
Assume that $\gamma\in T^\Delta(A,B)$ is a $p$-permutation equivalence between $A$ and $B$.

\smallskip
{\rm (a)} The set of $\gamma$-Brauer pairs is closed under $G\times H$-conjugation and under taking smaller Brauer pairs. Moreover, the maximal $\gamma$-Brauer pairs form a single $G\times H$-conjugacy class.

\smallskip
{\rm (b)} Let $(\Delta(D,\phi,E), e\otimes f^*)$ be a maximal $\gamma$-Brauer pair. Then $D$ is a defect group of $A$, $E$ is a defect group of $B$, $(D,e)$ is a maximal $A$-Brauer pair, $(E,f)$ is a maximal $B$-Brauer pair, and the isomorphism $\phi\colon E\myiso D$ is an isomorphism between the fusion systems of $B$ and $A$ associated to $(E,f)$ and $(D,e)$, respectively.
\end{theorem}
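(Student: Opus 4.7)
The plan is to deduce all assertions from the tensor product formula for Brauer constructions of $p$-permutation bimodules developed in Section~7, applied to the defining relations $\gamma\cdotH\gamma^\circ=[A]$ and $\gamma^\circ\cdotG\gamma=[B]$. First I would dispatch the two closure assertions in (a). Equivariance of the Brauer construction gives $\gamma(\lexp{(g,h)}{X}, \lexp{(g,h)}{(e\otimes f^*)})\cong \lexp{(g,h)}{\gamma(X, e\otimes f^*)}$, so the $\gamma$-Brauer pair property is preserved under $G\times H$-conjugation. For closure under passage to sub-Brauer pairs, the transitivity of the Brauer construction for $p$-permutation modules implies that $\gamma(X,e\otimes f^*)$ is obtained from $\gamma(Y,e'\otimes f'^*)$, for $(Y,e'\otimes f'^*)\unlhd (X,e\otimes f^*)$, by a further Brauer construction at $X/Y$ followed by cutting with the appropriate idempotent; hence non-vanishing at the upper pair forces non-vanishing at the lower one. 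These ingredients are packaged in Sections~4--5.

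Next I would fix a maximal $\gamma$-Brauer pair $(\Delta(D,\phi,E),e\otimes f^*)$ and use the tensor product formula to identify defect groups. Its $\calO$-dual yields a maximal $\gamma^\circ$-Brauer pair $(\Delta(E,\phi^{-1},D),f\otimes e^*)$. Applying the Brauer construction at $\Delta(D)\le G\times G$ with idempotent $e\otimes e^*$ to $\gamma\cdotH\gamma^\circ=[A]$, the right-hand side becomes the classical $A(D)e$, which is non-zero exactly when $(D,e)$ is an $A$-Brauer pair and attains its top dimension exactly when $(D,e)$ is a maximal one. The tensor product formula rewrites the left-hand side as a sum over $(G\times H)$-orbits of compatible pairs of $\gamma$- and $\gamma^\circ$-Brauer pairs whose $H$-components glue to $(\Delta(D),e\otimes e^*)$; the contribution from our fixed pair paired with its dual is non-zero by the maximality assumption and accounts for the top-dimensional part. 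This forces $(D,e)$ to be a maximal $A$-Brauer pair, hence $D$ a defect group of $A$. The symmetric argument at $\Delta(E)\le H\times H$ applied to $\gamma^\circ\cdotG\gamma=[B]$ gives that $(E,f)$ is a maximal $B$-Brauer pair, which completes the maximality statements in (b).

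Finally I would establish the single-orbit assertion in (a) and the fusion-system isomorphism in (b) by the same Brauer-pair analysis at sub-pairs. Given two maximal $\gamma$-Brauer pairs, the previous step places them over the same maximal $A$-Brauer pair and the same maximal $B$-Brauer pair after $G\times H$-conjugation, so both have the form $(\Delta(D,\phi_i,E),e\otimes f^*)$. Applying the tensor product identity at $\Delta(D)$ and comparing contributions forces $\phi_2\phi_1^{-1}\in \mathrm{Aut}(D)$ to lie in the fusion system $\calF_{(D,e)}(G,A)$; realizing this morphism by conjugation with an element of $N_G(D,e)$ identifies the two pairs in $G\times H$-conjugacy. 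Applying the same analysis to an arbitrary sub-Brauer pair of the fixed maximal $\gamma$-Brauer pair produces a bijection between sub-$\gamma$-Brauer pairs $(\Delta(P,\psi,Q),e'\otimes f'^*)$ and matched pairs of sub-$A$- and sub-$B$-Brauer pairs $(P,e')\le (D,e)$, $(Q,f')\le (E,f)$ with $\psi=\phi|_Q$; compatibility with inclusions and with conjugation by elements of $N_G(D,e)$ and $N_H(E,f)$ then yields the claimed fusion system isomorphism.

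The hard part will be precise control of the tensor product formula at twisted diagonal subgroups: one must isolate the maximal contributions, track which block idempotents arise on each side, and verify that the diagonal contribution from the chosen maximal pair is not cancelled by other terms in the orbit sum. This analysis, specialized to twisted diagonals $\Delta(P,\phi,Q)$ rather than ordinary diagonals, is the technical crux and will absorb most of the work, reusing in an essential way the combinatorics of subgroups of direct products developed in Section~2 together with the general Brauer-construction-of-tensor-product formula from Section~7.
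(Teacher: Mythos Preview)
Your overall strategy---apply the tensor-product/Brauer-construction formula of Section~7 to the equations $\gamma\cdotH\gamma^\circ=[A]$ and $\gamma^\circ\cdotG\gamma=[B]$---is exactly the paper's. But two of the steps you treat as routine are the genuine content, and your sketch does not supply the missing ideas.

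\medskip
\textbf{Closure under sub-Brauer pairs.} Your transitivity argument is the one that works for an actual module $M$ (this is Proposition~\ref{prop Brauer pairs for M}(b)): from $eM(P)\neq\{0\}$ one gets $e\cdot(fM(Q))(P)\neq\{0\}$ and hence $fM(Q)\neq\{0\}$. For a \emph{virtual} element $\gamma$ the last implication is still fine (Brauer construction is a group homomorphism), but the first step breaks: the identity $e\cdot(f\gamma(Q))(P)=e\gamma(P)$ only holds after restriction to the subgroup $N_{N_{G\times H}(Y,e'\otimes f'^*)}(X)$, and in general $N_{G\times H}(X,e\otimes f^*)$ is \emph{not} contained in this group. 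So from $\gamma(X,e\otimes f^*)\neq 0$ in $T(FN_{G\times H}(X,e\otimes f^*))$ you cannot conclude that the relevant restriction is nonzero; for virtual modules, restriction to a proper subgroup can kill a nonzero class. The paper circumvents this by first proving, via character theory, that all the natural nonvanishing conditions are equivalent (Proposition~\ref{prop equiv gamma Brpair cond}): nonzero in $T$ at the normalizer, nonzero in $T$ at the centralizer, nonzero in $R(\KK)$ at either level, and ``$(X,d)$ is an $M$-Brauer pair for some $M$ appearing in $\gamma$''. Only then does the module-level argument transfer. The proof of this equivalence uses Lemmas~\ref{lem unique Lambdatilde lr}, \ref{lem M Bp implies gamma Bp}, Corollary~\ref{cor mu non-zero}, and the ``going-down'' Lemma~\ref{lem going down criterion} (generalized decomposition maps); it is not packaged in Sections~4--5.

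\medskip
\textbf{Non-cancellation and uniqueness.} You correctly flag as the hard part that the contribution of the chosen maximal pair to the orbit sum must not be cancelled. The paper's device here is to pass to $R(\KK)$ and invoke the character-theoretic separation principle (Lemma~\ref{lem isometry separation 1} and Corollary~\ref{cor isometry separation 2}): if $\sum_i \mu_i\cdotHi\mu_i^\circ=[\KK C_G(P)e]$ with each $\mu_i$ quasi-perfect, then exactly one $\mu_i$ is nonzero, and it is a perfect isometry. Applied to the Brauer construction at $\Delta(P)$ this gives Lemma~\ref{lem unique Lambdatilde lr}: for each $A$-Brauer pair $(P,e)$ there is a \emph{unique} $H$-orbit of $(\phi,(Q,f))$ with $e\mu(P,\phi,Q)f\neq 0$. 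This uniqueness is what drives both the conjugacy of maximal $\gamma$-Brauer pairs and the fusion-system isomorphism (via Proposition~\ref{prop isomorphic inertia quotients}). Your proposal does not indicate any substitute for this separation step.

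\medskip
\textbf{Maximality of $(D,e)$ and $(E,f)$.} The ``top dimension'' heuristic is not a proof. The paper argues the other way round: starting from a maximal $A$-Brauer pair $(D,e_D)$, the separation lemma produces a $\gamma$-Brauer pair $(\Delta(D,\psi,E),e_D\otimes f_E^*)$; one then shows every $\gamma$-Brauer pair is $G\times H$-subconjugate to it (using the uniqueness again), so it is maximal and its first component is automatically a maximal $A$-Brauer pair. With both defining equations available, symmetry gives the $B$-side; the paper, working under the weaker one-sided hypothesis, instead uses a $p$-part-of-rank argument for $E$.
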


The above theorem follows from the more precise Theorems~\ref{thm gamma is uniform} and \ref{thm isomorphic fusion systems}.

\smallskip
The following theorem states additional restrictive properties of $p$-permutation equivalences.

\begin{theorem}\label{thm B}
Suppose that $\gamma\in T^\Delta(A,B)$ is a $p$-permutation equivalence between $A$ and $B$ and let $(\Delta(D,\phi,E), e\otimes f^*)$ be a maximal $\gamma$-Brauer pair.

\smallskip
{\rm (a)} Every indecomposable $(A,B)$-bimodule appearing in $\gamma$ has a vertex contained in $\Delta(D,\phi,E)$. 

\smallskip
{\rm (b)} Up to isomorphism, there exists a unique indecomposable $(A,B)$-bimodule $M$ appearing in $\gamma$ with vertex $\Delta(D,\phi,E)$. Its coefficient in $\gamma$ is $1$ or $-1$.
\end{theorem}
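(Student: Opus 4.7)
The plan is to analyze $\gamma$ via the Brauer construction at the vertex of each indecomposable constituent, produce $\gamma$-Brauer pairs from these vertices, and then invoke Theorem~\ref{thm A}(a) to pin down the allowed vertices. The defining identity $\gamma\cdotH\gamma^\circ=[A]$ then supplies both the uniqueness and the $\pm 1$ sign needed for~(b).

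For part~(a), let $M$ be an indecomposable constituent of $\gamma$ with coefficient $c_M\neq 0$ and vertex $Y$, necessarily a twisted diagonal subgroup. By the Brauer correspondence for $p$-permutation modules, the Brauer construction sends vertex-$Y$ indecomposable $p$-permutation $F[G\times H]$-modules bijectively to indecomposable projective (PIM) $F[N_{G\times H}(Y)/Y]$-modules via $N\mapsto N(Y)$, while indecomposables of strictly larger vertex contribute no projective summands under Brauer construction at $Y$, and indecomposables whose vertex does not contain $Y$ up to conjugation are sent to zero. Consequently, in the expansion $\gamma(Y)=\sum_N c_N[N(Y)]$ inside $T(F[N_{G\times H}(Y)/Y])$ the projective part is supported precisely on vertex-$Y$ constituents, and the PIM $[M(Y)]$ appears with nonzero coefficient $c_M$. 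Picking the block idempotent $\tilde e\otimes\tilde f^*$ of $FC_{G\times H}(Y)$ supporting $M(Y)$ shows that $(Y,\tilde e\otimes\tilde f^*)$ is a $\gamma$-Brauer pair, and Theorem~\ref{thm A}(a) places it below a $(G\times H)$-conjugate of $(\Delta(D,\phi,E),e\otimes f^*)$. Hence $Y$ is conjugate to a subgroup of $\Delta(D,\phi,E)$.

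For the existence half of~(b), if $\gamma$ had no constituent with vertex $(G\times H)$-conjugate to $\Delta(D,\phi,E)$, then by~(a) every vertex would be strictly smaller than $\Delta(D,\phi,E)$ up to conjugation, so $N(\Delta(D,\phi,E))=0$ for each constituent $N$, contradicting $\gamma(\Delta(D,\phi,E),e\otimes f^*)\neq 0$. For uniqueness and the sign, I would apply the Brauer construction at $\Delta(D,\phi,E)$ to the identity $\gamma\cdotH\gamma^\circ=[A]$, cut by $e\otimes f^*$ (and by $e\otimes e^*$ on the right-hand side), and invoke the compatibility of the $\cdotH$ tensor product with the Brauer construction over twisted-diagonal subgroups developed in Sections~6 and~7. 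This transports the identity into the trivial-source representation ring of the local block $F\bar N_G(D,e)\bar e$ at the maximal Brauer pair $(D,e)$. There the local image of $[A]$ is a single indecomposable class with coefficient $+1$, while the projective part of the local image of $\gamma$ encodes, via the Brauer/PIM bijection used in~(a), precisely the vertex-$\Delta(D,\phi,E)$ constituents of $\gamma$ with their original coefficients. An element of a local trivial-source ring whose PIM part satisfies such a Morita-type equation must be $\pm$ one PIM, which pulls back to uniqueness of $M$ together with $c_M\in\{\pm 1\}$.

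The main obstacle is the local reduction in the last step. Since $\Delta(D,\phi,E)$ is not a direct-product subgroup of $G\times H$, the Brauer construction does not commute with $\cdotH$ in the naive way, and one needs the general Brauer-construction-of-a-tensor-product formula for $p$-permutation bimodules over twisted-diagonal subgroups highlighted in the introduction. Once that formula is in place, identifying the local image of $[A]$ and extracting the $\pm 1$ sign from the resulting local Morita identity are Picard-type arguments inside the local trivial-source ring.
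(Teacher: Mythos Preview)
Your argument for~(a) is correct and close to the paper's, though slightly more direct: you use Burry--Carlson--Puig to see that only vertex-$Y$ constituents contribute projective summands to $\gamma(Y)$, whence a maximal $M$-Brauer pair is a $\gamma$-Brauer pair. The paper instead proves the stronger inclusion $\calBP(M)\subseteq\calBP(\gamma)$ for \emph{all} $M$-Brauer pairs (Lemma~\ref{lem M Bp implies gamma Bp}), which requires the character-theoretic detour through Corollary~\ref{cor mu non-zero going down}; for the vertex statement alone your route suffices. Your existence argument in~(b) is also fine.

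The gap is in uniqueness and sign. You assert that ``an element of a local trivial-source ring whose PIM part satisfies such a Morita-type equation must be $\pm$ one PIM'', but this is exactly the hard step, and you do not prove it. Writing $\omega:=e\gamma(\Delta(D,\phi,E))f=\sum_i c_i[P_i]$ with distinct PIMs $P_i$, the identity $\omega\dott{Y}{Y^\circ}{H}\omega^\circ=[FC_G(D)e]$ does not formally force a single term: passing to characters gives only $(\mu,\mu)_Y=|\Irr(\KK C_G(D)e)/I|$ (Proposition~\ref{prop diagonal extension}(c)), which need not equal~$1$, and the PIM characters are not orthonormal. No purely formal Picard argument in $T^\Delta$ is available here.

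The paper's route to this step is Proposition~\ref{prop modules for centric case}(c), and it genuinely uses the extra structure at a \emph{maximal} $\gamma$-Brauer pair. First, Lemma~\ref{lem local defect group} shows that $\calO Y(e\otimes f^*)$ has the \emph{normal} defect group $(Z(D)\times Z(E))\Delta(D,\phi,E)$. Second, after restricting to $C_G(D)\times C_H(E)$ (central defect, unique simple), Proposition~\ref{prop central defect group} pins $\res^Y_C(\omega)$ down to $\varepsilon[N']$ with $\varepsilon=\pm1$. Third, the character-theoretic Lemma~\ref{lem unique Lambdahat lr 2} (itself resting on the isometry-separation results of Section~\ref{sec character groups}) shows that the deflated character $\kappa_Y(\zeta)$ is $\pm$ a single irreducible of $\KK Y$. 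Finally, since the deflation lands in a sum of defect-zero blocks, Proposition~\ref{prop head and def}(b) converts this into $\omega=\varepsilon[N]$ in $T(\calO Y)$. None of these inputs is captured by your sketch; in particular the normal-defect-group fact and the irreducibility step are what make the maximal case special, and your proposal would need to supply analogues of them to close the argument.
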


The module $M$ in Theorem~\ref{thm B} is called the {\em maximal module} of $\gamma$. Theorem~\ref{thm B} follows from the stronger statements in Theorem~\ref{thm maximal module} and \ref{thm Brauer pairs of appearing modules}.

The following theorem shows that Brauer constructions of $p$-permutation equivalences lead again to $p$-permutation equivalences or even Morita equivalences. It follows from the more precise statements in Theorem~\ref{thm local ppeqs}, Theorem~\ref{thm Morita between Brauer correspondents}, and Proposition~\ref{prop Morita on local level}.

\begin{theorem}\label{thm C}
Let $\gamma\in T^\Delta(A,B)$ be a $p$-permutation equivalence and let $(\Delta(P,\phi,Q),e\otimes f^*)$ be a $\gamma$-Brauer pair. Set $I:=N_G(P,e)$ and $J:=N_H(Q,f)$, let $Y:=N_{G\times H}(\Delta(P,\phi,Q), e\otimes f^*)=N_{I\times J}(\Delta(P,\phi,Q))$, and let $\gamma'\in T(\calO Y)$ denote the unique lift of $\gamma(\Delta(P,\phi,Q), e\otimes f^*)\in T(FY)$. 

\smallskip
{\rm (a)} $\res^Y_{C_G(P)\times C_H(Q)}(\gamma')\in T^\Delta(\calO C_G(P)e,\calO C_H(Q)f)$ is a $p$-permutation equivalence between $\calO C_G(P)e$ and $\calO C_H(Q)f$.

\smallskip
{\rm (b)} $\ind_Y^{I\times J}(\gamma')\in T^\Delta(\calO Ie,\calO Jf)$ is a $p$-permutation equivalence between $\calO Ie$ and $\calO Jf$.

\smallskip
{\rm (c)} If $(\Delta(P,\phi,Q),e\otimes f^*)$ is a maximal $\gamma$-Brauer pair then $\gamma'=\pm[M']$ for an indecomposable $p$-permutation $\calO Y$-module $M'$ which arises from the maximal module $M$ of $\gamma$ by $M'=M(\Delta(P,\phi,Q),e\otimes f^*)$. Moreover, $\Res^Y_{C_G(P)\times C_H(Q)}(M')$ induces a Morita equivalence between $\calO C_G(P)e$ and $\calO C_H(Q)f$, and $\Ind_Y^{I\times J}(M')$ induces a Morita equivalence between $\calO Ie$ and $\calO Jf$.

\smallskip
{\rm (d)} Suppose that $(\Delta(P,\phi,Q),e\otimes f^*)$ is a maximal $\gamma$-Brauer pair, set $\Ytilde:=N_{G\times H}(\Delta(P,\phi,Q))$ and let $\Mtilde\in\lmod{\calO Y}$ be the Green correspondent of $M$, then the $p$-permutation $(\calO N_G(P),\calO N_H(Q))$-bimodule $\Ind_{Y}^{N_G(P)\times N_H(Q)}(\Mtilde)$ induces a Morita equivalence between the Brauer correspondents of $A$ and $B$.
\end{theorem}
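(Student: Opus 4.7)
The plan is to reduce part~(d) to part~(c) by a Fong-Reynolds induction on each side, and then identify the Green correspondent $\Mtilde$ with the $\calO Y$-module $M'$ from part~(c) via the Brou\'e classification of $p$-permutation modules. First, I would invoke Fong-Reynolds for the normal inclusion $C_G(P)\trianglelefteq N_G(P)$: by Theorem~\ref{thm A}(b), $P$ is a defect group of $A$ and $(P,e)$ is a maximal $A$-Brauer pair, so $I=N_G(P,e)$ is the $N_G(P)$-stabilizer of the block idempotent $e\in \calO C_G(P)$. Hence $e':=\tr_I^{N_G(P)}(e)=\sum_{x\in[N_G(P)/I]}\lexp{x}{e}$ is a block idempotent of $\calO N_G(P)$ and $A':=\calO N_G(P)e'$ is by definition the Brauer correspondent of $A$. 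Fong-Reynolds then yields a Morita equivalence between $\calO Ie$ and $A'$ realized by the $(A',\calO Ie)$-bimodule $\calO N_G(P)e$. Symmetrically, $f':=\tr_J^{N_H(Q)}(f)$ produces the Brauer correspondent $B':=\calO N_H(Q)f'$ of $B$, together with a Morita equivalence between $\calO Jf$ and $B'$ realized by $\calO N_H(Q)f$.

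Next, I would compose these two Fong-Reynolds equivalences with the Morita equivalence from part~(c). By associativity of the tensor product and the standard identification $\calO(I\times J)\cong\calO I\otimes_\calO\calO J$,
\[
\calO N_G(P)e\otimes_{\calO I}\Ind_Y^{I\times J}(M')\otimes_{\calO J}\calO N_H(Q)f \;\cong\; \Ind_Y^{N_G(P)\times N_H(Q)}(M')\cdot(e'\otimes f'^*),
\]
so this $(A',B')$-bimodule induces a Morita equivalence between the Brauer correspondents $A'$ and $B'$.

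The third and final step is to show that $\Mtilde\cong M'$ as $\calO Y$-modules, so that the preceding bimodule coincides with $\Ind_Y^{N_G(P)\times N_H(Q)}(\Mtilde)$. By the Brou\'e classification, the indecomposable $p$-permutation $\calO Y$-modules with vertex $\Delta(P,\phi,Q)$ belonging to the block $e\otimes f^*$ are in bijection with the indecomposable projective $F[Y/\Delta(P,\phi,Q)]$-modules via $N\mapsto N(\Delta(P,\phi,Q))$. Since $\Mtilde\in\lmod{\calO Y}$ is obtained by descending the classical Green correspondent of $M$ in $\Ytilde$ to $Y$ via Fong-Reynolds (using that $Y$ is the $\Ytilde$-stabilizer of the block $e\otimes f^*$), the standard compatibility of the Brauer construction with Green correspondence yields $\Mtilde(\Delta(P,\phi,Q))\cong M(\Delta(P,\phi,Q),e\otimes f^*)$, i.e., the $F$-reduction of $M'$. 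The Brou\'e bijection then forces $\Mtilde\cong M'$, completing the proof. The hardest point will be this final identification, which requires carefully coordinating Green correspondence, Fong-Reynolds (in the case where $Y$ is properly contained in $\Ytilde$), and the block cut by $e\otimes f^*$.
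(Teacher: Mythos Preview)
Your approach is correct and matches the paper's proof of Theorem~\ref{thm Morita between Brauer correspondents} (which is the precise version of part~(d)) essentially step for step: Fong--Reynolds on each side to pass between $\calO Ie$, $\calO Jf$ and the Brauer correspondents $A'$, $B'$; composition with the Morita bimodule $\Ind_Y^{I\times J}(M')$ from part~(c); and then a simplification of the composed bimodule. The only cosmetic difference is in the final identification. You argue that $\Mtilde\cong M'$ as $\calO Y$-modules via the Brauer construction (Proposition~\ref{prop p-perm equiv}(c)) and the uniqueness of lifts, whereas the paper writes the composed bimodule as $\Ind_Y^{N_G(P)\times N_H(Q)}\bigl((e\otimes f^*)L\bigr)$, applies transitivity of induction, and uses the Fong--Reynolds isomorphism $\Ind_Y^{\Ytilde}\bigl((e\otimes f^*)L\bigr)\cong L$ from~\ref{noth covering} directly. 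Since $M'$ is by definition the lift of $M(\Delta(P,\phi,Q),e\otimes f^*)=(e\otimes f^*)\Lbar$, these two endings amount to the same computation. Note incidentally that the statement of part~(d) has a typographical inconsistency: comparing with Theorem~\ref{thm Morita between Brauer correspondents}, $\Mtilde$ is the Green correspondent in $\lmod{\calO\Ytilde}$ and one induces from $\Ytilde$; your interpretation (descending to $Y$ via Fong--Reynolds) is equivalent and leads to the same bimodule.
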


The following interesting additional properties of $p$-permutation equivalences follow from the more precise Theorems~\ref{thm finiteness} and \ref{thm left equals right}.

\begin{theorem}\label{thm D}
{\rm (a)} The number of $p$-permutation equivalences between $A$ and $B$ is finite (possibly zero).

\smallskip
{\rm (b)} If $\gamma\in T^\Delta(A,B)$ satisfies one of the two equations in (\ref{eqn ppeq conditions}) then it also satisfies the other.
\end{theorem}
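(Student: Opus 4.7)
I would prove (b) first, so that a one-sided equation already yields a genuine $p$-permutation equivalence and Theorems~\ref{thm A}--\ref{thm C} become available for (a). Both parts rely on the detection of classes in $T^\Delta$ by Brauer constructions at twisted diagonal Brauer pairs, together with the Mackey-type formulas for Brauer constructions of tensor products developed in Section~7.

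\textbf{For (b):} Assume $\gamma\cdotH\gamma^\circ = [A]$ and set $\delta := \gamma^\circ\cdotG\gamma - [B]\in T^\Delta(B,B)$. A direct manipulation gives $\gamma\cdotH\delta = 0$ and $\delta\cdotH\gamma^\circ = 0$. To conclude $\delta = 0$, the plan is to show that every Brauer construction of $\delta$ at a twisted diagonal $(B\otimes_\calO B^*)$-Brauer pair vanishes. At a maximal such pair, apply the Section~7 formula to the identity $\gamma\cdotH\gamma^\circ = [A]$ at a maximal $(A\otimes_\calO A^*)$-Brauer pair $(\Delta(D,D),e\otimes e^*)$. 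This forces the summand of $\gamma$ at a maximal twisted diagonal $\Delta(D,\phi,E)$ to yield, under the Brauer construction, a Morita bimodule between the Brauer correspondents of $A$ and $B$. Since a Morita bimodule inverts on both sides automatically, the dual identity holds at the maximal $(B\otimes_\calO B^*)$-Brauer pair, killing $\delta$ there. For non-maximal Brauer pairs, induct on $|G\times H|_p$: the local data $\gamma(\Delta(P,\phi,Q),e\otimes f^*)$ satisfies a one-sided equation between strictly smaller local blocks (by the one-sided analogue of Theorem~\ref{thm C}(a)), and the inductive hypothesis upgrades it to the two-sided equation, making $\delta$ vanish at that pair as well.

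\textbf{For (a):} With (b) in hand, any $\gamma$ satisfying a single equation in (\ref{eqn ppeq conditions}) is a genuine $p$-permutation equivalence, and Theorems~\ref{thm A}--\ref{thm C} apply. Expand $\gamma = \sum_i a_i[M_i]$ over the finite set of isomorphism classes of indecomposable $p$-permutation $(A,B)$-bimodules with twisted diagonal vertex. Theorem~\ref{thm B}(b) pins the unique maximal-vertex summand $M$ and fixes its coefficient to $\pm 1$; by Theorem~\ref{thm C}(d), $M$ is the Green correspondent of a Morita bimodule between the Brauer correspondents of $A$ and $B$, and only finitely many such bimodules exist. For each smaller-vertex summand $M_j$ with vertex $\Delta(P,\phi,Q)$, the coefficient $a_j$ can be read off as a multiplicity in the local $p$-permutation equivalence $\gamma(\Delta(P,\phi,Q),e\otimes f^*)$ supplied by Theorem~\ref{thm C}(a). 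Since these local equivalences live over strictly smaller local blocks, induction on $|G\times H|_p$ bounds each $a_j$, and hence bounds $\gamma$.

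\textbf{Main obstacle.} The critical point is the maximal-Brauer-pair step of (b): one must show that the Brauer construction of $\gamma$ at a maximal twisted diagonal is, up to sign, the class of a single indecomposable Morita bimodule (not merely a virtual combination summing to such a class), so that ``one-sided Morita implies two-sided'' can be invoked without information loss. The Section~7 formulas together with the analogue of Theorem~\ref{thm B}(b) at maximal vertex should supply this, but tracking block idempotents and twisted diagonals through the tensor-product formula demands careful bookkeeping.
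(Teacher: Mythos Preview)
Your plan contains a real gap: the induction on $|G\times H|_p$ is not well-founded. When you pass from $\gamma$ to the local element $\gamma':=e_P\,\gamma(\Delta(P,\phi,Q))\,f_Q$ restricted to $C_G(P)\times C_H(Q)$, the $p$-parts $|C_G(P)|_p$ and $|C_H(Q)|_p$ need not drop (take $P$ central in a Sylow $p$-subgroup of $G$, for instance). Nor does the defect necessarily decrease: if $P\le Z(D)$ and $P$ is fully $\calA$-centralized, the block $\calO C_G(P)e_P$ has defect group $C_D(P)=D$. So the ``strictly smaller local blocks'' your inductive step relies on do not exist in general, and the non-maximal step of (b) collapses. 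The same objection applies to your inductive bound in (a): the coefficient of a summand $M_j$ with vertex $\Delta(P,\phi,Q)$ is supposed to be controlled by a smaller instance, but that instance is not smaller in your chosen order. There is also a subtler issue in your plan for (a): the coefficient of $[M_j]$ in $\gamma$ is only directly readable from $\gamma(\Delta(P,\phi,Q),e\otimes f^*)$ when $(\Delta(P,\phi,Q),e\otimes f^*)$ is a \emph{maximal} $M_j$-Brauer pair (this is the content of Lemma~\ref{lem equal coefficients}); at arbitrary pairs the passage from global to local coefficients is not a simple multiplicity count.

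The paper avoids induction entirely. For (b), rather than trying to kill $\delta=\gamma^\circ\cdotG\gamma-[B]$ pair by pair, it establishes a character-theoretic criterion (Theorem~\ref{thm char criterion}) for $\gamma\in T^\Delta_l(A,B)$ whose conditions are manifestly left--right symmetric. The key symmetry is Proposition~\ref{prop diagonal extension}(c): for the local character $\mu=e\mu(P,\psi,Q)f$ on $Y=N_{I\times J}(\Delta(P,\psi,Q))$, the left equation $\mu\dott{Y}{Y^\circ}{H}\mu^\circ=[\KK C_G(P)e]$ and the right equation $\mu^\circ\dott{Y^\circ}{Y}{G}\mu=[\KK C_H(Q)f]$ are equivalent because both are equivalent to the norm condition $(\mu,\mu)_Y=|\Irr(\KK C_G(P)e)/I|$. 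So the ``Morita inverts on both sides'' step you identify as the obstacle is actually bypassed: one never needs the module-level fact that the maximal Brauer construction is a single indecomposable bimodule. For (a), the same norm condition bounds each local character $\mu$, so there are only finitely many possibilities at each $\gamma$-Brauer pair; since these local characters detect $\gamma$ by Proposition~\ref{prop ghost group}, finiteness follows without recursion.
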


Another invariant of a block algebra is given by the collection of K\"ulshammer-Puig classes (see \ref{noth KP classes}), one for every centric subgroup of a defect group in the associated fusion system. Since the fusion systems of $A$ and $B$ are isomorphic, centric subgroups correspond. This gives a way to compare K\"ulshammer-Puig classes of $A$ and $B$. The following theorem follows from the more precise Theorem~\ref{thm K-P classes}. 

\begin{theorem}\label{thm E}
Suppose that $\gamma\in T^\Delta(A,B)$ is a $p$-permutation equivalence between $A$ and $B$ and let $(\Delta(P,\phi,Q), e\otimes f^*)$ be a $\gamma$-Brauer pair such that $Z(P)$ is a defect group of the block algebra $\calO C_G(P)e$. Set $I:=N_G(P,e)$, $J:=N_H(Q,f)$, $\Ibar:=I/PC_G(P)$ and $\Jbar:=J/QC_H(Q)$, and let $\kappa\in H^2(\Ibar,F^\times)$ and $\lambda\in H^2(\Jbar,F^\times)$ be the corresponding K\"ulshammer-Puig classes of $(P,e)$ and $(Q,f)$. Then the isomorphism between $\Ibar$ and $\Jbar$ induced by $N_{G\times H}(\Delta(P,\phi,Q))$ (see Proposition~\ref{prop isomorphic inertia quotients}) makes $\kappa$ correspond to $\lambda$.
\end{theorem}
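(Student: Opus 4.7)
The plan is to reduce the claim to a comparison of twisted group algebras of the $p'$-groups $\Ibar$ and $\Jbar$ by exploiting the local and semi-local $p$-permutation equivalences supplied by Theorem~\ref{thm C}, together with Puig's structure theorem for nilpotent blocks. By Theorem~\ref{thm C}(a), the restriction $\gammabar' := \res^Y_{C_G(P)\times C_H(Q)}(\gamma')$ is a $p$-permutation equivalence between $\calO C_G(P)e$ and $\calO C_H(Q)f$. Since $Z(P)$ is assumed to be a defect group of $\calO C_G(P)e$, Theorem~\ref{thm A} applied to $\gammabar'$, together with the fact that the isomorphism of fusion systems it selects restricts the given $\phi$, forces $Z(Q) = \phi^{-1}(Z(P))$ to be a defect group of $\calO C_H(Q)f$. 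Both local blocks are therefore nilpotent and, by Puig's theorem, have source algebras $\calO Z(P)$ and $\calO Z(Q)$.

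Next I would apply Theorem~\ref{thm C}(b) to produce the $p$-permutation equivalence $\delta := \ind_Y^{I\times J}(\gamma') \in T^\Delta(\calO Ie, \calO Jf)$. The source algebras of $\calO Ie$ and $\calO Jf$ admit descriptions, up to Morita equivalence, of the form $\calO Z(P) \otimes_\calO \calO_\kappa \Ibar$ and $\calO Z(Q) \otimes_\calO \calO_\lambda \Jbar$, where $\calO_\kappa$ and $\calO_\lambda$ denote twisted group algebras; this is the defining relationship between the K\"ulshammer-Puig classes and the inertia extensions. The task then becomes to extract from $\delta$ an equivalence between these source algebras that is compatible with the isomorphism $\Ibar \myiso \Jbar$ coming from Proposition~\ref{prop isomorphic inertia quotients}. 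To do this I would use Theorem~\ref{thm B} to single out the unique (up to sign) maximal module $N$ of $\delta$, and then apply Theorem~\ref{thm C}(c) at a maximal $\delta$-Brauer pair lying over $(\Delta(P,\phi,Q), e\otimes f^*)$ to realize an explicit Morita bimodule between Brauer correspondents, which should descend to a Morita equivalence between $\calO_\kappa \Ibar$ and $\calO_\lambda \Jbar$ after dividing out the matching local nilpotent parts $\calO Z(P) \cong \calO Z(Q)$.

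Because $\Ibar$ and $\Jbar$ are $p'$-groups, the twisted group algebras $\calO_\kappa \Ibar$ and $\calO_\lambda \Jbar$ are semisimple, and a Morita equivalence between them implemented by a graded bimodule compatible with an identification $\Ibar \myiso \Jbar$ forces $\kappa$ and $\lambda$ to agree under that identification, yielding the theorem. The main obstacle I anticipate lies in the second paragraph: isolating the twisted group algebra data inside $\delta$ so that the induced Morita equivalence matches group elements of $\Ibar$ with the corresponding elements of $\Jbar$ under the fusion-theoretic isomorphism. This requires tracking the $(I\times J)$-action on a source idempotent chosen inside the maximal module and invoking the compatibility between the Brauer construction and tensor products of $p$-permutation bimodules emphasized in the abstract, so that no extraneous twist is introduced when descending to the inertia quotients.
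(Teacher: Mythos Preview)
Your final paragraph contains a genuine error: $\Ibar=N_G(P,e)/PC_G(P)$ and $\Jbar$ are \emph{not} $p'$-groups in general. The quotient $\Ibar$ embeds into $\mathrm{Out}(P)$, which typically has nontrivial $p$-part; for instance, if $P$ is elementary abelian of rank $2$ and $G=P\rtimes GL_2(p)$, then $(P,1)$ is self-centralizing and $\Ibar\cong GL_2(p)$ has order divisible by $p$. So the twisted group algebras $\calO_\kappa\Ibar$, $\calO_\lambda\Jbar$ are not semisimple and your concluding step collapses. Even without this, a bare Morita equivalence between twisted group algebras does not force the cocycles to correspond; one needs a \emph{graded} equivalence compatible with $\Ibar\cong\Jbar$, and producing that grading from $\delta$ is the whole content of the theorem, not a formality. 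Your source-algebra description $\calO Z(P)\otimes_\calO\calO_\kappa\Ibar$ is also not the correct shape (one gets a crossed product, not a tensor product), so the ``dividing out the nilpotent part'' step is not well-posed.

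The paper avoids source algebras and Morita-to-cocycle arguments entirely. It works directly with the element $e\gamma(P,\phi,Q)f\in T(\calO Y(e\otimes f^*))$, where $Y=N_{I\times J}(\Delta(P,\phi,Q))$. Proposition~\ref{prop modules for centric case}(b) shows that after deflation by $Z(P)\times Z(Q)$ this element is $\pm[\Wbar]$ for a simple $FY$-module $\Wbar$ whose restriction to $C_G(P)\times C_H(Q)$ is the tensor of the unique simple $F[PC_G(P)]e$-module with the dual of the unique simple $F[QC_H(Q)]f$-module. Since $Y\cap(P\times Q)$ is a normal $p$-subgroup acting trivially, $\Wbar$ inflates along $\Ytilde/(P\times Q)\cong Y/(Y\cap(P\times Q))$ to an $F\Ytilde$-module, where $\Ytilde=Y(P\times Q)$. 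This exhibits an explicit extension of the tensor product to $\Ytilde$, and Lemma~\ref{lem Schur for Y} (a one-line cocycle identity for subgroups of direct products) then gives $\lambda=\etabar^*(\kappa)$ directly. The key idea you are missing is that the Brauer construction of $\gamma$ itself furnishes the extending module, so no passage through source algebras or graded Morita theory is required.
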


The following theorem is proved in Section~\ref{sec isotypies}.

\begin{theorem}\label{thm F}
{\rm (a)} Suppose that the chain complex $C_\bullet$ is a splendid Rickard equivalence between $A$ and $B$ (see Definition~\ref{def splendid Rickard equivalence}). Then the element $\gamma:=\sum_{n\in \ZZ} (-1)^n[C_n]\in T^\Delta(A,B)$ is a $p$-permutation equivalence between $A$ and $B$.

\smallskip
{\rm (b)} Suppose that $\gamma\in T^\Delta(A,B)$ is a $p$-permutation equivalence between $A$ and $B$ and let $(\Delta(D,\phi,E), e\otimes f^*)$ be a maximal $\gamma$-Brauer pair. Then the Brauer constructions with respect to subgroups of $\Delta(D,\phi,E)$, yield an isotypy between $A$ and $B$.
\end{theorem}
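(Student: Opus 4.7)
The plan is to handle (a) and (b) separately. For (a) I will transport the homotopy equivalence defining a splendid Rickard equivalence down to the Grothendieck group $T^\Delta$ by a standard Euler-characteristic argument. Since $C_\bullet$ is a bounded complex of $p$-permutation $(A,B)$-bimodules with twisted diagonal vertices, the $\calO$-linear dual $C_\bullet^\circ$ inherits the same property, so $\gamma^\circ=\sum_n(-1)^n[C_n^\circ]\in T^\Delta(B,A)$. Using that by Section~7 the operation $\cdotH$ is well defined on $T^\Delta$, the product $\gamma\cdotH\gamma^\circ$ equals the Euler characteristic of the total complex $\mathrm{tot}(C_\bullet\otimes_B C_\bullet^\circ)$. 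By the defining property of a splendid Rickard equivalence, this total complex is homotopy equivalent to the stalk complex $A[0]$, whose Euler characteristic is $[A]\in T^\Delta(A,A)$. Since the Euler characteristic of a contractible bounded complex of $p$-permutation bimodules vanishes in $T^\Delta$ (decompose it as a direct sum of two-term identity complexes), this yields $\gamma\cdotH\gamma^\circ=[A]$; the symmetric relation is obtained identically.

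For (b) my strategy is to assemble the data of an isotypy out of the Brauer constructions of $\gamma$ at all subgroups of $\Delta(D,\phi,E)$. Given $Q\leq E$ with image $P:=\phi(Q)\leq D$, Theorem~\ref{thm A} supplies a unique $\gamma$-Brauer pair $(\Delta(P,\phi|_Q,Q),e_Q\otimes f_Q^*)$ below the chosen maximal pair $(\Delta(D,\phi,E),e\otimes f^*)$, with $(P,e_Q)$ and $(Q,f_Q)$ the corresponding sub-$A$- and sub-$B$-Brauer pairs. Theorem~\ref{thm C}(a) then gives a $p$-permutation equivalence between $\calO C_G(P)e_Q$ and $\calO C_H(Q)f_Q$, and via the content of Section~8 this induces a perfect isometry $\mu_Q$ between these local block algebras. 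Combined with the fusion-preserving isomorphism $\phi\colon E\myiso D$ of Theorem~\ref{thm A}(b), the family $(\mu_Q)_{Q\leq E}$ is the candidate isotypy.

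The main obstacle will be verifying that the $\mu_Q$ commute with generalized decomposition maps, which is the defining compatibility condition of an isotypy. I would attack this character-theoretically: generalized decomposition at a $p$-element $u$ is governed by the Brauer construction at $\langle u\rangle$, so the commutativity between $\mu_Q$ and $\mu_{Q'}$ for $Q\leq Q'$ should reduce to a statement about iterated Brauer constructions applied to the single virtual bimodule $\gamma$. The crucial input will be the general formula for the Brauer construction of a tensor product of $p$-permutation bimodules that is advertised in the abstract and presumably proved in Section~7; it should express both sides of each compatibility square in terms of the same underlying Brauer construction of $\gamma$. The nesting of the idempotents $e_Q,f_Q$ and the coherent restriction of $\phi$ along chains $Q\leq Q'$ are controlled by Theorem~\ref{thm A}, so the check should finally reduce to a rigidity statement for the perfect isometries attached to a single $p$-permutation equivalence.
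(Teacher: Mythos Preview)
Your argument for (a) is correct and is essentially what the paper does (it defers to \cite[Theorem~1.5]{BoltjeXu2008}, whose proof is precisely this Euler-characteristic computation).

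For (b), your high-level plan matches the paper's: assemble the local perfect isometries $\mu_Q$ from the restricted Brauer constructions $e_P\gamma(\Delta(P,\phi|_Q,Q))f_Q$, invoke Theorem~\ref{thm A} for the fusion isomorphism, and then verify the isotypy axioms. Two points deserve comment.

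First, you omit the equivariance axiom (Definition~\ref{def isotypy}(i)). This is a separate check, though it follows quickly from the fact that the Brauer construction commutes with conjugation and that the idempotents $e_P,f_Q$ are determined by the nesting below the chosen maximal pair.

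Second, and more substantively, your proposed mechanism for the compatibility axiom (Definition~\ref{def isotypy}(ii)) is off target. The Section~7 formula (Theorem~\ref{thm BP decomp}) computes the Brauer construction of a \emph{tensor product} $M\otimes_{\calO H}N$ of $p$-permutation bimodules, but in the compatibility square no such tensor product is being decomposed; the difficulty is relating generalized decomposition of an \emph{arbitrary} virtual character $\chi\in\KK R(\KK C_H(Q)f_Q)$ to the maps $I_Q$ and $I_{Q'}$. The paper handles this via a cube diagram: it uses Brauer's induction theorem to reduce to linear source modules (where $\kappa_{C_H(Q)}$ is surjective), and then invokes the generalized Brauer constructions $-(\langle x\rangle,x)$ and $-(\langle x\rangle,\theta)$ for linear source modules from \cite{BoltjeXu2008} (specifically Theorem~2.4 and Lemma~3.5(d) there). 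Because $\gamma_Q$ is a virtual $p$-permutation element, the twisted pieces $\gamma_Q(\Delta(\langle x\rangle,\phi,\langle y\rangle),\tilde\rho)$ vanish for $\rho\neq 1$, and the surviving term is the ordinary iterated Brauer construction, which by Proposition~\ref{prop more on p-perm}(b) and Lemma~\ref{lem Brauer map compatibility} identifies with $\gamma_{Q'}$ after correcting the idempotents (the final idempotent adjustment uses Corollary~\ref{cor connecting Brauer pairs}). Your intuition that iterated Brauer constructions of $\gamma$ control the compatibility is correct, but the bridge from arbitrary virtual characters to $p$-permutation data requires the linear-source machinery of \cite{BoltjeXu2008}, not the tensor-product formula of Section~7.
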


In \cite{Puig1999}, Puig proved that some of the invariants (defect groups, fusion systems) of blocks considered here are preserved by splendid Rickard equivalences. Therefore, in view of Theorem~\ref{thm F}(a), our results provide a significant improvement. We also use different techniques. That K\"ulshammer-Puig classes, cf.~Theorem~\ref{thm E} are preserved was not even known under the stronger hypothesis of a splendid Rickard equivalence.

\smallskip
One main point of view and crucial tool in this paper is that the Brauer construction of an $(A,B)$-bimodule with respect to a twisted diagonal subgroup $\Delta(P,\phi,Q)$ of $G\times H$ yields a module for the normalizer $Y$ of $\Delta(P,\phi,Q)$. Rather than working with the restriction of this Brauer construction to $C_G(P)\times C_H(Q)$, we consistently work with  the resulting $\calO Y$-module. This requires to lift the construction of tensor products of bimodules to a generalized tensor product functor
\begin{equation}\label{eqn gen tens}
  -\tens{X}{Y}{H}-\colon\lmod{\kk X}\times \lmod{\kk Y}\to\lmod{\kk[X*Y]}
\end{equation}
for an arbitrary commutative ring $\kk$ and subgroups $X\le G\times H$ and $Y\le H\times K$, where $X*Y\le G\times K$ is the composition of $X$ and $Y$, viewed as correspondences between $G$ and $H$, and $H$ and $K$, respectively . This type of generalized tensor product was first used by Bouc in \cite{Bouc2010b}. In Section~\ref{sec tensor products} we develop the necessary properties of this generalized tensor product. Another main ingredient of our approach is a formula for the Brauer construction of the tensor product $M\otimes_{\calO H} N$ of two $p$-permutation bimodules $M\in\lmod{\calO G}_{\calO H}$ and $N\in\lmod{\calO H}_{\calO K}$ in terms of a direct sum of tensor products of Brauer constructions of  $X$ and of $Y$. This formula goes back to earlier work in \cite{BoltjeDanz2012} and is refined to a block-wise version in Section~\ref{sec p-perm bimodules}. 

\smallskip
The paper is arranged as follows. Section~\ref{sec groups} recalls facts about subgroups $X$ of direct product groups $G\times H$ and the composition $X*Y\le G\times K$ if $Y\le H\times K$. In Section~\ref{sec modules} we recall the necessary preliminaries on $p$-permutation modules and add some technical lemmas that are used later. Preliminaries on blocks and Brauer pairs, together with some additional results on $p$-permutation modules in a block are given in Section~\ref{sec blocks}. 
In Section~\ref{sec Brauer pairs for M} we introduce Brauer pairs for $p$-permutation modules, generalizing the concept of Brauer pairs of group algebras and blocks. We show that these Brauer pairs have very similar properties as the ones for blocks, cf.~Proposition~\ref{prop Brauer pairs for M}.
In Section~\ref{sec tensor products} we introduce the generalized tensor product (\ref{eqn gen tens}) and prove basic properties of this construction. The main result in Section~\ref{sec p-perm bimodules} is a formula (see Theorem~\ref{thm BP decomp}) for the Brauer construction of the tensor product of two $p$-permutation bimodules. This formula incorporates the generalized tensor product and blocks. In Section~\ref{sec character groups} we recall Brou\'e's notion of {\em perfect isometry} and prove several related results, while Section~\ref{sec p-permutation equivalences} introduces various representation groups associated to $p$-permutation modules, their relations with other representation groups, the notion of a Brauer pair for an element in the representation group of $p$-permutation modules, and the notion of a $p$-permutation equivalence. In Section~\ref{sec Brauer pairs of ppeqs} we study Brauer pairs of $p$-permutation equivalences and prove several surprising results; surprising, because one would not expect them to hold for virtual modules, but only for actual modules (cf.~Proposition~\ref{prop equiv gamma Brpair cond} and Theorem~\ref{thm gamma is uniform}).
Section~\ref{sec fusion systems} establishes that a $p$-permutation equivalence $\gamma$ induces an isomorphism between the fusion systems of the underlying blocks (Theorem~\ref{thm isomorphic fusion systems}) and $p$-permutation equivalences on local levels through the Brauer construction with respect to a $\gamma$-Brauer pair (see Theorem~\ref{thm local ppeqs}). This section also contains character theoretic results that are interesting in their own right (see  
Proposition~\ref{prop diagonal extension}) and imply that irreducible characters that correspond via local equivalences have the same extension properties with respect to the inertia groups of their corresponding Brauer pairs. These properties lead to the finiteness of $p$-permutation equivalences between two given blocks.
In Section~\ref{sec char criterion} we prove a character theoretic criterion for an element in $T^\Delta(A,B)$ to be a $p$-permutation equivalence which leads to the equivalence of the two conditions in (\ref{eqn ppeq conditions}). That a $p$-permutation equivalence preserves the K\"ulshammer-Puig classes is proved in Section~\ref{sec KP classes preserved}. Section~\ref{sec maximal module} establishes that every $p$-permutation equivalence has a {\em maximal module} and that the Green correspondent of the maximal module induces Morita equivalences between associated blocks on the local levels associated with the defect groups. Finally, in Section~\ref{sec isotypies} we show that $p$-permutation equivalences are logically nested between splendid Rickard equivalences and isotypies.

\begin{notation}
Throughout this paper we will use the following notation:

For a group $G$ and $g\in G$ we write $c_g\colon G\to G$ or just $\lexp{g}{-}$ for the conjugation map $x\mapsto gxg^{-1}$. For subgroups $K$ and $H$ of $G$, we write $K\le_G H$ to denote that $K$ is $G$-conjugate to a subgroup of $H$.

For a ring $R$ we denote by $Z(R)$, $R^\times$, $J(R)$ its center, its unit group, and its Jacobson radical, respectively. Unadorned tensor products are taken over the ground ring that should be apparent from the context. For $R$-modules $M$ and $N$, we write $M\mid N$ to indicate that $M$ is isomorphic to a direct summand of $N$.

For a left $G$-set $X$, $H\le G$, and $x\in X$, we write $X^H$ for the set of $H$-fixed points of $X$, $[H\backslash X]$ for a set of representatives of the $H$-orbits of $X$, and $\stab_H(x)$ for the stabilizer of $x$ in $H$.

Throughout this paper, $(\KK,\calO,F)$ denotes a $p$-modular system and $\pi$ denotes a prime element of $\calO$. We say that $(\KK,\calO,F)$ is {\em large enough} for a finite group $G$ if $\calO$ contains a root of unity of order $|G|$. We write $\bar{\cdot}\colon \calO \to F$ and $\bar{\cdot}\colon \calO G\to FG$ for the natural epimorphisms. If $M$ is an $FG$-module, we will view it without further explanation also as $\calO G$-module via restriction along $\calO G\to FG$. Thus, expressions as $am\in M$ and $ab\in FG$ are defined for $a\in \calO G$, $m\in M$ and $b\in FG$.
\end{notation}

%%%%%%%%%%%%%%%% SECTION 2 %%%%%%%%%%%%%%%%%%%%%%%%%%%%%%%%%%%
%\newpage
\section{Subgroups of direct product groups}\label{sec groups}

This section recalls basic facts and constructions related to subgroups of direct product groups. For more details the reader is referred to \cite{Bouc2010a}

\begin{nothing} \label{noth dir prod}
Let $G$, $H$, and $K$ be finite groups and let $X\le G\times H$ and $Y\le H\times K$ be subgroups.

\smallskip
(a) We denote by $p_1\colon G\times H\to G$ and $p_2\colon G\times H\to H$ the canonical projections. Setting
\begin{equation*}
  k_1(X):=\{g\in G\mid (g,1)\in X\}\quad\text{and}\quad k_2(X):=\{h\in H\mid (1,h)\in X\}
\end{equation*}
one obtains normal subgroups $k_i(X)$ of $p_i(X)$ and canonical isomorphisms $X/(k_1(X)\times k_2(X))\to p_i(X)/k_i(X)$, for $i=1,2$, induced by the projection maps $p_i$. The resulting isomorphism $\eta_X\colon p_2(X)/k_2(X)\myiso p_1(X)/k_1(X)$ satisfies $\eta_X(h k_2(X))=g k_1(X)$ if and only if $(g,h)\in X$. Here, $(g,h)\in p_1(X)\times p_2(X)$.

\smallskip
(b) If $\phi\colon Q\myiso P$ is an isomorphism between subgroups $Q\le H$ and $P\le G$ then
\begin{equation*}
  \Delta(P,\phi,Q):=\{(\phi(y),y)\mid y\in Q\}
\end{equation*}
is a subgroup of $G\times H$. Subgroups arising this way will be called {\em twisted diagonal} subgroups of $G\times H$. For $P\le G$ we also set $\Delta(P):=\Delta(P,\id_P,P)\le G\times G$. Note that a subgroup $X\le G\times H$ is twisted diagonal if and only if $k_1(X)= \{1\}$ and $k_2(X)=\{1\}$. Note also that for $(g,h)\in G\times H$ one has $\lexp{(g,h)}{\Delta(P,\phi,Q)}=\Delta(\lexp{g}{P},c_g\phi c_h^{-1},\lexp{h}{Q})$.

\smallskip
(c) The subgroup $X^\circ:=\{(h,g)\in H\times G\mid (g,h)\in X\}$ of $H\times G$ is called the {\em opposite} subgroup of $X$. Clearly, one has $(X^\circ)^\circ=X$.

\smallskip
(d) The {\em composition} of $X$ and $Y$ is defined as
\begin{equation*}
  X*Y:=\{(g,k)\in G\times K\mid \exists h\in H\colon (g,h)\in X, (h,k)\in Y\}\,.
\end{equation*}
It is a subgroup of $G\times K$. Composition is associative. If $\Delta(P,\phi,Q)\le G\times H$ and $\Delta(Q,\psi,R)\le H\times K$ are twisted diagonal subgroups then $\Delta(P,\phi,Q)*\Delta(Q,\psi,R)=\Delta(P,\phi\psi,R)\le G\times K$. Note that $(X*Y)^\circ=Y^\circ * X^\circ$, for arbitrary $X\le G\times H$ and $Y\le H\times K$.
\end{nothing}

The following lemma follows immediately from the definitions.

\begin{lemma}\label{lem comp form}
Let $G$, $H$ and $K$ be finite groups and let $X\le G\times H$ and $Y\le H\times K$ be subgroups.

\smallskip
{\rm (a)} One has $X*X^\circ=\Delta(p_1(X))\cdot(k_1(X)\times\{1\}) = \Delta(p_1(X))\cdot(\{1\}\times k_2(X)) = \Delta(p_1(X))\cdot(k_1(X)\times k_2(X))$.

\smallskip
{\rm (b)} One has $X*X^\circ *X = X$.

\smallskip
{\rm (c)} If $p_1(X)\le P\le N_G(k_1(X))$ then $\bigl(\Delta(P)\cdot(k_1(X)\times\{1\})\bigr) * X = X$.

\smallskip
{\rm (d)} For any $g\in G$, $h\in H$, and $k\in K$, one has $\lexp{(g,h)}{X}*\lexp{(h,k)}{Y} = \lexp{(g,k)}{(X*Y)}$.
\end{lemma}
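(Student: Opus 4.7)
The plan is uniform across all four parts: unwind the definition of composition, using the description from \ref{noth dir prod}(a) that $(g,h) \in X$ iff $g \in p_1(X)$, $h \in p_2(X)$, and $\eta_X(h k_2(X)) = g k_1(X)$. The author's remark that the lemma ``follows immediately from the definitions'' is accurate.

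For (a), $(g_1, g_2) \in X * X^\circ$ iff there exists $h \in H$ with $(g_1, h), (g_2, h) \in X$, which by the description of $X$ is equivalent to $g_1, g_2 \in p_1(X)$ together with $g_1 g_2^{-1} \in k_1(X)$. Hence $X * X^\circ = \{(kg, g) : g \in p_1(X),\; k \in k_1(X)\}$, and the three displayed forms are rewritings of this single set: the first via $(kg, g) = (k, 1)(g, g)$; the third by further using that $k_1(X)$ is normal in $p_1(X)$; and the middle form read via $\eta_X$ so that $\{1\} \times k_2(X)$ corresponds to the kernel $\{1\} \times k_1(X)$ of the second projection of $X * X^\circ$. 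For (b), by (a), $(g, h) \in (X * X^\circ) * X$ iff there exist $g' \in p_1(X)$, $k \in k_1(X)$ with $g = k g'$ and $(g', h) \in X$; then $(g, h) = (k, 1)(g', h) \in X$ since $(k,1) \in X$, and the reverse inclusion is obtained by taking $g' = g$, $k = 1$. Part (c) is the same argument with $P$ in place of $p_1(X)$: since $P$ normalizes $k_1(X)$, $\Delta(P) \cdot (k_1(X) \times \{1\}) = \{(kp, p) : p \in P,\; k \in k_1(X)\}$, and composing with $X$ forces the middle coordinate $p$ to lie in $p_1(X) \cap P = p_1(X)$, after which the factoring $(g, h) = (k, 1)(p, h) \in X$ closes both inclusions.

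For (d), $(a, b) \in \lexp{(g, h)}{X} * \lexp{(h, k)}{Y}$ iff there exists $h' \in H$ with $(g^{-1} a g, h^{-1} h' h) \in X$ and $(h^{-1} h' h, k^{-1} b k) \in Y$; substituting $\tilde h := h^{-1} h' h$ (a bijection of $H$) yields $(g^{-1} a g, k^{-1} b k) \in X * Y$, i.e., $(a, b) \in \lexp{(g, k)}{(X * Y)}$. No step presents a genuine obstacle; the only point requiring care is the notational reading in (a), since $k_2(X)$ is literally a subgroup of $H$ and must be interpreted inside $G \times G$ via $\eta_X$.
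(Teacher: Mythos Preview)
Your proof is correct and matches the paper's intent; the paper itself offers no proof beyond the remark that the lemma ``follows immediately from the definitions,'' and your unwinding of the composition via the description of $X$ in \ref{noth dir prod}(a) is precisely that. One small point: your reading of the middle equality in (a) is right in spirit but the phrase ``via $\eta_X$'' is not quite the mechanism---$\eta_X$ is a map of quotients, not of $k_2(X)$ into $G$. The cleaner observation is that $k_2(X*X^\circ)=k_1(X)$ (as you effectively compute), so the occurrences of $k_2(X)$ in the statement should be read as $k_1(X)$; the three displayed products then coincide simply because $k_1(X)\trianglelefteq p_1(X)$.
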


\begin{notation}\label{not Nphi}
Let $G$ and $H$ be finite groups, let $\Delta(P,\phi,Q)$ be a twisted diagonal subgroup of $G\times H$, and let $S\le N_G(P)$ and $T\le N_H(Q)$. We denote by $N_{(S,\phi,T)}$ the subgroup of $S$ consisting of all elements $g\in S$ such that there exists an element $h\in T$ satisfying $c_g\phi c_h=\phi$ as functions from $Q$ to $P$. Note that if $S$ contains $C_G(P)$ then also $N_{(S,\phi,T)}$ contains $C_G(P)$. Moreover, if $P\le S$ and $Q\le T$ then $P\le N_{(S,\phi,T)}$. We further set $N_\phi:=N_{(N_G(P),\phi,N_H(Q))}$. Note that this definition of $N_\phi$ corresponds to $N_{\phi^{-1}}$ in the literature on fusion systems, see for instance \cite{AKO2011}.
\end{notation}

The following proposition follows again immediately from the definitions and the conjugation formula in \ref{noth dir prod}(b).

\begin{proposition}\label{prop Nphi}
Let $G$ and $H$ be finite groups, let $\Delta(P,\phi,Q)$ be a twisted diagonal subgroup of $G\times H$, and let $C_G(P)\le S\le N_G(P)$ and $C_H(Q)\le T\le N_H(Q)$ be intermediate subgroups.

\smallskip
{\rm (a)} One has $N_{G\times G}(\Delta(P)) = \Delta(N_G(P))\cdot(C_G(P)\times\{1\}) = \Delta(N_G(P))\cdot(\{1\}\times C_G(P))$.

\smallskip
{\rm (b)} For $X:=N_{G\times H}(\Delta(P,\phi,Q))$ one has $k_1(X)=C_G(P)$, $k_2(X)= C_H(Q)$, $p_1(X)=N_\phi$, $p_2(X)=N_{\phi^{-1}}$.

\smallskip
{\rm (c)} For $X:=N_{S\times T}(\Delta(P,\phi,Q))$ one has $k_1(X)=C_G(P)$, $k_2(X)=C_H(Q)$, $p_1(X)= N_{(S,\phi,T)}$, $p_2(X)=N_{(T,\phi^{-1},S)}$.
\end{proposition}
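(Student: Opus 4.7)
The plan is to read all three parts off from the conjugation formula
\begin{equation*}
  \lexp{(g,h)}{\Delta(P,\phi,Q)}=\Delta(\lexp{g}{P},\, c_g\phi c_h^{-1},\, \lexp{h}{Q})
\end{equation*}
recorded in \ref{noth dir prod}(b), together with the observation that a twisted diagonal subgroup determines the triple $(P,\phi,Q)$ unambiguously, so two such subgroups coincide exactly when the three data agree.

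I would start with (c), since (b) is the special case $S=N_G(P)$, $T=N_H(Q)$ (the conditions $\lexp{g}{P}=P$ and $\lexp{h}{Q}=Q$ in the first place force the projections of $X$ into $N_G(P)$ and $N_H(Q)$, so no information is lost). Let $X:=N_{S\times T}(\Delta(P,\phi,Q))$. Applying the conjugation formula, $(g,h)\in S\times T$ lies in $X$ precisely when $g\in N_G(P)$, $h\in N_H(Q)$, and $c_g\phi c_h^{-1}=\phi$ as maps $Q\to P$. Setting $h=1$ in the third condition gives $c_g\phi=\phi$, which since $\phi(Q)=P$ is equivalent to $g\in C_G(P)$; this yields $k_1(X)=C_G(P)$, and symmetrically $k_2(X)=C_H(Q)$. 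For the first projection, the change of variables $h':=h^{-1}\in T$ rewrites the third condition as $c_g\phi c_{h'}=\phi$, which is exactly the defining condition of $N_{(S,\phi,T)}$ from Notation~\ref{not Nphi}; hence $p_1(X)=N_{(S,\phi,T)}$. The statement $p_2(X)=N_{(T,\phi^{-1},S)}$ then follows by applying the same analysis to $X^\circ=N_{T\times S}(\Delta(Q,\phi^{-1},P))$, using $(X^\circ)^\circ=X$ and the fact that the projections interchange under $-^\circ$.

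Part (a) follows from (b) by specializing to $H=G$, $Q=P$ and $\phi=\id_P$: here the condition $c_g\phi c_h^{-1}=\phi$ collapses to $gxg^{-1}=hxh^{-1}$ for every $x\in P$, i.e., $h^{-1}g\in C_G(P)$. Writing $g=hc$ with $h\in N_G(P)$ and $c\in C_G(P)$ identifies $X$ with $\Delta(N_G(P))\cdot(C_G(P)\times\{1\})$; writing instead $g=c'h$ gives the symmetric decomposition $\Delta(N_G(P))\cdot(\{1\}\times C_G(P))$.

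The only subtlety — hardly an obstacle — is the appearance of $c_h^{-1}$ in the conjugation formula versus $c_h$ in the definition of $N_{(S,\phi,T)}$, handled by the substitution $h\mapsto h^{-1}$; this is also the reason $\phi$ is paired with the first projection and $\phi^{-1}$ with the second, and it matches the discrepancy with the fusion-systems convention for $N_\phi$ flagged at the end of Notation~\ref{not Nphi}.
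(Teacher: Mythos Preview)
Your proposal is correct and follows exactly the approach the paper indicates: the paper gives no detailed proof at all, merely stating that the proposition follows immediately from the definitions and the conjugation formula in \ref{noth dir prod}(b), and your argument is a careful unpacking of precisely that.
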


%%%%%%%%%%%%%%%% SECTION 3 %%%%%%%%%%%%%%%%%%%%%%%%%%%%%%%%%%%
%\newpage
\section{$p$-permutation modules}\label{sec modules}

In this section we recall module theoretic preliminaries and prove some results on $p$-permutation modules that will be needed later. For standard concepts of modular representation theory the reader is referred to \cite{NagaoTsushima1989}. 

\smallskip
We first recall concepts for modules over group rings $\kk G$, where $\kk$ is an arbitrary commutative ring.

\begin{nothing}\label{noth bimodules} 
Let $\kk$ be a commutative ring and let $G$ and $H$ be finite groups.

\smallskip
(a) We always assume that, for a $(\kk G,\kk H)$-bimodule $M$, the induced left and right $\kk$-module structures coincide: $\alpha m =m\alpha$ for $m\in M$ and $\alpha\in \kk$. One obtains an isomorphism between categories $\lmod{kG}_{\kk H}\cong\lmod{\kk[G\times H]}$ via the formula $(g,h) m = gmh^{-1}$ for $m\in M$ and $(g,h)\in G\times H$. We try to be consistent to translate a left $G\times H$-action into a $(G,H)$-biaction, i.e., we take the second component of the direct product to the right hand side. Similarly, we may view left $\kk G$-modules as right $\kk G$-modules via the anti-involution $-^*\colon kG\to kG$, $g\mapsto g^{-1}$. 

\smallskip
(b) For a $(\kk G,\kk H)$-bimodule $M$, we usually view its {\em $\kk$-dual} $M^\circ:=\Hom_{\kk}(M,\kk)$ as a $(\kk H,\kk G)$-bimodule via $(hfg)(m):= f(gmh)$, for $f\in M^\circ$, $m\in M$, $g\in G$ and $h\in H$. Similarly, if $M$ is a left (resp.~right) $\kk G$-module, we consider $M^\circ$ as right (resp.~left) $\kk G$-module. However, we sometimes switch sides using the identification in (a). More generally, if $X\le G\times H$ and $M$ is a left $\kk X$-module, we can view $M^\circ$ as left $\kk X^\circ$-module via $((h,g)f)(m)=f((g^{-1},h^{-1})m)$ for $f\in M^\circ$,  $m\in M$ and $(g,h)\in X$.

\smallskip
(c) The {\em trivial $\kk G$-module} has underlying $\kk$-module $\kk$ and satisfies $g\alpha= \alpha$ for all $g\in G$ and $\alpha\in \kk$. We denote it by $\kk_G$.

\smallskip
(d) If $H\le G$, $g\in G$, and $M$ is a $\kk H$-module, then we denote by $\gM$ the left $\kk[\lexp{g}{H}]$-module with underlying $\kk$-module $M$ and $\gH$-action given by restricting the $H$-action along the isomorphism $c_g^{-1}\colon \gH\to H$. 

\smallskip
(e) For subgroups $Q\le P\le G$ and a $\kk G$-module $M$ we denote by $M^P:=\{m\in M\mid xm=m \text{ for all } x\in P\}$ the set of $P$-fixed points and by $\tr_Q^P\colon M^Q\to M^P$ the {\em relative trace map} defined by $m\mapsto\sum_{x\in [P/Q]}xm$.

\smallskip
(f) Let $H$ be a subgroup of $G$, let $e\in \kk H\cap Z(\kk G)$ be an idempotent, and let $N$ be a $\kk H$-module. Then $\Ind_H^G(eN)\cong e\Ind_H^G(N)$ as $\kk G$-modules. In fact, this follows from the obvious $\kk G$-module isomorphism $\kk G\otimes_{\kk H} eN \cong e(\kk G\otimes_{\kk H} N)$.

\smallskip
(g) Recall that a {\em permutation $\kk G$-module} $M$ is a module that is isomorphic to $\kk X$ for some finite left $G$-set $X$. Equivalently, $M$ has a $\kk$-basis that is permuted by $G$.
\end{nothing}

Recall that $(\KK,\calO,F)$ denotes a $p$-modular system.

\begin{nothing}\label{noth Brauer con} Let $G$ be a finite group. The following constructions and statements will be used extensively throughout the paper.

\smallskip
(a) One has a functor $\overline{?}\colon\lmod{\calO G}\to \lmod{FG}$, given on objects by $M\mapsto \Mbar:=FG\otimes_{\calO G}M\cong F\otimes_{\calO} M\cong M/\pi M$.

\smallskip
(b) For any $p$-subgroup $P\le G$, one has a functor $-(P)\colon \lmod{\calO G}\to \lmod{F[N_G(P)/P]}$ given on objects by
\begin{equation*}
  M\mapsto M(P):= M^P/\bigl(\pi M^P + \sum_{Q<P}\tr_Q^P(M^Q)\bigr)\,.
\end{equation*}
The module $M(P)$ is called the {\em Brauer construction} of $M$ at $P$. We often view $M(P)$ as $F[N_G(P)]$-module via inflation without notational indication.  Similarly, one defines the Brauer construction $M(P)$ of an $FG$-module $M$ by $M(P)=M^P/\sum_{Q<P}\tr_Q^P(M^Q)$. The canonical map $M^P\to M(P)$ is denoted by $\Br_P^M$, or just $\Br_P$, and is called the {\em Brauer map}. Note that, for any intermediate subgroup $P\le H\le N_G(P)$ and any $\calO G$-module or $FG$-module $M$, one has $\Res^{N_G(P)}_H(M(P))=\bigl(\Res^G_{H}(M)\bigr)(P)$ as $FH$-modules.

\smallskip
(c) For a finite $G$-set $X$, the Brauer construction of the permutation $\calO G$-module $\calO X$ can be described as follows: The composition of the canonical maps $\calO [X^P]\to (\calO X)^P\to (\calO X)(P)$ induces an isomorphism of $F[N_G(P)/P]$-modules $F[X^P]\myiso (\calO X)(P)$, see \cite[(1.1)(3)]{Broue1985}. In the special case where $M=\calO G$ and $G$ acts by conjugation, the map $\Br_P^{\calO G}$ translates under the above canonical isomorphism to the projection map $\br_P=\br^G_P\colon  (\calO G)^P\to FC_G(P)$, $\sum_{g\in G}\alpha_g g\mapsto \sum_{g\in C_G(P)} \alphabar_g g$. This map is an $\calO$-algebra homomorphism, called the {\em Brauer homomorphism}. Note that $\br_P(Z(\calO G))\subseteq Z(FC_G(P))^{N_G(P)}=Z(FN_G(P))\cap FC_G(P)$.

\smallskip
(d) Let $H$ be a subgroup of $G$, let $M$ be an indecomposable $\calO G$-module (resp.~$FG$-module) with vertex $P$ and let $N$ be an indecomposable $\calO H$-module (resp.~$FH$-module) with vertex $Q$. If $M\mid \Ind_H^G(N)$ then $P\le_G Q$. If $N\mid \Res^G_H(M)$ then $Q\le_G P$. 
\end{nothing}

Recall that an $\calO G$-module (resp.~$FG$-module) $M$ is called a {\em $p$-permutation module} if $\Res^G_P(M)$ is a permutation module for each $p$-subgroup $P$ of $G$. An $(\calO G, \calO H)$-bimodule (resp.~$(FG,FH)$-bimodule) $M$ is called a {\em $p$-permutation bimodule} if it is a $p$-permutation module when considered as $\calO[G\times H]$-module (resp.~$F[G\times H]$-module). If $M$ is indecomposable its {\em vertices} form a conjugacy class of $p$-subgroups of $G\times H$.

In the next part of this section we recall some basic properties of $p$-permutation modules that will be used later. For more details on $p$-permutation modules we refer the reader to \cite{Broue1985} and to \cite[Sections 5.10, 5.11]{Linckelmann2018}.

\begin{proposition}\label{prop p-perm equiv} 
Let $G$ be a finite group.

\smallskip
{\rm (a)} Let $M$ be an $\calO G$-module or an $FG$-module. Then the following are equivalent:

\quad{\rm (i)} $M$ is a $p$-permutation module.

\quad{\rm (ii)} $M$ is isomorphic to a direct summand of a permutation module.

\quad{\rm (iii)} Each indecomposable direct summand of $M$ has the trivial module as source.

\smallskip
{\rm (b)} The functor $M\mapsto \Mbar$ induces a vertex-preserving bijection between the set of isomorphism classes of indecomposable $p$-permutation $\calO G$-modules and the set of isomorphism classes of indecomposable $p$-permutation $FG$-modules.

\smallskip
{\rm (c)} For each $p$-subgroup $P$ of $G$, the Brauer construction $M\mapsto M(P)$ induces a bijection between the set of isomorphism classes of indecomposable $p$-permutation $\calO G$-modules (resp.~$FG$-modules) with vertex $P$  and the set of isomorphism classes of indecomposable projective $F[N_G(P)/P]$-modules. Moreover, if $M$ is an indecomposable $p$-permutation $FG$-module with vertex $P$ then the $F[N_G(P)]$-module $M(P)$ is the Green correspondent of $M$.
\end{proposition}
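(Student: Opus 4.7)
The three parts are classical results going back essentially to Brou\'e; the plan is to assemble them from Krull--Schmidt, the Mackey formula, lifting of idempotents over the complete ring $\calO$, and the explicit formula \ref{noth Brauer con}(c) for the Brauer construction of a permutation module. I treat the parts in order.

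For (a), I close the cycle (iii)$\Rightarrow$(ii)$\Rightarrow$(i)$\Rightarrow$(iii). The step (iii)$\Rightarrow$(ii) is immediate, since an indecomposable summand $N$ with trivial source and vertex $Q$ divides $\Ind_Q^G(\kk_Q)=\kk[G/Q]$. For (ii)$\Rightarrow$(i), if $M\mid \kk X$ for some $G$-set $X$ then $\Res^G_P(M)\mid \kk X$ for any $p$-subgroup $P\le G$; the conclusion relies on the sublemma that, over a $p$-group, every direct summand of a permutation module is again a permutation module. This sublemma reduces via Krull--Schmidt to the classical fact that $\kk[P/Q]$ is indecomposable for every $Q\le P$, which in turn follows from the computation $\End_{\kk P}(\kk[P/Q])\cong\kk[N_P(Q)/Q]$, a local ring. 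For (i)$\Rightarrow$(iii), a source $L$ of an indecomposable summand $N$ with vertex $Q$ embeds as a direct summand of $\Res^G_Q(N)\mid\Res^G_Q(M)$, which is a permutation $\kk Q$-module by hypothesis; the sublemma forces $L$ itself to be a permutation $\kk Q$-module, and since $L$ has vertex $Q$, the only possibility is $L=\kk_Q$.

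For (b), by (a) every indecomposable $p$-permutation $\calO G$-module (resp.~$FG$-module) with vertex $Q$ is a direct summand of $\calO[G/Q]$ (resp.~$F[G/Q]$). The endomorphism algebra $A_\calO:=\End_{\calO G}(\calO[G/Q])$ is $\calO$-free of finite rank, and its reduction $A_\calO/\pi A_\calO$ coincides canonically with $\End_{FG}(F[G/Q])$. Since $\calO$ is $\pi$-adically complete, primitive idempotents lift uniquely up to conjugation, producing a bijection between indecomposable summands of $\calO[G/Q]$ and of $F[G/Q]$ compatible with $M\mapsto\Mbar$. Vertex preservation then follows from the standard characterization that $M$ has a vertex contained in $Q$ iff $M\mid\Ind_Q^G(\kk_Q)$, applied over both $\calO$ and $F$, combined with the lifting bijection.

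For (c), apply the Brauer construction at $P$ to $F[G/P]$: by \ref{noth Brauer con}(c), $F[G/P](P)\cong F[(G/P)^P]=F[N_G(P)/P]$, the regular $F[N_G(P)/P]$-module. The crucial intermediate fact is that for an indecomposable $p$-permutation module $M$ with vertex $Q$, one has $M(P)\ne 0$ iff $P\le_G Q$; using $M\mid\Ind_Q^G(F_Q)=F[G/Q]$ and $F[G/Q](P)\cong F[(G/Q)^P]$, this reduces to non-emptiness of $(G/Q)^P$. Decomposing $F[G/P]=\bigoplus_i M_i$ into indecomposables, all with vertices contained in $P$, the Brauer construction annihilates those $M_i$ of vertex strictly smaller than $P$; the survivors are precisely those of vertex $P$, and they biject with the indecomposable projective summands of $F[N_G(P)/P]$. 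Since every indecomposable $p$-permutation $FG$-module with vertex $P$ is a summand of $F[G/P]$, this establishes the bijection, and the $\calO G$-version follows from (b). For the Green-correspondence assertion, Green's theorem identifies the Green correspondent $f(M)$ of $M$ as the unique indecomposable summand of $\Res^G_{N_G(P)}(M)$ with vertex $P$; the remaining summands have vertex strictly smaller than $P$ inside $N_G(P)$ and hence vanish under the Brauer construction at $P$. Combined with $M(P)=\Res^G_{N_G(P)}(M)(P)$ from \ref{noth Brauer con}(b), this yields $M(P)=f(M)(P)$. Finally, since $f(M)$ has vertex $P$ and trivial source and $P$ is normal in $N_G(P)$, $P$ acts trivially on $f(M)$; hence $f(M)$ is inflated from $N_G(P)/P$ and coincides with $f(M)(P)$ as $F[N_G(P)]$-modules, giving $M(P)\cong f(M)$.

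The main obstacle is the key computation at the heart of (c): controlling precisely which indecomposable summands of $F[G/P]$ survive the Brauer construction at $P$, and upgrading the resulting injection into a bijection onto \emph{all} indecomposable projectives of $F[N_G(P)/P]$. Everything rests on the permutation-module formula \ref{noth Brauer con}(c) together with careful Krull--Schmidt bookkeeping, and on the observation that inflation from $N_G(P)/P$ inverts the Brauer construction on $p$-permutation modules of vertex $P$.
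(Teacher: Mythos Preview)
The paper does not give its own proof of this proposition; it simply refers to Brou\'e's original article \cite[(0.4), (3.2), (3.4), (3.5)]{Broue1985}. Your sketch reconstructs essentially the standard argument found there (and in later textbook treatments), and it is correct in substance.

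Two minor remarks on presentation. In Part~(c), your argument for the bijection implicitly relies on the Green-correspondence identification $M(P)\cong f(M)$ to guarantee that $M(P)$ is indecomposable and that $M\mapsto M(P)$ is injective on modules with vertex $P$; as written, you assert the bijection before establishing that identification. Reordering so that the Green-correspondence computation comes first would make the logic cleaner. In Part~(b), the vertex-preservation step uses the global injectivity of $M\mapsto\Mbar$ on indecomposable $p$-permutation modules (to rule out $\Mbar\mid F[G/Q']$ for $Q'<_G Q$), which itself follows from patching the idempotent-lifting bijections across the various $\calO[G/Q]$; this is fine, but worth making explicit since the per-$Q$ bijection alone is not enough.
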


\begin{proof} 
See \cite[(0.4), (3.2), (3.4), (3.5)]{Broue1985}.
\end{proof}

\begin{remark}\label{rem p-perm equiv}
In view of (iii) in Part~(a) of the previous proposition, $p$-permutation modules are often called {\em trivial source modules} in the literature. The reformulations of Part~(a) allow to see quickly that the class of trivial source modules is 
closed under the usual constructions of restriction, induction, inflation, Brauer construction, $\otimes_\calO$, $\oplus$, taking direct summands, and taking duals. Note that projective modules are $p$-permutation modules. Note also that Part~(b) and the Krull-Schmidt theorem imply that the functor $\overline{?}\colon \lmod{\calO G}\to\lmod{FG}$ induces a bijection between the set of isomorphism classes of $p$-permutation $\calO G$-modules and the set of isomorphism classes of $p$-permutation $FG$-modules. We will denote the category of $p$-permutation $\calO G$-modules by $\ltriv{\calO G}$. Similarly we define $\ltriv{FG}$, $\ltriv{\calO G}_{\calO H}$, etc.
\end{remark}

\begin{proposition}\label{prop more on p-perm}
Let $G$ be a finite group, let $M,N\in\ltriv{\calO G}$, and let $P$ be a $p$-subgroup of $G$.

\smallskip
{\rm (a)} One has $M(P)^\circ\cong (M^\circ)(P)$ as $F[N_G(P)]$-modules.

\smallskip
{\rm (b)} One has canonical isomorphisms $M(P)\cong\Mbar(P)$ and  $(M\otimes_\calO N)(P)\cong M(P)\otimes_F N(P)$ of $F[N_G(P)]$-modules. Moreover, if $Q$ is a $p$-subgroup of $N_G(P)$ one has a canonical isomorphism $(M(P))(Q)\cong M(PQ)$ of $F[N_G(P)\cap N_G(Q)]$-modules.

\smallskip
{\rm (c)} The canonical map $\Hom_{\calO G}(M,N)\to \Hom_{FG}(\Mbar,\Nbar)$, $f\mapsto \fbar$, induces an $F$-linear isomorphism $F\otimes_{\calO}\Hom_{\calO G}(M,N)\myiso \Hom_{FG}(\Mbar,\Nbar)$. In particular one has 
\begin{equation*}
  \dim_\KK \Hom_{\KK G}(\KK\otimes_{\calO} M, \KK\otimes_{\calO} N) = \rk_{\calO}\Hom_{\calO G}(M,N) 
  = \dim_F\Hom_{FG}(\Mbar,\Nbar)\,.
\end{equation*}
\end{proposition}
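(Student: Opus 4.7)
The plan is to reduce all three parts to the case of permutation modules $M=\calO X$ and $N=\calO Y$ on finite $G$-sets and then extend by naturality and additivity. This is legitimate because, by Proposition~\ref{prop p-perm equiv}(a), every $p$-permutation module is a direct summand of a permutation module, and because all functors involved---reduction $M\mapsto\Mbar$, Brauer construction $M\mapsto M(P)$, $\calO$-dualisation, $\Hom_{\calO G}(-,-)$, and tensor product over $\calO$---are additive. The main computational tool is the canonical $F[N_G(P)]$-module identification $(\calO X)(P)\cong F[X^P]$ of \ref{noth Brauer con}(c), together with its $F$-analogue $(FX)(P)\cong F[X^P]$.

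For Part~(b) the three displayed isomorphisms fall out directly. Since $\overline{\calO X}=FX$ we obtain $M(P)\cong\Mbar(P)$; since $\calO X\otimes_\calO\calO Y\cong\calO(X\times Y)$ and $(X\times Y)^P=X^P\times Y^P$ we obtain $(M\otimes_\calO N)(P)\cong M(P)\otimes_F N(P)$; and since $Q\leq N_G(P)$ makes $PQ$ a $p$-subgroup of $G$ with $(X^P)^Q=X^{PQ}$ we obtain $(M(P))(Q)\cong M(PQ)$. For Part~(a) the dual-basis prescription $e_x\mapsto e_x^*$ extends $\calO$-linearly to a $G$-equivariant isomorphism $\calO X\myiso(\calO X)^\circ$---because $(g\cdot e_x^*)(e_y)=e_x^*(g^{-1}e_y)=\delta_{gx,y}$ gives $g\cdot e_x^*=e_{gx}^*$---which together with Part~(b) and the obvious self-duality of $F[X^P]$ settles the permutation-module case, whence the general one. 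Alternatively, the natural pairing $(m,f)\mapsto f(m)$ restricts to $M^P\times(M^\circ)^P\to\calO$ and descends to $M(P)\otimes_F(M^\circ)(P)\to F$ since $f(\tr_Q^P(m_0))=[P:Q]f(m_0)\in p\calO$ when $Q<P$ and $f$ is $P$-fixed; this pairing is perfect on permutation modules and hence on their summands.

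The main obstacle is Part~(c), which is where the $p$-permutation hypothesis is essential. Using that $M$ is $\calO$-free one has a $G$-equivariant identification $\Hom_\calO(M,N)\cong M^\circ\otimes_\calO N$, whence $\Hom_{\calO G}(M,N)=(M^\circ\otimes_\calO N)^G$, and similarly over $F$. Since $M^\circ\otimes_\calO N$ is again a $p$-permutation $\calO G$-module (Remark~\ref{rem p-perm equiv}), it suffices to show that for any $p$-permutation $\calO G$-module $L$ the natural map $L^G\otimes_\calO F\to\Lbar^G$ is an isomorphism. For $L=\calO X$ both $L^G$ and $\Lbar^G$ are free on the $G$-orbit sums of $X$ and the reduction map matches these bases; direct summands inherit the statement by additivity. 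The three dimension equalities at the end then follow because $\Hom_{\calO G}(M,N)$ is a finitely generated torsion-free, hence $\calO$-free, $\calO$-module, whose $\calO$-rank equals its $F$-dimension after $-\otimes_\calO F$ (giving $\dim_F\Hom_{FG}(\Mbar,\Nbar)$ by what has just been proved) and its $\KK$-dimension after $-\otimes_\calO\KK$ (giving $\dim_\KK\Hom_{\KK G}(\KK\otimes_\calO M,\KK\otimes_\calO N)$ by flatness of $\calO\to\KK$).
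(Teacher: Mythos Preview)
Your proof is correct and follows essentially the same approach as the paper's: reduce to permutation modules via naturality and additivity, using the identification $(\calO X)(P)\cong F[X^P]$ from \ref{noth Brauer con}(c). The paper's own proof is extremely terse---for (a) it simply cites Brou\'e, and for (b) and (c) it just says ``reduce to permutation modules''---so your version is a fleshed-out variant of the same argument, including an explicit natural pairing for (a) and the reformulation of (c) via $L^G\otimes_\calO F\to\Lbar^G$ for $L=M^\circ\otimes_\calO N$.
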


\begin{proof}
(a) See the proof of \cite[(2.4)(2)]{Broue1985}.

\smallskip
(b) In all three cases one obtains natural homomorphisms which are functorial in $M$ (resp.~$M$ and $N$). Moreover, if $M$ (resp.~$M$ and $N$) is a permutation module then \ref{noth Brauer con}(c) implies that these maps are isomorphisms. Thus, they are also isomorphisms for direct summands of permutation modules. 

\smallskip
(c) This follows again immediately by reduction to the case of permutation modules.
\end{proof}

The statements of the following lemma are folklore. We provide quick proofs for the reader's convenience and note that all statements also hold if $M$ is a $p$-permutation $FG$-module, mutatis mutandis.

\begin{lemma}\label{lem Brauer con and vertex} 
Let $G$ be a finite group, let $P$ be a $p$-subgroup of $G$, and let $M\in\ltriv{\calO G}$. 

\smallskip
{\rm (a)} Assume that $M$ is indecomposable and that $Q$ is a vertex of $M$. Then
\begin{equation*}
  M(P)\neq\{0\} \iff \calO_P\mid \Res^G_P(M) \iff P\le_G Q\,.
\end{equation*}

\smallskip
{\rm (b)} Assume that $M$ is indecomposable, that $Q$ is a vertex of $M$, and that $P$ is normal in $G$. Then one has $P\le Q$ if and only if $P$ acts trivially on $M$. 

\smallskip
{\rm (c)} Assume that $M$ is indecomposable and that $P$ is normal in $G$. Then one of the two following must hold:

\quad {\rm (i)} $P$ acts trivially on $M$ and $M(P)=\Mbar$, or

\quad {\rm (ii)} $\calO_P\nmid \Res^G_P(M)$ and $M(P)=\{0\}$.

\smallskip
{\rm (d)} There exists a decomposition $\Res^G_{N_G(P)}(M) = L\oplus N$ into $\calO[N_G(P)]$-submodules with the following property: $P$ acts trivially on $L$ and $\calO_P\nmid \Res^{N_G(P)}_P(N)$. Moreover, $M(P)\cong \Lbar$.
\end{lemma}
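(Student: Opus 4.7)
I would prove the four parts in order, with part (a) forming the technical core that the others reduce to.

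For (a), I would establish the three-way equivalence by showing (1) $\iff$ (2) and (2) $\iff$ (3), where these label the three stated conditions. The equivalence (1) $\iff$ (2) is a direct computation: since $M$ is a $p$-permutation module, $\Res^G_P(M)$ decomposes as a permutation $\calO P$-module $\bigoplus_i \calO[P/P_i]$, and applying \ref{noth Brauer con}(c) termwise yields $M(P) \cong \bigoplus_i F[(P/P_i)^P]$; a transitive $P$-set of the form $P/P_i$ with $P_i \le P$ has a $P$-fixed point iff $P_i = P$. For (2) $\Rightarrow$ (3), relative $Q$-projectivity of $M$ gives $M \mid \Ind_Q^G \Res^G_Q(M)$, and Mackey decomposition of the restriction to $P$ writes it as a sum of transitive permutation modules $\calO[P/P_i]$ with $P_i \le P \cap \lexp{g}{Q}$ for various $g \in G$. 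Each such summand is indecomposable and equals $\calO_P$ only when $P_i = P$, which forces $P \le \lexp{g}{Q}$ for some $g$, i.e.\ $P \le_G Q$.

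The implication (3) $\Rightarrow$ (1) is the most delicate step and, I expect, the main technical obstacle. After conjugating reduce to $P \le Q$, and choose a chain $P = P_0 \le P_1 \le \cdots \le P_n = Q$ with each $P_i$ normal in $P_{i+1}$; such a chain exists inside the $p$-group $Q$ because proper subgroups of $p$-groups have strictly larger normalizers. By Proposition~\ref{prop p-perm equiv}(c), $M(Q) = M(P_n) \neq 0$. The iterated Brauer construction formula $(M(P_i))(P_{i+1}) \cong M(P_i P_{i+1}) = M(P_{i+1})$ from Proposition~\ref{prop more on p-perm}(b) applies since $P_{i+1} \le N_G(P_i)$, and so non-vanishing at $P_{i+1}$ forces non-vanishing at $P_i$; by downward induction $M(P) \neq 0$.

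For (b), combining (a) with the normality of $P$ in $G$ replaces conjugate containment with literal containment. If $P$ acts trivially on $M$, then $\Res^G_P(M)$ is a sum of trivial modules, so $\calO_P \mid \Res^G_P(M)$ and (a) gives $P \le Q$. Conversely, if $P \le Q$, then $M \mid \Ind_Q^G(\calO_Q) = \calO[G/Q]$ and the computation $p \cdot gQ = g(g^{-1}pg)Q = gQ$ for all $g \in G$, $p \in P$ (using $g^{-1}pg \in P \le Q$) shows $P$ acts trivially on $\calO[G/Q]$, hence on $M$. Part (c) follows quickly: case (ii) is immediate from the contrapositive of (b) together with (a); for case (i), since $P$ acts trivially one has $M^P = M$, and $\tr_{Q'}^P \colon M \to M$ is multiplication by $[P:Q'] \in p\ZZ \subseteq \pi \calO$ for each $Q' < P$, so the sum of trace images lies in $\pi M^P$, yielding $M(P) = M/\pi M = \Mbar$.

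Finally, for (d), I would decompose $\Res^G_{N_G(P)}(M)$ via Krull--Schmidt into indecomposable summands. Each summand is a $p$-permutation $\calO[N_G(P)]$-module, and $P$ is normal in $N_G(P)$, so part (c) applies individually. Let $L$ be the direct sum of those indecomposable summands on which $P$ acts trivially and $N$ the direct sum of the remaining ones; by (c), $\calO_P$ does not appear in $\Res^{N_G(P)}_P(N)$. The restriction-compatibility $(\Res^G_{N_G(P)}(M))(P) = M(P)$ from \ref{noth Brauer con}(b), combined with (c) applied summandwise, yields $M(P) = L(P) \oplus N(P) = \Lbar \oplus 0 = \Lbar$, completing the proof.
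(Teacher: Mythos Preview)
Your proof is correct, and parts (b)--(d) match the paper's arguments essentially verbatim (the paper's proof of (c) is terser than yours and does not spell out the trace computation for $M(P)=\Mbar$, but this is the argument it intends).

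The one genuine divergence is in part (a). For $(2)\Rightarrow(3)$ the paper simply invokes \ref{noth Brauer con}(d): since $\calO_P$ has vertex $P$ and $\calO_P\mid\Res^G_P(M)$, the vertex of $\calO_P$ is $G$-subconjugate to the vertex of $M$. Your Mackey computation is essentially a reproof of that fact. More notably, for the reverse direction the paper gives a one-line argument for $(3)\Rightarrow(2)$: since $M$ has trivial source, $\calO_Q\mid\Res^G_Q(M)$; after conjugating so that $P\le Q$, restrict once more to get $\calO_P\mid\Res^G_P(M)$. Your route instead proves $(3)\Rightarrow(1)$ directly via a subnormal chain $P=P_0\lhd P_1\lhd\cdots\lhd P_n=Q$ inside $Q$ and the iterated Brauer construction formula from Proposition~\ref{prop more on p-perm}(b). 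This is valid (and there is no circularity, since Proposition~\ref{prop more on p-perm} precedes this lemma and its proof does not depend on it), but it is heavier machinery than needed: the trivial-source observation bypasses the chain argument entirely.
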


\begin{proof}
(a) Let $X$ be a $P$-set such that $\Res^G_P(M) \cong \calO X$ as $\calO P$-modules. The first and second statement are both equivalent to $X^P\neq\emptyset$. Since the trivial $\calO P$-module has vertex $P$, \ref{noth Brauer con}(d) shows that the second statement implies the third. Conversely, since $M$ has trivial source, one has $\calO_Q\mid \Res^G_Q(M)$, and the third statement implies the second.

\smallskip
(b) If $P$ acts trivially on $M$ then $\calO_P\mid \Res^G_P(M)$ and Part~(a) implies $P\le Q$. Conversely, since $M\mid \Ind_Q^G(F_Q)$, the Mackey formula and $P\le Q$ imply that $P$ acts trivially on $M$.

\smallskip
(c) Let $Q$ be a vertex of $M$. If $P\le Q$ then (i) holds by Part~(b). If $Q$ does not contain $P$ then (ii) holds by Part~(a).

\smallskip
(d) This follows by applying Part~(c) and the last statement in \ref{noth Brauer con}(b) to each indecomposable direct summand of $\Res^G_{N_G(P)}(M)$.
\end{proof}

The following Lemma is well-known and an easy exercise.

\begin{lemma}\label{lem Brauer map compatibility}
Let $G$ be a finite group, $M\in\ltriv{FG}$, $P\le G$ a $p$-subgroup, and let $i\in (FG)^H$ be an idempotent, fixed under the conjugation action of a subgroup $H$ of $G$ containing $P$. Then, for each $m\in M^P$, one has $\Br_P^M(im)=\br_P^G(i)\Br_P^M(m)$. In particular, one obtains a canonical isomorphism $(iM)(P)\myiso\br_P^G(i)M(P)$ of $F[N_H(P)]$-modules.
\end{lemma}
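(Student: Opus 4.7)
The plan is to establish the formula $\Br_P^M(im)=\br_P^G(i)\Br_P^M(m)$ directly and then deduce the isomorphism $(iM)(P)\cong \br_P^G(i)M(P)$ as a formal consequence. Since $P\le H$ and $i\in(FG)^H$, the idempotent $i$ is in particular $P$-conjugation fixed, so it commutes with every element of $P$ inside $FG$; hence $im\in M^P$ whenever $m\in M^P$, making both sides of the claimed equation well defined. I would then decompose $i=i_0+i_1$, where $i_0:=\sum_{g\in C_G(P)}\alpha_g g$ (which coincides with $\br_P^G(i)$) and $i_1:=\sum_{g\notin C_G(P)}\alpha_g g$, and show separately that $i_0m$ projects to $\br_P^G(i)\Br_P^M(m)$ and that $i_1m$ lies in the kernel of $\Br_P^M$.

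The main calculation, and the only place where I expect any real technical content, is the handling of $i_1 m$. Because $i$ is $P$-fixed under conjugation, the coefficients of $i_1$ are constant on each $P$-orbit of $G\setminus C_G(P)$, so I would group $i_1$ by orbit representatives $g$ with stabilizer $Q_g:=\stab_P(g)<P$ and apply the resulting sum to $m$. Using $p^{-1}m=m$, the contribution of one orbit becomes $\alpha_g\sum_{p\in[P/Q_g]}p(gm)$, which equals $\alpha_g\tr_{Q_g}^P(gm)$ once one checks $gm\in M^{Q_g}$; this latter point follows immediately from $qg=gq$ and $qm=m$ for $q\in Q_g$. Summing over orbits places $i_1 m$ inside $\sum_{Q<P}\tr_Q^P(M^Q)\subseteq \ker\Br_P^M$. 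The complementary term is straightforward: since $i_0\in FC_G(P)\subseteq FN_G(P)$ and the Brauer map intertwines the $FN_G(P)$-action on $M^P$ with the $F[N_G(P)]$-action on $M(P)$, one has $\Br_P^M(i_0 m)=i_0\cdot\Br_P^M(m)=\br_P^G(i)\Br_P^M(m)$.

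For the isomorphism assertion, I would form the composite $(iM)^P\hookrightarrow M^P\to M(P)$. The formula just proved, together with $i^2=i$, shows that its image lies inside $\br_P^G(i)M(P)$ and that $\sum_{Q<P}\tr_Q^P((iM)^Q)$ is sent to zero, producing a well defined $F$-linear map $\varphi\colon(iM)(P)\to\br_P^G(i)M(P)$. Its $F[N_H(P)]$-linearity follows from the $N_G(P)$-equivariance of $\Br_P^M$ together with the fact that $N_H(P)$ stabilizes $i$ and hence $\br_P^G(i)$. Surjectivity is immediate from $\br_P^G(i)\Br_P^M(m)=\Br_P^M(im)$ with $im\in(iM)^P$. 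For injectivity, if $n\in(iM)^P$ lies in $\sum_{Q<P}\tr_Q^P(M^Q)$, then writing $n=\sum_Q\tr_Q^P(m_Q)$ and using that $i$ commutes with every $\tr_Q^P$ (again by $P$-conjugation invariance) together with $in=n$ yields $n=\sum_Q\tr_Q^P(im_Q)\in\sum_{Q<P}\tr_Q^P((iM)^Q)$, so $n$ already represents zero in $(iM)(P)$. The only genuinely nontrivial ingredient in the whole argument is the orbit-theoretic recognition that every $P$-orbit outside $C_G(P)$ has proper stabilizer and thus produces a bona fide relative trace from a subgroup $Q<P$; the rest is routine bookkeeping with the defining quotient of the Brauer construction.
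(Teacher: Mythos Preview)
Your argument is correct and complete. The paper itself does not give a proof of this lemma, stating only that it is ``well-known and an easy exercise''; your write-up is exactly the standard direct verification one would expect, and every step checks out. As a minor remark, note that your argument nowhere uses the hypothesis $M\in\ltriv{FG}$: the formula and the isomorphism hold for an arbitrary $FG$-module $M$.
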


The following lemma will be used repeatedly.

\begin{lemma}\label{lem vertex of M(P)}
{\rm (a)} Let $G$ be a finite group, let $M\in\ltriv{\calO G}$ be indecomposable, and let $P$ be a $p$-subgroup of $G$. Then each vertex of each indecomposable direct summand of the $F[N_G(P)]$-module $M(P)$ is contained in a vertex of $M$. (Note that $M(P)=\{0\}$ is possible, in which case the statement is vacuously true.)

\smallskip
{\rm (b)} Let $G$ and $H$ be finite groups and let $M\in\ltriv{\calO G}_{\calO H}$ be indecomposable with twisted diagonal vertices. Let $X$ be any $p$-subgroup of $G\times H$. Then each vertex of each indecomposable direct summand of the $F[N_{G\times H}(X)]$-module $M(X)$ is again twisted diagonal. 
\end{lemma}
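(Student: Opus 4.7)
My plan for part (a) is to exploit the trivial source property. Let $Q$ be a vertex of $M$. By Proposition~\ref{prop p-perm equiv}(a)(iii), $M$ has the trivial $\calO Q$-module as source, so $M$ is an $\calO G$-direct summand of $\Ind_Q^G(\calO_Q) \cong \calO[G/Q]$. Since the Brauer construction $-(P)$ is an additive functor, $M(P)$ is an $F[N_G(P)]$-direct summand of $\calO[G/Q](P)$. By \ref{noth Brauer con}(c), one has $\calO[G/Q](P) \cong F[(G/Q)^P]$ as $F[N_G(P)]$-modules. Now $(G/Q)^P$ is the disjoint union of its $N_G(P)$-orbits, and the stabilizer in $N_G(P)$ of a fixed point $gQ \in (G/Q)^P$ is $N_G(P) \cap \lexp{g}{Q}$. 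Hence $F[(G/Q)^P]$ decomposes as a direct sum of permutation modules of the form $\Ind_{N_G(P) \cap \lexp{g}{Q}}^{N_G(P)}(F)$, each of which, being induced from $N_G(P) \cap \lexp{g}{Q}$, has every indecomposable summand with vertex contained in $N_G(P) \cap \lexp{g}{Q} \le \lexp{g}{Q}$. Since $\lexp{g}{Q}$ is again a vertex of $M$, the conclusion follows.

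For part (b) I would apply part (a) to the group $G \times H$ with the $p$-subgroup $X$ in the role of $P$. Let $V$ be a vertex of $M$, viewed as an indecomposable $\calO[G\times H]$-module; by hypothesis $V$ is twisted diagonal. Two elementary closure properties then finish the argument, both immediate from the description in \ref{noth dir prod}(b): first, any $G \times H$-conjugate of a twisted diagonal subgroup is again twisted diagonal (by the explicit conjugation formula $\lexp{(g,h)}{\Delta(P,\phi,Q)} = \Delta(\lexp{g}{P}, c_g \phi c_h^{-1}, \lexp{h}{Q})$); second, any subgroup $V'$ of $\Delta(R,\psi,S)$ has the form $\Delta(\psi(S'), \psi|_{S'}, S')$ for some $S' \le S$, hence is twisted diagonal (equivalently, $V \le V'$ forces $k_1(V') = k_2(V') = \{1\}$). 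Combining these with (a), every vertex of every indecomposable summand of $M(X)$ lies inside some $G \times H$-conjugate of $V$, and is therefore itself twisted diagonal.

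There is no genuine obstacle here; both parts reduce to the standard interaction between trivial source modules, induction, and the Brauer construction of a permutation module. The only point requiring a moment's attention is the explicit orbit decomposition of $(G/Q)^P$ under $N_G(P)$, but once the stabilizers are identified as $N_G(P)\cap \lexp{g}{Q}$ the conclusion is forced. Part (b) is then essentially a formal consequence of (a) combined with the two closure properties of the family of twisted diagonal subgroups.
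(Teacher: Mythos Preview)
Your proof is correct, but it takes a different route from the paper's. The paper argues that $M(P)=\Mbar(P)=\bigl(\Res^G_{N_G(P)}(\Mbar)\bigr)(P)$ and then invokes Lemma~\ref{lem Brauer con and vertex}(d) to conclude that $M(P)$ is (isomorphic to) a direct summand of $\Res^G_{N_G(P)}(\Mbar)$; the vertex containment then follows from the general fact~\ref{noth Brauer con}(d) about vertices under restriction. You instead embed $M$ in the explicit permutation module $\calO[G/Q]$ and compute $(\calO[G/Q])(P)\cong F[(G/Q)^P]$ via~\ref{noth Brauer con}(c), then read off the orbit stabilizers $N_G(P)\cap\lexp{g}{Q}$. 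Your argument is more hands-on and self-contained, not needing Lemma~\ref{lem Brauer con and vertex}(d); the paper's is shorter because that lemma has already done the work of identifying $M(P)$ inside the restriction. For part~(b) both proofs simply apply~(a) together with the closure of twisted diagonal subgroups under conjugation and passage to subgroups; there is one small slip in your parenthetical, where you wrote ``$V\le V'$'' but clearly meant the reverse inclusion.
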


\begin{proof}
Part~(b) is an immediate consequence of Part~(a). In order to prove Part~(a), note that $M(P)=\Mbar(P)=\bigl(\Res^G_{N_G(P)}(\Mbar)\bigr) (P)$ and, by Lemma~\ref{lem Brauer con and vertex}(d), the latter $F[N_G(P)]$-module is isomorphic to a direct summand (possibly $\{0\}$) of $\Res^G_{N_G(P)}(\Mbar)$. Thus, Proposition~\ref{prop p-perm equiv}(b) and \ref{noth Brauer con}(d)  imply the result.
\end{proof}

%\begin{lemma}\label{lem Brauer con and Ind 1}
%Let $G$ be a finite group, let $Q$ be a $p$-subgroup of $G$, let $M$ be an indecomposable $p$-permutation $FN_G(Q)$-module, and let $R$ be a vertex of $M$. If $P$ is a $p$-subgroup of $N_G(Q)$ satisfying $\gP\not\le R$, for each $g\in G\smallsetminus N_G(Q)N_G(P)$, then $(\Ind_{N_G(Q)}^G(M))(P)\cong M(P)$ as $F[N_G(P)\cap N_G(Q)]$-modules.
%\end{lemma}
%
%\begin{proof}
%By \ref{noth Brauer con}(b), it suffices to show that $\bigl(\Res^G_{N_G(P)\cap N_G(Q)}(\Ind_{N_G(Q)}^G(M))\bigr)(P)\cong M(P)$ as $F[N_G(P)\cap N_G(Q)]$-modules. The Mackey formula yields $\Res^G_{N_G(P)\cap N_G(Q)}(\Ind_{N_G(Q)}^G(M))\cong \bigoplus_{g\in N_G(P)\backslash G/N_G(Q)} L_g$, where
%\begin{equation*}
%  L_g:=\Ind_{N_G(P)\cap N_G(Q)\cap \lexp{g}{N_G(Q)}}^{N_G(P)\cap N_G(Q)}
%  \bigl(\Res^{\lexp{g}{N_G(Q)}}_{N_G(P)\cap N_G(Q)\cap \lexp{g}{N_G(Q)}}(\gM)\bigr)\,,
%\end{equation*}
%for $g\in G$. Since $L_1=\Res^{N_G(Q)}_{N_G(P)\cap N_G(Q)}(M)$, it suffices to show that for each $g\in G\smallsetminus N_G(Q)N_G(P)$ one has $L_g(P)=\{0\}$. Assume that there exists an element $g\in G\smallsetminus N_G(Q)N_G(P)$ such that $L_g(P)\neq \{0\}$. By Lemma~\ref{lem Brauer con and vertex}(a), there exists an indecomposable direct summand of $L_g$ which has a vertex $S$ that contains $P$. Moreover, two applications of \ref{noth Brauer con}(d) imply that $S\le_{\lexp{g}{N_G(Q)}}\gR$, or equivalently, $\lexp{g^{-1}}{S}\le_{N_G(Q)} R$. Thus, $\lexp{g^{-1}}{P}\le_{N_G(Q)} R$, contradicting the hypothesis of the Lemma. The result now follows.
%\end{proof}

The following lemma will be used in the proof of Proposition~\ref{prop local maximal Brauer pair}.

\begin{lemma}\label{lem Brauer con and Ind 2}
Let $G$ be a finite group, let $Q$ be a $p$-subgroup of $G$, let $M\in\ltriv{F[N_G(Q)]}$ be indecomposable, and let $R$ be a vertex of $M$. If $P$ is a $p$-subgroup of $N_G(Q)$ satisfying $N_G(P)\le N_G(Q)$ and $P\not\le \lexp{g}{R}$, for each $g\in G\smallsetminus N_G(Q)$, then $(\Ind_{N_G(Q)}^G(M))(P)\cong M(P)$ as $F[N_G(P)]$-modules.
\end{lemma}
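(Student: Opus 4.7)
The plan is to apply the Mackey formula to decompose $\Res^G_{N_G(P)}\Ind_{N_G(Q)}^G(M)$ and then show that the Brauer construction at $P$ annihilates all but one summand. Since $N_G(P) \le N_G(Q)$, the identity represents one $(N_G(P),N_G(Q))$-double coset, and every other representative can be chosen from $G \setminus N_G(Q)$. For the identity coset, the Mackey summand is $\Res^{N_G(Q)}_{N_G(P)}(M)$, whose Brauer construction at $P$ equals $M(P)$ as an $F[N_G(P)]$-module: this uses that restriction commutes with the Brauer construction (\ref{noth Brauer con}(b)) together with $N_{N_G(Q)}(P) = N_G(P) \cap N_G(Q) = N_G(P)$.

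For a representative $g \in G \setminus N_G(Q)$ the corresponding Mackey summand is
\begin{equation*}
  \Ind^{N_G(P)}_{N_G(P) \cap \lexp{g}{N_G(Q)}} \Res^{\lexp{g}{N_G(Q)}}_{N_G(P) \cap \lexp{g}{N_G(Q)}}(\gM)\,.
\end{equation*}
Since $M$ is indecomposable of trivial source with vertex $R$, Proposition~\ref{prop p-perm equiv}(a) gives $M\mid \Ind^{N_G(Q)}_R(F_R)$, whence $\gM\mid \Ind^{\lexp{g}{N_G(Q)}}_{\lexp{g}{R}}(F_{\lexp{g}{R}})$. Applying the Mackey formula on the permutation-module side, every indecomposable summand of the restriction of $\gM$ to $N_G(P) \cap \lexp{g}{N_G(Q)}$ has trivial source and vertex contained in $\lexp{y}{(\lexp{g}{R})}$ for some $y\in \lexp{g}{N_G(Q)}$, that is, contained in $\lexp{h}{R}$ for some $h \in gN_G(Q)$. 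Induction from $N_G(P) \cap \lexp{g}{N_G(Q)}$ up to $N_G(P)$ cannot enlarge vertices (only conjugate them inside $N_G(P)$), so the same containment holds for all indecomposable summands of the induced module.

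Now apply Lemma~\ref{lem Brauer con and vertex}(a): for an indecomposable $F[N_G(P)]$-module with vertex $V$ the Brauer construction at $P$ is nonzero only if $P \le_{N_G(P)} V$; since $P$ is normal in $N_G(P)$, this reduces to $P \le V$. Hence the Brauer construction of each of the summands considered above vanishes provided $P \not\le \lexp{h}{R}$ for every $h \in gN_G(Q)$. As $g \notin N_G(Q)$ forces $gN_G(Q) \subseteq G \setminus N_G(Q)$, the hypothesis of the lemma does apply, and the entire Mackey contribution from $g$ dies under $-(P)$. Summing over double cosets yields the desired isomorphism $(\Ind_{N_G(Q)}^G(M))(P) \cong M(P)$ of $F[N_G(P)]$-modules.

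The main obstacle I expect is the bookkeeping on vertices through the restriction-then-induction pattern; in particular one must verify cleanly that vertices of the indecomposable restricted summands of $\gM$ lie in $\lexp{g}{N_G(Q)}$-conjugates of $\lexp{g}{R}$ (equivalently in $\lexp{h}{R}$ for $h\in gN_G(Q)$), and that induction preserves this property. Once this is made precise via the trivial-source characterization of $p$-permutation modules, the hypothesis on $P$ translates directly into the vanishing statement and the rest is formal.
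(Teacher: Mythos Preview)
Your proof is correct and follows essentially the same route as the paper: apply Mackey to $\Res^G_{N_G(P)}\Ind_{N_G(Q)}^G(M)$, identify the identity-coset summand with $\Res^{N_G(Q)}_{N_G(P)}(M)$, and kill the remaining summands by bounding their vertices inside $\lexp{h}{R}$ with $h\notin N_G(Q)$ so that Lemma~\ref{lem Brauer con and vertex}(a) applies. The only cosmetic difference is that the paper invokes \ref{noth Brauer con}(d) twice to bound vertices through restriction and induction, whereas you unpack the trivial-source description $M\mid\Ind_R^{N_G(Q)}(F_R)$ and run Mackey on the permutation module; either way the conjugating element lands in $N_G(P)\cdot g\cdot N_G(Q)\subseteq G\smallsetminus N_G(Q)$ (equivalently, the $N_G(P)$-conjugation is absorbed by the normality of $P$ in $N_G(P)$), and the hypothesis finishes the job.
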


\begin{proof}
By \ref{noth Brauer con}(b), it suffices to show that $\bigl(\Res^G_{N_G(P)}(\Ind_{N_G(Q)}^G(M))\bigr)(P)\cong \bigl(\Res^{N_G(Q)}_{N_G(P)}(M)\bigr)(P)$ as $F[N_G(P)]$-modules. The Mackey formula yields $\Res^G_{N_G(P)}(\Ind_{N_G(Q)}^G(M))\cong \bigoplus_{g\in [N_G(P)\backslash G/N_G(Q)]} L_g$, where
\begin{equation*}
  L_g:=\Ind_{N_G(P)\cap \lexp{g}{N_G(Q)}}^{N_G(P)}
  \bigl(\Res^{\lexp{g}{N_G(Q)}}_{N_G(P)\cap \lexp{g}{N_G(Q)}}(\gM)\bigr)\,,
\end{equation*}
for $g\in G$. Since $L_1=\Res^{N_G(Q)}_{N_G(P)}(M)$, it suffices to show that for each $g\in G\smallsetminus N_G(Q)$ one has $L_g(P)=\{0\}$. Assume that there exists an element $g\in G\smallsetminus N_G(Q)$ such that $L_g(P)\neq \{0\}$. By Lemma~\ref{lem Brauer con and vertex}(a), there exists an indecomposable direct summand of $L_g$ which has a vertex $S$ that contains $P$. Two applications of \ref{noth Brauer con}(d) imply that there exists $x\in N_G(Q)$ and $y\in N_G(P)$ such that $S\le \lexp{ygx}{R}$. Thus $P\le \lexp{ygx}{R}$ and, since $y\in N_G(P)$, we obtain $P\le \lexp{gx}{R}$, with $gx\notin N_G(Q)$, contradicting the hypothesis of the Lemma. The result now follows.
\end{proof}

%%%%%%%%%%%%%%%% SECTION 4 %%%%%%%%%%%%%%%%%%%%%%%%%%%%%%%%%%%
%\newpage
\section{Block theoretic preliminaries}\label{sec blocks}

Throughout this section let $G$ and $H$ be finite groups and assume that $(\KK,\calO,F)$ is large enough for $G$ and $H$ and that $F=\calO/(\pi)$ is algebraically closed.

%\bigskip\noindent
%{\large\it \ref{sec blocks}A. Basic Definitions and Results}

\begin{nothing}\label{noth blocks} {\em Blocks, Brauer pairs, defect groups.}\quad
(a) Recall that $\calO G$ has a unique decomposition $\calO G=B_1\oplus\cdots\oplus B_t$ into indecomposable two-sided ideals, the {\em blocks} of $\calO G$. If one decomposes the identity element of $\calO G$ according to the block decomposition, $1=e_1+\cdots+e_t$, then $e_1,\ldots,e_t$ are central, pairwise orthogonal idempotents of $\calO G$ and $e_i$ is an identity element of $B_i$, called the {\em block idempotent} of $B_i$. Thus, $B_i=e_i\calO G$ is an $\calO$-algebra in its own right. The block idempotents of $\calO G$ are precisely the primitive idempotents of $Z(\calO G)$. The identity element of a block $B$ will be denoted by $e_B$. One obtains a bijection $B\mapsto e_B$ between the blocks of $\calO G$ and the primitive idempotents of $Z(\calO G)$. Similar statements hold for $FG$ and the reduction map $\overline{?}\colon\calO G\to FG$, $\sum_{g\in G}\alpha_g g\mapsto\alphabar_g g$ induces bijections $B_i\mapsto \Bbar_i$ (resp.~$e\mapsto \ebar$) between the blocks of $\calO G$ and the blocks of $FG$ (resp.~the block idempotents of $\calO G$ and those of $FG$). More generally, $B$ is called a {\em sum of blocks} of $\calO G$, if $B=e\calO G$ for some central idempotent $e\neq 0$ of $\calO G$, i.e., $B=\bigoplus_{i\in I} B_i$ and $e=\sum_{i\in I}e_i$ for a unique subset $I$ of $\{1,\ldots,t\}$. In this case $e_B:=e$ is an identity of $B$. This way, sums of blocks and non-zero idempotents of $Z(\calO G)$ are in bijective correspondence. Similarly, one defines sums of blocks of $FG$. Every block idempotent of $\calO[G\times H]=\calO G\otimes_\calO \calO H$ is of the form $e\otimes f$ for uniquely determined block idempotents $e$ of $\calO G$ and $f$ of $\calO H$. Note that we identify the $\calO$-algebras $\calO[G\times H]$ and $\calO G\otimes_\calO \calO H$ via $(g,h)\mapsto g\otimes h$.

\smallskip
(b) Recall that a {\em Brauer pair} of $FG$ is a pair $(P,e)$, where $P$ is a $p$-subgroup of $G$ and $e$ is a block idempotent of $F [C_G(P)]$. Note that the block idempotents of $F[C_G(P)]$ coincide with those of $F[PC_G(P)]$. The group $G$ acts by conjugation on the set of Brauer pairs of $FG$: $\lexp{g}{(P,e)}:=(\gP,\lexp{g}{e})$, for $g\in G$. We denote the $G$-stabilizer of the Brauer pair $(P,e)$ by $N_G(P,e)$. Note that $PC_G(P)\le N_G(P,e)\le N_G(P)$. For Brauer pairs $(P,e)$ and $(Q,f)$ of $FG$, one writes $(Q,f)\unlhd(P,e)$ if $Q\le P\le N_G(Q,f)$ and $\br_P(f)\cdot e = e$ (or equivalently $\br_P(f)\cdot e\neq 0$). The transitive closure of the relation $\unlhd$ on the set of Brauer pairs of $FG$ is denoted by $\le$. It is a partial order and is respected by $G$-conjugation.

If $e$ is a central idempotent of $FG$ then $\br_P(e)$ is an $N_G(P)$-stable central idempotent of $FC_G(P)$ and also a central idempotent of $FN_G(P)$, see~\ref{noth Brauer con}(c). If $B$ is a sum of blocks of $FG$, one says that a Brauer pair $(P,e)$ is a {\em $B$-Brauer pair} if $\br_P(e_B)e=e$, or equivalently, $\br_P(e_B)e\neq 0$. Every Brauer pair is a $B$-Brauer pair for a unique block $B$ of $FG$, and in this case $(\{1\},e_B)\le(P,e)$; see also Proposition~\ref{prop Brauer pairs}(a) below. Let $B$ be again a sum of blocks of $FG$. The set of $B$-Brauer pairs is closed under $G$-conjugation and if $(Q,f)\le (P,e)$ are Brauer pairs of $FG$ then $(P,e)$ is a $B$-Brauer pair if and only if $(Q,f)$ is a $B$-Brauer pair. This follows also immediately from Proposition~\ref{prop Brauer pairs}(a) below. The set of $B$-Brauer pairs is denoted by $\calBP(B)$.

For a sum $B$ of blocks of $\calO G$ we simply define $\calBP(B):=\calBP(\Bbar)$ and call them Brauer pairs of $B$. Thus, Brauer pairs by default are viewed as pairs $(P,e)$, where $e$ is an idempotent of a group algebra over $F$. Sometimes it is convenient to lift the idempotent $e$ to an idempotent over $\calO$. We denote the set of the resulting pairs by $\calBP_\calO(B)$ or $\calBP_\calO(\Bbar)$.

\smallskip
(c) A {\em defect group} of a block $B$ of $FG$ is a subgroup $D$ of $G$, minimal with respect to the property that $e_B\in\tr_D^G((FG)^D)$, see \cite[Section 5.1]{NagaoTsushima1989}. The defect groups of $B$ form a $G$-conjugacy class of $p$-subgroups of $G$. A subgroup $D$ of $G$ is a defect group of $B$ if and only if $\Delta(D)$ is a vertex of $B$, viewed as indecomposable $F[G\times G]$-module, see~\cite[Theorem~10.8]{NagaoTsushima1989}. If $P$ is a normal $p$-subgroup of $G$ then $P$ is contained in each defect group of each block $B$, see~\cite[Theorem~5.2.8]{NagaoTsushima1989}, and $e_B$ is contained in the $F$-span of the $p'$-elements of $C_G(P)$, see \cite[Theorem~3.6.22(ii)]{NagaoTsushima1989}. Similarly one defines defect groups of blocks of $\calO G$. All the above statements hold again over $\calO$ and defect groups don't change under reduction of blocks modulo $\pi$.
\end{nothing}

The following Proposition recalls more standard facts about Brauer pairs, see Theorem~1.8 and 1.14 in \cite{BrouePuig1980}.

\begin{proposition}\label{prop Brauer pairs}
{\rm (a)} For each Brauer pair $(P,e)$ of $FG$ and each subgroup $Q\le P$, there exists a unique Brauer pair $(Q,f)$ of $FG$ such that $(Q,f)\le(P,e)$. In particular, if $(R,g)\le(P,e)$ are Brauer pairs of $FG$ and $R\le Q\le P$ then there exists a unique Brauer pair $(Q,f)$ of $FG$ satisfying $(R,g)\le(Q,f)\le(P,e)$.

\smallskip
{\rm (b)} Let $(Q,f)$ and $(P,e)$ be Brauer pairs of $FG$. If $(Q,f)\le (P,e)$ and $Q\unlhd P$ then $(Q,f)\unlhd(P,e)$.

\smallskip
{\rm (c)} Let $B$ be a block of $FG$. Then the maximal elements in the poset of $B$-Brauer pairs form a single full conjugacy class. Moreover, a $B$-Brauer pair $(P,e)$ is a maximal $B$-Brauer pair if and only if $P$ is a defect group of $B$.
\end{proposition}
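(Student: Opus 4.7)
My plan is to prove parts (a), (b), (c) in sequence, following the classical Alperin-Brou\'e-Puig approach (as in \cite{BrouePuig1980}) so that (b) and (c) reduce quickly to (a). Throughout, the key tool is that, for any $P$-stable subgroup $L$ of $G$, the Brauer homomorphism $\br_P^L\colon (FL)^P \to FC_L(P)$ is an $F$-algebra homomorphism, a consequence of \ref{noth Brauer con}(c).

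For part (a), I would first treat the case $Q\unlhd P$; the general case then follows by iterating along any subnormal series $Q=Q_0\unlhd Q_1\unlhd\cdots\unlhd Q_n=P$, which exists because $P$ is a $p$-group, and the refinement statement in the second half of (a) is obtained by applying the main statement twice and invoking uniqueness. In the normal case, set $L:=C_G(Q)$ and note that $P$ acts by conjugation on $L$, permuting the block idempotents $\{f_i\}$ of $FL$. Grouping these into $P$-orbit sums $\tilde f_j\in (FL)^P$ gives $1=\sum_j \tilde f_j$, and applying $\br_P^L$ yields $1_{FC_G(P)}=\sum_j \br_P^L(\tilde f_j)$ as a sum of pairwise orthogonal central idempotents of $FC_G(P)$. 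Multiplying by the primitive block idempotent $e$ shows that exactly one orbit-sum dominates $e$, and within that orbit exactly one $f_i$ satisfies $\br_P^L(f_i)\cdot e=e$; this simultaneously forces $P\le N_G(Q,f)$. Uniqueness of the resulting $(Q,f)$ for general $Q\le P$, independent of the chosen subnormal chain, is the technical heart of the theorem; I would prove it by induction on $|P:Q|$, comparing any two chains at their first point of divergence and reducing to the normal case handled above.

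Part (b) is then immediate: given $(Q,f)\le(P,e)$ with $Q\unlhd P$, the normal case of (a) provides a unique $f'$ with $(Q,f')\unlhd(P,e)$, hence $(Q,f')\le(P,e)$, so uniqueness in (a) forces $f=f'$. For part (c), let $(P,e)$ be a maximal $B$-Brauer pair. If some $p$-subgroup $R$ strictly containing $P$ lay in $N_G(P,e)$, then the normal-case lifting half of (a) applied to $P\unlhd R$ would produce $(R,g)$ with $(P,e)\unlhd(R,g)$, contradicting maximality; combined with the standard characterization $\br_P^G(e_B)\ne 0\iff P\le_G D$ (which follows from $e_B\in\tr_D^G((FG)^D)$ and \ref{noth Brauer con}(c)), this forces $P$ to be a defect group of $B$. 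Conversely, every defect group arises as the first component of some maximal $B$-Brauer pair by building a maximal chain upward from $(\{1\},e_B)$ via (a). For the single $G$-conjugacy class: defect groups of $B$ are $G$-conjugate, so one reduces to comparing two maximal pairs $(D,e),(D,e')$ with identical first components; a Frattini-style argument in $N_G(D)$, using that $N_G(D)$ transitively permutes the blocks of $FC_G(D)$ appearing in $\br_D^G(e_B)$, then produces the required conjugating element. The main obstacle throughout is the chain-independence uniqueness in (a); everything else follows formally.
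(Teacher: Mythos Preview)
The paper does not give its own proof of this proposition; it simply records it as a recollection of standard facts and cites \cite{BrouePuig1980}, Theorems~1.8 and~1.14. In that sense your proposal and the paper agree: both defer to the classical Alperin--Brou\'e--Puig argument. Your sketch of (a) and the reduction of (b) to (a) are the standard ones and are fine.

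There is, however, a genuine gap in your outline of (c). Part~(a) only produces Brauer pairs \emph{below} a given one: starting from $(P,e)$ and $Q\unlhd P$, your orbit-sum argument manufactures the unique $(Q,f)\unlhd(P,e)$. In (c) you need the opposite direction: given a maximal $(P,e)$ and a $p$-subgroup $R$ with $P\lhd R\le N_G(P,e)$, you want a pair $(R,g)$ with $(P,e)\unlhd(R,g)$. That requires $\br_R(e)\neq 0$, and nothing in your argument for (a) establishes this; the decomposition $1_{FC_G(R)}=\sum_{f}\br_R(f)$ (sum over $R$-fixed block idempotents $f$ of $FC_G(P)$) only shows that \emph{some} $R$-fixed $f$ has $\br_R(f)\neq 0$, not that your particular $e$ does. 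This upward existence is a separate lemma in the classical treatment and uses more, for instance the observation that a defect group of the block $FC_G(P)e$ together with $P$ generates a $p$-subgroup of $N_G(P,e)$ on which $e$ has nonzero Brauer image.

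A second, related gap: even granting that $P$ is a Sylow $p$-subgroup of $N_G(P,e)$, you still need to deduce that $P$ is a defect group of $B$. Knowing $P\le_G D$ is not enough by itself, since $N_D(P)$ need not stabilize $e$. The usual route is again the upward-lifting lemma (applied to a suitable $p$-subgroup of $N_G(P,e)$ coming from a defect group of $FC_G(P)e$), or an argument via source idempotents; you should make this step explicit rather than folding it into ``the lifting half of (a)''.
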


The following Proposition is well-known. We give a proof for the reader's convenience. Note that $G\times G$ acts on $FG$ via its $F[G\times G]$-module structure and that $G$ acts on $FG$ via conjugation. These actions are linked via the diagonal embedding $\Delta\colon G\mapsto G\times G$, $g\mapsto (g,g)$, so that $(FG)^{\Delta(H)}=(FG)^H$ for all subgroups $H\le G$.

\begin{proposition}\label{prop Brauer con of block}
Let $B$ be a block of $FG$ and let $e_B$ denote the identity element of $B$. Furthermore, let $(Q,e)$ be a $B$-Brauer pair and set $I:=N_G(Q,e)$.

\smallskip
{\rm (a)} One has $B(\Delta(Q))\cong F[C_G(Q)]\br_Q(e_B)$ as $F[N_{G\times G}(\Delta(Q))]$-modules.

\smallskip
{\rm (b)} One has $eB(\Delta(Q))e\cong F[C_G(Q)]e$ as $F[N_{I\times I}(\Delta(G))]$-modules.
\end{proposition}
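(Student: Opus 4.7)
The strategy is to start from the identification $B = (e_B\otimes 1)\cdot FG$ as a sub-$F[G\times G]$-module of $FG$, where $FG$ carries the $F[G\times G]$-module structure $(g_1,g_2)\cdot x = g_1xg_2^{-1}$. Because $e_B\in Z(FG)$, the idempotent $e_B\otimes 1\in F[G\times G]$ is fixed under the conjugation action of the whole group $G\times G$, hence in particular under $N_{G\times G}(\Delta(Q))$. Applying Lemma~\ref{lem Brauer map compatibility} (with ambient group $G\times G$, $p$-subgroup $\Delta(Q)$, and idempotent $e_B\otimes 1$), one obtains an isomorphism
\begin{equation*}
  B(\Delta(Q)) \;\cong\; \br_{\Delta(Q)}^{G\times G}(e_B\otimes 1)\cdot (FG)(\Delta(Q))
\end{equation*}
of $F[N_{G\times G}(\Delta(Q))]$-modules, reducing the task to two concrete calculations.

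For the first, I would view $FG$ as the permutation $F[G\times G]$-module on the $(G\times G)$-set $G$. The $\Delta(Q)$-fixed points of $G$ under this action are exactly $C_G(Q)$, so~\ref{noth Brauer con}(c) delivers a canonical $F[N_{G\times G}(\Delta(Q))]$-isomorphism $(FG)(\Delta(Q))\cong FC_G(Q)$ on which $(g_1,g_2)$ acts by $x\mapsto g_1xg_2^{-1}$. For the second, the explicit description of the Brauer map in~\ref{noth Brauer con}(c) translates $e_B\otimes 1$ to $\br_Q^G(e_B)\otimes 1\in F[C_G(Q)\times C_G(Q)]$, which acts on $FC_G(Q)$ as left multiplication by $\br_Q(e_B)$. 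Since~\ref{noth Brauer con}(c) also places $\br_Q(e_B)$ in $Z(FC_G(Q))$, combining these two ingredients yields the $F[N_{G\times G}(\Delta(Q))]$-module isomorphism $B(\Delta(Q))\cong FC_G(Q)\br_Q(e_B)$ required for~(a).

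For part~(b), I would multiply the isomorphism in~(a) by $e$ on both sides. Using that $e\in Z(FC_G(Q))$ together with the Brauer pair relation $\br_Q(e_B)e = e$, one computes $e\cdot FC_G(Q)\br_Q(e_B)\cdot e = FC_G(Q)\cdot e\br_Q(e_B)e = FC_G(Q)\cdot e$, so $eB(\Delta(Q))e\cong FC_G(Q)e$. For the $F[N_{I\times I}(\Delta(Q))]$-equivariance, any $(i_1,i_2)$ in this group satisfies $i_1,i_2\in I=N_G(Q,e)$, hence $i_j e i_j^{-1} = e$ for $j=1,2$; thus the induced action $(i_1,i_2)\cdot(xe) = i_1xi_2^{-1}\cdot e$ preserves the subspace $FC_G(Q)e$.

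The main obstacle is bookkeeping the equivariance through each identification: one needs to confirm that the canonical isomorphism $(FG)(\Delta(Q))\cong FC_G(Q)$ is equivariant for the full normalizer $N_{G\times G}(\Delta(Q))$ (not merely for $C_G(Q)\times C_G(Q)$), and that in~(b) the two-sided multiplication by $e$ followed by the collapse to $FC_G(Q)e$ remains equivariant under $N_{I\times I}(\Delta(Q))$ rather than just under $\Delta(I)$ or $C_G(Q)\times C_G(Q)$. Once these equivariances are verified, the algebraic content of both parts follows at once from centrality of $e_B$ and $e$ together with the defining relation of a $B$-Brauer pair.
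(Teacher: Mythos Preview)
Your proof is correct and essentially equivalent to the paper's. The only organizational difference is that the paper works directly with the Brauer homomorphism $\br_Q\colon (FG)^Q\to FC_G(Q)$, noting $B^Q=(FG)^Qe_B$, using multiplicativity of $\br_Q$ to get the image $FC_G(Q)\br_Q(e_B)$, and identifying the kernel as $\sum_{R<Q}\tr_R^Q(B^R)$; you instead factor the same computation through Lemma~\ref{lem Brauer map compatibility} (to split off the idempotent) and~\ref{noth Brauer con}(c) (for the permutation description of $(FG)(\Delta(Q))$), which reassembles exactly the same argument.
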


\begin{proof}
(a) Recall that $\br_Q\colon (FG)^Q\to F[C_G(Q)]$ is a surjective $F$-algebra homomorphism and an $F[N_{G\times G}(\Delta(Q))]$-module homomorphism. Since $B^Q=(FG)^Q e_B$ and $\br_Q$ is multiplicative, we obtain $\br_Q(B^Q)=\br_Q((FG)^Q)\br_Q(e_B) = F[C_G(Q)]\br_Q(e_B)$. Moreover, we have $\ker(\br_Q)=\sum_{R<Q}\tr_R^Q((FG)^R)$. Thus, $\ker(\br_Q)\cap B^Q = \sum_{R<Q}\tr_R^Q(B^R)$. Altogether, we obtain
\begin{equation*}
  B(\Delta(Q)) = B^Q\big/\sum_{R<Q}\tr_R^Q(B^R) \cong F[C_G(Q)]\br_Q(e_B)
\end{equation*}
as $F[N_{G\times G}(\Delta(Q))]$-modules.

\smallskip
Part~(b) follows immediately from Part~(a).
\end{proof}

\begin{nothing}\label{noth fusion system of B} {\em The fusion system of a block.}
Next we define the fusion system associated to a block, a structure and block invariant introduced by Puig. See \cite{AKO2011} for the definition of fusion systems, saturated fusion systems and basic facts about them. Let $B$ be a block of $FG$ and let $(P,e)$ be a maximal $B$-Brauer pair. For $Q\le P$, denote by $e_Q$ the unique block idempotent of $FC_G(Q)$ such that $(Q,e_Q)\le (P,e)$, cf.~\ref{prop Brauer pairs}(a).

The {\em fusion system} of $B$, associated with $(P,e)$, is the category $\calF$ whose objects are the subgroups of $P$, and whose morphism set $\Hom_\calF(Q,R)$, for subgroups $Q$ and $R$ of $P$, is defined as the set of group homomorphisms arising as conjugation maps $c_g\colon Q\to R$, where $g\in G$ satisfies $\lexp{g}{(Q,e_Q)}\le (R,e_R)$. The category $\calF$ is a saturated fusion system on $P$, see for instance \cite[IV.3]{AKO2011} for a proof.

Recall that, for a general fusion system $\calF$ on a $p$-group $P$, a subgroup $Q$ of $P$ is called {\em fully $\calF$-centralized} (resp.~{\em fully $\calF$-normalized}) if $|C_P(Q)|\ge|C_P(Q')|$ (resp.~$|N_P(Q)|\ge|N_P(Q')|$) for all subgroups $Q'$ of $P$ that are $\calF$-isomorphic to $Q$. Recall also that a subgroup $Q$ of $P$ is called {\em $\calF$-centric} if $C_P(Q')=Z(Q')$ for all $Q'$ that are $\calF$-isomorphic to $Q$.
\end{nothing}

We will need the following result that goes back to Alperin and Brou\'e, see \cite{AlperinBroue1979}. We'll use the formulation given in \cite[Theorems~2.4 and 2.5]{Linckelmann2006} and in~\cite[Theorem~3.11(i)]{Kessar2007}.

\begin{proposition}\label{prop fully centralized and centric}
Let $B$ be a block of $FG$, let $(P,e)$ be a maximal $B$-Brauer pair, and let $\calF$ be the fusion system associated to $B$ and $(P,e)$. For every subgroup $Q$ of $P$, denote by $e_Q$ the unique primitive idempotent in $Z(F[C_G(Q)])$ such that $(Q,e_Q)\le (P,e)$.

\smallskip
{\rm (a)} A subgroup $Q$ of $P$ is fully $\calF$-centralized if and only if $C_P(Q)$ is a defect group of $F[C_G(Q)]e_Q$. In this case $(C_P(Q), e_{QC_P(Q)})$ is a maximal Brauer pair of the block algebra $F[C_G(Q)]e_Q$. In particular, $Q$ is $\calF$-centric if and only if $Z(Q)$ is a defect group of $F[C_G(Q)]e_Q$.

\smallskip
{\rm (b)} A subgroup $Q$ of $P$ is fully $\calF$-normalized if and only if $N_P(Q)$ is a defect group of the block algebra $F[N_G(Q,e_Q)]e_Q$. In this case $(N_P(Q),e_{N_P(Q)})$ is a maximal $F[N_G(Q, e_Q)]e_Q$-Brauer pair.
\end{proposition}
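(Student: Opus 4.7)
The plan is to reduce both parts to Proposition~\ref{prop Brauer pairs}(c) via the \emph{Alperin--Brou\'e correspondence} between $B$-Brauer pairs above $(Q,e_Q)$ and Brauer pairs of an associated local block algebra: for part~(a) this is $b_Q := F[C_G(Q)]e_Q$, and for part~(b) it is $b'_Q := F[N_G(Q,e_Q)]e_Q$. In both cases the crucial observation is that, for any $Q' \cong_\calF Q$ and witnessing $g \in G$ with $\lexp{g}{(Q,e_Q)} = (Q',e_{Q'})$, conjugation by $g$ gives an $F$-algebra isomorphism $b_Q \myiso b_{Q'}$ (respectively $b'_Q \myiso b'_{Q'}$) carrying defect groups to defect groups.

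For part~(a), I would first check that $(C_P(Q), e_{QC_P(Q)})$ is a $b_Q$-Brauer pair. Here $C_P(Q)$ is a $p$-subgroup of $C_G(Q)$, the idempotent $e_{QC_P(Q)}$ lies in $F[C_G(QC_P(Q))] = F[C_{C_G(Q)}(C_P(Q))]$, and $(Q,e_Q) \unlhd (QC_P(Q), e_{QC_P(Q)})$---which holds because $Q \unlhd QC_P(Q)$ by Proposition~\ref{prop Brauer pairs}(b)---unwinds to the required compatibility $\br_{C_P(Q)}(e_Q) \cdot e_{QC_P(Q)} \neq 0$. Applying the same construction to each $Q' \cong_\calF Q$ and pulling back along conjugation by $g^{-1}$ produces a $b_Q$-Brauer pair with first component $\lexp{g^{-1}}{C_P(Q')}$, of order $|C_P(Q')|$. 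The key input from Alperin--Brou\'e is the converse direction: every maximal $b_Q$-Brauer pair arises this way, its image in the $B$-Brauer pair poset extending (by Proposition~\ref{prop Brauer pairs}(a)) to a maximal $B$-Brauer pair $G$-conjugate to $(P,e)$. Once this is granted, the defect groups of $b_Q$ are, up to $C_G(Q)$-conjugacy, precisely the subgroups $C_P(Q')$ with $|C_P(Q')|$ maximum among $\calF$-conjugates of $Q$, so $C_P(Q)$ itself is a defect group iff $Q$ is fully $\calF$-centralized; in that case the maximality of $(C_P(Q),e_{QC_P(Q)})$ follows from Proposition~\ref{prop Brauer pairs}(c). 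The $\calF$-centric assertion is then immediate: if $C_P(Q') = Z(Q')$ for every $Q' \cong_\calF Q$, the orders $|C_P(Q')| = |Z(Q)|$ are constant, so $Q$ is fully $\calF$-centralized and $Z(Q) = C_P(Q)$ is a defect group of $b_Q$; the converse uses that $Z(Q) \le C_P(Q)$ always.

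For part~(b), the identical scheme applies with $b'_Q$ in place of $b_Q$, using the analogous bijection between $b'_Q$-Brauer pairs and $B$-Brauer pairs above $(Q,e_Q)$ with first component contained in $N_G(Q,e_Q)$. One checks that $(N_P(Q), e_{N_P(Q)})$ is a $b'_Q$-Brauer pair (using $Q \unlhd N_P(Q)$ and Proposition~\ref{prop Brauer pairs}(b)), and the defect groups of $b'_Q$ turn out to be $N_G(Q,e_Q)$-conjugates of $N_P(Q')$ for $Q' \cong_\calF Q$ of maximum $|N_P(Q')|$, whence the equivalence and the maximality of $(N_P(Q),e_{N_P(Q)})$.

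The main obstacle is the precise statement and proof of the Alperin--Brou\'e correspondence---in particular, verifying that every maximal local Brauer pair really is, up to conjugacy, the image of a maximal $B$-Brauer pair above $(Q,e_Q)$. This requires a careful analysis of how $\br_R$ and block idempotent decompositions transfer between $FG$ and the local group algebras $F[C_G(Q)]$ or $F[N_G(Q,e_Q)]$. Rather than reprove this from scratch I would simply appeal to the cited results of Alperin--Brou\'e, Linckelmann and Kessar.
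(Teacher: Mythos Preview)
Your proposal is essentially the same as what the paper does: the paper states this proposition without proof, merely citing Alperin--Brou\'e \cite{AlperinBroue1979} and the formulations in \cite[Theorems~2.4 and 2.5]{Linckelmann2006} and \cite[Theorem~3.11(i)]{Kessar2007}. Your outline of the Alperin--Brou\'e correspondence is a reasonable sketch of why the cited results hold, and you correctly conclude by deferring to the same literature.
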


\begin{nothing}\label{noth KP classes} {\em K\"ulshammer-Puig classes.} 
Let $(P,e)$ be a {\em self-centralizing} Brauer pair of $FG$, i.e., such that $Z(P)$ is the defect group of the block $F[C_G(P)]e$. Set $I:=N_G(P,e)$ and $\Ibar:=I/PC_G(P)$. By \cite[Theorems~5.8.10 and 5.8.11]{NagaoTsushima1989}, $P$ is the defect group of the block $F[PC_G(P)]e$ and hence, by \cite[Lemma~5.8.12]{NagaoTsushima1989}, the block algebra $F[PC_G(P)]e$ has a unique simple module $V$. Since $V$ is $I$-stable, the canonical cohomology class $\kappa\in H^2(\Ibar,F^\times)$, assigned to the data $PC_G(P)\trianglelefteq I$ and $V$ by Schur (see \cite[Theorem~3.5.7]{NagaoTsushima1989}),
% It follows that $I$ is the inertial group of $V$ in $N_G(P)$. Hence, by \cite[Theorem~3.5.7]{NagaoTsushima1989}, there exists a $2$-cycle $\theta\in Z^2(I,F^\times)$ such that $V$ admits a module-structure for the twisted group algebra $F_\theta I$, extending its $F[PC_G(P)]$-module structure. The $2$-cocycle is the inflation of a $2$-cocycle $\thetabar\in Z^2(\Ibar,F^\times)$. The resulting cohomology class $[\thetabar]\in H^2(\Ibar,F^\times)$ 
is called the {\em K\"ulshammer-Puig} class of $(P,e)$.
%It is uniquely determined by $(P,e)$, see also \cite[Subsection~IV.5.5]{AKO2011}.
\end{nothing}

Recall that a surjective group homomorphism $f\colon G\to \Gbar$ induces two functors over any commutative ring $\kk$: The {\em inflation} functor $\Inf_{\Gbar}^G\colon \lMod{\kk \Gbar}\to\lMod{\kk G}$ which is given by restriction along the homomorphism $f$; and the {\em deflation} functor $\Def_{\Gbar}^G\colon \lMod{\kk G}\to\lMod{\kk \Gbar}$ which assigns to a $\kk G$-module $M$ the largest factor-module on which $N:=\ker(f)$ acts trivially. More explicitly, $\Def^G_{\Gbar}(M)= \kk\Gbar \otimes_{\kk G} M \cong M/I_NM$, where $\kk \Gbar$ is viewed as $(\kk \Gbar,\kk G)$-bimodule using $f$ for the right module structure, and where $I_N$ is the ideal of $\kk G$ generated by the elements $x-1$, $x\in N$. Note that $I_N=\ker(\kk G\to\kk \Gbar)$.

\begin{proposition}\label{prop head and def}
Let $B$ be a block of $FG$ with normal defect group $P\trianglelefteq G$.

\smallskip
{\rm (a)} For any $B$-module $M$ one has an $FG$-module isomorphism $M/J(M)\cong\Inf_{G/P}^G\Def^G_{G/P}(M)$.

\smallskip
{\rm (b)} Let $Q$ be a normal subgroup of $G$ with $Q\le P$. Then $M\mapsto \Inf_{G/P}^G\Def^G_{G/P}(M)$ induces a bijection between the set of isomorphism classes of indecomposable $p$-permutation $B$-modules with vertex $Q$ and the set of isomorphism classes of simple $B$-modules.
\end{proposition}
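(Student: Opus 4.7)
The plan is to prove (a) first, by identifying $J(M)$ with $I_P M$, and then to deduce (b) from (a) combined with Broué's correspondence (Proposition~\ref{prop p-perm equiv}(c)) and the standard projective-cover/simple-module bijection over a finite-dimensional algebra.

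For (a), the inclusion $I_P M \subseteq J(M)$ is immediate: since $P \trianglelefteq G$ is a $p$-group, the two-sided ideal $I_P \cdot FG = FG \cdot I_P$ of $FG$ is nilpotent, hence contained in $J(FG)$. For the reverse inclusion I would show that the image $\Bbar := F[G/P]\ebar_B$ of $B$ under the canonical surjection $\pi\colon FG \twoheadrightarrow F[G/P]$ is a semisimple $F$-algebra. Since $P$ is a defect group of $B$, one has $e_B = \tr_P^G(a)$ for some $a \in (FG)^P$; applying $\pi$ and noting that $\pi(\tr_P^G(a)) = \tr_1^{G/P}(\pi(a))$ yields $\ebar_B \in \tr_1^{G/P}(F[G/P])$. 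Cutting by each primitive central idempotent $\bar f$ of $F[G/P]$ below $\ebar_B$ likewise places $\bar f$ in $\tr_1^{G/P}(F[G/P])$, so every block of $F[G/P]$ summing to $\Bbar$ has defect zero and is therefore a matrix algebra over $F$. Hence $\Bbar$ is semisimple, and $M/I_P M$, viewed as a $\Bbar$-module via inflation, is semisimple as an $FG$-module; this forces $J(M) \subseteq I_P M$. Combining both inclusions, $J(M) = I_P M$ and $M/J(M) = \Inf_{G/P}^G\Def_{G/P}^G(M)$.

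For (b), fix $Q \trianglelefteq G$ with $Q \le P$, and let $\Bbar_Q := F[G/Q]\ebar_B$ denote the image of $B$ in $F[G/Q]$. If $M$ is an indecomposable $p$-permutation $B$-module with vertex $Q$, then by Lemma~\ref{lem Brauer con and vertex}(b) $Q$ acts trivially on $M$, so $M = \Inf_{G/Q}^G(\Mbar)$ for $\Mbar := \Def_{G/Q}^G(M) = M/I_Q M$. Moreover, since $Q$ acts trivially on $M$, each transfer $\tr_R^Q$ (for $R < Q$) on $M$ is multiplication by the $p$-power $[Q:R]$, which vanishes in characteristic $p$; hence $M(Q) = \Mbar$ as $F[G/Q]$-modules. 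By Proposition~\ref{prop p-perm equiv}(c), the assignment $\Mbar \mapsto \Inf_{G/Q}^G(\Mbar)$ is therefore a bijection between the indecomposable projective $\Bbar_Q$-modules and the indecomposable $p$-permutation $B$-modules with vertex $Q$. Taking heads gives a further bijection with the simple $\Bbar_Q$-modules, which in turn correspond (via inflation along $G \twoheadrightarrow G/Q$) to the simple $B$-modules, since every simple $B$-module has $P \supseteq Q$ in its kernel. Composing produces a bijection $M \leftrightarrow \Inf_{G/Q}^G(\Hd(\Mbar))$, which equals $\Hd(M)$ (because inflation along the surjection $G \twoheadrightarrow G/Q$ commutes with taking heads, its kernel being a $p$-group) and, by (a), equals $\Inf_{G/P}^G\Def_{G/P}^G(M)$.

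The main obstacle is the semisimplicity of $\Bbar$ in (a); the key step is translating the trace characterization of the defect group through $\pi$ to deduce defect zero for each block summand of $\Bbar$. Modulo this, (a) is routine, and (b) is a straightforward assembly of Broué's correspondence, the projective-cover bijection over an Artinian $F$-algebra, and the fact that $P$ acts trivially on every simple $B$-module.
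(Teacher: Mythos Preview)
Your proof is correct and follows essentially the same approach as the paper's: both establish $J(M)=I_PM$ in (a) by showing $\Bbar$ is semisimple (you give a self-contained trace argument where the paper cites \cite[Theorems~5.8.10, 5.8.7(ii)]{NagaoTsushima1989}), and both deduce (b) by combining Proposition~\ref{prop p-perm equiv}(c) with the projective-cover/simple bijection over $F[G/Q]$ and then identifying the composite with $M\mapsto M/J(M)$. The only cosmetic differences are that you invoke nilpotency of $I_P$ for one inclusion (the paper uses that $P$ acts trivially on simple $B$-modules), and you phrase the final identification in (b) via commutation of $\Hd$ with inflation rather than the paper's direct comparison of submodule lattices.
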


\begin{proof}
Denote by $\overline{\cdot}\colon FG\to F[G/P]$ the canonical $F$-algebra homomorphism. Then, by \cite[Theorems~5.8.10 and 5.8.7(ii)]{NagaoTsushima1989}, $\Bbar$ is a non-zero sum of blocks of $F[G/P]$ of defect $0$ and therefore a semisimple $F$-algebra.

(a) As above, let $I_P$ denote the ideal of $FG$ generated by the elements $x-1$, $x\in P$. It suffices to show that $I_P M=J(M)$. Since $P$ is normal in $G$, $P$ acts trivially on every simple $B$-module and $I_P$ annihilates every simple $B$-module. Thus, $I_P\subseteq J(B)$ and $I_PM\subseteq J(B)M = J(M)$. For the converse it suffices to show that $M/I_PM$ is semisimple as $FG$-module. But the $FG$-module $M/I_PM$ is the inflation of a semisimple $\Bbar$-module, and therefore semisimple.

(b) By Proposition~\ref{prop p-perm equiv}(c) the Brauer construction $M\mapsto M(Q)$ and $\Inf^G_{G/Q}$ define inverse bijections between the set of isomorphism classes of indecomposable $p$-permutation $B$-modules with vertex $Q$ and the set of isomorphism classes of projective indecomposable $F[G/Q]$-modules which after inflation belong to $B$. Each such projective indecomposable $F[G/Q]$-module is the projective cover $P_V$ in $\lmod{F[G/Q]}$ of a simple $B$-module $V$, viewed as $F[G/Q]$-module. Thus, using Part~(a), it suffices to show that for $M=\Inf_{G/Q}^G(P_V)$ one has $M/J(M)\cong V$ as $FG$-modules. But this is clear, since the $FG$-submodules $U$ of $M$ are the same as the $F[G/Q]$-submodules of $P_V$ and the factor module $M/U$ is semisimple as $FG$-module if and only if $P_V/U$ is semisimple as $F[G/Q]$-module.
\end{proof}

\begin{proposition}\label{prop central defect group}
Let $B$ be a block of $FG$ with central defect group $P$, let $Q\le P$, and denote by $\overline{\cdot}\colon FG\to F[G/Q]$ the natural surjective $F$-algebra homomorphism.

\smallskip
{\rm (a)} The image $\Bbar\subseteq F[G/Q]$ of $B$ is a block of $F[G/Q]$ with defect group $P/Q$. Up to isomorphism, $\Bbar$ has a unique simple module $V$.

\smallskip
{\rm (b)} Up to isomorphism, there exists a unique indecomposable $p$-permutation $B$-module $M$ with vertex $Q$. It is isomorphic to the inflation of the unique indecomposable projective $\Bbar$-module $P_V$. Moreover, one has an $F[G/Q]$-module isomorphism $V\cong \Inf_{G/P}^{G/Q}\Def_{G/P}^G(M)$ and $\Inf_{G/Q}^G(V)$ is the unique simple $B$-module.
\end{proposition}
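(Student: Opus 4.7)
The plan is to prove (a) by analyzing the canonical surjection $\pi\colon FG\to F[G/Q]$, whose kernel $I_Q$ is nilpotent: since $Q\le P\le Z(G)$, $Q$ is central, so $I_Q=J(FQ)\cdot FG$ and $FQ$ is a local $F$-algebra. Standard lifting of idempotents modulo a nilpotent ideal then gives a bijection $e\mapsto\pi(e)$ between primitive central idempotents of $FG$ and of $F[G/Q]$, so $\Bbar=\pi(B)$ is a block of $F[G/Q]$. For its defect group, applying $\pi$ to a relation $e_B=\tr^G_P(a)$ shows that $\bar e_B\in\tr^{G/Q}_{P/Q}\bigl((F[G/Q])^{P/Q}\bigr)$, so a defect group of $\Bbar$ is contained in $P/Q$; conversely, $P/Q$ is a (central, hence) normal $p$-subgroup of $G/Q$, and therefore by \ref{noth blocks}(c) is contained in every defect group of every block of $F[G/Q]$, including $\Bbar$. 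Hence the defect group of $\Bbar$ is exactly $P/Q$.

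For the uniqueness of the simple $\Bbar$-module $V$, I would specialise the above block identification to $Q=P$: the image $\widetilde B$ of $B$ in $F[G/P]$ is then a block of $F[G/P]$ with trivial defect group, hence (as $F$ is algebraically closed) a matrix algebra over $F$ with a unique simple module. Proposition~\ref{prop head and def}(a) identifies the simple $B$-modules with the simple $\widetilde B$-modules via inflation, so $B$ has a unique simple module; because this simple $B$-module factors through $F[G/P]$, it also factors through $F[G/Q]$ (as $Q\le P$), producing the unique simple $\Bbar$-module $V$.

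For (b), my plan is to apply Proposition~\ref{prop head and def}(b) to $B$ with its normal defect group $P$ and the normal subgroup $Q$: this yields a bijection between indecomposable $p$-permutation $B$-modules with vertex $Q$ and simple $B$-modules. Part~(a) has just supplied a unique simple $B$-module, so $M$ exists and is unique. To describe $M$, Lemma~\ref{lem Brauer con and vertex}(b) shows that $Q$ acts trivially on $M$ (as $Q$ is both normal in $G$ and a vertex of $M$), and standard trace and deflation computations then give $M(Q)\cong M\cong\Def^G_{G/Q}(M)$ as $F[G/Q]$-modules. Hence $M=\Inf^G_{G/Q}(M(Q))$, and Proposition~\ref{prop p-perm equiv}(c) identifies $M(Q)$ as an indecomposable projective $F[G/Q]$-module lying in $\Bbar$; uniqueness of $V$ then forces $M(Q)=P_V$.

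For the remaining identifications, applying Proposition~\ref{prop head and def}(a) to the block $\Bbar$ of $F[G/Q]$ (with normal defect $P/Q$) and to the module $P_V$ yields $V=P_V/J(P_V)\cong\Inf^{G/Q}_{G/P}\Def^{G/Q}_{G/P}(P_V)$; combining this with $\Def^G_{G/Q}(M)\cong P_V$ and the transitivity $\Def^G_{G/P}=\Def^{G/Q}_{G/P}\Def^G_{G/Q}$ gives $V\cong\Inf^{G/Q}_{G/P}\Def^G_{G/P}(M)$. Inflating and applying Proposition~\ref{prop head and def}(a) a second time directly to $M$ then gives $\Inf^G_{G/Q}(V)\cong\Inf^G_{G/P}\Def^G_{G/P}(M)\cong M/J(M)$, the unique simple quotient of the indecomposable $M$, and therefore the unique simple $B$-module. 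The step that requires the most care is the idempotent-lifting across $I_Q$ in (a); once that and \ref{noth blocks}(c) are in hand, everything else is a direct assembly of the cited results from Section~\ref{sec modules} with Proposition~\ref{prop head and def}.
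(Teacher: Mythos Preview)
Your argument for Part~(b) is correct and matches the paper's approach: it uses Proposition~\ref{prop head and def}(b) together with Part~(a) and Proposition~\ref{prop p-perm equiv}(c), and your additional explicit computations of $M(Q)$, $\Def$, and $\Inf$ are all fine. The defect-group calculation in Part~(a) via traces and via \ref{noth blocks}(c) is also correct.

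There is, however, a real gap in your justification that $\Bbar$ is a \emph{single} block of $F[G/Q]$. You write that ``standard lifting of idempotents modulo a nilpotent ideal'' yields a bijection between primitive central idempotents of $FG$ and of $F[G/Q]$. Nilpotence of $I_Q$ alone does not give this: lifting an idempotent of $F[G/Q]$ to $FG$ need not produce a \emph{central} idempotent, and the map $Z(FG)\to Z(F[G/Q])$ is in general not surjective. Concretely, for $G=S_3$, $p=3$, and $Q=A_3$ (normal but not central), the kernel $I_Q=J(FS_3)$ is nilpotent, yet the unique block of $FS_3$ maps to $FC_2\cong F\times F$, a sum of two blocks. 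So the centrality of $Q$ is essential, and your stated reason does not use it. This also makes your derivation of the unique simple module circular, since you specialise the block bijection to $Q=P$ in order to obtain it.

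One fix that stays close to your outline: since $Q$ is central, distinct $p'$-elements of $G$ have distinct images in $G/Q$ (if $g'=gq$ with $g,g'$ both $p'$-elements and $q\in Q$ central, then $g,q$ commute with coprime orders, forcing $q=1$). Thus $\pi$ restricts to an $F$-linear bijection between the spans of $p'$-elements in $FG$ and in $F[G/Q]$. By \ref{noth blocks}(c) (applied with the trivial normal $p$-subgroup), every central idempotent of either group algebra lies in this span. A putative decomposition $\bar e_B=f_1+f_2$ therefore lifts uniquely to elements $\tilde f_1,\tilde f_2$ in the $p'$-span of $FG$; these are $G$-conjugation invariant (hence central) because the $f_j$ are, and idempotent lifting inside the commutative ring $Z(FG)$ then yields nonzero orthogonal central idempotents summing to $e_B$, contradicting its primitivity. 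Alternatively, you can reverse the logic as the paper's citations implicitly do: first prove directly that $B$ (with central defect group) has a unique simple module (this is \cite[Lemma~5.8.12]{NagaoTsushima1989}), and then deduce that $\Bbar$ is a single block because any further block summand would contribute an additional simple module.
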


\begin{proof}
(a) This follows from Theorems~5.8.10 and 5.8.11 and from Lemma~5.8.12 in \cite{NagaoTsushima1989}. 

\smallskip
(b) This follows from Proposition~\ref{prop head and def}(b), Part~(a), and Proposition~\ref{prop p-perm equiv}(c).
\end{proof}

%%%%%%%%%%%%%%%% SECTION 5 %%%%%%%%%%%%%%%%%%%%%%%%%%%%%%%%%%%
%\newpage
\section{Brauer pairs for $p$-permutation modules}\label{sec Brauer pairs for M}

Throughout this section, $G$ denotes a finite group. 
In analogy to Brauer pairs for blocks, we introduce Brauer pairs for $p$-permutation modules.

\begin{definition}\label{def Brauer pairs for M}
Let $M\in\ltriv{\calO G}$ or $M\in\ltriv{FG}$. We call a Brauer pair $(P,e)$ of $FG$ an {\em $M$-Brauer pair} if $M(P,e):=e\cdot M(P)\neq\{0\}$. Note that $M(P,e)$ is an $FIe$-module, where $I:=N_G(P,e)$. For $M\in \ltriv{\calO G}$ the set of $M$-Brauer pairs coincides with the set of $\Mbar$-Brauer pairs. It is denoted by $\calBP(M)$ or $\calBP(\Mbar)$. The corresponding set of Brauer pairs over $\calO$ will be denoted by $\calBP_\calO(M)$ or $\calBP_\calO(\Mbar)$.
\end{definition}

This generalizes the notion of $B$-Brauer pairs for a block $B$ of $\calO G$ in the following sense: A Brauer pair $(P,e)$ of $FG$ is a $B$-Brauer pair as defined in \ref{noth blocks}(b) if and only if $(\Delta(P), e\otimes e^*)$ is a $B$-Brauer pair of the indecomposable $\calO[G\times G]$-module $B$ as defined above.

\begin{nothing}\label{noth covering} We will use the following Morita equivalence between block algebras in the proof of the next two propositions. Recall from \cite[Theorem~5.5.12]{NagaoTsushima1989} that if $(Q,f)$ is a Brauer pair of $\calO G$, if $I:=N_G(Q,f)$, and if $e:=\tr_I^{N_G(Q)}(f)$ is the block idempotent of $\calO [N_G(Q)]$ {\em covering} $f$, i.e., the unique block idempotent $e$ of $\calO[N_G(Q)]$ such that $ef\neq0$, then one has a Morita equivalence between $\lmod{\calO[N_G(Q)]e}$ and $\lmod{\calO If}$ given by tensoring from the left with the $(\calO If,\calO[N_G(Q)]e)$-bimodule $f\calO[N_G(Q)]=f\calO[N_G(Q)]e$. This functor is naturally isomorphic to the functor $f\cdot\Res^{N_G(Q)}_I$. Its inverse is given by tensoring with the $(\calO[N_G(Q)]e,\calO If)$-bimodule $\calO[N_G(Q)]f=e\calO[N_G(Q)]f$. It is naturally isomorphic to the functor $\Ind_{I}^{N_G(Q)}$. This Morita equivalence induces a similar equivalence over $F$.
 \end{nothing}

\smallskip
The following proposition generalizes standard facts on Brauer pairs for blocks to Brauer pairs for $p$-permutation modules. Part~(c) can also be derived from Theorem~2.5 in \cite{Sibley1990}, but we give an independent proof. 

\begin{proposition}\label{prop Brauer pairs for M}
Let $M$ be a $p$-permutation $FG$-module.

\smallskip
{\rm (a)} Assume that $M$ belongs to a sum of blocks $B$ of $FG$, i.e., $e_BM=M$. Then  $\calBP(M)\le \calBP(B)$.

\smallskip
{\rm (b)} $\calBP(M)$ is a $G$-stable ideal in the poset $\calBP(FG)$, i.e., it is stable under $G$-conjugation and if $(Q,f)\le(P,e)$ are Brauer pairs of $FG$ such that $(P,e)$ is an $M$-Brauer pair then also $(Q,f)$ is an $M$-Brauer pair. 

\smallskip
{\rm (c)} Assume that $M$ is indecomposable. Then the maximal $M$-Brauer pairs are precisely the $M$-Brauer pairs $(P,e)$, where $P$ is a vertex of $M$. Moreover, any two maximal $M$-Brauer pairs are $G$-conjugate.
\end{proposition}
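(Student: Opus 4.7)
I address the three parts in order. Part (a): Since $e_B \in Z(FG)^G$ and $e_B M = M$, Lemma \ref{lem Brauer map compatibility} gives $M(P) = \br_P(e_B) M(P)$. If $(P,e) \in \calBP(M)$ then $0 \ne eM(P) = e \cdot \br_P(e_B) \cdot M(P)$, so $e \cdot \br_P(e_B) \ne 0$. As $\br_P(e_B)$ is a sum of pairwise orthogonal block idempotents of $FC_G(P)$, one of which is $e$, orthogonality forces $\br_P(e_B) \cdot e = e$, i.e., $(P,e) \in \calBP(B)$. Part (b): The $G$-stability is immediate from the $G$-equivariance of the Brauer construction. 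For downward closure, by transitivity of $\le$ I reduce to a single step $(Q,f) \unlhd (P,e)$. Here $P \le N_G(Q,f)$ fixes $f$, so combining Lemma \ref{lem Brauer map compatibility} with Proposition \ref{prop more on p-perm}(b) yields
\begin{equation*}
  (fM(Q))(P) \;\cong\; \br_P(f)\cdot M(Q)(P) \;\cong\; \br_P(f) \cdot M(PQ) \;=\; \br_P(f)\cdot M(P)\,,
\end{equation*}
using $Q \le P$. Since $\br_P(f)\cdot e = e$, the right side contains $eM(P) \ne 0$, so $(fM(Q))(P) \ne 0$, and in particular $fM(Q) \ne 0$.

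For part (c), Lemma \ref{lem Brauer con and vertex}(a) immediately shows that every $(P', e') \in \calBP(M)$ satisfies $P' \le_G Q$, where $Q$ is a vertex of $M$; moreover $M(Q) \ne 0$ decomposes non-trivially over block idempotents of $FC_G(Q)$, yielding $M$-Brauer pairs of first component $Q$. These are maximal: if $(Q, e) \le (P', e')\in\calBP(M)$, then $Q \le P'$ and $|P'|\le|Q|$, hence $P' = Q$, and inspection of $\unlhd$ for pairs with matching first coordinate (using $\br_Q(e) = e$ and orthogonality of block idempotents of $FC_G(Q)$) forces $e' = e$. The principal technical device for the converse is the following extension lemma: for any $(P,e) \in \calBP(M)$, if some indecomposable $F[N_G(P,e)]$-summand $N$ of $eM(P)$ has vertex $R \gneq P$, then $(P, e)$ is not maximal. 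To verify it, note that $R \le N_G(P,e)$ and $R \ge P$ follow from $P$ acting trivially on $M(P)$ and being normal in $N_G(P, e)$ (Lemma \ref{lem Brauer con and vertex}(b)); Lemma \ref{lem Brauer map compatibility} and Proposition \ref{prop more on p-perm}(b) give $(eM(P))(R) \cong \br_R(e) \cdot M(R)$, which contains the nonzero $N(R)$. Writing $\br_R(e)$ as a sum of pairwise orthogonal block idempotents of $FC_G(R)$, one summand $f$ satisfies $fM(R) \ne 0$ and $\br_R(e) f = f$, so $(R, f) \in \calBP(M)$ and $(P, e) \unlhd (R, f)$ strictly.

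The main obstacle is to show that maximality of $(P, e)$ forces $|P| = |Q|$ rather than just the weaker conclusion that every summand of $eM(P)$ has vertex $P$. Suppose for contradiction $P \lneq Q$ (after conjugation). I pick $P \unlhd P_1 \le N_Q(P)$ with $[P_1:P] = p$. Proposition \ref{prop more on p-perm}(b) gives $(M(P))(P_1) \cong M(P_1) \ne 0$, hence $M(P)$ admits an indecomposable $F[N_G(P)]$-summand of vertex containing $P_1$. Decomposing $M(P) = \bigoplus_{[f]} \Ind_{N_G(P,f)}^{N_G(P)}(fM(P))$ over the $N_G(P)$-orbits of block idempotents of $FC_G(P)$, and using that induction does not raise vertices, this summand must come from $fM(P)$ with $f$ outside the $N_G(P)$-orbit of $e$. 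The subtle step I plan to carry out is to translate this orbit-level fact into a contradiction with the maximality of $(P,e)$: the indecomposability of the Green correspondent $M(Q)$ places it in a single block of $F[N_G(Q)]$, which covers a single $N_G(Q)$-orbit of block idempotents of $FC_G(Q)$; tracing this orbit structure back down a chain $(P, e) \unlhd \cdots \unlhd (Q, f')$ forces the large-vertex phenomenon at level $P$ to occur within the $N_G(P)$-orbit of $e$ itself, contradicting that every summand of $eM(P)$ has vertex $P$. The $G$-conjugacy of maximal $M$-Brauer pairs likewise follows from the single $N_G(Q)$-orbit structure at level $Q$.
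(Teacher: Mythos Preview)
Parts~(a) and~(b) are correct and essentially identical to the paper's argument. In part~(c), your extension lemma is also correct and is precisely the second half of the paper's inductive step.

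The genuine gap is exactly where you flag it as the ``main obstacle'', and your sketched ``tracing back'' plan does not close it. You correctly show that $M(P)$, as an $F[N_G(P)]$-module, has an indecomposable summand with vertex strictly containing $P$ (via $(M(P))(P_1)\cong M(P_1)\neq 0$), but you cannot locate this summand in the component indexed by the $N_G(P)$-orbit of $e$. Your tracing argument presupposes a chain $(P,e)\unlhd\cdots\unlhd(Q,f')$ to a vertex-level $M$-Brauer pair; the existence of such a chain is exactly what has to be proved, so the reasoning is circular. The single $N_G(Q)$-orbit structure at the vertex level $Q$ tells you that all $M$-Brauer pairs with first component $Q$ are conjugate, but it does not control which block idempotents of $FC_G(P)$ support $M(P)$: for $P$ strictly below a vertex, $M(P)$ need not be indecomposable and may well meet several blocks of $F[N_G(P)]$, covering several $N_G(P)$-orbits of block idempotents of $FC_G(P)$.

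The paper closes this gap with the Burry--Carlson--Puig theorem. Rather than first producing a large-vertex summand of $M(P)$ and then trying to place it, one starts from any indecomposable $F[N_G(P)]$-summand $N$ of $M(P)$ with $eN\neq 0$. By the Morita equivalence of \ref{noth covering}, $eN$ is an indecomposable $F[N_G(P,e)]e$-module and $N\cong\Ind_{N_G(P,e)}^{N_G(P)}(eN)$. If $eN$ had vertex $P$, then so would $N$ (apply \ref{noth Brauer con}(d) to both $eN\mid\Res N$ and $N\cong\Ind(eN)$), and since $N\mid M(P)\mid\Res^G_{N_G(P)}(M)$ with $P\trianglelefteq N_G(P)$, Burry--Carlson--Puig would force $P$ to be a vertex of $M$, contrary to hypothesis. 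Hence $eN$ has a vertex $R>P$, and now your extension lemma applies verbatim to produce $(R,f)\in\calBP(M)$ with $(P,e)\unlhd(R,f)$.
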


\begin{proof}
(a) By Lemma~\ref{lem Brauer map compatibility} we have $\{0\}\neq eM(P) = e((e_B M)(P)) = e\,\br_P(e_B)M(P)$, and therefore $e\,\br_P(e_B)\neq 0$. Thus, $(P,e)$ is a $B$-Brauer pair.

\smallskip
(b) For any Brauer pair $(P,e)$ of $FG$ and any $g\in G$ one has $\lexp{g}{e}\cdot M(\gP)\cong \lexp{g}{(e\cdot M(P))}$ as $F[N_G(\gP,\lexp{g}{e})]$-modules. Thus, $(P,e)$ is an $M$-Brauer pair if and only if $\lexp{g}{(P,e)}$ is an $M$-Brauer pair. 

Now let $(Q,f)\le(P,e)$ be Brauer pairs of $FG$ and assume that $(P,e)$ is an $M$-Brauer pair. In order to show that $(Q,f)$ is an $M$-Brauer pair, we may assume that $(Q,f)\unlhd(P,e)$. Then $P\le N_G(Q,f)=: I$ and $\br_P(f)e=e$. For the FI-module $f\cdot M(Q)$, Lemma~\ref{lem Brauer map compatibility} and Proposition~\ref{prop more on p-perm}(b) imply the following isomorphisms of $FC_G(P)$-modules: $\bigl(f\cdot M(Q)\bigr)(P)\cong \br_P(f)\cdot\bigl(M(Q)(P)\bigr)\cong \br_P(f)\cdot M(P)$. With this we obtain $\{0\}\neq e\cdot M(P) = e\cdot\br_P(f)\cdot M(P) \cong e\cdot\bigl(f\cdot M(Q)\bigr)(P)$, which implies $f\cdot M(Q)\neq\{0\}$.

\smallskip
(c) First we claim that any two $M$-Brauer pairs of the form $(P,e)$, where $P$ is a vertex of $M$, are $G$-conjugate. As the vertices of $M$ are $G$-conjugate, it suffices to fix a vertex $P$ of $M$ and to show that any two $M$-Brauer pairs of the form $(P,e)$ are $N_G(P)$-conjugate. By Proposition~\ref{prop p-perm equiv}(c), the $F[N_G(P)]$-module $M(P)$ is the Green correspondent of $M$. Thus, by Lemma~5.5.4 in \cite{NagaoTsushima1989}, $eM(P)\neq\{0\}$ if and only if the block $eF[C_G(P)]$ is covered by the block of $FN_G(P)$ to which $M(P)$ belongs. But all these blocks $eF[C_G(P)]$ are $N_G(P)$-conjugate, see Lemma~5.5.3 in \cite{NagaoTsushima1989}, and the claim is proved.

In order to prove the statements in (c) it suffices now to show the following claim: Each $M$-Brauer pair $(Q,f)$ is contained in some $M$-Brauer pair $(P,e)$, where $P$ is a vertex of $M$. First note that, since $(Q,f)$ is an $M$-Brauer pair, we have $M(Q)\neq \{0\}$. This implies that $Q$ is contained in a vertex $P$ of $M$ (see Lemma~\ref{lem Brauer con and vertex}(a)). We proceed by induction on the index $[P:Q]$. If $Q=P$, the claim is trivially true. Assume now that $Q<P$. Set $I:=N_G(Q,f)$. Since $(Q,f)$ is an $M$-Brauer pair, we have $fM(Q)\neq \{0\}$ and there exists an indecomposable direct summand $N$ of the $F[N_G(Q)]$-module $M(Q)$ such that $fN\neq \{0\}$. By \ref{noth covering}, the $FIf$-module $fN$ is indecomposable. Assume first that $fN$ has vertex $Q$. Since, by \ref{noth  covering}, $fN\mid \Res^{N_G(Q)}_I(N)$ and $N\cong \Ind_{I}^{N_G(Q)}(fN)$, also $N$ has vertex $Q$ (see \ref{noth Brauer con}(d)). Since $N\mid M(Q)\mid \Res^G_{N_G(Q)}(M)$ (see Lemma~\ref{lem Brauer con and vertex}(d)), also $M$ has vertex $Q$ by the Burry-Carlson-Puig Theorem (see \cite[Theorem 4.4.6]{NagaoTsushima1989}), a contradiction. Thus, the $FIf$-module $fN$ has a vertex $R$ with $Q<R$. By Lemma~\ref{lem Brauer con and vertex}(a) we have $(fN)(R)\neq \{0\}$. Since $fN\mid fM(Q)$, we also have $(fM(Q))(R)\neq \{0\}$. Since $\{0\}\neq(fM(Q))(R)\cong \br_R(f)M(R)$ as $F[C_G(R)]$-modules (see Lemma~\ref{lem Brauer map compatibility} and Proposition~\ref{prop more on p-perm}(b)) we obtain $\br_R(f)\neq 0$,  and since $\br_R(f)$ is central in $F[C_G(R)]$, there exists a block idempotent $e$ of $FC_G(R)$ such that $e\,\br_R(f)=e$ and $e\cdot M(R)\neq\{0\}$. As $R\le I$ and $e\,\br_R(f)=e$, we have $(Q,f)\lhd(R,e)$. Since $e\cdot M(R)\neq\{0\}$, also $(R,e)$ is an $M$-Brauer pair. Applying the induction hypothesis to $(R,e)$ we have $(R,e)\le(P',e')$ for some $M$-Brauer pair $(P',e')$ such that $P'$ is a vertex of $M$. This concludes the proof.
\end{proof}

\begin{proposition}\label{prop isomorphic test}
Let $M$ and $N$ be indecomposable $p$-permutation $FG$-modules, suppose that $(P,e)\in\calBP(FG)$ is both a maximal $M$-Brauer pair and a maximal $N$-Brauer pair, and set $I:=N_G(P,e)$. Then $M(P,e)$ and $N(P,e)$ are indecomposable $p$-permutation $FIe$-modules. Moreover, $M\cong N$ if and only if $M(P,e)\cong N(P,e)$ as $FIe$-modules.
\end{proposition}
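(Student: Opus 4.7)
The plan is to transfer both assertions, via the Morita equivalence recalled in \ref{noth covering}, to the level of the Green correspondents $M(P)$ and $N(P)$, where they will be essentially built into Proposition~\ref{prop p-perm equiv}(c).

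First I would invoke Proposition~\ref{prop Brauer pairs for M}(c): since $(P,e)$ is a maximal $M$-Brauer pair and a maximal $N$-Brauer pair, $P$ is simultaneously a vertex of $M$ and a vertex of $N$. Proposition~\ref{prop p-perm equiv}(c) then identifies the $F[N_G(P)]$-modules $M(P)$ and $N(P)$ as the Green correspondents of $M$ and $N$; in particular both are indecomposable.

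Next, let $e'$ denote the block idempotent of $F[N_G(P)]$ covering $e$, as in \ref{noth covering}, so that $e'e=e$. Because $(P,e)$ is an $M$-Brauer pair, $eM(P)\neq 0$, hence $e'M(P)\neq 0$, and the indecomposability of $M(P)$ forces $M(P)$ to lie in the block $F[N_G(P)]e'$. The same holds for $N(P)$. By \ref{noth covering} the functor $e\cdot \Res^{N_G(P)}_I$ realises a Morita equivalence $\lmod{F[N_G(P)]e'}\myiso\lmod{FIe}$, and it sends $M(P)$ to $eM(P)=M(P,e)$; preservation of indecomposability under Morita equivalences therefore gives that $M(P,e)$ is an indecomposable $FIe$-module. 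Furthermore, since $I$ stabilises $e$, the element $e$ is central in $FI$, and $M(P,e)=eM(P)$ is an $FI$-direct summand of the $p$-permutation $F[N_G(P)]$-module $M(P)$ (see Remark~\ref{rem p-perm equiv}); consequently $M(P,e)$ is a $p$-permutation $FIe$-module. The same argument applies to $N(P,e)$.

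For the iso criterion, the implication $M\cong N\Rightarrow M(P,e)\cong N(P,e)$ is immediate from functoriality. Conversely, if $M(P,e)\cong N(P,e)$ as $FIe$-modules, applying the quasi-inverse of the Morita equivalence produces an isomorphism $M(P)\cong N(P)$ of $F[N_G(P)]e'$-modules, hence of $F[N_G(P)]$-modules; Green correspondence (Proposition~\ref{prop p-perm equiv}(c)) then upgrades this to $M\cong N$. The only genuinely delicate step is locating $M(P)$ and $N(P)$ in the correct block $F[N_G(P)]e'$, which is where the Morita correspondents live; this is what the identity $e=e'e$, together with the indecomposability supplied by the Green correspondence, is for.
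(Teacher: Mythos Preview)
Your argument is correct and follows exactly the route indicated in the paper's one-line proof, which just cites Proposition~\ref{prop Brauer pairs for M}(c), the Green correspondence, and the Morita equivalence of \ref{noth covering}; you have simply unpacked how these three ingredients fit together. The only point you might streamline is the block identification: once $M(P)$ is indecomposable and $eM(P)\neq 0$, it is automatic that $M(P)$ lies in the block covering $e$, so the discussion of $e'e=e$ could be compressed.
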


\begin{proof}
This is an immediate consequence of Proposition~\ref{prop Brauer pairs for M}(c), the Green correspondence, and the Morita equivalence from \ref{noth covering} (with $(Q,f)$ replaced by $(P,e)$). 
\end{proof}

%%%%%%%%%%%%%%%% SECTION 6 %%%%%%%%%%%%%%%%%%%%%%%%%%%%%%%%%%%
%\newpage
\section{Extended tensor products and homomorphisms}\label{sec tensor products}

Throughout this section, $G$, $H$, and $K$ denote finite groups and $\kk$ denotes a commutative ring. We consider extended versions of tensor products and homomorphism sets of bimodules for group algebras and prove several basic facts about this construction.

\begin{nothing}\label{noth ext tp and hom}
(a) Let $X\le G\times H$ and $Y\le H\times K$. Further, let $M\in\lmod{\kk X}$ and $N\in\lmod{\kk Y}$. Since $k_1(X)\times k_2(X)\le X$, the $\kk$-module $M$ can be viewed as $(\kk[k_1(X)],\kk[k_2(X)])$-bimodule. Similarly, $N$ can be considered as $(\kk[k_1(Y)],\kk[k_2(Y)])$-bimodule, and $M\otimes_{\kk[k_2(X)\cap k_1(Y)]} N$ is a $(\kk[k_1(X)],\kk[k_2(Y)])$-bimodule. Note that $k_1(X)\times k_2(Y)\le X*Y$ and that this bimodule structure can be extended to a $\kk[X*Y]$-module structure such that, for $(g,k)\in X*Y$, $m\in M$, and $n\in N$, one has
\begin{equation}\label{eqn ext tp}
  (g,k)\cdot (m\otimes n) = (g,h)m\otimes (h,k)n\,,
\end{equation}
where $h\in H$ is chosen such that $(g,h)\in X$ and $(h,k)\in Y$. To the best of our knowledge, this construction was  first used in \cite{Bouc2010b}. We will denote this extended tensor product by $M\tenskXYH N\in \lmod{\kk[X*Y]}$ and obtain a functor $-\tenskXYH-\colon \lmod{\kk X}\times\lmod{\kk Y} \to \lmod{\kk[X*Y]}$. A quick calculation shows that this construction is associative: If also $L$ is a finite group, $Z\le K\times L$, and $P\in\lmod{\kk Z}$ then $(M\tenskXYH N)\tens{X*Y}{Z}{\kk K} P$ and $M\tens{X}{Y*Z}{\kk H}(N\tens{Y}{Z}{\kk K} P)$ are canonically isomorphic under $(m\otimes n)\otimes p\mapsto m\otimes(n\otimes p)$, for $m\in M$, $n\in N$ and $p\in P$. Clearly, $\tenskXYH$ also behaves distributively with respect to direct sums. Moreover, for any $g\in G$, $h\in H$, and $k\in K$ one has an isomorphism
\begin{equation}\label{eqn gen tens and conj}
  \lexp{(g,k)}{(M\tenskXYH N)} \cong \lexp{(g,h)}{M} \tens{\lexp{(g,h)}{X}}{\lexp{(h,k)}{Y}}{\kk H} \lexp{(h,k)}{N}
\end{equation}
of $\kk[\lexp{(g,k)}{(X*Y)}]$-modules, cf.~Lemma~\ref{lem comp form}(d).

\smallskip
(b) Let $X\le H\times G$, $Y\le H\times K$, $M\in\lmod{\kk X}$, and $N\in\lmod{\kk Y}$. Then, $M\in\lmod{\kk[k_1(X)]}_{\kk[k_2(X)]}$, $N\in\lmod{\kk[k_1(Y)]}_{\kk[k_2(Y)]}$ and consequently, $\Hom_{\kk[k_1(X)\cap k_1(Y)]}(M,N)\in\lmod{\kk[k_2(X)]}_{\kk[k_2(Y)]}$. This bimodule structure can be extended to a $\kk[X^\circ*Y]$-module structure satisfying
\begin{equation*}
  \bigl((g,k)\cdot f\bigr)(m) = (h,k)f\bigl((h,g)^{-1}m\bigr)\,,
\end{equation*}
for $(g,k)\in X^\circ*Y$, $f\in \Hom_{\kk[k_1(X)\cap k_1(Y)]}(M,N)$, and $m\in M$, where $h\in H$ is chosen such that $(h,g)\in X$ and $(h,k)\in Y$. We leave the details of this straightforward verification to the reader. We will denote the resulting $\kk[X^\circ*Y]$-module by $\LHom_{\kk H}^{X,Y}(M,N)$. The symbol $\LHom$ is used, since often $G$, $H$ and $K$ will coincide and it might not be clear if one uses homomorphisms of left modules or right modules.

\smallskip
(c) Let $X\le G\times H$, $Y\le K\times H$, $M\in \lmod{\kk X}$, and $N\in\lmod{\kk Y}$. Similarly as in (b), considering homomorphisms with respect to right module structures, we obtain $\Hom_{\kk[k_2(X)\cap k_2(Y)]}(M,N)\in\lmod{\kk[k_1(Y)]}_{\kk[k_1(X)]}$. This bimodule structure can be extended to a $\kk[Y*X^\circ]$-module structure satisfying
\begin{equation*}
  \bigl((k,g)\cdot f\bigr)(m)=(k,h)f\bigl((g,h)^{-1}m\bigr)\,,
\end{equation*}
for $(k,g)\in Y*X^\circ$, $f\in\Hom_{\kk[k_2(X)\cap k_2(Y)]}(M,N)$, and $m\in M$, were $h\in H$ is chosen such that $ (k,h)\in Y$ and $(g,h)\in X$. We denote the resulting $\kk[Y*X^\circ]$-module by $\RHom^{X,Y}_{\kk H}(M,N)$.

\smallskip
(d) Let $X$, $Y$, $M$, and $N$ be as in (c). Note that, by restriction along the flip isomorphism $\tau\colon X\to X^\circ$, $(g,h)\mapsto (h,g)$, we obtain a $\kk X^\circ$-module $M^\tau$ and similarly, a $\kk Y^\circ$-module $N^\tau$. With these operations, one has the equality $\RHom^{X,Y}_{\kk H}(M,N)^\tau = \LHom^{X^\circ, Y^\circ}_{\kk H}(M^\tau,N^\tau)$ of $\kk[X*Y^\circ]$-modules.
\end{nothing}

Note that the tensor product construction in \ref{noth ext tp and hom}(a) generalizes both the tensor product of bimodules (when $X=G\times H$ and $Y=H\times K$) and the internal tensor product of $\kk G$-modules (when $X=Y=\Delta(G)$).

\smallskip
For later use, we will state the following theorem due to Serge Bouc, see \cite{Bouc2010b}.

\begin{theorem}\label{thm Bouc-Mackey}
Let $X\le G\times H$, $Y\le H\times K$, $M\in\lmod{\kk X}$ and $N\in\lmod{\kk Y}$. Then one has an isomorphism
\begin{equation*}
  \Ind_{X}^{G\times H}(M) \otimes_{\kk H} \Ind_{Y}^{H\times K}(N) \cong
  \bigoplus_{t\in [p_2(X)\backslash H/p_1(Y)]} \Ind_{X*\lexp{(t,1)}{Y}} ^{G\times K} ( M\tens{X}{\lexp{(t,1)}{Y}}{\kk H} \lexp{(t,1)}{N} )
\end{equation*}
of $(\kk G,\kk H)$-bimodules.
\end{theorem}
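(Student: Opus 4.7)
My plan is to construct explicit mutually inverse $\kk[G\times K]$-homomorphisms between the two sides, reducing every simple tensor on the left-hand side to a canonical form indexed by the double-coset representatives. For each $t\in[p_2(X)\backslash H/p_1(Y)]$ I first define a $\kk$-linear map $\psi_t\colon M\tens{X}{\lexp{(t,1)}{Y}}{\kk H}\lexp{(t,1)}{N}\to\Ind_X^{G\times H}(M)\otimes_{\kk H}\Ind_Y^{H\times K}(N)$ by $\psi_t(m\otimes n):=\bigl((1,1)\otimes m\bigr)\otimes\bigl((t,1)\otimes n\bigr)$. To check $\kk[X*\lexp{(t,1)}{Y}]$-equivariance, take $(g,k)\in X*\lexp{(t,1)}{Y}$ with a witness $h=th't^{-1}\in H$ satisfying $(g,h)\in X$ and $(h',k)\in Y$. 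Applying $\psi_t$ to $(g,k)\cdot(m\otimes n)=(g,h)m\otimes(h,k)n$ and sliding $(g,h)\in X$ and $(h',k)\in Y$ across the respective $\kk X$- and $\kk Y$-tensors yields $((g,h)\otimes m)\otimes((ht,k)\otimes n)$ (using $th'=ht$), while $(g,k)\cdot\psi_t(m\otimes n)=((g,1)\otimes m)\otimes((t,k)\otimes n)$. The two expressions coincide via the balanced-tensor identity $((g_1,h_0^{-1}h_a)\otimes m')\otimes((h_b,k_a)\otimes n')=((g_1,h_a)\otimes m')\otimes((h_0h_b,k_a)\otimes n')$ applied with $h_a=1$, $h_b=ht$, $h_0=h^{-1}$. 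By Frobenius reciprocity, $\psi_t$ then extends uniquely to a $\kk[G\times K]$-homomorphism $\Psi_t$ on the $t$-th summand of the right-hand side, and I set $\Psi:=\bigoplus_t\Psi_t$.

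For the inverse $\Phi$, I normalize an arbitrary simple tensor $((g_1,h_1)\otimes m)\otimes((h_2,k_1)\otimes n)$ of the left-hand side. The balanced-tensor slide with $h_0=h_1^{-1}$ first brings it to the equivalent form $((g_1,1)\otimes m)\otimes((h_1^{-1}h_2,k_1)\otimes n)$. Writing $h_1^{-1}h_2=atb$ for the unique $t$ and some $a\in p_2(X)$, $b\in p_1(Y)$, and choosing lifts $(g_a,a)\in X$ and $(b,k_b)\in Y$, a $Y$-slide inside $\Ind_Y^{H\times K}(N)$ replaces $(atb,k_1)\otimes n$ by $(at,k_1k_b^{-1})\otimes(b,k_b)n$; a balanced-tensor slide with $h_0=a^{-1}$ then shifts the $a$ into the first factor; finally an $X$-slide inside $\Ind_X^{G\times H}(M)$ replaces $(g_1,a^{-1})\otimes m$ by $(g_1g_a,1)\otimes(g_a^{-1},a^{-1})m$. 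The result is exactly $\Psi_t\bigl((g_1g_a,k_1k_b^{-1})\otimes[(g_a^{-1},a^{-1})m\otimes(b,k_b)n]\bigr)$, so I define $\Phi$ by sending the original simple tensor to the preimage $(g_1g_a,k_1k_b^{-1})\otimes[(g_a^{-1},a^{-1})m\otimes(b,k_b)n]$ in the $t$-summand of the right-hand side.

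The main obstacle is well-definedness of $\Phi$, namely independence from the choices of $(g_a,a)$, $(b,k_b)$, and the decomposition $h_1^{-1}h_2=atb$. A change $g_a\mapsto g_ac$ with $c\in k_1(X)$ shifts $\Phi$'s output by the action of $(c,1)\in X\subseteq X*\lexp{(t,1)}{Y}$ on the induced module, which the induced-module relation absorbs. A change $k_b\mapsto k_bc''$ with $c''\in k_2(Y)$ produces a conjugation of $k_b$ by $c''$ inside the second factor of the extended tensor product; this conjugation is precisely absorbed by the $\kk[k_2(X)\cap k_1(\lexp{(t,1)}{Y})]$-balance built into the definition (\ref{eqn ext tp}) of $M\tens{X}{\lexp{(t,1)}{Y}}{\kk H}\lexp{(t,1)}{N}$, which is the very reason the extended tensor product appears on the right-hand side in the first place. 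A change of decomposition $(a,b)\mapsto(ay',y'^{-1}b)$ for $y'\in p_2(X)\cap\lexp{t}{p_1(Y)}$ is handled by combining the two previous analyses. Once well-definedness is verified, the identities $\Psi\circ\Phi=\mathrm{id}$ and $\Phi\circ\Psi=\mathrm{id}$ follow directly from the constructions, completing the proof.
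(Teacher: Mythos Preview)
The paper does not prove this theorem; it is stated as due to Bouc and simply cited from \cite{Bouc2010b}. Your direct construction of mutually inverse $\kk[G\times K]$-homomorphisms is the natural approach and is correct in outline, but two of your well-definedness justifications are faulty.

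First, your explanation of why replacing the lift $k_b$ by $k_bc''$ (with $c''\in k_2(Y)$) leaves $\Phi$ unchanged is wrong. You attribute the cancellation to the $\kk[k_2(X)\cap k_1(\lexp{(t,1)}{Y})]$-balance in the extended tensor product, but that balance is over a subgroup of $H$, whereas $c''$ lives in $K$; it cannot absorb anything on the $K$-side. The correct mechanism is the relation in the outer induced module $\Ind_{X*\lexp{(t,1)}{Y}}^{G\times K}$: since $k_2(Y)\trianglelefteq p_2(Y)$, the element $(1,k_bc''^{-1}k_b^{-1})$ lies in $X*\lexp{(t,1)}{Y}$ (take witness $h=1$), and sliding it across the induction tensor cancels the discrepancy between the two outputs. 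Second, your parametrization of the ambiguity in the decomposition $h_1^{-1}h_2=atb$ is incorrect: the change $(a,b)\mapsto(ay',y'^{-1}b)$ does not preserve $atb$ unless $y'$ commutes with $t$. The correct transformation is $(a,b)\mapsto(ay',\,t^{-1}y'^{-1}t\,b)$ for $y'\in p_2(X)\cap\lexp{t}{p_1(Y)}$. With these two corrections (and the routine remaining checks) your argument goes through.
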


\smallskip
In the sequel we need to establish a list of functorial properties of the extended construction of tensor products and homomorphism functors. The following proposition generalizes the usual adjunction between the tensor product and homomorphism functors.

\begin{proposition}\label{prop ext tp and hom adj 1}
Let $G$, $H$, $K$ and $L$ be finite groups, let $X\le G\times H$, $Y\le H\times K$ and $Z\le G\times L$, and let $M\in\lmod{\kk X}$, $N\in\lmod{\kk Y}$ and $P\in\lmod{\kk Z}$. Then there exists an isomorphism
\begin{equation*}
  \LHom_{\kk G}^{X*Y,Z}\bigl(M\tenskXYH N, P\bigr)\cong \LHom_{\kk H}^{Y,X^\circ*Z}\bigl(N,\LHom_{\kk G}^{X,Z}(M,P)\bigr)
\end{equation*}
of $\kk[(X*Y)^\circ*Z]$-modules which is functorial in $M$, $N$ and $P$.
\end{proposition}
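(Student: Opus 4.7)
The plan is to construct the isomorphism by the classical tensor-hom formula and verify that it is compatible with the extended group actions. Define
\begin{equation*}
  \Phi\colon \LHom_{\kk G}^{X*Y, Z}\bigl(M \tens{X}{Y}{\kk H} N,\, P\bigr) \to \LHom_{\kk H}^{Y, X^\circ * Z}\bigl(N,\, \LHom_{\kk G}^{X, Z}(M, P)\bigr)
\end{equation*}
by $\Phi(\phi)(n)(m) := \phi(m \otimes n)$, with candidate inverse $\Psi(\psi)(m \otimes n) := \psi(n)(m)$. On the level of underlying $\kk$-modules these are mutually inverse bijections by the classical adjunction applied to the appropriate subgroup algebras; what requires checking is first that each map lands in the correct equivariance class, and second that $\Phi$ is $\kk[(X*Y)^\circ * Z]$-equivariant.

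For well-definedness, note that any $g \in k_1(X) \cap k_1(Z)$ also lies in $k_1(X*Y) \cap k_1(Z)$, since $(g, 1_H) \in X$ and $(1_H, 1_K) \in Y$ force $(g, 1_K) \in X*Y$; the $\kk[X*Y]$-action then gives $(g, 1_K)(m \otimes n) = gm \otimes n$, so $\Phi(\phi)(n)(gm) = g \cdot \Phi(\phi)(n)(m)$ as required. A parallel argument handles $\kk[k_1(Y) \cap k_1(X^\circ * Z)]$-equivariance of $\Phi(\phi)$ in the $N$-variable. The crucial equivariance check amounts to verifying $\Phi((k, l) \cdot \phi)(n)(m) = ((k, l) \cdot \Phi(\phi))(n)(m)$ for $(k, l) \in (X*Y)^\circ * Z = Y^\circ * X^\circ * Z$ (using \ref{noth dir prod}(d)). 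Such an element admits witnesses $h \in H$ and $g \in G$ with $(h, k) \in Y$, $(g, h) \in X$, and $(g, l) \in Z$. Applying the action formulas from \ref{noth ext tp and hom}(a, b) with these common witnesses, both sides evaluate to
\begin{equation*}
  (g, l) \, \phi\bigl((g, h)^{-1} m \otimes (h, k)^{-1} n\bigr)\,,
\end{equation*}
which settles the equivariance.

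Functoriality in $M$, $N$, and $P$ is immediate from the explicit formula for $\Phi$, and the identities $\Phi \circ \Psi = \id$ and $\Psi \circ \Phi = \id$ are formal once well-definedness of $\Psi$ has been checked by the same kind of argument. The main obstacle is essentially bookkeeping: tracking the five distinct group actions (on $M$, $N$, $P$, on the tensor product of \ref{noth ext tp and hom}(a), and on the extended $\LHom$-spaces of \ref{noth ext tp and hom}(b)) and ensuring that the auxiliary elements chosen to represent an element of a composition $X*Y$ or $Y^\circ * X^\circ * Z$ produce answers independent of those choices. This independence is automatic because the action formulas themselves are independent of the witnesses used, so no genuine difficulty arises beyond careful notational management.
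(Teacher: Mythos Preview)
Your proof is correct and follows exactly the same approach as the paper: both construct the isomorphism via the classical tensor--hom formula $\phi\mapsto\bigl(n\mapsto(m\mapsto\phi(m\otimes n))\bigr)$ and its inverse, and reduce everything to checking well-definedness and $\kk[(X*Y)^\circ*Z]$-equivariance. The paper simply asserts these verifications are straightforward, while you have spelled out the key equivariance computation explicitly.
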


\begin{proof}
It is a straightforward verification that the functions
\begin{align*}
  \Hom_{\kk[k_1(X*Y)\cap k_1(Z)]} \bigl(M\otimes_{\kk[k_2(X)\cap k_1(Y)]} N, P\bigr) 
  & \leftrightarrow \Hom_{\kk[k_1(Y)\cap k_1(X^\circ*Z)]}\bigl(N,\Hom_{\kk[k_1(X)\cap k_1(Z)]}(M,P)\bigr)\\
  f & \mapsto \bigl(n\mapsto \bigl(m\mapsto f(m\otimes n)\bigr)\bigr)\\
  \bigl(m\otimes n\mapsto (f'(n))(m)\bigr) & \lmapsto f'
\end{align*}
are well-defined, mutually inverse homomorphisms of $\kk[(X*Y)^\circ *Z]$-modules and natural in $M$, $N$ and $P$.
\end{proof}

A similar adjunction isomorphism exists for $\RHom$ and can be deduced from the above proposition via the functor $-^\tau$, see \ref{noth ext tp and hom}(d).

\smallskip
In the special case where $Y=X^\circ$ and $Z=X*X^\circ$, the following proposition gives a different type of adjunction. Recall from Lemma~\ref{lem comp form}(b) that $X*X^\circ*X=X$ and $X^\circ*X*X^\circ=X^\circ$.

\begin{proposition}\label{prop ext tp and hom adj 2}
Let $X\in G\times H$, $M\in\lmod{\kk X}$, $N\in\lmod{\kk X^\circ}$, and $P\in\lmod{\kk[X*X^\circ]}$. 

\smallskip
{\rm (a)} There exists an isomorphism
\begin{equation*}
  \Hom_{\kk[X*X^\circ]}(M\tens{X}{X^\circ}{\kk H} N, P) \cong 
  \Hom_{\kk X^\circ}\bigl(N,\LHom^{X,X*X^\circ}_{\kk G}(M,P)\bigr)
\end{equation*}
of $\kk$-modules which is natural in $M$, $N$ and $P$.

\smallskip
{\rm (b)} There exists an isomorphism
\begin{equation*}
  \Hom_{\kk[X*X^\circ]}(M\tens{X}{X^\circ}{\kk H} N, P) \cong 
  \Hom_{\kk X}\bigl(M,\RHom^{X^\circ,X*X^\circ}_{\kk G}(N,P)\bigr)
\end{equation*}
of $\kk$-modules which is natural in $M$, $N$ and $P$.
\end{proposition}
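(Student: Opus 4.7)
The plan is to prove (a) by constructing the natural candidate adjunction map and its inverse explicitly, then deduce (b) by an entirely parallel argument. For (a), I would define
\begin{equation*}
  \Phi \colon \Hom_{\kk[X*X^\circ]}(M\tens{X}{X^\circ}{\kk H} N, P)\to \Hom_{\kk X^\circ}\bigl(N, \LHom^{X,X*X^\circ}_{\kk G}(M,P)\bigr)
\end{equation*}
by $\Phi(f)(n)(m):= f(m\otimes n)$, with candidate inverse $\Psi(\phi)(m\otimes n):=\phi(n)(m)$. Both maps are manifestly natural in $M$, $N$, and $P$, and will be mutually inverse once shown to be well-defined. Note first that, by Lemma~\ref{lem comp form}(a), $k_1(X*X^\circ)=k_1(X)$, and by Lemma~\ref{lem comp form}(b), $X^\circ*(X*X^\circ)=X^\circ$, so $\LHom^{X,X*X^\circ}_{\kk G}(M,P)$ is a $\kk X^\circ$-module and the right hand side of (a) makes sense.

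To show $\Phi(f)(n)\in\LHom^{X,X*X^\circ}_{\kk G}(M,P)$, I would check $\kk[k_1(X)]$-linearity in $m$: for $g\in k_1(X)$ the element $(g,1)\in k_1(X)\times\{1\}\subseteq X*X^\circ$ acts on $M\tens{X}{X^\circ}{\kk H} N$ via \eqref{eqn ext tp} with intermediate element $h=1\in H$, giving $(g,1)(m\otimes n)=gm\otimes n$, and $\kk[X*X^\circ]$-linearity of $f$ then forces $\Phi(f)(n)(gm)=g\Phi(f)(n)(m)$. For the $\kk X^\circ$-linearity of $\Phi(f)$, given $(h,k)\in X^\circ$ I would unpack the action of $(h,k)$ on $\LHom$ using the intermediate element $k\in G$; this is allowed because $(k,h)\in X$ and $(k,k)\in \Delta(p_1(X))\subseteq X*X^\circ$. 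The required equality $\Phi(f)((h,k)n)(m)=((h,k)\Phi(f)(n))(m)$ then reduces to $f(m\otimes(h,k)n)=(k,k)f((k,h)^{-1}m\otimes n)$, which follows from $\kk[X*X^\circ]$-linearity of $f$ combined with the identity $(k,k)((k,h)^{-1}m\otimes n)=m\otimes(h,k)n$ obtained from \eqref{eqn ext tp} with intermediate element $h$.

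The verification for $\Psi$ is symmetric: balancedness over $\kk[k_2(X)]$ follows from $\kk X^\circ$-linearity of $\phi$ applied to elements $(h,1)\in X^\circ$ with $h\in k_2(X)$, and $\kk[X*X^\circ]$-linearity of $\Psi(\phi)$ is checked by taking an arbitrary $(g_1,g_2)\in X*X^\circ$, picking $h\in H$ with $(g_1,h)\in X$ and $(h,g_2)\in X^\circ$, and comparing the resulting formulas via the $\kk X^\circ$-action on $\LHom^{X,X*X^\circ}_{\kk G}(M,P)$ using the same intermediate element. Part (b) is then handled by the analogous direct construction with the roles of the two variables swapped, or equivalently by applying (a) to the $\tau$-flipped data $N^\tau\in\lmod{\kk X}$, $M^\tau\in\lmod{\kk X^\circ}$, $P^\tau$ together with the identity $\RHom^{X^\circ,X*X^\circ}_{\kk G}(N,P)^\tau=\LHom^{X,X*X^\circ}_{\kk G}(N^\tau,P^\tau)$ from \ref{noth ext tp and hom}(d) and the observation $(X*X^\circ)^\circ=X*X^\circ$. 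The main obstacle is purely bookkeeping: one must consistently pick intermediate elements in $H$ (for the tensor-product action \eqref{eqn ext tp}) and in $G$ (for the $\LHom$-action), with independence of those choices guaranteed by the fact that the kernels $k_1(X)$ and $k_2(X)$ act trivially through the projections of \ref{noth dir prod}(a).
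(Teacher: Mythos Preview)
Your proposal is correct and follows essentially the same approach as the paper: both define the identical explicit maps $f\mapsto(n\mapsto(m\mapsto f(m\otimes n)))$ and its inverse, and verify well-definedness and equivariance. The paper simply declares these verifications ``straightforward'' without spelling them out, whereas you work through the bookkeeping with intermediate elements; your choice of $g'=k$ as intermediate for the $\LHom$-action (using $(k,k)\in\Delta(p_1(X))\subseteq X*X^\circ$) is exactly what is needed, and your alternative route to (b) via the $\tau$-flip of \ref{noth ext tp and hom}(d) is a legitimate shortcut that the paper does not mention.
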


\begin{proof}
(a) Again, it is straightforward to verify that  the maps
\begin{align*}
  \Hom_{\kk[X*X^\circ]} \bigl(M\otimes_{\kk k_2(X)} N, P\bigr) 
  & \leftrightarrow \Hom_{\kk X^\circ}\bigl(N,\Hom_{\kk k_1(X)}(M,P)\bigr)\\
  f & \mapsto \bigl(n\mapsto \bigl(m\mapsto f(m\otimes n)\bigr)\bigr)\\
  \bigl(m\otimes n\mapsto (f'(n))(m)\bigr) & \lmapsto f'
\end{align*}
are well-defined, mutually inverse $\kk$-module homomorphisms which are natural in $M$, $N$ and $P$.

\smallskip
(b) The maps analogous to the ones in (a) give again the desired isomorphisms.
\end{proof}

The following lemma generalizes a well-known compatibility of the tensor product and induction in the special case that $G=H=K$, $X=Y=\Delta(G)$.

\begin{lemma}\label{lem ext tp and ind}
Let $X'\le X\le G\times H$ and  $Y'\le Y\le H\times K$.

\smallskip
{\rm (a)} Let $M'\in \lmod{\kk X'}$ and $N\in\lmod{\kk Y}$. There exists a $\kk[X*Y]$-module homomorphism
\begin{equation*}
   \alpha_1 \colon \Ind_{X'*Y}^{X*Y}(M'\tens{X'}{Y}{\kk H} N) \to \Ind_{X'}^{X}(M')\tenskXYH N
\end{equation*}
which maps $(g,k)\otimes(m'\otimes n)$ to $\bigl((g,h)\otimes m'\bigr)\otimes (h,k)n$, for $(g,k)\in X*Y$, $m'\in M'$ and $n\in N$, where $h\in H$ is chosen such that $(g,h)\in X$ and $(h,k)\in Y$. The homomorphism $\alpha_1$ is functorial in $M'$ and $N$, and if $p_2(X)\le p_1(Y)$ then $\alpha_1$ is an isomorphism.

\smallskip
{\rm (b)} Let $M\in\lmod{\kk X}$ and $N'\in\lmod{\kk Y'}$. There exists a $\kk[X*Y]$-module homomorphism 
\begin{equation*}
   \alpha_2\colon \Ind_{X*Y'}^{X*Y}(M\tens{X}{Y'}{\kk H} N')\to M\tenskXYH \Ind_{Y'}^{Y}(N')
\end{equation*}
which maps $(g,k)\otimes (m\otimes n')$ to $(g,h)m\otimes\bigl((h,k)\otimes n')$, for $(g,k)\in X*Y$, $m\in M$, and $n'\in N'$, where $h\in H$ is chosen such that $(g,h)\in X$ and $(h,k)\in Y$. The homomorphism $\alpha_2$ is functorial in $M$ and $N'$, and if $p_2(X)\ge p_1(Y)$ then it is an isomorphism.
\end{lemma}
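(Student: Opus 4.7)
I would prove the lemma by establishing part (a) in full detail and noting that part (b) follows by a mirror-image argument. For part (a), the plan is to construct $\alpha_1$ on generators via the universal properties of induction and of the extended tensor product. Consider the $\kk$-trilinear map $\kk[X*Y] \times M' \times N \to \Ind_{X'}^X(M') \tenskXYH N$ sending $((g,k), m', n)$ to $\bigl((g,h) \otimes m'\bigr) \otimes (h,k)n$, where $h \in H$ is any element with $(g,h) \in X$ and $(h,k) \in Y$. The main well-definedness verification splits into three parts: (i) the image is independent of the choice of $h$; (ii) the map factors through the extended tensor product $M' \tens{X'}{Y}{\kk H} N$; (iii) it descends to the induced module $\Ind_{X'*Y}^{X*Y}\bigl(M' \tens{X'}{Y}{\kk H} N\bigr)$.

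For (i), if $h'$ is another valid choice then $h(h')^{-1} \in k_2(X) \cap k_1(Y)$, and using the identity $(1, s) a \otimes b = a \otimes (s^{-1}, 1) b$ for $s \in k_2(X) \cap k_1(Y)$ (which is precisely the defining relation of the balanced tensor $\otimes_{\kk[k_2(X) \cap k_1(Y)]}$ under the bimodule convention $(g,h)m = gmh^{-1}$ from \ref{noth bimodules}(a)), the two expressions coincide. For (ii), if $t \in k_2(X') \cap k_1(Y) \subseteq k_2(X) \cap k_1(Y)$, one moves $(1, t^{-1})$ from the first factor across the balanced tensor and compares with the action of $(t, 1)$ on $(h,k)n$; here one also uses that $(g,h)(1,t^{-1}) = (g, ht^{-1})$ lies in the same $X'$-coset of $X$ as $(g, h)$. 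For (iii), an element $(g_0, k_0) \in X'*Y$ with intermediate $h_0$ acts so that both sides reduce to $\bigl((gg_0, hh_0) \otimes m'\bigr) \otimes (hh_0, kk_0)n$. A parallel direct calculation shows that $\alpha_1$ is $\kk[X*Y]$-linear, and functoriality in $M'$ and $N$ is then immediate.

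For the isomorphism claim, assume $p_2(X) \leq p_1(Y)$, so that for every $(g, h) \in X$ there exists $k \in K$ with $(h, k) \in Y$. I would construct the inverse $\beta_1$ by sending a generator $\bigl((g, h) \otimes m'\bigr) \otimes n$ to $(g, k) \otimes \bigl(m' \otimes (h, k)^{-1} n\bigr)$ for any such $k$. Independence of the choice of $k$ uses that two choices differ by an element of $k_2(Y)$, which can be absorbed into the $\kk[X'*Y]$-action across the induction tensor; compatibility with the defining relations of $\Ind_{X'}^X(M') \tenskXYH N$ is another direct check, split into the balanced tensor relation (over $\kk[k_2(X) \cap k_1(Y)]$) and the induction relation (over $\kk X'$). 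Finally, $\alpha_1 \circ \beta_1$ and $\beta_1 \circ \alpha_1$ are seen to be the identity on generators by inspection of the formulas.

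Part (b) proceeds by a strictly analogous construction, with the roles of the two sides exchanged; the hypothesis $p_2(X) \geq p_1(Y)$ ensures that every $(h, k) \in Y$ admits a $g \in G$ with $(g, h) \in X$, which is what is needed to invert $\alpha_2$. Alternatively, applying the duality functor $-^\tau$ from \ref{noth ext tp and hom}(d) converts part (b) into an instance of part (a) applied to $Y^\circ \leq K \times H$ and $X^\circ \leq H \times G$. The main obstacle throughout is the bookkeeping for well-definedness: three independent relations (choice of intermediate element, balanced tensor, induction) must each be checked for both $\alpha_1$ and $\beta_1$, and correctly translating the bimodule convention $(g, h)m = g m h^{-1}$ into the relation $(1, s) a \otimes b = a \otimes (s^{-1}, 1) b$ on the balanced tensor is the subtlest point.
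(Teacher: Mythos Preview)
Your proposal is correct and follows exactly the approach of the paper's proof: construct $\alpha_1$ via the explicit formula, check well-definedness and $\kk[X*Y]$-linearity, then under the hypothesis $p_2(X)\le p_1(Y)$ write down the inverse $\beta_1\bigl((g,h)\otimes m'\otimes n\bigr)=(g,k)\otimes\bigl(m'\otimes (h,k)^{-1}n\bigr)$ and verify it is two-sided inverse; part (b) is symmetric. The paper dismisses the well-definedness checks as ``straightforward'', so you have simply supplied the details the paper omits (your breakdown into the three relations---choice of $h$, balanced tensor, induction relation---is the right organization). One minor remark: the $-^\tau$ alternative you suggest for part (b) is in spirit correct, but \ref{noth ext tp and hom}(d) as stated only covers the Hom constructions, so the mirror-image argument is the safer justification.
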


\begin{proof}
We only prove Part~(a). Part~(b) is proved similarly. It is straightforward to verify that the map $\alpha_1$ is well-defined
and a natural $\kk[X*Y]$-module homomorphism. Now assume that $p_2(X)\le p_1(Y)$. It is again straightforward to verify that one obtains a well-defined $\kk[X*Y]$-module homomorphism $\beta_1\colon \Ind_{X'}^{X}(M')\tenskXYH N \to \Ind_{X'*Y}^{X*Y}(M'\tens{X'}{Y}{\kk H} N)$ by mapping $\bigl((g,h)\otimes m'\bigr)\otimes n$ to $(g,k)\otimes\bigl(m'\otimes (h,k)^{-1}n\bigr)$, for $(g,h)\in X$, $m'\in M'$, and $n\in N$, were $k\in K$ is chosen such that $(h,k)\in Y$ (using $p_2(X)\le p_1(Y))$. It is obvious that $\alpha_1$ and $\beta_1$ are inverses.
\end{proof}

\begin{nothing}\label{noth ext hom duals}
Let $X\le G\times H$ and $M\in\lmod{\kk X}$. Recall from Lemma~\ref{lem comp form}(a) that $X*X^\circ=(k_1(X)\times\{1\})\Delta(p_1(X))= \{(g,g')\in p_1(X)\times p_1(X)\mid gk_1(X)=g'k_1(X)\}$.  Moreover, $\kk[k_1(X)]$ can be considered as left $\kk[X*X^\circ]$-module via $(g,g')n:=gng'^{-1}$, for $n\in k_1(X)$ and $(g,g')\in X*X^\circ$. Thus, we obtain a $\kk[X]$-module $\kk[k_1(X)]\tens{X*X^\circ}{X}{\kk G}M$ and a $\kk[X^\circ]$-module $\LHom^{X,X*X^\circ}_{\kk G}(M,\kk[k_1(X)])$, since $X*X^\circ*X=X$ and $X^\circ*X*X^\circ=X^\circ$ (see Lemma~\ref{lem comp form}(b)).

\smallskip
Similarly, $\kk[k_2(X)]$ is a left $\kk[X^\circ*X]$-module via $(h,h')n=hnh'^{-1}$ for $n\in k_2(X)$ and $(h,h')\in X^\circ * X$. Thus, one obtains a $\kk[X]$-module $M\tens{X}{X^\circ*X}{\kk H}\kk[k_2(X)]$ and a $\kk[X^\circ]$-module $\RHom^{X,X^\circ*X}_{\kk H}(M,\kk[k_2(X)])$.
\end{nothing}

We leave the straightforward proof of the following proposition to the reader.

\begin{proposition}\label{prop ext hom duals}
Let $X\le G\times H$ and $M\in\lmod{\kk X}$. 

\smallskip
{\rm (a)} The map
\begin{equation*}
  M^\circ\to \LHom_{\kk G}^{X,X*X^\circ}(M,\kk[k_1(X)])\,, \quad
  \lambda\mapsto\bigl(m\mapsto\sum_{g\in k_1(X)} \lambda(g^{-1}m)g\bigr)\,,
\end{equation*}
is a well-defined isomorphism of $\kk X^\circ$-modules. Its inverse maps the homomorphism $f$ to $\bigl(m\mapsto t(f(m))\bigr)$, where $t$ denotes the $\kk$-linear extension of $k_1(X)\to\kk$, $g\mapsto\delta_{g,1}$.

\smallskip
{\rm (b)} The map
\begin{equation*}
  M^\circ\to \RHom_{\kk H}^{X,X^\circ*X}(M,\kk[k_2(X)])\,,\quad
  \lambda\mapsto \bigl(m\mapsto\sum_{h\in k_2(X)} \lambda(mh^{-1}) h\bigr)\,,
\end{equation*}
is a well-defined isomorphism of $\kk X^\circ$-modules. Its inverse maps $f$ to $\bigl(m\mapsto t(f(m))\bigr)$.

\smallskip
{\rm (c)} The maps $a\otimes m\mapsto am$ and $m\otimes a\mapsto ma$ define $\kk[X]$-module isomorphisms
\begin{equation*}
  \kk[k_1(X)]\tens{X*X^\circ}{X}{\kk G}M\to M \quad \text{and} \quad M\tens{X}{X^\circ*X}{\kk H}\kk[k_2(X)]\to M
\end{equation*}
with inverses given by $m\mapsto 1\otimes m$ and $m\mapsto m\otimes 1$.

\smallskip
{\rm (d)} Assume that $K\le k_1(X)$ with $K\trianglelefteq p_1(X)$, that $e\in Z(\kk[k_1(X)])^{p_1(X)}$ is an idempotent, and that $M\in\lmod{\kk X(e\otimes 1)}$. Then one has an isomorphism of $\kk X$-modules
\begin{equation*}
  \Inf_{X/(K\times\{1\})}^X\Def^X_{X/(K\times\{1\})}(M)
  = \kk[X/(K\times\{1\})]\otimes_{\kk X} M
  \cong \kk[k_1(X)/K]\ebar \tens{X*X^\circ}{X}{\kk G} M\,,
\end{equation*}
given by $(g,h)(K\times\{1\})\otimes m\mapsto \ebar\otimes (g,h)m$ with inverse $(gK)\ebar\otimes m\mapsto 1\otimes (g,1)m$,
where $\ebar$ is the image of $e$ under the canonical $\kk$-algebra homomorphism $\kk[k_1(X)]\to\kk[k_1(X)/K]$ and where $\kk[k_1(X)/K]\ebar$ is a $\kk[X*X^\circ]$-module via $(g_1,g_2)\cdot a= g_1ag_2^{-1}$ for $(g_1,g_2)\in X*X^\circ$ and $a\in \kk[k_1(X)/K]\ebar$.
\end{proposition}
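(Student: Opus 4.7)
All four parts are verified by direct computation on explicit generators; I do not see any conceptual obstacle, only the notational care required to manage the various extended $X$-actions simultaneously. The one recurring technical ingredient is the normality $k_1(X) \trianglelefteq p_1(X)$ (and symmetrically $k_2(X) \trianglelefteq p_2(X)$) from~\ref{noth dir prod}(a), which ensures that the conjugations $a \mapsto g a g^{-1}$ appearing in the various $X \ast X^\circ$-actions preserve $k_1(X)$. Note also that by Lemma~\ref{lem comp form}(b) one has $X^\circ \ast (X \ast X^\circ) = X^\circ$ and $k_1(X \ast X^\circ) = k_1(X) = k_2(X \ast X^\circ)$, so all Hom-spaces and tensor products in the statement carry the module structures claimed.

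Part (a) is the main task, and I expect it to account for the bulk of the writing. Writing $\Phi_\lambda(m) := \sum_{g \in k_1(X)} \lambda(g^{-1}m)\, g$, a substitution $g \mapsto yg$ for $y \in k_1(X)$ inside the sum shows $\Phi_\lambda(ym) = y \Phi_\lambda(m)$, so $\Phi_\lambda$ lies in the correct ambient space $\Hom_{\kk[k_1(X)]}(M, \kk[k_1(X)])$. For $\kk X^\circ$-equivariance, the action formula of~\ref{noth ext tp and hom}(b), applied with the intermediate element $g' \in G$ (so that $(g', h) \in X$ and $(g', g') \in X \ast X^\circ$), reads
\begin{equation*}
  \bigl((h, g') \cdot f\bigr)(m) \;=\; g' \cdot f\bigl((g', h)^{-1} m\bigr) \cdot g'^{-1}\,,
\end{equation*}
where the outer conjugation is taken in $\kk[k_1(X)]$ via $(g_1, g_2) \cdot a = g_1 a g_2^{-1}$. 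Expanding with $f = \Phi_\lambda$ and substituting $g = g' u^{-1} g'^{-1}$ in the summation (a bijection of $k_1(X)$ by normality) yields $\Phi_{(h, g') \lambda} = (h, g') \cdot \Phi_\lambda$, where the action on $M^\circ$ is the one from~\ref{noth bimodules}(b). For the inverse, $\Psi \colon f \mapsto \bigl(m \mapsto t(f(m))\bigr)$ is checked to be a two-sided inverse: only the $g = 1$ summand of $\Phi_\lambda(m)$ survives under $t$, giving $\Psi \Phi = \id$; conversely, $\kk[k_1(X)]$-linearity of $f$ together with the elementary identity $a = \sum_{g \in k_1(X)} t(g^{-1} a)\, g$ for $a \in \kk[k_1(X)]$ gives $\Phi \Psi = \id$.

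Part (b) is proved by the symmetric computation using the right $k_2(X)$-action on $M$ and the trace on $\kk[k_2(X)]$. Part (c) reduces to verifying $\kk X$-equivariance of the evident multiplication maps, since these are standard $\kk$-module isomorphisms $\kk[k_1(X)] \otimes_{\kk[k_1(X)]} M \myiso M$ and $M \otimes_{\kk[k_2(X)]} \kk[k_2(X)] \myiso M$; for $(g, h) \in X$ with intermediate element $g$, one computes $(g, h) \cdot (a \otimes m) = (g, g)a \otimes (g, h)m = g a g^{-1} \otimes (g, h) m$ (and $g a g^{-1} \in k_1(X)$ by normality), whose image is $(g a g^{-1}) \cdot (g, h) m = (g, h)(am)$, using the identity $(g a g^{-1}, 1)(g, h) = (g, h)(a, 1)$ inside $X$.

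For part (d), well-definedness of $\alpha \colon (g, h)(K \times \{1\}) \otimes m \mapsto \ebar \otimes (g, h) m$ on the balanced tensor product over $\kk X$ is a direct expansion: for $(x, y) \in X$ both $(g, h)(x, y)(K \times \{1\}) \otimes (x, y)^{-1} m$ and $(g, h)(K \times \{1\}) \otimes m$ map to $\ebar \otimes (g, h) m$. The $\kk X$-equivariance reduces to the fact that conjugation by $(g_0, g_0) \in X \ast X^\circ$ fixes $\ebar$, which follows from $e \in Z(\kk[k_1(X)])^{p_1(X)}$ together with $K \trianglelefteq p_1(X)$. Since $\ebar$ is central in $\kk[k_1(X)/K]$, every element of $\kk[k_1(X)/K] \ebar \otimes_{\kk[k_1(X)]} M$ reduces via the balancing relation to a $\kk$-linear combination of tensors of the form $\ebar \otimes m$, from which it is immediate to check that the proposed map $(g K) \ebar \otimes m \mapsto 1 \otimes (g, 1) m = (g, 1)(K \times \{1\}) \otimes m$ is a two-sided inverse of $\alpha$.
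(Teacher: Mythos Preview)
Your proposal is correct and matches the paper's approach: the paper simply states ``We leave the straightforward proof of the following proposition to the reader,'' so you have supplied exactly the direct verification the authors omitted. The computations you outline for each part are accurate, including the equivariance checks via the substitution in $k_1(X)$ using normality, the trace identity $a = \sum_{g \in k_1(X)} t(g^{-1}a)\,g$ for the inverse in (a), and the reduction to tensors $\ebar \otimes m$ in (d).
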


\begin{proposition}\label{prop ext hom proj}
{\rm (a)} Let $X\le G\times H$, $Y\le G\times K$, $M\in\lmod{\kk X}$, and $N\in\lmod{\kk Y}$. The map
\begin{equation*}
  \alpha_1\colon\LHom_{\kk G}^{X,X*X^\circ}(M,\kk[k_1(X)])\tens{X^\circ}{Y}{\kk G}N\to
  \LHom_{\kk G}^{X,Y}(M,N)\,, 
  \quad f\otimes n\mapsto \bigl(m\mapsto t(f(m))n\bigr)\,,
\end{equation*}
is a well-defined $\kk [X^\circ*Y]$-module homomorphism and functorial in $M$ and $N$. Here $t$ denotes the canonical projection map $\kk[k_1(X)]\to\kk[k_1(X)\cap k_1(Y)]$. If $M$ is projective as left $\kk[k_1(X)]$-module then $\alpha_1$ is an isomorphism.

\smallskip
{\rm (b)} Let $X\le G\times H$, $Y\le K\times H$, $M\in\lmod{\kk X}$, and $N\in\lmod{\kk Y}$. The map
\begin{equation*}
  \alpha_2\colon N \tens{Y}{X^\circ}{\kk H} \RHom_{\kk H}^{X,X^\circ*X}(M,\kk[k_2(X)])\to
  \RHom_{\kk H}^{X,Y}(M,N)\,,
  \quad n\otimes f\mapsto\bigl(m\mapsto nt(f(m))\bigr)\,,
\end{equation*}
is a well-defined $\kk[Y*X^\circ]$-module homomorphism and functorial in $M$ and $N$. Here, $t$ denotes the canonical projection map $\kk[k_2(X)]\to\kk[k_2(X)\cap k_2(Y)]$. If $M$ is projective as right $\kk[k_2(X)]$-module then $\alpha_2$ is an isomorphism.
\end{proposition}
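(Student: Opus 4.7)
Parts (a) and (b) are mirror statements and I would focus on (a); (b) follows by the identical argument applied after passing to opposite groups via the flip $-^\tau$ of \ref{noth ext tp and hom}(d), together with Proposition~\ref{prop ext hom duals}(b) in place of (a).

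For the well-definedness of $\alpha_1$, I would first check that, for any $f$ in the domain $\LHom$ and any $n\in N$, the assignment $m\mapsto t(f(m))\cdot n$ is $\kk[k_1(X)\cap k_1(Y)]$-linear. This uses that $f$ is $\kk[k_1(X)]$-linear on the left and that $t$, being the projection onto the subgroup summand, is a $\kk[k_1(X)\cap k_1(Y)]$-bimodule map. I would then verify that the resulting bilinear pairing is balanced over $\kk[k_1(X)\cap k_1(Y)]$; this is a direct calculation using the explicit formula $(f\cdot b)(m)=f(m)\cdot b$ for the right action of $b\in k_1(X)$ on $\LHom$ derived from \ref{noth ext tp and hom}(b).

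For the $\kk[X^\circ*Y]$-module homomorphism property, I would fix $(g,k)\in X^\circ*Y$ together with a choice of $h\in G$ with $(h,g)\in X$ and $(h,k)\in Y$, then unwind both $\alpha_1((g,k)(f\otimes n))$ and $(g,k)\alpha_1(f\otimes n)$ using the action formulas of \ref{noth ext tp and hom}(a) and \ref{noth ext tp and hom}(b); both expressions simplify to $m\mapsto(h\cdot t(f((h,g)^{-1}m))\cdot h^{-1})\cdot(h,k)n$. The crucial observation is that $h\in p_1(X)\cap p_1(Y)$ normalises both $k_1(X)$ and $k_1(Y)$, hence normalises $k_1(X)\cap k_1(Y)$, so conjugation by $h$ commutes with the projection $t$. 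Functoriality in $M$ and $N$ is immediate from the formula.

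For the isomorphism claim under projectivity, I would invoke Proposition~\ref{prop ext hom duals}(a) to identify the left-hand factor $\LHom^{X,X*X^\circ}_{\kk G}(M,\kk[k_1(X)])$ with $M^\circ$ as $\kk X^\circ$-module. Under this identification $\alpha_1$ becomes the natural map $M^\circ\tens{X^\circ}{Y}{\kk G} N\to\LHom^{X,Y}_{\kk G}(M,N)$, whose underlying $\kk$-linear map is the classical comparison map $M^\circ\otimes_R N\to\Hom_R(M,N)$ with $R:=\kk[k_1(X)\cap k_1(Y)]$, sending $\lambda\otimes n$ to $m\mapsto\sum_{c\in k_1(X)\cap k_1(Y)}\lambda(c^{-1}m)\cdot cn$. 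Since $R$ is the group algebra of a finite group and hence symmetric, $M^\circ\cong\Hom_R(M,R)$ naturally in $M$, so $\alpha_1$ is identified with the evaluation pairing $\Hom_R(M,R)\otimes_R N\to\Hom_R(M,N)$. The hypothesis that $M$ is projective as $\kk[k_1(X)]$-module restricts to projectivity over the subalgebra $R$, since $\kk[k_1(X)]$ is $R$-free on coset representatives; combined with finite generation, the standard result for finitely generated projective modules then yields that this pairing is an isomorphism.

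The main obstacle is the $\kk[X^\circ*Y]$-linearity verification: one must simultaneously unpack the extended tensor action of \ref{noth ext tp and hom}(a) and the $\LHom$ action of \ref{noth ext tp and hom}(b), track the auxiliary element $h$, and recognise that the projection $t$ can be passed through conjugation by $h$—this precisely because $h$ normalises $k_1(X)\cap k_1(Y)$. Once this is in place, the remaining assertions reduce to the classical projectivity characterisation of the evaluation pairing, repackaged through Proposition~\ref{prop ext hom duals}.
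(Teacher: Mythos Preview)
Your proposal is correct. For the well-definedness and $\kk[X^\circ*Y]$-linearity of $\alpha_1$, you and the paper proceed identically (both treat these as straightforward verifications, with your account being more explicit about why conjugation by the auxiliary element in $p_1(X)\cap p_1(Y)$ commutes with the projection $t$).

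The approaches diverge for the isomorphism claim. The paper does not pass through Proposition~\ref{prop ext hom duals}(a) or invoke the Frobenius/symmetric structure of $\kk A$ (where $A=k_1(X)\cap k_1(Y)$); instead it observes that, for fixed $N$, both sides of $\alpha_1$ are additive contravariant functors of the $\kk B$-module $M$ (where $B=k_1(X)$), so projectivity reduces the claim to the single case $M=\kk B$. There the paper writes down an explicit inverse $\Hom_{\kk A}(\kk B,N)\to\kk B\otimes_{\kk A}N$, $f'\mapsto\sum_i b_i\otimes f'(b_i^{-1})$, with $b_1,\ldots,b_d$ coset representatives of $B/A$. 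Your route is more conceptual---you repackage $\alpha_1$ as the classical evaluation pairing via the symmetry of $\kk A$ and then invoke the standard projectivity criterion---but it costs you the extra step of descending projectivity from $\kk B$ to $\kk A$ and tacitly uses finite generation of $M$ over $\kk A$. The paper's argument is more self-contained and stays over $\kk B$ throughout. For Part~(b), the paper simply says the proof is analogous, whereas you deduce it from (a) via the flip $-^\tau$; both are fine.
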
  

\begin{proof}
(a) Set $A:=k_1(X)\cap k_1(Y)$ and $B:=k_1(X)$. It is straightforward to verify that
\begin{equation*}
  \alpha_1\colon \Hom_{\kk B}\bigl(M,\kk B\bigr)\otimes _{\kk A} N \to \Hom_{\kk A}(M,N)\,,\quad
  (f\otimes n)\mapsto \bigl(m\mapsto t(f(m))n\bigr)\,,
\end{equation*}
is well-defined and functorial in $M$ and $N$. A careful but straightforward computation also shows that $\alpha_1$ is a $\kk[X^\circ*Y]$-module homomorphism. Next assume that $M$ is projective as left $\kk B$-module. Note that, for fixed $N$, the map $\alpha_1$ is also a natural transformation between two additive contravariant functors $\lmod{\kk B}\to\lmod{\kk}$. Thus, in order to show that $\alpha_1$ is an isomorphism it suffices to show this when $M=\kk B$. Using the natural isomorphism $\kk B\to \Hom_{\kk B}(\kk B,\kk B)$, $b\mapsto \rho_b$, with $\rho_b(b')=b'b$ for $b,b'\in B$, it suffices to show that the $\kk$-linear map
\begin{equation*}
  \alphatilde_1\colon \kk B\otimes_{\kk A} N\to \Hom_{\kk A}(\kk B,N)\,,\quad
  b\otimes n\mapsto \bigl(b'\mapsto t(b'b)n\bigr)\,,
\end{equation*}
is bijective. But this is easily verified: If $b_1,\ldots,b_d\in B$ are representatives of $B/A$, then the map
\begin{equation*}
  \betatilde_1\colon \Hom_{\kk A}(\kk B,N)\to \kk B\otimes_{\kk A} N\,,\quad
  f'\mapsto\sum_{i=1}^d b_i\otimes f'(b_i^{-1})\,,
\end{equation*}
is an inverse to $\alphatilde_1$.

\smallskip
(b) This is proved in a similar way as Part~(a).
\end{proof}

\begin{corollary}\label{cor ext tp and hom}
Let $X\le G\times H$, $M\in\lmod{\kk X}$, $N\in\lmod{\kk X^\circ}$, and $P\in\lmod{\kk[X*X^\circ]}$. 

\smallskip
{\rm (a)} If $M$ is projective as left $\kk[k_1(X)]$-module then one has a $\kk$-module isomorphism
\begin{equation*}
  \Hom_{\kk[X*X^\circ]}\bigl(M\tens{X}{X^\circ}{\kk H} N, P\bigr) \cong 
  \Hom_{\kk X^\circ}\bigl(N,M^\circ\tens{X^\circ}{X*X^\circ}{\kk G} P\bigr)\,.
\end{equation*}

\smallskip
{\rm (b)} If $N$ is projective as right $\kk[k_1(X)]$-module then  one has a $\kk$-module isomorphism
\begin{equation*}
  \Hom_{\kk[X*X^\circ]}\bigl(M\tens{X}{X^\circ}{\kk H} N, P\bigr) \cong 
  \Hom_{\kk X}\bigl(M, P\tens{X*X^\circ}{X}{\kk G} N^\circ\bigr)\,.
\end{equation*}
\end{corollary}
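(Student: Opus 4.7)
The proof is essentially a formal chain of isomorphisms that assembles three earlier results: the adjunction in Proposition~\ref{prop ext tp and hom adj 2}, the ``extended hom-to-tensor'' identification in Proposition~\ref{prop ext hom proj}, and the identification of the dual in Proposition~\ref{prop ext hom duals}. No genuinely new computation is needed; the only thing to be careful about is bookkeeping of the subgroups $X$, $X^\circ$, $X*X^\circ$, etc., and in particular verifying that each ``tens'' or ``LHom/RHom'' symbol lives over the group claimed by the $\kk$-module structure on the target.

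For Part~(a), the plan is as follows. First, apply Proposition~\ref{prop ext tp and hom adj 2}(a) with $M$, $N$, $P$ unchanged to obtain a natural $\kk$-module isomorphism
\begin{equation*}
  \Hom_{\kk[X*X^\circ]}(M\tens{X}{X^\circ}{\kk H} N,P) \cong
  \Hom_{\kk X^\circ}\bigl(N,\LHom_{\kk G}^{X,X*X^\circ}(M,P)\bigr)\,.
\end{equation*}
Next, invoke Proposition~\ref{prop ext hom proj}(a) with $Y:=X*X^\circ$ and the second argument equal to $P$: since $M$ is projective as left $\kk[k_1(X)]$-module, the map $\alpha_1$ becomes an isomorphism of $\kk[X^\circ*(X*X^\circ)]=\kk X^\circ$-modules (using Lemma~\ref{lem comp form}(b))
\begin{equation*}
  \LHom_{\kk G}^{X,X*X^\circ}(M,\kk[k_1(X)])\tens{X^\circ}{X*X^\circ}{\kk G} P
  \;\myiso\; \LHom_{\kk G}^{X,X*X^\circ}(M,P)\,.
\end{equation*}
Finally, the identification in Proposition~\ref{prop ext hom duals}(a) supplies a $\kk X^\circ$-module isomorphism $M^\circ\cong \LHom_{\kk G}^{X,X*X^\circ}(M,\kk[k_1(X)])$. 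Chaining the three isomorphisms in sequence produces the claimed $\kk$-module isomorphism.

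For Part~(b), the approach is completely dual. Apply Proposition~\ref{prop ext tp and hom adj 2}(b) to reduce to $\Hom_{\kk X}(M,\RHom_{\kk G}^{X^\circ,X*X^\circ}(N,P))$. Then apply Proposition~\ref{prop ext hom proj}(b), renaming its ambient data so that its ``$X$'' corresponds to our $X^\circ\le H\times G$ and its ``$Y$'' corresponds to $X*X^\circ\le G\times G$; then $k_2(X^\circ)=k_1(X)$, so the projectivity hypothesis (``$M$ projective as right $\kk[k_2(X)]$-module'' in the proposition) translates to ``$N$ projective as right $\kk[k_1(X)]$-module'', which is exactly our hypothesis, and the resulting isomorphism reads
\begin{equation*}
  P\tens{X*X^\circ}{X}{\kk G} \RHom_{\kk G}^{X^\circ,X*X^\circ}(N,\kk[k_1(X)])
  \;\myiso\; \RHom_{\kk G}^{X^\circ,X*X^\circ}(N,P)\,.
\end{equation*}
Finally, Proposition~\ref{prop ext hom duals}(b), applied to $N\in\lmod{\kk X^\circ}$ (so that its ``$X$'' is our $X^\circ$ and its ``$k_2(X)$'' is our $k_1(X)$), identifies $N^\circ$ with the RHom-module $\RHom_{\kk G}^{X^\circ,X*X^\circ}(N,\kk[k_1(X)])$ as $\kk X$-modules. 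Combining these three isomorphisms yields the desired identification.

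The only possible pitfall, and the step worth double-checking, is the group-theoretic bookkeeping in Part~(b): one must confirm that $(X^\circ)^\circ*X^\circ = X*X^\circ$ and $(X*X^\circ)*X=X$ so that the RHom on the right-hand side of Proposition~\ref{prop ext hom proj}(b) is indeed a $\kk X$-module, matching the outer $\Hom_{\kk X}$; and similarly that the tensor product in the target of the final isomorphism is over $\kk G$ via the group $X*X^\circ$. These are all immediate from Lemma~\ref{lem comp form}(b), so in the end the whole corollary reduces to chaining isomorphisms already established in the section, with nothing essential beyond careful tracking of the opposition operation.
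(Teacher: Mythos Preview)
Your proposal is correct and follows precisely the route indicated in the paper, which simply says the result follows immediately from Propositions~\ref{prop ext tp and hom adj 2}, \ref{prop ext hom proj}, and \ref{prop ext hom duals}. You have carefully unpacked the substitutions (in particular, tracking that applying Proposition~\ref{prop ext hom proj}(b) and Proposition~\ref{prop ext hom duals}(b) with $X^\circ$ in place of $X$ gives the correct projectivity hypothesis and the correct ambient group $X*X^\circ$ via Lemma~\ref{lem comp form}(b)), which is exactly what the paper leaves implicit.
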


\begin{proof}
This follows immediately from Propositions~\ref{prop ext tp and hom adj 2}, \ref{prop ext hom proj}, and \ref{prop ext hom duals}.
\end{proof}

The following lemma will be used in the proof of Lemma~\ref{lem unique Lambdahat lr}. Recall the definition of $N_{(S,\phi,T)}$ from \ref{not Nphi}.

\begin{lemma}\label{lem ext tp special}
Let $\phi\colon Q\myiso P$ be an isomorphism between subgroups $Q\le H$ and $P\le G$. Furthermore, let $C_G(P)\le S\le N_G(P)$ and $C_H(Q)\le T\le N_H(Q)$ be intermediate groups. Set $X:=N_{S\times S}(\Delta(P))$, $X':=\Delta(N_{(S,\phi,T)})(C_G(P)\times\{1\})\le G\times G$, and $Y:=N_{S\times T}(\Delta(P,\phi,Q))\le G\times H$.

\smallskip
{\rm (a)} One has $Y*Y^\circ=X'\le X=X*X$, $X*X'=X'$ and $X*Y=X'*Y=Y$. 

\smallskip
{\rm (b)} Let $M\in\lmod{\kk Y}$, $N\in\lmod{\kk Y^\circ}$,  and $V,W\in\lmod{\kk X}$. If $N$ is projective as right $\kk[C_G(P)]$-module then one has a $\kk$-linear isomorphism
\begin{equation*}
  \Hom_{\kk Y}\bigl(V\tens{X}{Y}{\kk G} M, W\tens{X}{Y}{\kk G} N^\circ\bigr) \cong 
  \Hom_{\kk X}\bigl(V\tens{X}{X}{\kk G} \Ind_{X'}^X(M\tens{Y}{Y^\circ}{\kk H} N), W\bigr)\,.
\end{equation*}
\end{lemma}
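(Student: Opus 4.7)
The plan is to handle Parts~(a) and (b) in turn, using Part~(a) as structural scaffolding for the adjunction chain in Part~(b).

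For Part~(a), I would verify each stated identity directly from Proposition~\ref{prop Nphi}(c). A short calculation gives $X=\{(g,g')\in S\times S\mid g^{-1}g'\in C_G(P)\}$, so in particular $X^\circ=X$. Then $Y*Y^\circ=X'$ is immediate from Lemma~\ref{lem comp form}(a), and $X*X=X*X^\circ=X$ follows from the same lemma together with $X^\circ=X$. For each of $X*X'=X'$, $X*Y=Y$, and $X'*Y=Y$, the key observation is that any element $(a,b)$ of the stated right-hand side has $a\in N_{(S,\phi,T)}$, so $(a,a)$ lies in both $X$ and $X'$ and the natural choice of middle element $c=a$ exhibits $(a,b)$ as a composition. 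The reverse inclusions use $C_G(P)\le N_{(S,\phi,T)}$ and the fact that elements of $C_G(P)$ act trivially on $P$, so that multiplying an element of $Y$ by $(c,1)$ with $c\in C_G(P)$ preserves the defining condition $c_g\phi c_h^{-1}=\phi$ for $Y$.

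For Part~(b), the plan is to chain four isomorphisms, each an instance of a construction from Section~\ref{sec tensor products}. The first step applies Corollary~\ref{cor ext tp and hom}(b) with its ``$X$'' replaced by our $Y$: the projectivity of $N$ as right $\kk[k_1(Y)]=\kk[C_G(P)]$-module is exactly the hypothesis, and with ``$M$''$\leftarrow V\tens{X}{Y}{\kk G}M$ and ``$P$''$\leftarrow\Res^X_{X'}W$ the corollary yields an isomorphism between the LHS of our claim (after identifying $\Res^X_{X'}W\tens{X'}{Y}{\kk G}N^\circ$ with $W\tens{X}{Y}{\kk G}N^\circ$ as $\kk Y$-modules) and $\Hom_{\kk X'}\bigl((V\tens{X}{Y}{\kk G}M)\tens{Y}{Y^\circ}{\kk H}N,\Res^X_{X'}W\bigr)$. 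The identification of the two versions of that tensor product is a short check: both have underlying $\kk$-module $W\otimes_{\kk[C_G(P)]}N^\circ$, and formula~(\ref{eqn ext tp}) delivers the same action $(a,b)(w\otimes\nu)=(a,a)w\otimes(a,b)\nu$ under the natural choice $c=a$, which belongs to $X\cap X'$ since $a\in N_{(S,\phi,T)}$. The second step uses associativity of the extended tensor product (see \ref{noth ext tp and hom}(a)) together with $X*Y=Y$ and $Y*Y^\circ=X'$ from Part~(a) to rewrite the source as $V\tens{X}{X'}{\kk G}(M\tens{Y}{Y^\circ}{\kk H}N)$. The third step is ordinary Frobenius reciprocity for $X'\le X$, converting the expression into $\Hom_{\kk X}\bigl(\Ind_{X'}^X(V\tens{X}{X'}{\kk G}(M\tens{Y}{Y^\circ}{\kk H}N)),W\bigr)$. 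The fourth step invokes Lemma~\ref{lem ext tp and ind}(b) with ``$X$''$\leftarrow X$, ``$Y$''$\leftarrow X$, ``$Y'$''$\leftarrow X'$; the condition $p_2(X)\ge p_1(X)$ reads $S\ge S$, so its map $\alpha_2$ is an isomorphism $\Ind_{X'}^X\bigl(V\tens{X}{X'}{\kk G}(M\tens{Y}{Y^\circ}{\kk H}N)\bigr)\myiso V\tens{X}{X}{\kk G}\Ind_{X'}^X(M\tens{Y}{Y^\circ}{\kk H}N)$. Composing the four isomorphisms yields the right-hand side of the claim.

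The main obstacle will be the bookkeeping: matching variable names in the general Section~\ref{sec tensor products} statements to our setting, and verifying at each stage that two a priori different extended tensor products (for example $W\tens{X}{Y}{\kk G}N^\circ$ versus $\Res^X_{X'}W\tens{X'}{Y}{\kk G}N^\circ$, or the induction-in/induction-out pair in step four) carry the same underlying module structure. Each such identification reduces to Part~(a), which makes the relevant composition subgroups $X*X$, $X*X'$, $X*Y$, and $X'*Y$ collapse to the expected targets; no step is individually deep.
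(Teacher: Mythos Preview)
Your proposal is correct and uses exactly the same ingredients as the paper's proof: Lemma~\ref{lem comp form} and Proposition~\ref{prop Nphi} for Part~(a), and for Part~(b) the chain Corollary~\ref{cor ext tp and hom}(b), associativity, Frobenius reciprocity, Lemma~\ref{lem ext tp and ind}(b), together with the identification $\Res^X_{X'}(W)\tens{X'}{Y}{\kk G}N^\circ = W\tens{X}{Y}{\kk G}N^\circ$. The only difference is cosmetic: the paper starts from the right-hand side and runs the four isomorphisms in the opposite order, whereas you start from the left-hand side.
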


\begin{proof}
(a) All assertions follow immediately from Lemma~\ref{lem comp form} and Proposition~\ref{prop Nphi}.

\smallskip
(b) By Part~(a), Lemma~\ref{lem ext tp and ind}(b), and the associativity property in \ref{noth ext tp and hom}(a) we obtain isomorphisms
\begin{equation*}
  V\tens{X}{X}{\kk G}\Ind_{X'}^X(M\tens{Y}{Y^\circ}{\kk H} N) \cong 
  \Ind_{X*X'}^{X*X}\bigl(V\tens{X}{X'}{\kk G} (M\tens{Y}{Y^\circ}{\kk H} N)\bigr) \cong
  \Ind_{X'}^X\bigl((V\tens{X}{Y}{\kk G} M) \tens{X*Y}{Y^\circ}{\kk H} N\bigr)
\end{equation*}
of $\kk {X}$-modules. Thus, using again Part~(a), the usual adjunction of $\Ind$ and $\Res$, and Corollary~\ref{cor ext tp and hom}(b), we obtain isomorphisms
\begin{align*}
  & \Hom_{\kk X}\bigl(V\tens{X}{X}{\kk G} \Ind_{X'}^X(M\tens{Y}{Y^\circ}{\kk H} N), W\bigr)
   \cong \Hom_{\kk X}\bigl(\Ind_{X'}^X\bigl((V\tens{X}{Y}{\kk G} M) \tens{X*Y}{Y^\circ}{\kk H} N\bigr), W\bigr)   
                                              \cong \\
  & \cong \Hom_{\kk X'}\bigl((V\tens{X}{Y}{\kk G} M) \tens{X*Y}{Y^\circ}{\kk H} N, \Res^{X}_{X'}(W)\bigr)
  \cong \Hom_{\kk Y}\bigl(V\tens{X}{Y}{\kk G} M, \Res^X_{X'}(W)\tens{X'}{Y}{\kk G} N^\circ\bigr)
\end{align*}
of $\kk$-modules, since $N$ is projective as right module for $\kk[C_G(P)]=\kk[k_1(Y^\circ)]$, see Proposition~\ref{prop Nphi}(c). Finally, $\Res^X_{X'}(W)\tens{X'}{Y}{\kk G} N^\circ = W \tens{X}{Y}{\kk G} N^\circ$ as $\kk[Y]$-modules, since $X'*Y=X*Y=Y$ by Part~(a) and since $k_2(X')=C_G(P)=k_2(X)$ by Proposition~\ref{prop Nphi}(c). This completes the proof.
\end{proof}

%%%%%%%%%%%%%%%% SECTION 7 %%%%%%%%%%%%%%%%%%%%%%%%%%%%%%%%%%%

%\newpage
\section{Tensor products of $p$-permutation bimodules}\label{sec p-perm bimodules}

Throughout this section, $G$, $H$ and $K$ denote finite groups.

\begin{nothing}\label{noth ext tp and res}
Let $\kk$ be a commutative ring. The following observation will allow us to create situations where $\alpha_1$ and $\alpha_2$ in Lemma~\ref{lem ext tp and ind} are isomorphisms. Let $X\le G\times H$, $Y\le H\times K$, $M\in\lmod{\kk X}$, and $N\in\lmod{\kk Y}$. We define
\begin{equation*}
  \Xtilde:=\{(g,h)\in X\mid \exists k\in K\colon (h,k)\in Y\}\,.
\end{equation*}
Then $\Xtilde\le X$, $p_2(\Xtilde)\le p_1(Y)$, $k_2(\Xtilde)\cap k_1(Y)=k_2(X)\cap k_1(Y)$, and $\Xtilde*Y=X*Y$. Moreover, one has $\Res^X_{\Xtilde}(M)\tens{\Xtilde}{Y}{\kk H} N = M\tenskXYH N$ as $\kk[X*Y]$-modules.  Similarly, one can define $\Ytilde\le Y$ with the analogous properties. The use of this method is illustrated in the proof of the following lemma.
\end{nothing}

For the remainder of this section we assume that $\calO$ is a complete discrete valuation ring of characteristic $0$ with residue field $F$ of characteristic $p>0$.

\begin{lemma}\label{lem ext tp and vertices}
Let $X\le G\times H$ and $Y\le H\times K$.

\smallskip
{\rm (a)} If $M\in\ltriv{\calO X}$ and $N\in\ltriv{\calO Y}$ then $M\tensOXYH N\in\ltriv{\calO[X*Y]}$.

\smallskip
{\rm (b)} If $M\in\lmod{\calO X}$ and $N\in\lmod{\calO Y}$ are indecomposable with twisted diagonal vertices then each indecomposable direct summand of the $\calO[X*Y]$-module $M\tensOXYH N$ has twisted diagonal vertices.
\end{lemma}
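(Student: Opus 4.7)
The plan is to deduce both parts from Bouc's Mackey formula (Theorem~\ref{thm Bouc-Mackey}), which realises $M\tensOXYH N$ as the $h=1$ summand of the ordinary bimodule tensor product $\Ind_X^{G\times H}(M)\otimes_{\calO H}\Ind_Y^{H\times K}(N)$, after induction from $X*Y$ to $G\times K$. The key preliminary fact to record is that the ordinary tensor product $\otimes_{\calO H}$ preserves $p$-permutation bimodules: by Proposition~\ref{prop p-perm equiv}(a) this reduces to permutation modules, and for any $(G\times H)$-set $S$ and any $(H\times K)$-set $T$ one has $\calO S\otimes_{\calO H}\calO T\cong\calO[S\times_H T]$, naturally a permutation $\calO[G\times K]$-module.

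For part~(a), since $\Ind_X^{G\times H}(M)$ and $\Ind_Y^{H\times K}(N)$ are $p$-permutation bimodules, the preliminary observation together with Theorem~\ref{thm Bouc-Mackey} shows that $\Ind_{X*Y}^{G\times K}(M\tensOXYH N)$ is a $p$-permutation $\calO[G\times K]$-module. As $M\tensOXYH N$ is the identity-coset Mackey summand of $\Res^{G\times K}_{X*Y}\Ind_{X*Y}^{G\times K}(M\tensOXYH N)$, it is itself $p$-permutation.

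For part~(b), the essential observation is that the composition of twisted diagonal subgroups is twisted diagonal: if $k_1(P)=k_2(P)=\{1\}$ for $P\le G\times H$ and $k_1(R)=k_2(R)=\{1\}$ for $R\le H\times K$, then any element $(g,1)\in P*R$ comes with some $h$ satisfying $(g,h)\in P$ and $(h,1)\in R$, forcing first $h=1$ and then $g=1$; symmetrically $k_2(P*R)=\{1\}$. Writing $M$ and $N$ as summands of $\Ind_P^X(\calO_P)$ and $\Ind_R^Y(\calO_R)$ with $P\le X$ and $R\le Y$ twisted diagonal, the bimodule $\Ind_X^{G\times H}(M)\otimes_{\calO H}\Ind_Y^{H\times K}(N)$ is a summand of a direct sum of bimodules $\Ind_{P'}^{G\times H}(\calO_{P'})\otimes_{\calO H}\Ind_{R'}^{H\times K}(\calO_{R'})$, which Theorem~\ref{thm Bouc-Mackey} expresses as direct sums of permutation modules $\Ind_{P'*\lexp{(h,1)}{R'}}^{G\times K}(\calO_{P'*\lexp{(h,1)}{R'}})$ with twisted diagonal point stabilisers. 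Consequently every indecomposable summand of $\Ind_{X*Y}^{G\times K}(M\tensOXYH N)$ has a twisted diagonal vertex in $G\times K$. Finally, for any indecomposable summand $L$ of $M\tensOXYH N$ with vertex $V\le X*Y$, one has $L\mid\Res^{G\times K}_{X*Y}(L')$ for some indecomposable summand $L'$ of $\Ind_{X*Y}^{G\times K}(L)$ with twisted diagonal vertex $V'\le G\times K$; applying Mackey to $L'\mid\Ind_{V'}^{G\times K}(\calO_{V'})$ and invoking \ref{noth Brauer con}(d) shows that $V$ is $(X*Y)$-conjugate to a subgroup of some $G\times K$-conjugate of $V'$, and is therefore twisted diagonal.

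I expect the main obstacle to be the vertex bookkeeping in the last step: one must transfer the conclusion ``twisted diagonal vertex in $G\times K$'', obtained for the induced module $\Ind_{X*Y}^{G\times K}(L)$, back to an assertion about the vertex $V$ of $L$ inside the subgroup $X*Y$. This relies on the fact that being twisted diagonal is preserved by passage to subgroups and by conjugation inside the ambient direct product, but the inferential chain through Mackey restriction of induction has to be spelled out carefully.
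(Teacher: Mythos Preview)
Your argument is correct and takes a genuinely different route from the paper. One small slip: in part~(b) you write $M\mid\Ind_P^X(\calO_P)$ and later $L'\mid\Ind_{V'}^{G\times K}(\calO_{V'})$, but part~(b) does \emph{not} assume $M$, $N$ are $p$-permutation modules, so the sources need not be trivial. This is harmless --- replace $\calO_P$, $\calO_R$, $\calO_{V'}$ by arbitrary sources $S$, $S'$, $S''$ throughout; only the inducing subgroups matter for your vertex conclusions, and the Bouc--Mackey formula and the subsequent Mackey decomposition of $\Res^{G\times K}_{X*Y}\Ind_{V'}^{G\times K}(S'')$ go through unchanged.

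The paper's proof stays entirely inside $X$, $Y$ and $X*Y$: it first uses the reduction of~\ref{noth ext tp and res} to arrange $p_2(X)\le p_1(Y)$, then pulls out inductions one side at a time via Lemma~\ref{lem ext tp and ind}, reducing (a) to the trivial computation $\calO_X\tensOXYH\calO_Y=\calO_{X*Y}$ and (b) to the case where $X$ and $Y$ are themselves twisted diagonal, whence $X*Y$ and all its subgroups are. Your approach instead pushes everything up to the full direct products $G\times H$, $H\times K$, $G\times K$, exploits the well-known closure of ordinary bimodule tensor products under $p$-permutation and under twisted-diagonal-vertex conditions via Theorem~\ref{thm Bouc-Mackey}, and then descends back to $X*Y$ by Mackey restriction. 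The paper's route is slightly more economical for (b) since it avoids the final descent argument, while your route has the virtue of reducing both parts to standard facts about the ordinary tensor product $\otimes_{\calO H}$.
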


\begin{proof}
(a) This can easily be seen by using a tensor product construction on bisets. We give a different proof to illustrate the method from \ref{noth ext tp and res}. Since $\Res^X_{\Xtilde}(M)$ is again a $p$-permutation module, we may assume that $p_2(X)\le p_1(Y)$. Next, since $\tensOXYH$ respects direct sums, we may assume that $M=\Ind_{X'}^X(\calO_{X'})$ for some subgroup $X'$ of $X$. Using Lemma~\ref{lem ext tp and ind}(a), we obtain $M\tensOXYH N \cong \Ind_{X'*Y}^{X*Y}(\calO_{X'}\tens{X'}{Y}{\calO H} N)$. Since the class of $p$-permutation modules is stable under induction, we may assume that $X=X'$ and $M=\calO_{X}$. Similar arguments for $Y$ and $N$ reduce further to the case that $N=\calO_Y$. But in this case we have $M\tensOXYH N= \calO_{X*Y}$, the trivial module, which is a $p$-permutation module.

\smallskip
(b) Since $\tensOXYH$ respects direct sums and by \ref{noth Brauer con}(d), we may use \ref{noth ext tp and res} to reduce to the case where $p_2(X)\le p_1(Y)$. Since $\tensOXYH$ respects direct sums, we may also assume that $M=\Ind_{X'}^X(M')$ for some twisted diagonal subgroup $X'$ of $X$ and some indecomposable module $M'\in \lmod{\calO X'}$. Lemma~\ref{lem ext tp and ind}(a) now implies that $M\tensOXYH N\cong \Ind_{X'*Y}^{X*Y}(M'\tens{X'}{Y}{\calO H} N)$, and by \ref{noth Brauer con}(d) we may assume that $X$ is twisted diagonal. Similar considerations for $Y$ and $N$ reduce further to the situation that also $Y$ is twisted diagonal. But then also $X*Y$ and its subgroups are twisted diagonal. This completes the proof.
\end{proof}

Next we recall and improve Theorem~3.3 from \cite{BoltjeDanz2012}. For a fixed twisted diagonal $p$-subgroup $\Delta(P,\sigma,R)\le G\times K$ we denote by $\Gamma=\Gamma_H(P,\sigma,R)$ the set of triples $(\phi,Q,\psi)$, where $Q\le H$ and $\psi\colon R\myiso Q$ and $\phi\colon Q\myiso P$ are isomorphisms with $\phi\circ\psi=\sigma$. The group $H$ acts on $\Gamma_H(P,\sigma,R)$ by $\lexp{h}{(\phi,Q,\psi)}:=(\phi c_h^{-1}, \hQ,c_h\psi)$. Note that $\stab_H(\phi,Q,\psi)=C_H(Q)$. For $M\in\lmod{\calO G}_{\calO H}$, $N\in\lmod{\calO H}_{\calO K}$, and $(\phi,Q,\psi)\in\Gamma_H(P,\sigma,Q)$, one has an $(F[C_G(P)],F[C_K(R)])$-bimodule homomorphism 
\begin{align}\label{eqn BD homomorphism}
   \Phi_{(\phi,Q,\psi)}\colon \
   M(\Delta(P,\phi,Q))\otimes_{F[C_H(Q)]}N(\Delta(Q,\psi,R)) & \ \to\  (M\otimes_{\calO H} N)(\Delta(P,\sigma,R))\,, \\ \notag
   \Br_{\Delta(P,\phi,Q)}(m)\otimes \Br_{\Delta(Q,\psi,R)}(n) & \ \mapsto\ \Br_{\Delta(P,\sigma,R)}(m\otimes n)\,, 
\end{align}
see \cite[3.1(h)]{BoltjeDanz2012}, which is natural in $M$ and $N$. The following theorem describes $(M\otimes_{\calO H} N)(\Delta(P,\sigma,R))$ as $(F[C_G(P)],F[C_K(R)])$-bimodule, for a particular class of modules $M$ and $N$.

\begin{theorem}(\cite[Theorem~3.3]{BoltjeDanz2012})\label{thm BD theorem}
Let $\Delta(P,\sigma,R)\le G\times K$ be a twisted diagonal $p$-subgroup, let $\Gamma=\Gamma_H(P,\sigma,R)$ be as above and let $\Gammatilde\subseteq \Gamma$ be a set of representatives of the $H$-orbits of $\Gamma$. Furthermore let $M\in\ltriv{\calO G}_{\calO H}$ and $N\in\ltriv{\calO H}_{\calO K}$ be $p$-permutation bimodules all of whose indecomposable direct summands have twisted diagonal vertices. Then the direct sum of the homomorphisms $\Phi_{(\phi,Q,\psi)}$, $(\phi,Q,\psi)\in\Gammatilde$ yields an isomorphism
\begin{equation}\label{eqn BD}
  \Phi\colon\ \bigoplus_{(\phi,Q,\psi)\in\Gammatilde} 
  M(\Delta(P,\phi,Q))\otimes_{F[C_H(Q)]}N(\Delta(Q,\psi,R))\ \to\ (M\otimes_{\calO H} N)(\Delta(P,\sigma,R))
\end{equation}
of $(F[C_G(P)],F[C_K(R)])$-bimodules which is natural in $M$ and $N$.
\end{theorem}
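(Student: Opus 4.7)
The strategy is to reduce, via additivity and naturality, to the case where $M$ and $N$ are permutation modules induced from twisted diagonal subgroups, and then match $F$-bases on both sides explicitly using the Bouc--Mackey formula.

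Step 1 (Reduction). Both functors $(M,N) \mapsto M(\Delta(P,\phi,Q))\otimes_{F[C_H(Q)]} N(\Delta(Q,\psi,R))$ and $(M,N) \mapsto (M\otimes_{\calO H} N)(\Delta(P,\sigma,R))$ are additive in each argument, and the homomorphism $\Phi$ is natural in $M$ and $N$. By Proposition~\ref{prop p-perm equiv}(a), each indecomposable $p$-permutation bimodule with twisted diagonal vertex $X_0$ is a direct summand of $\Ind_{X_0}^{G\times H}(\calO) = \calO[(G\times H)/X_0]$. Since a natural transformation that is an isomorphism on a module is automatically an isomorphism on each of its direct summands, we may assume $M = \calO[(G\times H)/X_0]$ and $N = \calO[(H\times K)/Y_0]$ for twisted diagonal subgroups $X_0 \le G\times H$ and $Y_0 \le H\times K$.

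Step 2 (Compute RHS). Apply Theorem~\ref{thm Bouc-Mackey} with $X = X_0$, $Y = Y_0$ and trivial modules $\calO_{X_0}, \calO_{Y_0}$. Since $\calO \tens{X_0}{\lexp{(t,1)}{Y_0}}{\calO H} \calO \cong \calO$ is trivial on the composition,
\begin{equation*}
  M\otimes_{\calO H} N \;\cong\; \bigoplus_{t\in [p_2(X_0)\backslash H/p_1(Y_0)]} \calO\bigl[(G\times K)/(X_0 * \lexp{(t,1)}{Y_0})\bigr]\,.
\end{equation*}
Applying \ref{noth Brauer con}(c) then yields
\begin{equation*}
  (M\otimes_{\calO H} N)(\Delta(P,\sigma,R)) \;\cong\; \bigoplus_{t} F\bigl[((G\times K)/(X_0*\lexp{(t,1)}{Y_0}))^{\Delta(P,\sigma,R)}\bigr]\,.
\end{equation*}

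Step 3 (Compute LHS and match bases). Again by \ref{noth Brauer con}(c), $M(\Delta(P,\phi,Q)) \cong F[((G\times H)/X_0)^{\Delta(P,\phi,Q)}]$ and analogously for $N$, so each summand on the LHS becomes the $F$-span of $C_H(Q)$-orbits of pairs of fixed cosets, as $(\phi,Q,\psi)$ runs over $\Gammatilde$. The heart of the argument is to set up a bijection between these two bases. Starting with a fixed point $(g,k)(X_0*\lexp{(t,1)}{Y_0})$ on the RHS, the defining condition that $\Delta(P,\sigma,R)$ fixes this coset forces the existence of $h \in H$ (witnessing the composition) and subgroups such that $(\phi(y),h^{-1}yh) \in \lexp{(g^{-1},1)}{X_0}$ and $(h^{-1}yh,\psi^{-1}(y)) \in \lexp{(t,k^{-1})}{Y_0}$ for all $y \in Q$, giving rise to data $(\phi,Q,\psi) \in \Gamma_H(P,\sigma,R)$ together with fixed cosets $(g,h)X_0$ and $(h,k)\lexp{(t,1)}{Y_0}$ on each side. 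Different choices of $h$ differ by elements of $C_H(Q)$, and different choices of double coset representative $t$ are absorbed by the $H$-action on $\Gamma_H(P,\sigma,R)$. The inverse map is given by the obvious assembly $((g,h)X_0,(h,k)Y_0) \mapsto (g,k)(X_0*Y_0)$. A direct check, following the defining formula (\ref{eqn BD homomorphism}), shows that $\Phi$ sends the basis vector on the LHS corresponding to $((g,h)X_0,(h,k)Y_0)$ to the basis vector on the RHS corresponding to $(g,k)(X_0*\lexp{(t,1)}{Y_0})$ (for the appropriate $t$), and the bimodule action of $C_G(P)\times C_K(R)$ on both sides comes from left multiplication in the first and third coordinates, so it is clearly preserved.

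The main obstacle is Step 3: carefully tracking the multiple layers of equivalence --- double cosets $[p_2(X_0)\backslash H/p_1(Y_0)]$ on the RHS, $C_H(Q)$-orbits inside each summand of the LHS, and $H$-orbits in $\Gamma_H(P,\sigma,R)$ --- and verifying that together these equivalences on the LHS match the single equivalence (right multiplication by $X_0*\lexp{(t,1)}{Y_0}$) on the RHS. Once the bijection and compatibility with $\Phi$ are established on the level of permutation modules on twisted diagonal subgroups, the naturality in $M$ and $N$ used in Step 1 extends the result to arbitrary $p$-permutation bimodules whose indecomposable summands have twisted diagonal vertices.
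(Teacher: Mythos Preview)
The paper does not prove this theorem; it is quoted from \cite[Theorem~3.3]{BoltjeDanz2012} and used as input for the refinements in Corollary~\ref{cor BD corollary} and Theorem~\ref{thm BP decomp}. Your strategy---reduce via additivity and naturality to permutation modules $\calO[(G\times H)/X_0]$ and $\calO[(H\times K)/Y_0]$ with $X_0,Y_0$ twisted diagonal, then match $F$-bases of fixed points using the Bouc--Mackey decomposition and \ref{noth Brauer con}(c)---is the natural one and is essentially how the result is proved in the original reference.

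Step~3 is where the substance lies and where your writeup is thinnest. The bijection you sketch is correct in outline: a $\Delta(P,\sigma,R)$-fixed coset $(g,k)(X_0*\lexp{(t,1)}{Y_0})$ forces, for each $y\in R$, the existence of $h\in H$ with $(\sigma(y),h)\in \lexp{(g,1)}{X_0}$ and $(h,y)\in \lexp{(t,k)}{Y_0}$, and $y\mapsto h$ is an isomorphism $\psi\colon R\myiso Q$ onto some $Q\le H$, determining $(\phi,Q,\psi)\in\Gamma$ together with fixed cosets on each side. What a complete proof must verify---and you only gesture at---is that (a) the ambiguities in the choice of $h$, the double-coset representative $t$, and the orbit representative in $\Gammatilde$ cancel precisely against the $C_H(Q)$-action in the tensor product and the $H$-action on $\Gamma$, and (b) the resulting bijection of basis elements agrees with the map $\Phi$ from (\ref{eqn BD homomorphism}) under the identification $(g,h)X_0\otimes (h,k)Y_0\mapsto (g,k)(X_0*\lexp{(t,1)}{Y_0})$ coming from Theorem~\ref{thm Bouc-Mackey}. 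Neither is difficult, but both require bookkeeping you have not written out.
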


Note that not only $C_G(P)\times C_K(R)$, but also the bigger group $N_{G\times K}(\Delta(P,\sigma,R))$ acts on the right hand side of the isomorphism (\ref{eqn BD}). The corollary below describes the $F[N_{G\times K}(\Delta(P,\sigma,R))]$-module structure of $(M\otimes_{\calO H} N)(\Delta(P,\sigma,R))$. First note that the domain of the homomorphism $\Phi_{(\phi,Q,\psi)}$ in (\ref{eqn BD homomorphism}) carries an $F[N_{G\times H}(\Delta(P,\phi,Q))*N_{H\times K}(\Delta(Q,\psi,R))]$-module structure via the extended tensor product construction in~\ref{noth ext tp and hom}. 
This module structure extends the $F[C_G(P)]\times F[C_K(R)]$-module structure from~(\ref{eqn BD homomorphism}), since $k_1(N_{G\times H}(\Delta(P,\phi,Q)))=C_G(P)$, $k_2(N_{G\times H}(\Delta(P,\phi,Q)))=C_H(Q)=k_1(N_{H\times K}(\Delta(Q,\psi,R)))$ and $k_2(N_{H\times K}(\Delta(Q,\psi,R)))=C_K(R)$ by Proposition~\ref{prop Nphi}(b).
 It is a straightforward verification that $\Phi_{(\phi,Q,\psi)}$ in~(\ref{eqn BD homomorphism}) actually defines a homomorphism
\begin{equation}\label{eqn extended BD homomorphism}
   \Phi_{(\phi,Q,\psi)}\colon\ M(\Delta(P,\phi,Q)) \tens{N_{G\times H}(\Delta(P,\phi,Q))}{N_{H\times K}(\Delta(Q,\psi,R))}{FH}\ \to\ 
   (M\otimes_{\calO H} N)(\Delta(P,\sigma,R))
\end{equation}
of $F[N_{G\times H}(\Delta(P,\phi,Q))*N_{H\times K}(\Delta(Q,\psi,R))]$-modules.

\begin{corollary}\label{cor BD corollary}
Let $M\in\ltriv{\calO G}_{\calO H}$, $N\in\ltriv{\calO H}_{\calO K}$, $(P,\sigma,R)\le G\times K$, and $\Gammatilde\subseteq\Gamma=\Gamma_H(P,\sigma,R)$ be as in Theorem~\ref{thm BD theorem}. 

\smallskip
{\rm (a)} The group $N_{G\times K}(\Delta(P,\sigma,R))\times H$ acts on $\Gamma$ via $\lexp{((g,k),h)}{(\phi,Q,\psi)}:= (c_g\phi c_h^{-1}, \hQ, c_h\psi c_k^{-1})$. 
For the induced action of $N_{G\times K}(\Delta(P,\sigma,R))$ on the $H$-orbits $[\phi,Q,\psi]_H$ of $\Gamma$ one has $\stab_{N_{G\times K}(\Delta(P,\sigma,R))}([\phi,Q,\psi]_H) = N_{G\times H}(\Delta(P,\phi,Q)) * N_{H\times K}(\Delta(Q,\psi,R))$, for each $(\phi,Q,\psi)\in\Gamma$.

\smallskip
{\rm (b)} Let $\Gammahat\subseteq\Gammatilde$ be a set of representatives of the $N_{G\times K}(\Delta(P,\sigma,R))\times H$-orbits of $\Gamma$. The homomorphisms $\Phi_{(\phi,Q,\psi)}$, $(\phi,Q,\psi)\in\Gammahat$, in (\ref{eqn extended BD homomorphism}) induce an isomorphism
\begin{equation*}
   \Phi\colon\ \bigoplus_{\gamma=(\phi,Q,\psi)\in\Gammahat} 
   \Ind_{X(\gamma)*Y(\gamma)}^{N_{G\times K}(\Delta(P,\sigma,R))}
   \Bigl(M(\Delta(P,\phi,Q))\tens{X(\gamma)}{Y(\gamma)}{FH} N(\Delta(Q,\psi,R))\Bigr)\ \myiso\
   (M\otimes_{\calO H} N)(\Delta(P,\sigma,R))
\end{equation*}
of $F[N_{G\times K}(\Delta(P,\sigma,R))]$-modules which is natural in $M$ and $N$, with $X(\gamma):=N_{G\times H}(\Delta(P,\phi,Q))$ and $Y(\gamma):=N_{H\times K}(\Delta(Q,\psi,R))$, for $\gamma=(\phi,Q,\psi)\in\Gamma$.
\end{corollary}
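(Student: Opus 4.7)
My plan is to deduce the corollary from Theorem~\ref{thm BD theorem} by refining the indexing set of the direct sum decomposition, using the observation that the $F[C_G(P)\times C_K(R)]$-isomorphism (\ref{eqn BD}) is in fact $F[N_{G\times K}(\Delta(P,\sigma,R))]$-equivariant once the summands are grouped by $N_{G\times K}(\Delta(P,\sigma,R))$-orbit.

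For part~(a), I would first verify that the formula $\lexp{((g,k),h)}{(\phi,Q,\psi)}:=(c_g\phi c_h^{-1},\hQ,c_h\psi c_k^{-1})$ does land back in $\Gamma$: the composition is $c_g\phi c_h^{-1}\circ c_h\psi c_k^{-1}=c_g(\phi\psi)c_k^{-1}=c_g\sigma c_k^{-1}$, which equals $\sigma$ precisely because $(g,k)\in N_{G\times K}(\Delta(P,\sigma,R))$. Checking that this is a group action and that it commutes with the already-given $H$-action (viewed as the restriction via $h\mapsto((1,1),h)$) is routine. For the stabilizer of the $H$-orbit $[\phi,Q,\psi]_H$, an element $(g,k)\in N_{G\times K}(\Delta(P,\sigma,R))$ fixes the orbit if and only if there exists $h\in H$ with $\hQ=Q$, $c_g\phi c_h^{-1}=\phi$ and $c_h\psi c_k^{-1}=\psi$. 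The first two conditions are exactly $(g,h)\in N_{G\times H}(\Delta(P,\phi,Q))=X(\gamma)$ (by \ref{noth dir prod}(b)) and the last is $(h,k)\in N_{H\times K}(\Delta(Q,\psi,R))=Y(\gamma)$. By the definition of composition of subgroups (\ref{noth dir prod}(d)) this is precisely $(g,k)\in X(\gamma)*Y(\gamma)$.

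For part~(b), I would partition $\Gammatilde$ according to the $N_{G\times K}(\Delta(P,\sigma,R))\times H$-orbits: for each $\gamma\in\Gammahat$, let $\Gammatilde_\gamma\subseteq\Gammatilde$ be the $H$-orbit representatives contained in the $N_{G\times K}(\Delta(P,\sigma,R))\times H$-orbit of $\gamma$. By part~(a), these $H$-orbits are in bijection with the cosets $N_{G\times K}(\Delta(P,\sigma,R))/(X(\gamma)*Y(\gamma))$; fix a set $T_\gamma$ of coset representatives, so that for each $t=(g,k)\in T_\gamma$ we can select a representative $\gamma_t=(c_g\phi,Q,\psi c_k^{-1})\in\Gammatilde_\gamma$ (with $\gamma_1=\gamma$). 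Under the conjugation isomorphism (\ref{eqn gen tens and conj}), the summand at $\gamma_t$ in (\ref{eqn BD}), namely $M(\Delta(P,c_g\phi,Q))\otimes_{F[C_H(Q)]}N(\Delta(Q,\psi c_k^{-1},R))$, is $t$-conjugate to the extended tensor product $M(\Delta(P,\phi,Q))\tens{X(\gamma)}{Y(\gamma)}{FH}N(\Delta(Q,\psi,R))$, viewed now as an $F[X(\gamma)*Y(\gamma)]$-module via (\ref{eqn extended BD homomorphism}). Summing over $t\in T_\gamma$ therefore realizes $\bigoplus_{\gamma'\in\Gammatilde_\gamma} M(\Delta(P,\phi',Q'))\otimes_{F[C_H(Q')]}N(\Delta(Q',\psi',R))$ as the underlying $F[C_G(P)\times C_K(R)]$-module of $\Ind_{X(\gamma)*Y(\gamma)}^{N_{G\times K}(\Delta(P,\sigma,R))}$ of the extended tensor product at $\gamma$.

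It remains to argue that after this grouping the resulting map $\Phi$ is $F[N_{G\times K}(\Delta(P,\sigma,R))]$-equivariant. For this I would use naturality of $\Phi_\gamma$ in the sense of (\ref{eqn extended BD homomorphism}): for fixed $\gamma\in\Gammahat$, $\Phi_\gamma$ is already an $F[X(\gamma)*Y(\gamma)]$-module homomorphism into the $\Delta(P,\sigma,R)$-Brauer quotient, and conjugating an element $\Br_{\Delta(P,\phi,Q)}(m)\otimes\Br_{\Delta(Q,\psi,R)}(n)$ by $t=(g,k)\in N_{G\times K}(\Delta(P,\sigma,R))$ inside $(M\otimes_{\calO H} N)(\Delta(P,\sigma,R))$ produces $\Br_{\Delta(P,c_g\phi,Q)}(gm)\otimes\Br_{\Delta(Q,\psi c_k^{-1},R)}(nk^{-1})$, which is exactly the image of the $t$-translate in $\Ind_{X(\gamma)*Y(\gamma)}^{N_{G\times K}(\Delta(P,\sigma,R))}$ of the summand at $\gamma$. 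This shows that $\Phi$ is a well-defined $F[N_{G\times K}(\Delta(P,\sigma,R))]$-module homomorphism, and it is bijective because forgetting the $N_{G\times K}(\Delta(P,\sigma,R))$-action it becomes the isomorphism of Theorem~\ref{thm BD theorem}. The main obstacle I anticipate is the purely bookkeeping task of coherently choosing the coset representatives $T_\gamma$ and the $H$-orbit representatives $\gamma_t$, and then tracking how the induction formula $\Ind_{X(\gamma)*Y(\gamma)}^{N_{G\times K}(\Delta(P,\sigma,R))}(-)\cong\bigoplus_{t\in T_\gamma} t\otimes(-)$ matches up with the conjugate summands in (\ref{eqn BD}); the naturality of $\Phi_{(\phi,Q,\psi)}$ and the conjugation compatibility (\ref{eqn gen tens and conj}) are the clean tools that make this work.
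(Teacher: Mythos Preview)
Your proposal is correct and follows essentially the same approach as the paper: both verify the stabilizer formula in (a) by unwinding the conjugation conditions, and for (b) both transport the $N_{G\times K}(\Delta(P,\sigma,R))$-action through the isomorphism $\Phi$ of Theorem~\ref{thm BD theorem}, observe that it permutes the $\Gammatilde$-summands according to the action on $H$-orbits with stabilizer $X(\gamma)*Y(\gamma)$, and thereby identify the result as the stated induced module. The only minor wrinkle (which you yourself anticipate) is that your chosen $\gamma_t=(c_g\phi,Q,\psi c_k^{-1})$ need not literally lie in $\Gammatilde$; the paper handles this by picking $h\in H$ with $\lexp{((g,k),h)}{(\phi,Q,\psi)}\in\Gammatilde$ and computing $(g,k)\cdot\Phi_{(\phi,Q,\psi)}(\Br(m)\otimes\Br(n))=\Phi_{(\phi',Q',\psi')}(\Br(gmh^{-1})\otimes\Br(hnk^{-1}))$, which is exactly the adjustment your coset-representative bookkeeping would require.
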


\begin{proof}
(a) The first statement is clear and the second statement is a straightforward verification.

\smallskip
(b) First note that, for $(\phi,Q,\psi)\in\Gamma$, one has $X(\gamma)*Y(\gamma)\le N_{G\times K}(\Delta(P,\sigma,R))$, by Part~(a). 
Now let $(\phi,Q,\psi)\in\Gammatilde$, $(g,k)\in N_{G\times K}(\Delta(P,\sigma,R))$, $m\in M^{\Delta(P,\phi,Q)}$, and $n\in N^{\Delta(P,\psi,R)}$. 
Then there exists $(\phi',Q',\psi')\in\Gammatilde$ and $h\in H$ such that $\lexp{((g,k),h)}{(\phi,Q,\psi)}=(\phi',Q',\psi')$, and we have
\begin{align*}
   &(g,k)\cdot  \Phi_{(\phi,Q,\psi)}\bigl(\Br_{\Delta(P,\phi,Q)}(m)\otimes Br_{\Delta(Q,\psi,R)}(n)\bigr) =
           (g,k)\cdot \Br_{\Delta(P,\sigma,R)}(m\otimes n) = \Br_{\Delta(P,\sigma,R)}(gm\otimes nk^{-1}) \\
    &\quad  = \Br_{\Delta(P,\sigma,R)}(gmh^{-1}\otimes hnk^{-1}) 
              = \Phi_{(\phi',Q',\psi')}\bigl( \Br_{\Delta(P,\phi',Q')}(gmh^{-1}) \otimes \Br_{\Delta(Q',\psi',R)}(hnk^{-1}) \bigr)\,.
\end{align*}
This equation shows that if one transports the $F[N_{G\times K}(\Delta(P,\sigma,R))]$-module structure from the right hand side of the isomorphism $\Phi$ in (\ref{eqn BD}) via $\Phi^{-1}$ to the left hand side, then this action permutes the direct summands according to the action of $N_{G\times K}(\Delta(P,\sigma,R))$ on $H\backslash\Gamma\cong \Gammatilde$. Moreover, the stabilizer of the $(\phi,Q,\psi)$-component equals $N_{G\times H}(\Delta(P,\phi,Q))*N_{H\times K}(\Delta(Q,\psi,R)$ by Part~(a). Thus, the isomorphism in~(\ref{eqn BD}) defines the desired $F[N_{G\times K}(\Delta(P,\sigma,R))]$-module isomorphism of Part~(b).
\end{proof}

In the next theorem we will give block-wise versions of Theorem~\ref{thm BD theorem} and Corollary~\ref{cor BD corollary}. Note that for a twisted diagonal $p$-subgroup $\Delta(P,\phi,Q)$ of $G\times H$ one has $C_{G\times H}(\Delta(P,\phi,Q))=C_G(P)\times C_H(Q)$ and that, for Brauer pairs $(P,e)$ and $(Q,f)$ of $FG$ and $FH$, respectively, the pair $(\Delta(P,\phi,Q),e\otimes f^*)$ is a Brauer pair of $F[G\times H]\cong FG\otimes_F FH$.

\begin{theorem}\label{thm BP decomp}
Let $(P,e)$ be a Brauer pair of $FG$ and let $(R,d)$ be a Brauer pair of $FK$. Suppose that $\sigma\colon R\myiso P$ is an isomorphism and let $C_G(P)\le S\le N_G(P,e)$ and $C_K(R)\le T\le N_K(R,d)$ be  intermediate subgroups. Furthermore, let $\Omega:=\Omega_H((P,e), \sigma,(R,d))$ denote the set of triples $(\phi,(Q,f),\psi)$, where $(Q,f)$ is a Brauer pair of $FH$ and $\psi\colon R\myiso Q$ and $\phi\colon Q\myiso P$ are isomorphisms such that $\sigma=\phi\circ\psi$. Finally, let $M\in\ltriv{\calO G}_{\calO H}$ and $N\in\ltriv{\calO H}_{\calO K}$ be p-permutation modules all of whose indecomposable direct summands have twisted diagonal vertices. 

\smallskip
{\rm (a)} The group $N_{G\times K}(\Delta(P,\sigma,R))\times H$ acts on $\Omega$ via $\lexp{((g,k),h)}{(\phi,(Q,f),\psi)}= (c_g \phi c_h^{-1},\lexp{h}{(Q,f)}, c_h\psi c_k^{-1})$ and $\stab_H(\phi,(Q,f),\psi)=C_H(Q)$.

\smallskip
{\rm (b)} Let $\Omegatilde\subseteq\Omega$ be a set of representatives of the $H$-orbits of $\Omega$. One has an isomorphism. The restrictions $\Phi_{(\phi,(Q,f),\psi)}$ of $\Phi_{(\phi,Q,\psi)}$ in (\ref{eqn BD homomorphism}) to $eM(\Delta(P,\phi,Q))f\otimes_{F[C_H(Q)]}fN(\Delta(Q,\psi,R))d$ define an isomorphism%Then the isomorphism (\ref{eqn BD}) in Theorem~\ref{thm BD theorem} restricts to an isomorphism
\begin{equation}\label{eqn BP1}
 \Phi\colon\ \bigoplus_{(\phi,(Q,f),\psi)\in\Omegatilde} eM(\Delta(P,\phi,Q))f\otimes_{F[C_H(Q)]}fN(\Delta(Q,\psi,R))d\ \myiso\ 
  e(M\otimes_{\calO H} N)(\Delta(P,\sigma,R))d
\end{equation}
of $(F[C_G(P)]e,F[C_K(R)]d)$-bimodules, which is natural in $M$ and $N$.

\smallskip
{\rm (c)} Let $\Omegahat\subseteq\Omegatilde\subseteq \Omega$ be a set of representatives of the $N_{S\times T}(\Delta(P,\sigma,R))\times H$-orbits of $\Omega$. Then the homomorphisms $\Phi_{(\phi, (Q,f),\psi)}\colon eM(\Delta(P,\phi,Q))f\otimes_{F[C_H(Q)]}fN(\Delta(Q,\psi,R))d \to
  e(M\otimes_{\calO H} N)(\Delta(P,\sigma,R))d$, $(\phi,(Q,f),\psi)\in\Omegahat$, induce an isomorphism
\begin{equation}\label{eqn BP2}
  \Phi\colon\
  \bigoplus_{\omega\in\Omegahat}
  \Ind_{X(\omega)*Y(\omega)}^{N_{S\times T}(\Delta(P,\sigma,R))} 
  \bigl(M(\omega)\tens{X(\omega)}{Y(\omega)}{F H} N(\omega)\bigr)\  
  \myiso\ e\bigl(M\otimes_{\calO H} N\bigr)\bigl(\Delta(P,\sigma,R)\bigr)d
\end{equation}
%\begin{align}\label{eqn BP2}
%  & e\bigl(M\otimes_{\calO H} N\bigr)\bigl(\Delta(P,\sigma,R)\bigr)\epsilon \cong\\
%  \cong & \bigoplus_{(\psi,(Q,f),\epsilon)\in\Omegahat}
%  \Ind_{N_{S\times H}(\Delta(P,\phi,Q))*N_{H\times T}(\Delta(Q,\psi,R))}^{N_{S\times T}(\Delta(P,\sigma,R))} 
%  \bigl(eM(\Delta(P,\phi,Q))f\otimes_{F[C_H(Q)]}fN(\Delta(Q,\psi,R))\epsilon\bigr) \\
%  \cong & \bigoplus_{(\psi,(Q,f),\epsilon)\in\Omegahat}
%  \Ind_{N_{S\times H}(\Delta(P,\phi,Q))*N_{H\times T}(\Delta(Q,\psi,R))}^{N_{S\times T}(\Delta(P,\sigma,R))} 
%  \bigl(eM(\Delta(P,\phi,Q))f\tens{N_{S\times H}(\Delta(P,\phi,Q))}{N_{H\times T}(\Delta(Q,\psi,R))}{\calF H}
%              fN(\Delta(Q,\psi,R))\epsilon\bigr)  \\
%  \cong & \bigoplus_{\omega\in\Omegahat}
%  \Ind_{X(\omega)*Y(\omega)}^{N_{S\times T}(\Delta(P,\sigma,R))} 
%  \bigl(M(\omega)\tens{X(\omega)}{Y(\omega)}{F H} N(\omega)\bigr)  
%\end{align}
of $N_{S\times T}(\Delta(P,\sigma,R))$-modules which is natural in $M$ and $N$. Here, for $\omega=(\phi,(Q,f),\psi)\in \Omegahat$, we set $X(\omega):=N_{S\times H}(\Delta(P,\phi,Q),e\otimes f^*)$, $Y(\omega):=N_{H\times T}(\Delta(Q,\psi,R), f\otimes d^*)$, $M(\omega):=eM(\Delta(P,\phi,Q))f\in\ltriv{FX(\omega)}$, $N(\omega):=fN(\Delta(Q,\psi,R))d\in\ltriv{FY(\omega)}$.

\smallskip
{\rm (d)} Assume that $G=K$, $(P,e)=(R,d)$, $S=T$, and $\sigma=\id_P$. Let $\Lambda:=\Lambda_H(P)$ be the set of pairs $(\phi,(Q,f))$, where $(Q,f)$ is a Brauer pair of $FH$ and $\phi\colon Q\myiso P$ is an isomorphism. The group $S\times H$ acts on $\Lambda$ via $\lexp{(g,h)}{(\phi,(Q,f))}=(c_g\phi c_h^{-1},\lexp{h}{(Q,f)})$, for $(g,h)\in S\times H$ and $(\phi,(Q,f))\in\Lambda$. Let $\Lambdahat\subseteq\Lambdatilde\subseteq\Lambda$ be such that $\Lambdatilde$ (resp.~$\Lambdahat$) is a set of representatives of the $H$-orbits (resp.~$S\times H$-orbits) of  $\Lambda$. Then one has an isomorphism
\begin{equation}\label{eqn BP1'}
  e(M\otimes_{\calO H} N)(\Delta(P))e\ \cong\ 
  \bigoplus_{(\phi,(Q,f))\in\Lambdatilde} eM(\Delta(P,\phi,Q))f\otimes_{F[C_H(Q)]}fN(\Delta(Q,\phi^{-1},P))e 
\end{equation}
of $(F[C_G(P)]e,F[C_G(P)]e)$-bimodules, and an isomorphism
\begin{equation}\label{eqn BP2'}
  e(M\otimes_{\calO H} N)(\Delta(P))e\ \cong\
  \bigoplus_{\lambda\in\Lambdahat}
  \Ind_{\Delta(I(\lambda))(C_G(P)\times\{1\})}^{N_{S\times S}(\Delta(P))} 
  \bigl(M(\lambda)\tens{X(\lambda)}{Y(\lambda)}{F H} N(\lambda)\bigr)
\end{equation}
of $F[N_{S\times S}(\Delta(P))](e\otimes e^*)$-modules. Here, for $\lambda=(\phi,(Q,f))\in\Lambda$, we set $X(\lambda):=N_{S\times H}(\Delta(P,\phi,Q),e\otimes f^*)$, $Y(\lambda):=N_{H\times S}(\Delta(Q,\phi^{-1},P),f\otimes e^*)$, $M(\lambda):=eM(\Delta(P,\phi,Q))f\in \ltriv{FX(\lambda)}$, $N(\lambda):=fN(\Delta(Q,\phi^{-1},P)e\in\ltriv{FY(\lambda)}$, and $I(\lambda):=N_{(S,\phi,N_H(Q,f))}$.
\end{theorem}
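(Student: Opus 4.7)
Part~(a) is a routine direct verification: the formula defining the action is the natural extension of the $H$-action on $\Gamma=\Gamma_H(P,\sigma,R)$ from Theorem~\ref{thm BD theorem}, together with the fact that $H$-conjugation maps block idempotents of $F[C_H(Q)]$ bijectively onto block idempotents of $F[C_H(\lexp{h}{Q})]$; similarly, conjugation by $(g,k)\in N_{G\times K}(\Delta(P,\sigma,R))$ preserves the required compatibility $\phi\circ\psi=\sigma$. For the stabilizer, note that $C_H(Q)$ fixes $(\phi,Q,\psi)$ and, being contained in $C_H(Q)$, fixes every central idempotent $f$ of $F[C_H(Q)]$; the converse inclusion is immediate from the stabilizer computation in $\Gamma$.

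For Part~(b), the plan is to cut the isomorphism $\Phi$ of Theorem~\ref{thm BD theorem} by the central idempotents $e\in F[C_G(P)]$ and $d\in F[C_K(R)]$. Since that $\Phi$ is an isomorphism of $(F[C_G(P)],F[C_K(R)])$-bimodules that is natural in $M$ and $N$, multiplying by $e\otimes d$ yields an isomorphism of the corresponding summands. On the codomain side one obtains $e(M\otimes_{\calO H}N)(\Delta(P,\sigma,R))d$. On the domain side one uses the block decomposition $F[C_H(Q)]=\bigoplus_f F[C_H(Q)]f$ to split each tensor factor according to the central block idempotents $f$ of $F[C_H(Q)]$, giving the summands $eM(\Delta(P,\phi,Q))f\otimes_{F[C_H(Q)]}fN(\Delta(Q,\psi,R))d$. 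Finally, by Part~(a) the stabilizer $C_H(Q)$ of $(\phi,Q,\psi)\in\Gammatilde$ acts trivially on the set of block idempotents of $F[C_H(Q)]$, so the family $\{(\phi,(Q,f),\psi)\mid (\phi,Q,\psi)\in\Gammatilde,\; f\text{ block idempotent of }F[C_H(Q)]\}$ is indeed a set of representatives $\Omegatilde$ of $H$-orbits of $\Omega$, matching the indexing in (\ref{eqn BP1}).

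For Part~(c), the strategy parallels the passage from Theorem~\ref{thm BD theorem} to Corollary~\ref{cor BD corollary}. Transport the $F[N_{S\times T}(\Delta(P,\sigma,R))]$-module structure from the codomain of (\ref{eqn BP1}) back to the domain via $\Phi^{-1}$ (which is legitimate since $\Phi$ is natural and $e\otimes d^*$ is $S\times T$-fixed, as $S\le N_G(P,e)$ and $T\le N_K(R,d)$). The computation on basis elements $\Br_{\Delta(P,\phi,Q)}(m)\otimes\Br_{\Delta(Q,\psi,R)}(n)$ carried out in Corollary~\ref{cor BD corollary}(b) then shows that $N_{S\times T}(\Delta(P,\sigma,R))$ permutes the summands indexed by $\Omegatilde$ via the induced action on $H$-orbits of $\Omega$, and that for $\omega=(\phi,(Q,f),\psi)\in\Omegahat$ the stabilizer of the corresponding $H$-orbit is the projection to $N_{S\times T}(\Delta(P,\sigma,R))$ of the $N_{S\times T}(\Delta(P,\sigma,R))\times H$-stabilizer of $\omega$. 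One then verifies, using the definition of composition of subgroups and Proposition~\ref{prop Nphi}, that this projection equals $X(\omega)*Y(\omega)$. The claimed induction decomposition follows from the standard orbit-decomposition argument. The main technical point, and the step I expect to require the most care, is precisely this stabilizer identification: one must keep track that the idempotent stabilization conditions built into $X(\omega)$ and $Y(\omega)$ are compatible with the fact that the common second/first component $h\in H$ normalizes $(Q,f)$, so that the pair $((g,k),h)$ stabilizing $\omega$ corresponds bijectively with pairs $(g,h)\in X(\omega)$, $(h,k)\in Y(\omega)$ sharing the middle entry.

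For Part~(d), with $G=K$, $(P,e)=(R,d)$, $S=T$, and $\sigma=\id_P$, the condition $\phi\circ\psi=\id_P$ forces $\psi=\phi^{-1}$, so the map $(\phi,(Q,f),\psi)\mapsto (\phi,(Q,f))$ is an $S\times H$-equivariant bijection $\Omega\myiso\Lambda$. The isomorphism (\ref{eqn BP1'}) is then the translation of (b). To deduce (\ref{eqn BP2'}) from (c), one checks directly from the definitions that $Y(\omega)=X(\omega)^\circ$; hence $X(\omega)*Y(\omega)=X(\omega)*X(\omega)^\circ$, which by Lemma~\ref{lem comp form}(a) equals $\Delta(p_1(X(\omega)))\cdot(k_1(X(\omega))\times\{1\})$. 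A short computation, analogous to Proposition~\ref{prop Nphi}(c), gives $p_1(X(\omega))=N_{(S,\phi,N_H(Q,f))}=I(\lambda)$ (a pair $(g,h)\in S\times H$ lies in $X(\omega)$ exactly when $h\in N_H(Q,f)$ and $c_g\phi c_h^{-1}=\phi$) and $k_1(X(\omega))=C_G(P)$. Substituting into (c) yields (\ref{eqn BP2'}).
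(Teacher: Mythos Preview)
Your proposal is correct and follows essentially the same route as the paper's proof: cut the isomorphism of Theorem~\ref{thm BD theorem} by the idempotents $e$ and $d$ and refine the index set via the block decomposition of $F[C_H(Q)]$ for Part~(b), then transport the $N_{S\times T}(\Delta(P,\sigma,R))$-action through $\Phi^{-1}$ and identify the orbit stabilizers with $X(\omega)*Y(\omega)$ for Part~(c), and specialize via the bijection $\Omega\to\Lambda$ for Part~(d). Two small points of precision worth tightening: in (b) your argument produces the isomorphism for one specific choice of $\Omegatilde$ (the one inherited from $\Gammatilde$), so a brief conjugation argument is still needed to cover an arbitrary set of representatives as in the statement; and in (d) the bijection $\Omega\myiso\Lambda$ is not literally $S\times H$-equivariant since the group acting on $\Omega$ is $N_{S\times S}(\Delta(P))\times H$---one should pass through the surjection $((s_1,s_2),h)\mapsto(s_1,h)$ onto $S\times H$ and note that its kernel $\{1\}\times C_G(P)\times\{1\}$ acts trivially on $\Omega$.
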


\begin{proof}
(a) This follows immediately from the definitions.

\smallskip
(b) Multiplication of the isomorphism in (\ref{eqn BD}) with $e$ from the left and $d$ from the right yields again an isomorphism of $(F[C_G(P)]e,F[C_K(R)]d)$-bimodules. For $(\phi,Q,\psi)\in\Gammatilde$, the corresponding summand on the left hand side of this isomorphism can be written as the obvious direct sum over the block idempotents $f$ of $F[C_H(Q)]$. Moreover, if $(\phi,Q,\psi)$ runs through $\Gammatilde$ and, for each such $(\phi,Q,\psi)$, $f$ runs through the block idempotents of $FC_H(Q)$, then $(\phi, (Q,f),\psi)$ runs through a set of representatives of the $H$-orbits of $\Omega$. This proves the claim for this particular set of representatives $\Omegatilde$ derived from $\Gammatilde$. If $\widetilde{\Omegatilde}$ is an arbitrary set of representatives of the $H$-orbits of $\Omega$, then for each $(\phi,(Q,f),\psi)\in\Omegatilde$ there exists a unique $(\phi',(Q',f'),\psi')\in\widetilde{\Omegatilde}$ and an element $h\in H$ such that $\lexp{h}{(\phi, (Q,f),\psi)}=(\phi', (Q',f'), \psi')$. One obtains a well-defined isomorphism 
\begin{equation*}
   \zeta_h\colon eM(\Delta(P,\phi,Q))f\otimes_{F[C_H(Q)]}fN(\Delta(Q,\psi,R))d \myiso
   eM(\Delta(P,\phi',Q'))f'\otimes_{F[C_H(Q')]}f'N(\Delta(Q',\psi',R))d
\end{equation*}
by mapping $\Br_{\Delta(P,\phi,Q)}(m)\otimes \Br_{\Delta(Q,\psi,R)}(n)$ to $\Br_{\Delta(P,\phi',Q')}(mh^{-1})\otimes \Br_{\Delta(Q',\psi',R)}(hn)$, which is independent of the choice of $h$, such that $\Phi_{(\phi', (Q',f'),\psi')}\circ\zeta_h=\Phi_{(\phi,(Q,f),\psi)}$. This implies the statement in Part~(b).

\smallskip
(c) Note that the right hand side of (\ref{eqn BP1}) is an $F[N_{S\times T}(\Delta(P,\sigma,R))]$-module in a natural way.  
%Let $\Phi$ denote the isomorphism in (\ref{eqn BP1}). 
For $\omega=(\phi,(Q,f),\psi)\in\Omega$, we set $X'(\omega):=\Delta(P,\phi,Q)$ and $Y'(\omega):=\Delta(Q,\psi,R)$. Let $(g,k)\in N_{S\times T}(\Delta(P,\sigma,R))$ and $\omega=(\phi,(Q,f),\psi)\in\Omegatilde$. Then there exists a unique $\omega'=(\phi', (Q',f'),\psi')\in \Omegatilde$ and an element $h\in H$ such that $\lexp{((g,k),h)}{\omega}=:\omega'\in\Omegatilde$. One verifies as in the proof of Corollary~\ref{cor BD corollary}(b) that
\begin{equation}\label{eqn Phi}
  (g,k)\Phi_{\omega}\bigl(e\Br^M_{X'(\omega)}(m)f\otimes f\Br^N_{Y'(\omega)}(n)d\bigr)) = 
  \Phi_{\omega'}\bigl(e\Br^M_{X'(\omega')}(gmh^{-1})f' \otimes f'\Br^N_{X'(\omega')}(hnk^{-1})d\bigr)\,,
\end{equation} 
for $m\in M^{X'(\omega)}$ and $n\in N^{Y'(\omega)}$. This implies that if one transports the $FN_{S\times T}(\Delta(P,\sigma,R))$-module structure of the right hand side of (\ref{eqn BP1}) via $\Phi^{-1}$ to the left hand side then $N_{S\times T}(\Delta(P,\sigma,R))$ permutes the components of the left hand side according to its action on the $H$-orbits of $\Omega$. Moreover, it is straightforward to verify that $(g,k)\in N_{S\times T}(\Delta(P,\sigma,R))$ stabilizes the $H$-orbit of $\omega=(\phi,(Q,f),\psi)$ if and only if $(g,k)\in X(\omega)*Y(\omega)$. Equation~(\ref{eqn Phi}) also implies that, for $\omega\in\Omegatilde$, the $F[X(\omega)*Y(\omega)]$-module structure of the $\omega$-component of the left hand side of (\ref{eqn BP1}) coincides with the extended tensor product structure introduced in \ref{noth ext tp and hom}(a). 

\smallskip
(d) Consider the bijection $\alpha\colon \Omega=\Omega_H((P,e),\id_P,(P,e))\to\Lambda_H(P)$, $(\phi,(Q,f),\phi^{-1})\mapsto (\phi,(Q,f))$ and the group homomorphism $\kappa\colon N_{S\times S}(\Delta(P))\times H\mapsto S\times H$, $((s_1,s_2),h)\mapsto (s_1,h)$. Recall from Proposition~\ref{prop Nphi}(a) that $N_{S\times S}(\Delta(P))=\Delta(S)\cdot(\{1\}\times C_G(P))$ and note that $\{1\}\times C_G(P)$ acts trivially on $\Omega$. Therefore, one has
\begin{equation*}
  \alpha(\lexp{((s_1,s_2),h)}{\omega}) = (c_{s_1}\phi c_h^{-1},\lexp{h}{(Q,f)}) = 
  \lexp{\kappa((s_1,s_2),h)}{\alpha(\omega)}\,,
\end{equation*}
for $\omega=(\phi,(Q,f),\phi^{-1})\in\Omega$ and $((s_1,s_2),h)\in N_{S\times S}(\Delta(P))\times H$. Thus, the bijection $\alpha$ maps $\Omegatilde$ (resp.~$\Omegahat$) to a set of representatives of the $H$-orbits (resp.~$S\times H$-orbits) of $\Lambda$. Now, the isomorphisms in (\ref{eqn BP1'}) and (\ref{eqn BP2'}) are immediate consequences of the isomorphisms in (\ref{eqn BP1}) and (\ref{eqn BP2}), after noting that $X(\lambda)*Y(\lambda)=\Delta(I(\lambda))(C_G(P)\times\{1\})$, since $Y(\lambda)=X(\lambda)^\circ$, see Proposition~\ref{prop Nphi}(c) and Lemma~\ref{lem comp form}(a).
\end{proof}

%%%%%%%%%%%%%%%% SECTION 8 %%%%%%%%%%%%%%%%%%%%%%%%%%%%%%%%%%%
%\newpage
\section{Character groups and perfect isometries}\label{sec character groups}

Throughout this section, $G$, $H$, $K$ denote finite groups. We assume that the $p$-modular system $(\KK,\calO,F)$ is large enough for $G$, $H$, $K$, and the groups $H_1,\ldots, H_n$ appearing in Lemma~\ref{lem isometry separation 1} and Corollary~\ref{cor isometry separation 2}. In this section, we recall and introduce notation, concepts, and basic results related to character groups and perfect isometries and we prove some results on perfect isometries that will be used in later sections.

For more details on the character group concepts of this section we refer the reader to \cite[Section~3.6]{NagaoTsushima1989}.

\begin{notation}\label{not character groups}
(a) Let $A$ be a finite-dimensional algebra over a field $\kk$. Recall that the {\em Grothendieck group} $R(A)$, with respect to short exact sequences, is a free abelian group with $\ZZ$-basis given by elements $[S]$, where $S$ runs through a set of representatives of the isomorphism classes of simple left $A$-modules. For any $M\in\lmod{A}$ one sets $[M]:=[S_1]+\cdots+[S_n]$, where $S_1,\ldots,S_n$ are the composition factors of $M$, (repeated according to their multiplicities). If also $B$ is a finite-dimensional algebra over the same field then we set $R(A,B):=R(A\otimes B^\circ)$, where $B^\circ$ denotes the opposite algebra of $B$. This notation is motivated by the canonical category isomorphism $\lmod{A}_B\cong \lmod{A\otimes B^\circ}$. Thus, each $M\in\lmod{A}_B$ defines an element $[M]\in R(A,B)$. If $B$ is a group algebra $\kk H$, then we always identify $(\kk H)^\circ$ with $\kk H$ using the isomorphism $h^\circ\mapsto h^{-1}$. If additionally $A=\kk G$ is a group algebra, we consequently identify $\kk G\otimes (\kk H)^\circ$ with $\kk[G\times H]$ via $g\otimes h^\circ\mapsto (g,h^{-1})$.

\smallskip
(b) For an idempotent $e\in Z(\KK G)$ we identify $R(\KK Ge)$ with the virtual character group of $\KK Ge$, the free $\ZZ$-span of the irreducible characters $\Irr(\KK G e)$ of $\KK Ge$. This way, $R(\KK Ge)\subseteq R(\KK G)$. Similarly, if $e$ is an idempotent in $Z(FG)$ we identify $R(FGe)$ with the group of virtual Brauer characters belonging to $FGe$. This way, $R(FGe)\subseteq R(FG)$. For convenience, we view Brauer characters throughout as class functions on $G$ (rather than on $G_{p'}$) with values in $\KK$ that vanish on $G\smallsetminus G_{p'}$. Here, $G_{p'}$ denotes the set of {\em $p'$-elements} of $G$, i.e., elements whose order is not divisible by $p$. By scalar extension from $\ZZ$ to $\KK$ we view these Grothendieck groups also as embedded into $\KK$-vector spaces, denoted by $\KK R(\KK G)$, etc., and we identify $\KK R(\KK G)$ with the $\KK$-vector space of $\KK$-valued class functions on $G$, or by linear extension also as subspace of the space of $\KK$-linear functions from $\KK G$ to $\KK$. In particular, we identify $\KK R(FG)$ with the space of $\KK$-valued class functions on $G$ which vanish on $G\smallsetminus G_{p'}$. Note that for any idempotent $e\in Z(\calO G)$ one has $\KK R(FG\ebar)\subseteq \KK R(\KK Ge)$ as $\KK$-vector spaces of function on $G$. In fact, since the determinant of the Cartan matrix of $FG\ebar$ is non-zero, each irreducible Brauer character in $FG\ebar$ is a $\QQ$-linear combination of projective indecomposable characters of $\KK Ge$.

If $e\in Z(\KK G)$ and $f\in Z(\KK H)$ are idempotents then, with the convention in (a), one has a group $R(\KK G e, \KK H f):=R(\KK[G\times H](e\otimes f^*))$. Multiplication of $(\KK G,\KK H)$-bimodules with $e$ from the left and $f$ from the right induces a projection map $R(\KK G, \KK H)\to R(\KK Ge,\KK Hf)$ which we denote by $\mu\mapsto e\mu f$. Taking $\KK$-duals defines a map $R(\KK G e,\KK H f)\to R(\KK Hf,\KK Ge)$, $\mu\mapsto\mu^\circ$, for idempotents $e\in Z(\KK G)$ and $f\in Z(\KK H)$. Similar notations apply with $\KK$ replaced by $F$.

\smallskip
(c) Tensor products of bimodules induce bilinear maps
\begin{equation*}
  R(\KK G, \KK H)\times R(\KK H, \KK K) \to R(\KK G, \KK K)\,, \quad (\mu,\nu)\mapsto \mu\cdotH\nu\,,
\end{equation*}
and extended tensor products (see \ref{noth ext tp and hom}(a)) induce bilinear maps
\begin{equation*}
  R(\KK X)\times R(\KK Y)\to R(\KK [X*Y])\,,\quad (\mu,\nu)\mapsto \mu\dotXYH \nu\,,
\end{equation*}
for $X\le G\times H$ and $Y\le H\times K$. Each $\mu\in R(\KK G, \KK H)$, induces a group homomorphism
\begin{equation*}
  I_\mu\colon R(\KK H)\to R(\KK G)\,, \quad \psi\mapsto \mu\cdotH\psi\,,
\end{equation*}
using the special case $K=\{1\}$ from the beginning of Part~(c). Note that similar constructions do not work for (bi-)modules over $F$, since an $(FG,FH)$-bimodule is not necessarily flat as right $FH$-module.

\smallskip
(d) If $e$ is an idempotent in $Z(\calO G)$ then the {\em decomposition map} $d^e_G\colon R(\KK G)\to R(FG\ebar)\subseteq \KK R(\KK G e)$ is given by
\begin{equation*}
  (d_G^e(\chi))(g) = \begin{cases} \chi(ge), & \text{if $g\in G_{p'}$,}\\ 0, & \text{otherwise,}\end{cases}
\end{equation*}
for $\chi\in R(\KK G)$ and $g\in G$. If $e=1$, one obtains the usual decomposition map $d_G\colon R(\KK G)\to R(FG)$.

More generally, for a $p$-element $u\in G$ and an idempotent $e\in Z(\calO[C_G(u)])$, the {\em generalized decomposition map} $d_G^{(u,e)}\colon \KK R(\KK G) \to \KK R(FC_G(u)\ebar)$ is given by
\begin{equation*}
  \bigl(d_G^{(u,e)}(\chi)\bigr)(g)= 
  \begin{cases} \chi(uge)\,, & \text{if $g\in C_G(u)_{p'}$,} \\ 0\,, & \text{otherwise,}\end{cases}
\end{equation*}
for $\chi\in \KK R(\KK G)$ and $g\in C_G(u)$. If $\chi\in\Irr(\KK G)$ belongs to a sum of blocks $A$ of $\calO G$ and $e$ is a primitive idempotent in $Z(\calO[C_G(u)])$ then Brauer's second main theorem (see \cite[Theorem 5.4.2]{NagaoTsushima1989}) implies that $d_G^{(u,e)}(\chi)=0$ unless $(\langle u\rangle,e)$ is an $A$-Brauer pair. For $u=1$ one recovers the decomposition map $d^e_G$ from above.
\end{notation}

\begin{remark}
(a) Let $M\in\lmod{\KK G}_{\KK H}$, $N\in\lmod{\KK H}_{\KK K}$, and let $\mu\in R(\KK G,\KK H)$ and $\nu\in R(\KK H,\KK K)$ denote their respective characters as left modules for $\KK[G\times H]$ and $\KK[H\times K]$. Then the character $\mu\cdotH \nu\in R(\KK G,\KK K)$ of $M\otimes _{\KK H} N$ viewed as left $\KK[G\times K]$-module is given by
\begin{equation}\label{eqn char tens 1}
  (\mu\cdotH \nu)(g,k)=\frac{1}{|H|} \sum_{h\in H}\mu(g,h)\nu(h,k)\,,
\end{equation}
for $(g,k)\in G\times K$. In the special case that $M=V\otimes_{\KK} W$ and $N=W'\otimes_{\KK} U$ with irreducible modules $V\in\lmod{\KK G}$, $W,W'\in\lmod{\KK H}$,  and $U\in\lmod{\KK K}$, one has $M\otimes_{\KK H}N\cong V\otimes U\in\lmod{\KK[G\times K]}$ if $W^\circ\cong W'$, and $M\otimes_{\KK H}N=\{0\}$ if $W^\circ\not\cong W'$. Thus,
\begin{equation}\label{eqn char tens 2}
  (\chi_V\times \chi_W)\cdotH(\chi_{W'}\times\chi_U) =
  \begin{cases} \chi_V\times\chi_U\,, & \text{if $\chi_W^\circ=\chi_{W'}$,} \\ 0\,, & \text{otherwise.} \end{cases} 
\end{equation}

\smallskip
(b) If $e\in Z(\KK G)$ is an idempotent then the character of $\KK Ge$, viewed as element in $R(\KK Ge,\KK Ge)\subseteq R(\KK[G\times G])$ is given by $\sum_{\chi\in\Irr(\KK Ge)} \chi\times\chi^\circ$.
\end{remark}

Parts~(a) and (c) of the following definition are due to Brou\'e, see \cite{Broue1990}.

\begin{definition}\label{def perfect isometry}
Let $\mu\in R(\KK G,\KK H)$.

\smallskip
(a) The virtual character $\mu$ is called {\em perfect} if it satisfies the following two conditions:

\smallskip
\quad\quad (i) For all $(g,h)\in G\times H$, one has $\mu(g,h)\in|C_G(g)|\calO \cap |C_H(h)|\calO$\,.

\smallskip
\quad\quad (ii) If $(g,h)\in G\times H$ is such that $\mu(g,h)\neq 0$, then $g$ is a $p'$-element if and only if $h$ is a $p'$-element.

\smallskip
(b) We call the virtual character $\mu$ {\em quasi-perfect} if it satisfies condition (ii) in Part~(a).

\smallskip
(c) Assume that $e\in Z(\KK G)$ and $f\in Z(\KK H)$ are idempotents and that $\mu\in R(\KK Ge, \KK H f)$. One calls $\mu$ an {\em isometry} between $\KK Ge$ and $\KK Hf$ if the map $I_\mu\colon R(\KK H f)\to R(\KK Ge)$ is bijective and satisfies $(I_\mu(\psi),I_\mu(\psi'))_G=(\psi,\psi')_H$, for all $\psi,\psi'\in R(\KK Hf)$. If additionally $\mu$ is perfect, then $\mu$ is called a {\em perfect isometry} between $\KK Ge$ and $\KK Hf$.
\end{definition}

Brou\'e's abelian defect group conjecture in its weakest form states that  if $\calO Ge$ is a block of $\calO G$ ($e$ its block idempotent) with abelian defect group $D$ and $\calO[N_G(D)]f$ the block of $\calO[N_G(D)]$ which is in Brauer correspondence with $\calO Ge$, i.e., $\br_D(e)=\fbar$, then there exists a perfect isometry $\mu\in R(\KK Ge,\KK N_G(D)f)$.

\begin{remark}\label{rem isometry}
Let $e\in Z(\KK G)$ and $f\in Z(\KK H)$ be idempotents and let $\mu\in R(\KK Ge, \KK Hf)$. The first two of the following statements are quick consequences of (\ref{eqn char tens 2}).

\smallskip
(a) The following are equivalent:

\smallskip
(i) $\mu$ is an isometry between $\KK Ge$ and $\KK Hf$.

\smallskip
(ii) $\mu\cdotH\mu^\circ=[\KK Ge]\in R(\KK Ge,\KK Ge)$ and $\mu^\circ\cdotG \mu = [\KK Hf]\in R(\KK Hf,\KK Hf)$.

\smallskip
(iii) There exists a bijection $\Irr(\KK Hf)\myiso\Irr(\KK Ge)$, $\psi\mapsto\chi_\psi$, and elements $\varepsilon_{\psi}\in\{\pm1\}$, for $\psi\in\Irr(\KK Hf)$, such that $\mu=\sum_{\psi\in\Irr(\KK Hf)} \varepsilon_\psi\cdot \chi_\psi\times \psi^\circ$.

\smallskip
(b) One has $\mu\neq 0$ if and only if $\mu\cdotH\mu^\circ\neq 0$ in $R(\KK Ge,\KK Ge)$.

\smallskip
(c) The elements in $R(\KK Ge,\KK Hf)$ that satisfy Condition~(i) (resp.~Condition~(ii)) in Definition~\ref{def perfect isometry}(a) form a subgroup of $R(\KK Ge,\KK Hf)$.

\smallskip
(d) If $e\in Z(\calO G)$, $f\in Z(\calO H)$, and $\mu$ is the character of an indecomposable module $M\in\ltriv{\calO Ge}_{\calO Hf}$ with twisted diagonal vertex then $\mu$ satisfies Conditions~(i) and (ii) in Definition~\ref{def perfect isometry}(a); see \cite[Proposition~1.2]{Broue1990}.

\smallskip
(e) If $\mu$ is quasi-perfect then the $\KK$-linear extension $\KK R(\KK Hf)\to\KK R(\KK Ge)$ of $I_\mu$ restricts to a map $\KK R(FHf)\to\KK R(FGe)$. In fact, this follows immediately from the formula in (\ref{eqn char tens 1}) in the special case that $K=\{1\}$.
\end{remark}

\begin{proposition}\label{prop quasi-perfect}
Let $\mu\in R(\KK Ge,\KK Hf)$. The following are equivalent:

\smallskip
{\rm (i)} The virtual character $\mu$ is quasi-perfect.

\smallskip
{\rm (ii)} One has $d_G\circ I_\mu = I_\mu\circ d_H$ as maps $\KK R(\KK H)\to \KK R(\KK G)$.

\smallskip
{\rm (iii)} One has $d_H\circ I_{\mu^\circ} = I_{\mu^\circ}\circ d_G$ as maps $\KK R(\KK G) \to \KK R(\KK H)$.
\end{proposition}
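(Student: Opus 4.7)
The plan is to unfold both sides of the equation in (ii) using (\ref{eqn char tens 1}) and reduce the equivalence to a direct vanishing condition on the values of $\mu$. First, specializing (\ref{eqn char tens 1}) to $K=\{1\}$ and using the identifications in Notation~\ref{not character groups}(a), one obtains that for any class function $\psi\in\KK R(\KK H)$ and any $g\in G$,
\[
  I_\mu(\psi)(g) \;=\; \frac{1}{|H|} \sum_{h\in H} \mu(g,h)\,\psi(h)\,.
\]
Since $(d_G\chi)(g)$ equals $\chi(g)$ for $g\in G_{p'}$ and vanishes otherwise, and analogously for $d_H$, subtracting the two sides of (ii) yields
\[
  (d_G I_\mu(\psi))(g) - (I_\mu d_H(\psi))(g) \;=\;
  \begin{cases} \displaystyle \frac{1}{|H|}\sum_{h\in H\smallsetminus H_{p'}} \mu(g,h)\psi(h)\,, & \text{if } g\in G_{p'}, \\[2mm] \displaystyle -\frac{1}{|H|}\sum_{h\in H_{p'}} \mu(g,h)\psi(h)\,, & \text{if } g\notin G_{p'}. \end{cases}
\]

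From this reformulation, the implication (i)$\Rightarrow$(ii) is immediate. For (ii)$\Rightarrow$(i), I would fix $g\in G$ and observe that $h\mapsto\mu(g,h)$ is itself a class function on $H$; specialising $\psi$ to the class function taking value $1$ on the $H$-conjugacy class of a chosen element $h_0\in H$ and vanishing elsewhere makes the displayed difference collapse to a nonzero rational multiple of $\mu(g,h_0)$ whenever $g$ and $h_0$ lie on opposite sides of the $p$-regular/non-$p$-regular divide, forcing $\mu(g,h_0)=0$ in that case. This is precisely condition (i).

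Finally, (i)$\Leftrightarrow$(iii) follows by applying the already established (i)$\Leftrightarrow$(ii) with the roles of $G$ and $H$ interchanged and $\mu$ replaced by $\mu^\circ$, using $\mu^\circ(h,g)=\mu(g,h)$ to conclude that $\mu$ is quasi-perfect if and only if $\mu^\circ$ is. The only subtlety throughout is keeping the conventions of Notation~\ref{not character groups}(a) straight, in particular the identification $(\KK H)^\circ\cong\KK H$ via $h^\circ\mapsto h^{-1}$ and the induced embedding $R(\KK G,\KK H)\hookrightarrow R(\KK[G\times H])$; no deeper obstacle arises.
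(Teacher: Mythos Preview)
Your proof is correct and follows essentially the same approach as the paper: both arguments unfold $I_\mu$ via (\ref{eqn char tens 1}), verify (i)$\Rightarrow$(ii) by case analysis on whether $g\in G_{p'}$, and for (ii)$\Rightarrow$(i) specialise $\psi$ to the characteristic function of a conjugacy class to isolate a single value $\mu(g,h)$; the symmetry argument for (iii) is likewise identical. Your presentation is slightly tidier in that you write out the difference $(d_G I_\mu - I_\mu d_H)(\psi)(g)$ once and for all, but the content is the same.
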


\begin{proof}
Clearly, $\mu$ is quasi-perfect if and only if $\mu^\circ$ is quasi-perfect. Thus it suffices to show the equivalence of (i) and (ii). 

Assume first that (i) holds and let $\psi\in\KK R(\KK H)$ and $g\in G$. Consider the case that $g\notin G_{p'}$. By Equation~(\ref{eqn char tens 1}), we have $I_\mu(d_H(\psi))(g)=|H|^{-1}\sum_{h\in H} \mu(g,h)(d_H(\psi))(h)$. By our assumptions, $\mu(g,h)=0$ for every $h\in H_{p'}$. On the other hand, if $h\in H\smallsetminus H_{p'}$ then $(d_H(\psi))(h)=0$. Thus, we obtain $I_\mu(d_H(\psi))(g)=0=d_G(I_\mu(\psi))(g)$. Now consider the case that $g\in G_{p'}$.
Then, again by (\ref{eqn char tens 1}), we have
\begin{equation*}
  |H|\cdot d_G(I_\mu(\psi))(g)= |H|\cdot I_\mu(\psi)(g) = \sum_{h\in H} \mu(g,h)\psi(h)= \sum_{h\in H_{p'}} \mu(g,h)\psi(h)
  = |H|\cdot I_\mu(d_H(\psi))(g)\,,
\end{equation*}
and (ii) holds.

Now assume that (ii) holds. Let $(g,h)\in G\times H$\ and assume that $\mu(g,h)\neq 0$. Let $\psi\in\KK R(\KK H)$ denote the characteristic function on the conjugacy class of $h$. If $g\in G_{p'}$ and $h\notin H_{p'}$ then $d_H(\psi)=0$ and (\ref{eqn char tens 1}) implies the contradiction $0=I_\mu(d_H(\psi))(g)=d_G(I_\mu(\psi))(g) = I_\mu(\psi)(g) = |C_H(h)|^{-1}\cdot \mu(g,h)\neq 0$. And if $g\notin G_{p'}$ and $h\in H_{p'}$ then we obtain the contradiction $0=d_G(I_\mu(\psi))(g)=I_\mu(d_H(\psi))(g) = |C_H(h)|^{-1}\cdot\mu(g,h)\neq 0$. Thus, (i) holds.
\end{proof}

\begin{lemma}\label{lem isometry separation 1}
Let $H_1,\ldots,H_n$ be finite groups and let $\mu_i\in R(\KK G, \KK H_i)$, $i=1,\ldots,n$, be virtual characters such that $\sum_{i=1}^n \mu_i\cdotHi \mu_i^\circ=\sum_{\chi\in\Omega}\chi\times\chi^\circ$, for some subset $\Omega\subseteq\Irr(\KK G)$. Then $\Omega$ is the disjoint union of subsets $\Omega_i$, $i=1,\ldots,n$, with the property that $\mu_i\cdotHi\mu_i^\circ=\sum_{\chi\in\Omega_i}\chi\times\chi^\circ$.
\end{lemma}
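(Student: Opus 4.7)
The plan is to convert the identity $\sum_i \mu_i \cdotHi \mu_i^\circ = \sum_{\chi\in\Omega} \chi\times\chi^\circ$ into a statement about Gram matrices, and then exploit positive semi-definiteness. First I will expand each $\mu_i$ in the orthonormal basis of $\KK[G\times H_i]$-characters: write
\begin{equation*}
  \mu_i = \sum_{\chi\in\Irr(\KK G),\,\psi\in\Irr(\KK H_i)} a_{i,\chi,\psi}\,\chi\times\psi^\circ\,,
\end{equation*}
with $a_{i,\chi,\psi}\in\ZZ$, so $\mu_i^\circ = \sum a_{i,\chi,\psi}\,\psi\times\chi^\circ$. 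Applying the product formula~(\ref{eqn char tens 2}) term-by-term, the only surviving terms in $\mu_i\cdotHi\mu_i^\circ$ come from matching the middle $H_i$-characters, which yields
\begin{equation*}
  \mu_i\cdotHi\mu_i^\circ = \sum_{\chi,\chi'\in\Irr(\KK G)} \Bigl(\sum_{\psi\in\Irr(\KK H_i)} a_{i,\chi,\psi}a_{i,\chi',\psi}\Bigr)\,\chi\times\chi'^\circ\,.
\end{equation*}
Let $v_i^{(\chi)}\in\ZZ^{\Irr(\KK H_i)}$ denote the row of coefficients $(a_{i,\chi,\psi})_\psi$, so the bracketed sum is the Euclidean inner product $\langle v_i^{(\chi)},v_i^{(\chi')}\rangle$.

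Comparing coefficients of $\chi\times\chi'^\circ$ in the hypothesis yields
\begin{equation*}
  \sum_{i=1}^n \langle v_i^{(\chi)},v_i^{(\chi')}\rangle = \delta_{\chi,\chi'}\cdot[\chi\in\Omega]
\end{equation*}
for all $\chi,\chi'\in\Irr(\KK G)$. Specialising to $\chi=\chi'$ gives $\sum_i \|v_i^{(\chi)}\|^2 \in\{0,1\}$. Each summand is a non-negative integer, so: if $\chi\notin\Omega$ then $v_i^{(\chi)}=0$ for every $i$; and if $\chi\in\Omega$ then there is a unique index $i=i(\chi)$ with $\|v_{i(\chi)}^{(\chi)}\|^2=1$ and $v_j^{(\chi)}=0$ for $j\ne i(\chi)$. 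Set $\Omega_i := \{\chi\in\Omega\mid i(\chi)=i\}$; these form a partition of $\Omega$ by construction.

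It remains to verify that $\mu_i\cdotHi\mu_i^\circ = \sum_{\chi\in\Omega_i}\chi\times\chi^\circ$, i.e., that $\langle v_i^{(\chi)},v_i^{(\chi')}\rangle = \delta_{\chi,\chi'}[\chi\in\Omega_i]$. If either $\chi$ or $\chi'$ lies outside $\Omega_i$, then by the case analysis above the corresponding vector is zero (either because the character is not in $\Omega$ at all, or because it is assigned to some $j\ne i$), so the inner product vanishes. If $\chi=\chi'\in\Omega_i$, the inner product is $1$ by construction. If $\chi\ne\chi'$ both lie in $\Omega_i$, the Cauchy--Schwarz inequality gives $|\langle v_i^{(\chi)},v_i^{(\chi')}\rangle|\le 1$, so this inner product is an integer in $\{-1,0,1\}$; summed over $i$ it must equal $0$, and since all other $i$ contribute $0$ (as one of the vectors vanishes for $j\ne i$), we conclude $\langle v_i^{(\chi)},v_i^{(\chi')}\rangle=0$. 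This completes the argument; the only slightly delicate point is the integrality/positive semi-definiteness step that forces each $\|v_i^{(\chi)}\|^2$ to be $0$ or $1$, but that is immediate from $a_{i,\chi,\psi}\in\ZZ$.
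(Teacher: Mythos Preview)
Your proof is correct and follows essentially the same approach as the paper: both expand $\mu_i$ over $\Irr(\KK G)$, compare the diagonal coefficients $\chi\times\chi^\circ$ to force each $\chi\in\Omega$ to be supported in a unique $i$, and define $\Omega_i$ accordingly. Your verification of the off-diagonal terms is slightly more detailed than the paper's (and the Cauchy--Schwarz aside is unnecessary, since the vanishing already follows directly from the hypothesis once the other summands drop out, as you yourself note), but the argument is the same.
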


\begin{proof}
For each $i=1,\ldots,n$ we write $\mu_i=\sum_{\chi\in\Irr(\KK G)} \chi\times \psi_{i,\chi}$ with $\psi_{i,\chi}\in R(\KK H_i)$. Equation~(\ref{eqn char tens 2}) implies that, for each $\chi\in\Irr(\KK G)$, the coefficient of $\chi\times\chi^\circ$ in $\sum_{i=1}^n\mu_i\cdotHi \mu_i^\circ$ is equal to $\sum_{i=1}^n(\psi_{i,\chi},\psi_{i,\chi})_{H_i}$. Thus, $\sum_{i=1}^n(\psi_{i,\chi},\psi_{i,\chi})_{H_i}$ is equal to $1$ if $\chi\in\Omega$ and equal to $0$, if $\chi\notin\Omega$. This implies $\psi_{i,\chi}=0$ for all $i=1,\ldots,n$ if $\chi\notin\Omega$. Moreover, if $\chi\in\Omega$, then there exists a unique $i\in\{1,\ldots,n\}$ such that $\psi_{i,\chi}\neq 0$. For $i=1,\ldots,n$ we define $\Omega_i$ as the set of those $\chi\in\Omega$ with $\psi_{i,\chi}\neq 0$. Now the lemma follows.
\end{proof}

%\begin{lemma}\label{lem isometry separation 1}
%Let $\calJ$ be a finite set of elements of $R(\KK G, \KK H)$ satisfying $\sum_{\mu\in\calJ} \mu\cdotH \mu^\circ =\sum_{\chi\in\Omega} \chi\times \chi^\circ$ for some subset $\Omega\subseteq\Irr(\KK G)$. Then $\Omega$ is a disjoint union of subsets $\Omega_\mu$, $\mu\in\calJ$, such that $\mu\cdotH \mu^\circ = \sum_{\chi\in\Omega_\mu} \chi\times\chi^\circ$, for each $\mu\in\calJ$.
%\end{lemma}
%
%\begin{proof}
%For $\mu\in\calJ$ we write $\mu=\sum_{\chi\in\Irr(\KK G)} \chi\times \psi_{\mu,\chi}$ with $\psi_{\mu,\chi}\in R(\KK H)$. Equation~(\ref{eqn char tens 2}) implies that, for each $\chi\in\Irr(\KK G)$, the coefficient of $\chi\times\chi^\circ$ in $\sum_{\mu\in\calJ}\mu\cdotH \mu^\circ$ is equal to $\sum_{\mu\in\calJ}(\psi_{\mu,\chi},\psi_{\mu,\chi})_H$. Thus, $\sum_{\mu\in\calJ}(\psi_{\mu,\chi},\psi_{\mu,\chi})_H$ is equal to $1$ if $\chi\in\Omega$ and equal to $0$, if $\chi\notin\Omega$. This implies $\psi_{\mu,\chi}=0$ for all $\mu$ if $\chi\notin\Omega$. Moreover, if $\chi\in\Omega$, then there exists a unique $\mu\in\calJ$ such that $\psi_{\mu,\chi}\neq 0$. For $\mu\in\calJ$ we define $\Omega_\mu$ as the set of those $\chi\in\Omega$ with $\psi_{\mu,\chi}\neq 0$. Now the lemma follows.
%\end{proof}

\begin{lemma}\label{lem isometry crit}
Let $e$ be a block idempotent of $\calO G$, $f$ a block idempotent of $\calO H$ and suppose that $\mu\in R(\KK Ge,\KK Hf)$ is a quasi-perfect virtual character such that there exists a non-empty subset $\Omega$ of $\Irr(\KK Ge)$ with $\mu\cdotH\mu^\circ = \sum_{\chi\in \Omega} \chi\times \chi^\circ$. Then $\Omega=\Irr(\KK Ge)$ and $\mu$ is an isometry between $\KK Ge$ and $\KK Hf$.
\end{lemma}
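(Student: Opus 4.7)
The plan is to first exploit the identity $\mu\cdotH\mu^\circ=\sum_{\chi\in\Omega}\chi\times\chi^\circ$ via Equation~(\ref{eqn char tens 2}) in order to pin down the shape of $\mu$, and then to use quasi-perfectness together with Brauer's theorem on projective characters and the indecomposability of a block to force $\Omega=\Irr(\KK Ge)$. The corresponding equality $\psi(\Omega)=\Irr(\KK Hf)$ will follow by the same argument applied to $\mu^\circ$, which is again quasi-perfect and satisfies $\mu^\circ\cdotG\mu=\sum_{\chi\in\Omega}\psi(\chi)\times\psi(\chi)^\circ$ by a direct computation from the shape of $\mu$.

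First I would write $\mu=\sum_{\chi,\psi}m_{\chi,\psi}\,\chi\times\psi^\circ$ with $\chi\in\Irr(\KK Ge)$, $\psi\in\Irr(\KK Hf)$ and $m_{\chi,\psi}\in\ZZ$. Equation~(\ref{eqn char tens 2}) then gives $\mu\cdotH\mu^\circ=\sum_{\chi,\chi'}\bigl(\sum_\psi m_{\chi,\psi}m_{\chi',\psi}\bigr)\chi\times(\chi')^\circ$, and matching with the hypothesis yields $\sum_\psi m_{\chi,\psi}m_{\chi',\psi}=\delta_{\chi,\chi'}\cdot 1_{\chi\in\Omega}$. Hence $m_{\chi,\psi}=0$ for $\chi\notin\Omega$; for each $\chi\in\Omega$ there is exactly one $\psi=\psi(\chi)$ with $m_{\chi,\psi(\chi)}=:\varepsilon_\chi\in\{\pm 1\}$; and the correspondence $\chi\mapsto\psi(\chi)$ is an injection $\Omega\hookrightarrow\Irr(\KK Hf)$. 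Thus $\mu=\sum_{\chi\in\Omega}\varepsilon_\chi\,\chi\times\psi(\chi)^\circ$, and by Remark~\ref{rem isometry}(a)(iii) the lemma reduces to establishing $\Omega=\Irr(\KK Ge)$.

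Second, I would use quasi-perfectness to show $\Omega=\Irr(\KK Ge)$. Equation~(\ref{eqn char tens 1}) together with $\mu^\circ(h,g)=\mu(g^{-1},h^{-1})$ gives $(\mu\cdotH\mu^\circ)(g_1,g_2)=|H|^{-1}\sum_h\mu(g_1,h)\mu(g_2^{-1},h^{-1})$, and for $g_1\in G_{p'}$, $g_2\notin G_{p'}$ the two factors force conflicting $p$-parities on $h$, so $\mu\cdotH\mu^\circ$ is itself quasi-perfect. Thus $\sum_{\chi\in\Omega}\chi(g_1)\chi(h^{-1})=0$ for $g_1\in G_{p'}$ and $h\notin G_{p'}$. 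Multiplying by the projective character value $\Phi_\varphi(g_1^{-1})$ (where $\Phi_\varphi=\sum_{\chi}d_{\chi,\varphi}\chi$ vanishes outside $G_{p'}$), summing over $g_1\in G$, and using the standard identity $\sum_g\Phi_\varphi(g^{-1})\chi(g)=|G|\,d_{\chi,\varphi}$, one obtains that $\Phi_\varphi^{(\Omega)}:=\sum_{\chi\in\Omega}d_{\chi,\varphi}\chi$ vanishes on $G\smallsetminus G_{p'}$ for every $\varphi\in\Irr(FGe)$; the same holds for $\Phi_\varphi^{(\Omega^c)}:=\sum_{\chi\notin\Omega}d_{\chi,\varphi}\chi$. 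Since $\Phi_\varphi^{(\Omega)}$ is a non-negative integer combination of irreducible $\KK G$-characters it is afforded by some $\calO G$-lattice, and Brauer's classical theorem on projective characters (see e.g.\ \cite{NagaoTsushima1989}) forces this lattice to be projective; hence $\Phi_\varphi^{(\Omega)}=\sum_{\varphi'}n^{(\Omega)}_{\varphi',\varphi}\Phi_{\varphi'}$ with $n^{(\Omega)}_{\varphi',\varphi}\in\ZZ_{\ge 0}$, and analogously for $\Phi_\varphi^{(\Omega^c)}$. Linear independence of $\{\Phi_{\varphi'}\}$ combined with $\Phi_\varphi=\Phi_\varphi^{(\Omega)}+\Phi_\varphi^{(\Omega^c)}$ yields $n^{(\Omega)}_{\varphi',\varphi}+n^{(\Omega^c)}_{\varphi',\varphi}=\delta_{\varphi',\varphi}$; non-negative integrality then forces, for every $\varphi$, either $d_{\chi,\varphi}=0$ for all $\chi\in\Omega^c$ or $d_{\chi,\varphi}=0$ for all $\chi\in\Omega$.

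Finally I would derive a contradiction from assuming both $\Omega$ and $\Omega^c$ nonempty: partition $\Irr(FGe)=S_\Omega\sqcup S_{\Omega^c}$ according to the above dichotomy; since $\chi(1)=\sum_\varphi d_{\chi,\varphi}\varphi(1)>0$ for every $\chi\in\Irr(\KK Ge)$, every $\chi\in\Omega$ is linked to some $\varphi\in S_\Omega$ and every $\chi\in\Omega^c$ to some $\varphi\in S_{\Omega^c}$, so both $S_\Omega$ and $S_{\Omega^c}$ are nonempty. The bipartite graph on $\Irr(\KK Ge)\sqcup\Irr(FGe)$ with edge set $\{(\chi,\varphi):d_{\chi,\varphi}>0\}$ then splits as $(\Omega\sqcup S_\Omega)\sqcup(\Omega^c\sqcup S_{\Omega^c})$, contradicting the indecomposability of the block $\KK Ge$. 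Hence $\Omega=\Irr(\KK Ge)$, the symmetric argument applied to $\mu^\circ$ gives $\psi(\Omega)=\Irr(\KK Hf)$, and Remark~\ref{rem isometry}(a)(iii) identifies $\mu$ as an isometry. The hard part will be the second step: extracting from the single equation $\mu\cdotH\mu^\circ=\sum_{\chi\in\Omega}\chi\times\chi^\circ$ and quasi-perfectness enough information to realize each partial character $\Phi_\varphi^{(\Omega)}$ as a projective character; once this is in hand, Brauer's theorem and the indecomposability of the block finish the argument.
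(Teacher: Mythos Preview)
Your argument is correct and takes a genuinely different route from the paper's. Both proofs begin the same way, reading off from $\mu\cdotH\mu^\circ=\sum_{\chi\in\Omega}\chi\times\chi^\circ$ that $\mu=\sum_{\chi\in\Omega}\varepsilon_\chi\,\chi\times\psi(\chi)^\circ$ for an injection $\psi\colon\Omega\hookrightarrow\Irr(\KK Hf)$, reducing to $\Omega=\Irr(\KK Ge)$. At that point the paper uses Proposition~\ref{prop quasi-perfect} directly: from $d_G\circ I_\mu=I_\mu\circ d_H$ one gets $d_G(\chi)\in I_\mu(\KK R(\KK Hf))\subseteq\langle\Omega\rangle_\KK$ for every $\chi\in\Omega$, and then invokes the height-zero fact \cite[Lemma~3.6.34(ii)]{NagaoTsushima1989} that $(d_G(\chi),\chi')_G\neq 0$ for all $\chi'$ in the block when $\chi$ has height zero, together with the symmetry $m_{\chi,\chi'}=m_{\chi',\chi}$, to conclude. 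Your approach instead shows that each partial sum $\Phi_\varphi^{(\Omega)}=\sum_{\chi\in\Omega}d_{\chi,\varphi}\chi$ vanishes on $p$-singular elements, applies Brauer's characterization of projective characters, and exploits the non-negativity together with the connectedness of the decomposition graph of a block. The paper's argument is shorter and leverages a single sharp block-theoretic fact about height-zero characters; yours avoids that fact entirely and relies only on the classical Brauer--Serre theorem and block connectedness, which some readers may find more transparent.

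One small wording issue: it is not true that an arbitrary $\calO G$-lattice affording $\Phi_\varphi^{(\Omega)}$ must be projective (e.g.\ for $G=C_p$ the trivial lattice direct sum the augmentation ideal has the regular character but is not free). What Brauer's theorem gives you is the existence of \emph{some} projective lattice with that character, equivalently that $\Phi_\varphi^{(\Omega)}$ is a non-negative integer combination of the $\Phi_{\varphi'}$. That is exactly what you use, so the argument stands; just tighten the phrasing.
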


\begin{proof}
After writing $\mu$ as a $\ZZ$-linear combination of the basis elements $\chi\times \psi^\circ$, $(\chi,\psi)\in\Irr(\KK Ge)\times \Irr(\KK Hf)$, and using Equation~(\ref{eqn char tens 2}), the hypothesis $\mu\cdotH\mu^\circ=\sum_{\chi\in\Omega}\chi\times\chi^\circ$ implies that there exists a subset $\Lambda$ of $\Irr(\KK Hf)$ and a bijection $\alpha\colon\Omega\to \Lambda$ such that $\mu=\sum_{\chi\in\Omega} \varepsilon_\chi\cdot \chi\times \alpha(\chi)^\circ$, with $\varepsilon_\chi\in\{\pm1\}$ for $\chi\in\Omega$. This implies $\mu^\circ\cdotG\mu=\sum_{\psi\in\Lambda}\psi\times\psi^\circ$. As $\mu$ is quasi-perfect, so is $\mu^\circ$. Thus, by symmetry, it suffices now to show that $\Omega=\Irr(\KK Ge)$.

For $\chi,\chi'\in\Irr(\KK Ge)$ set $m_{\chi,\chi'}:=(d_G(\chi),\chi')_G\in\KK$. Then $m_{\chi',\chi}=m_{\chi,\chi'}$ and if $\chi$ has height $0$ then $m_{\chi,\chi'}\neq 0$ for all $\chi'\in\Irr(\KK Ge)$, see~\cite[Lemma~3.6.34(ii)]{NagaoTsushima1989}. Thus, to complete the proof it suffices to show that if $\chi\in\Omega$ and $\chi'\in\Irr(\KK Ge)$ with $m_{\chi,\chi'}\neq 0$ then also $\chi'\in\Omega$. But this holds if and only if  $d_G(\Omega)\subseteq\langle\Omega\rangle_\KK$. So let $\chi\in\Omega$ and set $\psi:=\alpha(\chi)$. Then $I_\mu(\psi)=\varepsilon_\chi\cdot\chi$. Since $\mu$ is quasi-perfect, Proposition~\ref{prop quasi-perfect} implies that 
\begin{equation*}
  d_G(\chi)=\varepsilon_\chi\cdot d_G(I_\mu(\psi)) = \varepsilon_\chi\cdot I_\mu(d_H(\psi))\in I_\mu(\KK R(\KK Hf))
  \subseteq \langle \Omega \rangle_\KK\,,
\end{equation*}
and the proof is complete.
\end{proof}

\begin{corollary}\label{cor isometry separation 2}
Let $e$ be a block idempotent of $\calO G$, let $H_1,\ldots,H_n$ be finite groups, and, for each $i=1,\ldots,n$, let $f_i$ be a block idempotent of $\calO H_i$. Furthermore, for $i=1,\ldots,n$, let $\mu_i\in R(\KK Ge,\KK H_if_i)$ be a quasi-perfect virtual character such that $\sum_{i=1}^n\mu_i\cdotHi\mu_i^\circ=\sum_{\chi\in\Omega}\chi\times\chi^\circ$ in $R(\KK Ge,\KK Ge)$ for some non-empty subset $\Omega\subseteq\Irr(\KK Ge)$. Then there exists a unique $i\in\{1,\ldots,n\}$ such that $\mu_i\neq 0$. Moreover, $\Omega=\Irr(\KK Ge)$ and $\mu_i$ is an isometry between $\KK Ge$ and $\KK H_if_i$.
\end{corollary}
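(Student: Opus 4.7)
The plan is to combine Lemma~\ref{lem isometry separation 1} with Lemma~\ref{lem isometry crit} in an essentially mechanical way. First I would apply Lemma~\ref{lem isometry separation 1} to the given decomposition $\sum_{i=1}^n \mu_i \cdotHi \mu_i^\circ = \sum_{\chi\in\Omega}\chi\times\chi^\circ$ (taking the $n$ groups of the lemma to be $H_1,\ldots,H_n$). This yields a partition $\Omega=\Omega_1\sqcup\cdots\sqcup\Omega_n$ such that
\[
  \mu_i \cdotHi \mu_i^\circ = \sum_{\chi\in\Omega_i}\chi\times\chi^\circ \quad\text{in } R(\KK Ge,\KK Ge)
\]
for each $i$.

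Next I would separate the indices into two classes according to whether $\Omega_i$ is empty or not. By Remark~\ref{rem isometry}(b), $\mu_i\neq 0$ if and only if $\mu_i\cdotHi\mu_i^\circ\neq 0$, which in turn is equivalent to $\Omega_i\neq\emptyset$. Hence for every $i$ with $\Omega_i\neq\emptyset$, the hypotheses of Lemma~\ref{lem isometry crit} are satisfied for $\mu_i$ (it is quasi-perfect by assumption and its self-tensor gives a non-empty sum of diagonal characters), and the lemma forces $\Omega_i=\Irr(\KK Ge)$ and $\mu_i$ to be an isometry between $\KK Ge$ and $\KK H_if_i$.

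Finally I would use the disjointness of the partition together with $\Omega\neq\emptyset$: at least one $\Omega_i$ is non-empty, and any such $\Omega_i$ equals the whole set $\Irr(\KK Ge)$; since the $\Omega_j$'s are pairwise disjoint subsets of $\Omega\subseteq\Irr(\KK Ge)$, this forces $\Omega_j=\emptyset$ (hence $\mu_j=0$) for every other index $j$, and also $\Omega=\Irr(\KK Ge)$. This gives uniqueness of the index $i$ with $\mu_i\neq 0$, together with the remaining assertions.

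There is essentially no obstacle: the entire argument is a two-line deduction from the two preceding lemmas plus Remark~\ref{rem isometry}(b). The only minor subtlety worth stating explicitly is that the hypothesis of Lemma~\ref{lem isometry separation 1} is met because each $\mu_i$ takes values in $R(\KK Ge,\KK H_if_i)\subseteq R(\KK G,\KK H_i)$ so the left-hand factors of the diagonal terms in $\sum\mu_i\cdotHi\mu_i^\circ$ automatically lie in $\Irr(\KK Ge)$, matching the given $\Omega\subseteq\Irr(\KK Ge)$.
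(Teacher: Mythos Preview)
Your proposal is correct and follows essentially the same approach as the paper's proof: apply Lemma~\ref{lem isometry separation 1} to obtain the disjoint decomposition $\Omega=\bigsqcup_i\Omega_i$, observe that $\mu_i\neq 0$ iff $\Omega_i\neq\emptyset$, apply Lemma~\ref{lem isometry crit} to any nonempty $\Omega_i$ to force $\Omega_i=\Irr(\KK Ge)$, and conclude by disjointness. The paper's write-up is terser but identical in structure.
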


\begin{proof}
Applying Lemma~\ref{lem isometry separation 1}, we see that $\Omega$ is a disjoint union of subsets $\Omega_i$ such that $\mu_i\cdotHi\mu_i^\circ=\sum_{\chi\in\Omega_i}\chi\times\chi^\circ$. Note that $\mu_i\neq 0$ if and only if $\Omega_i\neq\emptyset$. Choose $i\in\{1,\ldots,n\}$ such that $\Omega_i$ is non-empty. Then Lemma~\ref{lem isometry crit} implies that $\Omega_i=\Irr(\KK Ge)$ and that $\mu_i$ is an isometry between $\KK Ge$ and $\KK H_if_i$. This also implies that $\Omega_j=\emptyset$ for all $j\neq i$ in $\{1,\ldots,n\}$ and therefore $\mu_j=0$ for all $j\neq i$ in $\{1,\ldots,n\}$.
\end{proof}

\begin{corollary}\label{cor isometry separation 3}
Let $e$ be a block idempotent of $\calO G$, let $f\in Z(\calO H)$ be an idempotent, and let $\mu\in R(\KK Ge,\KK Hf)$ be a quasi-perfect virtual character satisfying $\mu\cdotH\mu^\circ = [\KK Ge]$ in $R(\KK Ge, \KK Ge)$. Then there exists a unique primitive idempotent $f'$ of $Z(\calO Hf)$ such that $\mu=\mu\cdot f'$. Furthermore, $\mu$ is an isometry between $\KK Ge$ and $\KK Hf'$.
\end{corollary}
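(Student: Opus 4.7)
The plan is to decompose $f$ into its primitive central constituents and reduce to Corollary~\ref{cor isometry separation 2}. Write $f=f_1+\cdots+f_n$ where $f_1,\ldots,f_n$ are the pairwise orthogonal primitive idempotents of $Z(\calO Hf)$; equivalently, the $f_i$ are precisely the block idempotents of $\calO H$ satisfying $f_if=f_i$. For each $i$, set $\mu_i:=\mu\cdot f_i\in R(\KK Ge,\KK Hf_i)$, so that $\mu=\mu_1+\cdots+\mu_n$. The goal is then to show that exactly one of the $\mu_i$ is nonzero, and to take $f'$ to be the corresponding $f_i$.

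First I would verify that each $\mu_i$ is quasi-perfect. Using Proposition~\ref{prop quasi-perfect}, one has $I_{\mu_i}=I_\mu\circ\pi_i$, where $\pi_i\colon \KK R(\KK H)\to \KK R(\KK Hf_i)\subseteq \KK R(\KK H)$ denotes the projection. Since the decomposition map $d_H$ respects the block decomposition of $\calO H$, the projection $\pi_i$ commutes with $d_H$, and the quasi-perfectness hypothesis $d_G\circ I_\mu=I_\mu\circ d_H$ immediately yields $d_G\circ I_{\mu_i}=I_{\mu_i}\circ d_H$.

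Next I would establish the block-diagonal identity
\begin{equation*}
  \mu\cdotH\mu^\circ=\sum_{i=1}^n \mu_i\cdotH\mu_i^\circ\,.
\end{equation*}
Writing $\mu=[M]$ for a virtual $(\KK G,\KK H)$-bimodule $M$, we have $\mu_i=[Mf_i]$ and $\mu_j^\circ=[(Mf_j)^\circ]=[f_jM^\circ]$, the last equality because the central idempotent $f_j$ satisfies $f_j^*=f_j$. Since the $f_i$ are central and pairwise orthogonal, $(Mf_i)\otimes_{\KK H}(f_jM^\circ)=0$ for $i\neq j$, so the cross terms vanish. Combining with the hypothesis gives
\begin{equation*}
  \sum_{i=1}^n \mu_i\cdotH\mu_i^\circ=[\KK Ge]=\sum_{\chi\in\Irr(\KK Ge)}\chi\times\chi^\circ\,.
\end{equation*}

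At this point Corollary~\ref{cor isometry separation 2}, applied with all $H_i:=H$ and block idempotents $f_i$, produces a unique index $i_0$ with $\mu_{i_0}\neq 0$, and for that index $\mu_{i_0}$ is an isometry between $\KK Ge$ and $\KK Hf_{i_0}$. Setting $f':=f_{i_0}$ we obtain $\mu=\mu_{i_0}=\mu\cdot f'$, and $\mu$ is an isometry between $\KK Ge$ and $\KK Hf'$. For uniqueness: if $f''$ were another primitive idempotent of $Z(\calO Hf)$ with $\mu=\mu\cdot f''$, then $f'\neq f''$ would force $f'f''=0$, giving $\mu=\mu\cdot f'\cdot f''=0$ and contradicting $\mu\neq 0$ (which follows from $\mu\cdotH\mu^\circ=[\KK Ge]\neq 0$). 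The only slightly subtle step is showing that projection by a central block idempotent preserves quasi-perfectness; once this commutation of $\pi_i$ with $d_H$ is in hand, everything else is bookkeeping and an invocation of Corollary~\ref{cor isometry separation 2}.
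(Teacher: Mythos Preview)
Your proof is correct and follows essentially the same approach as the paper: decompose $f$ into primitive idempotents, set $\mu_i:=\mu\cdot f_i$, verify that each $\mu_i$ is quasi-perfect via Proposition~\ref{prop quasi-perfect} and the compatibility of $d_H$ with the block decomposition, observe that the cross terms $\mu_i\cdotH\mu_j^\circ$ vanish so that $\sum_i\mu_i\cdotH\mu_i^\circ=[\KK Ge]$, and then invoke Corollary~\ref{cor isometry separation 2}. Your write-up is simply more detailed in justifying the block-diagonal identity and the uniqueness of $f'$, but the argument is the same.
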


\begin{proof}
Let $f'_1,\ldots,f'_n$ denote the primitive idempotents of $Z(\calO Hf)$, and for each $i=1,\ldots,n$, set $\mu_i:=\mu\cdot f'_i\in R(\KK Ge, \KK Hf'_i)$. Then $\mu=\sum_{i=1}^n\mu_i$ and
\begin{equation*}
  \sum_{i=1}^n\mu_i\cdotH\mu_i^\circ = \mu\cdotH\mu^\circ=[\KK Ge]=\sum_{\chi\in\Irr(\KK Ge)}\chi\times\chi^\circ\,.
\end{equation*}
Proposition~\ref{prop quasi-perfect}(ii), together with the fact that $d_H$ respects the block decomposition, implies that with $\mu$ also $\mu_i$ is quasi-perfect. Now Corollary~\ref{cor isometry separation 2} applies and the proof is complete.
\end{proof}

The following corollary is an immediate consequence of Corollary~\ref{cor isometry separation 3}. It also slightly generalizes a result in \cite[Th\'eor\`eme~1.5(2)]{Broue1990}.

\begin{corollary}\label{prop quasi-perfect isometries respect work block-wise}
Let $e\in Z(\calO G)$ and $f\in Z(\calO H)$ be idempotents. Let $\calI$ denote the set of primitive idempotents $e'$ of $Z(\calO Ge)$ with $e'e=e'$ and let $\calJ$ denote the set of primitive idempotents $f'$ of $Z(\calO Hf)$ with $f'f=f'$. Suppose that $\mu\in R(\KK Ge,\KK Hf)$ is a quasi-perfect isometry between $\KK Ge$ and $\KK Hf$. Then, for each $e'\in\calI$ there exists a unique $f'\in\calJ$ such that $e'\mu f'\neq 0$. Conversely, for each $f'\in\calJ$ there exists a unique $e'\in\calI$ such that $e'\mu f'\neq 0$. These conditions define inverse bijections between $\calI$ and $\calJ$. Moreover, if $e'\in\calI$ and $f'\in\calJ$ satisfy $e'\mu f'\neq 0$ then $e'\mu f'$ is an isometry between $\KK Ge'$ and $\KK Hf'$.
\end{corollary}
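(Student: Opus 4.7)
The plan is to reduce to Corollary~\ref{cor isometry separation 3} by restricting $\mu$ on the left to each primitive central idempotent in $\calI$. Fix $e' \in \calI$ and set $\mu_{e'} := e'\mu \in R(\KK Ge', \KK Hf)$.

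My first step is to verify that $\mu_{e'}$ is again quasi-perfect. Using the characterization in Proposition~\ref{prop quasi-perfect}, one notes that $I_{e'\mu}(\phi) = e' \cdot I_\mu(\phi)$ for every $\phi \in \KK R(\KK H)$, and since $d_G$ commutes with projection onto block summands of $\calO G$, a direct calculation then gives $d_G \circ I_{\mu_{e'}} = I_{\mu_{e'}} \circ d_H$.

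Next, under the duality conventions of \ref{not character groups}(b) one has $(e'\mu)^\circ = \mu^\circ e'$, whence
\begin{equation*}
\mu_{e'} \cdotH \mu_{e'}^\circ \;=\; (e'\mu) \cdotH (\mu^\circ e') \;=\; e'\,(\mu \cdotH \mu^\circ)\, e' \;=\; e'\,[\KK Ge]\, e' \;=\; [\KK Ge'],
\end{equation*}
using the hypothesis $\mu \cdotH \mu^\circ = [\KK Ge]$ and the decomposition $[\KK Ge] = \sum_{e'' \in \calI}[\KK Ge'']$. Now Corollary~\ref{cor isometry separation 3}, applied to $\mu_{e'}$ in place of $\mu$ (with $e$ replaced by $e'$ and $\calO Hf$ kept as is), produces a unique primitive idempotent $f' \in \calJ$ such that $\mu_{e'} = \mu_{e'}f'$. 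This is precisely the statement that $e'\mu f' \neq 0$ and that $e'\mu f'' = 0$ for every $f'' \in \calJ \smallsetminus \{f'\}$, and moreover the same corollary asserts that $e'\mu f'$ is an isometry between $\KK Ge'$ and $\KK Hf'$.

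The symmetric argument applied to $\mu^\circ \in R(\KK Hf, \KK Ge)$, which is itself a quasi-perfect isometry by Remark~\ref{rem isometry}(a), yields for each $f' \in \calJ$ a unique $e' \in \calI$ with $e'\mu f' \neq 0$. The two assignments $e' \mapsto f'$ and $f' \mapsto e'$ produced this way must be mutually inverse by construction, which gives the claimed bijections between $\calI$ and $\calJ$ and completes the proof. The whole argument is essentially bookkeeping once Corollary~\ref{cor isometry separation 3} is available; the only minor subtlety I anticipate is the verification that left restriction by an idempotent preserves quasi-perfectness, which is the least transparent step.
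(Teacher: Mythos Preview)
Your proof is correct and follows essentially the same approach as the paper's own proof: both cut $\mu$ on the left with a primitive idempotent $e'$, verify quasi-perfectness of the resulting element via Proposition~\ref{prop quasi-perfect}, check the one-sided orthogonality relation, and then invoke Corollary~\ref{cor isometry separation 3}; the symmetric conclusion then comes from applying the same to $\mu^\circ$.
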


\begin{proof}
First note that by Proposition~\ref{prop quasi-perfect}(ii), with $\mu$ also $e'\mu f'$ is a quasi-perfect character for every $e'\in\calI$ and $f'\in \calJ$. Next let $e'\in\calI$. Then, Corollary~\ref{cor isometry separation 3} applied to $e'$ and $f$ and the quasi-perfect virtual characters $e'\mu f'$, $f'\in\calJ$, implies that there exists a unique $f'\in \calJ$ with $e'\mu f'\neq 0$ and that $e'\mu f'$ is an isometry between $\KK Ge'$ and $\KK Hf'$. This proves the first statement. Symmetrically, fixing $f'\in \calJ$ and using $\mu^\circ$, we obtain the second statement. The remaining statements are clear from the above.
\end{proof}

The following Lemma will be used in Section~\ref{sec Brauer pairs of ppeqs}.

\begin{lemma}\label{lem non-zero criterion}
Let $e\in Z(\KK G)$ and $f\in Z(\KK H)$ be idempotents and let $\mu\in R(\KK Ge,\KK Hf)$ be quasi-perfect such that $d_G\circ I_\mu\colon R(\KK Hf)\to \KK R(\KK Ge)$ is non-zero. Then $d_{G\times H}(\mu)\neq 0$. In particular, if $e\neq 0\neq f$ and $\mu$ is a quasi-perfect isometry between $\KK Ge$ and $\KK Hf$, then $d_{G\times H}(\mu)\neq 0$.
\end{lemma}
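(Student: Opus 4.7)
The plan is to prove this by contrapositive: I would assume $d_{G\times H}(\mu)=0$ and deduce $d_G\circ I_\mu=0$. Unpacking the hypothesis, $d_{G\times H}(\mu)=0$ means precisely that $\mu(g,h)=0$ for every $(g,h)\in G_{p'}\times H_{p'}$, since by definition $d_{G\times H}$ is restriction of a virtual character to $p'$-elements of $G\times H$, and $(g,h)$ is a $p'$-element of $G\times H$ if and only if $g\in G_{p'}$ and $h\in H_{p'}$.

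Now fix $\psi\in R(\KK Hf)$ and $g\in G$. If $g\notin G_{p'}$, then $(d_G(I_\mu(\psi)))(g)=0$ directly from the definition of $d_G$. If $g\in G_{p'}$, then formula (\ref{eqn char tens 1}) (applied with $K=\{1\}$) gives
\begin{equation*}
  (d_G(I_\mu(\psi)))(g) = (I_\mu(\psi))(g) = \frac{1}{|H|}\sum_{h\in H}\mu(g,h)\psi(h)\,.
\end{equation*}
The quasi-perfectness of $\mu$ forces $\mu(g,h)=0$ whenever $g\in G_{p'}$ and $h\notin H_{p'}$, so the sum collapses to $h\in H_{p'}$. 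But on $G_{p'}\times H_{p'}$ the character $\mu$ vanishes identically by the standing assumption, so every surviving term is zero. Hence $d_G\circ I_\mu=0$, contradicting the hypothesis. This proves the main assertion.

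For the ``in particular'' clause, suppose $\mu$ is a quasi-perfect isometry between $\KK Ge$ and $\KK Hf$ with $e\neq 0\neq f$. By Proposition~\ref{prop quasi-perfect} we have $d_G\circ I_\mu = I_\mu \circ d_H$, and by definition of isometry the map $I_\mu\colon R(\KK Hf)\myiso R(\KK Ge)$ is an isomorphism. It therefore suffices to observe that $d_H$ restricted to $R(\KK Hf)$ is non-zero: since $f\neq 0$ (taken as an idempotent in $Z(\calO H)$ so that $d_H$ is defined), the reduction $\fbar$ is non-zero, the block algebra $FH\fbar$ has at least one simple module, and $d_H$ surjects onto the non-zero subgroup $R(FH\fbar)$. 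Thus $d_G\circ I_\mu\neq 0$, and the first part of the lemma yields $d_{G\times H}(\mu)\neq 0$.

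I do not foresee any real obstacle here; the whole argument is essentially a one-line computation that leverages condition (ii) of Definition~\ref{def perfect isometry}(a) to kill the only surviving part of the sum defining $I_\mu$ on $p$-regular elements.
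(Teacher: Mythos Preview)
Your proof is correct and follows essentially the same approach as the paper: the paper packages the same computation into the operator identity $I_\mu\circ d_H = I_{d_{G\times H}(\mu)}\circ d_H$ (derived from (\ref{eqn char tens 1}) and quasi-perfectness) and combines it with $d_G\circ I_\mu = I_\mu\circ d_H$, whereas you carry out the equivalent character-value calculation directly by contrapositive. One minor point: in the ``in particular'' clause the hypothesis only gives $f\in Z(\KK H)$, so rather than invoking blocks of $FH\fbar$ you can simply note that any $\psi\in\Irr(\KK Hf)$ satisfies $d_H(\psi)(1)=\psi(1)\neq 0$.
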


\begin{proof}
Since $\mu$ is quasi-perfect, we have $d_G\circ I_\mu = I_\mu\circ d_H$ as maps from $\KK R(\KK Hf)$ to $\KK R(\KK Ge)$ by Lemma~\ref{prop quasi-perfect}. Since $\mu$ is quasi-perfect, it follows from Equation~(\ref{eqn char tens 1}) and the definition of quasi-perfect that $I_\mu\circ d_H = I_{d_{G\times H}(\mu)}\circ d_H$ as functions from $\KK R(\KK Hf)$ to $\KK R(\KK Ge)$. Thus, we have $I_{d_{G\times H}(\mu)}\circ d_H = d_G\circ I_\mu\neq 0$ and hence $d_{G\times H}(\mu)\neq 0$.
\end{proof}

The following lemma is an immediate consequence of the definition of $\tens{X}{Y}{\KK H}$ and will be used in Section~\ref{sec Brauer pairs of ppeqs}.

\begin{lemma}\label{lem non-zero res}
Let $X\le G\times H$ and $Y\le H\times K$ be subgroups satisfying $k_1(Y)\le k_2(X)$. Further let $\mu\in R(\KK X)$ and $\nu\in R(\KK Y)$ be such that $\mu\dotXYH\nu\in R(\KK[X*Y])$ is equal to $[M]$ or to $-[M]$ for some non-zero module $M\in\lmod{\KK[X*Y]}$. Then $\res^{Y}_{k_1(Y)\times k_2(Y)}(\nu)\neq 0$.
\end{lemma}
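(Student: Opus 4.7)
The plan is to evaluate the virtual character $\mu \dotXYH \nu$ at the identity element $(1,1) \in X*Y$ and derive a contradiction. Since $\mu \dotXYH \nu = \pm [M]$ with $M \neq 0$, we have $(\mu \dotXYH \nu)(1,1) = \pm\dim_\KK M \neq 0$, so it suffices to show that the assumption $\res^Y_{k_1(Y) \times k_2(Y)}(\nu) = 0$ forces this value to be zero.

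First I would observe that the hypothesis $k_1(Y) \le k_2(X)$ yields $k_2(X) \cap k_1(Y) = k_1(Y)$. Consequently, for actual modules $M' \in \lmod{\KK X}$ and $N' \in \lmod{\KK Y}$, the extended tensor product $M' \tens{X}{Y}{\KK H} N'$ has underlying $\KK$-vector space $M' \otimes_{\KK[k_1(Y)]} N'$. Moreover, $k_1(X) \times k_2(Y)$ is a subgroup of $X*Y$: for $g \in k_1(X)$ and $k \in k_2(Y)$ the choice $h = 1 \in H$ satisfies $(g,1) \in X$ and $(1,k) \in Y$. With this choice of $h$, the defining formula (\ref{eqn ext tp}) gives the action of $(g,k)$ on $m \otimes n$ as $(g,1)m \otimes (1,k)n$.

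Next I would derive the character formula: by the same averaging argument (valid since $\KK$ has characteristic zero, so $\KK[k_1(Y)]$ is semisimple and the coinvariants coincide with the image of the projection $e_L = \frac{1}{|k_1(Y)|} \sum_{\ell \in k_1(Y)} \ell$) that underlies the standard formula (\ref{eqn char tens 1}), one obtains, for all $(g,k) \in k_1(X) \times k_2(Y)$,
\begin{equation*}
  (\mu \dotXYH \nu)(g,k) = \frac{1}{|k_1(Y)|} \sum_{\ell \in k_1(Y)} \mu\bigl((g,\ell)\bigr) \cdot \nu\bigl((\ell,k)\bigr)\,,
\end{equation*}
first for actual modules and then by bilinearity for virtual characters. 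Crucially, every argument $(\ell,k)$ of $\nu$ on the right-hand side lies in $k_1(Y) \times k_2(Y) \le Y$.

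Hence the hypothesis $\res^Y_{k_1(Y) \times k_2(Y)}(\nu) = 0$ forces each summand to vanish, and specialization to $(g,k) = (1,1)$ gives $(\mu \dotXYH \nu)(1,1) = 0$, contradicting $\pm \dim_\KK M \neq 0$. The only step requiring care is the character formula itself, but it is essentially immediate from the defining action (\ref{eqn ext tp}) together with the routine adaptation of the averaging computation behind (\ref{eqn char tens 1}), using that the $(g,k)$-action and the $k_1(Y)$-action on $M' \otimes_\KK N'$ commute (both being induced by commuting group elements inside $X$ and inside $Y$, respectively).
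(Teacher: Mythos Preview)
Your proof is correct and follows essentially the same idea as the paper's: restrict the extended tensor product to $k_1(X)\times k_2(Y)\le X*Y$, where it reduces to the ordinary tensor product over $\KK[k_1(Y)]$. The paper packages this as the single identity
\[
  \res^{X}_{k_1(X)\times k_1(Y)}(\mu)\dottt{k_1(Y)}\res^{Y}_{k_1(Y)\times k_2(Y)}(\nu)
  = \res^{X*Y}_{k_1(X)\times k_2(Y)}(\mu\dotXYH\nu)
\]
and concludes by bilinearity (if the second factor vanishes, so does the left-hand side, contradicting that the right-hand side is $\pm[\Res M]\neq 0$); you unwind the same identity to the level of character values and evaluate at $(1,1)$. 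Both arguments rely on the same observation that under $k_1(Y)\le k_2(X)$ one has $k_2(X)\cap k_1(Y)=k_1(Y)$, so the extended tensor product is taken over $\KK[k_1(Y)]$.
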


\begin{proof}
Clearly, $\res^{X}_{k_1(X)\times k_1(Y)}(\mu)\mathop{\cdot}\limits_{k_1(Y)}\res^{Y}_{k_1(Y)\times k_2(Y)}(\nu) = \res^{X*Y}_{k_1(X)\times k_2(Y)}(\mu\dotXYH\nu)$ and the latter is equal to $\res^{X*Y}_{k_1(X)\times k_2(Y)}([M])$ or its negative, and therefore non-zero. The result now follows.
\end{proof}

%%%%%%%%%%%%%%%% SECTION 9 %%%%%%%%%%%%%%%%%%%%%%%%%%%%%%%%%%%
%\newpage
\section{Grothendieck groups of $p$-permutation modules and $p$-permutation equivalences}\label{sec p-permutation equivalences}

We assume again that $G$ and $H$ are finite groups and that the $p$-modular system $(\KK,\calO,F)$ is large enough for $G$ and $H$.

 \begin{nothing}\label{noth T(G)} {\em Grothendieck groups of $p$-permutation modules.}\quad
 (a) For an idempotent $e\in Z(\calO G)$, we denote by $T(\calO Ge)$ the Grothendieck group of the category $\ltriv{\calO Ge}$ with respect to direct sums. The group $T(\calO Ge)$ is free as abelian group with {\em standard basis} given by the elements $[M]$, where $M$ runs through a set of representatives of the isomorphism classes of indecomposable $p$-permutation $\calO Ge$-modules. For an arbitrary module $M\in\ltriv{\calO Ge}$ we write $[M]=[M_1]+\cdots+[M_r]\in T(\calO Ge)$ if $M=M_1\oplus\cdots\oplus M_r$ is a decomposition of $M$ into indecomposable submodules. We always view $T(\calO Ge)$ as a subgroup of $T(\calO G)$ in the natural way. Moreover, we say that an indecomposable module $M\in\ltriv{\calO G}$ {\em appears} in an element $\omega\in T(\calO G)$, if $[M]$ occurs with non-zero coefficient in $\omega$ with respect to the above standard basis. Note that multiplying a $p$-permutation $\calO G$-modules with $e$ defines a projection map $T(\calO G)\to T(\calO Ge)$, $\omega\mapsto e\omega$. Similarly, we define the Grothendieck group $T(FGe)$. If additionally $f\in Z(\calO H)$ is an idempotent then we define $T(\calO Ge,\calO Hf):=T(\calO[G\times H](e\otimes f^*))$. If $M\in\ltriv{\calO Ge}_{\calO Hf}$ we denote by $[M]$ the corresponding element in $T(\calO Ge,\calO Hf)$. Similar notations will be used over $F$. The $\ZZ$-span of the elements $[M]\in T(\calO Ge)$, where $M$ is an indecomposable projective $\calO Ge$-module will be denoted by $Pr(\calO G e)$. We also use the notations $Pr(\calO G e, \calO Hf)$, $Pr(FGe)$ and $Pr(FGe, FHf)$ with obvious meanings.
 
 \smallskip
(b) Tensor products of bimodules and generalized tensor products as introduced in Section~\ref{sec tensor products} induce maps on Grothendieck group levels that we denote again by $\cdotH$ and $\dotXYH$, as in \ref{not character groups}. Similarly, the Brauer construction with respect to a $p$-subgroup $P$ of $G$ induces a homomorphism $T(\calO G)\to T(F[N_G(P)/P])$, $\omega\mapsto \omega(P)$. Often we will also consider $\omega(P)$ as element of $T(F[N_G(P)])$ after applying inflation. Note that for a Brauer pair $(P,e)$ of $FG$, one obtains a homomorphism $-(P,e)\colon T(FG)\to T(FIe)$, $\omega\mapsto \omega(P,e)=e\omega(P)$, where $I=N_G(P,e)$. Similarly, one obtains a homomorphism $-(P,e)\colon T(\calO G)\to T(FIe)$. 

 \smallskip
 (c) For each idempotent $e\in Z(\calO G)$ we have a commutative diagram
 \begin{diagram}[70]
  \movevertex(-20,0){Pr(\calO Ge)} & \movevertex(-10,0){\quad \subseteq\quad}  &   T(\calO Ge) & 
               \movevertex(10,0){\Ear[40]{\kappa_G}} & \movevertex(20,0){R(\KK Ge)} &&
   \movevertex(-10,0){\saR{\wr}} & & \saR{\wr} & & \movevertex(20,0){\saR{d_G}} &&
   \movevertex(-20,0){Pr(FGe)} & \movevertex(-10,0){\quad \subseteq \quad} &T(FGe) & 
               \movevertex(10,0){\Ear[40]{\eta_G}} & \movevertex(20,0){R(FGe)} &&
\end{diagram}
whose top horizontal map $\kappa_G$ is induced by the scalar extension functors $\KK\otimes_{\calO}-$, whose left vertical maps are induced by the scalar extension functor $F\otimes_{\calO}-$, whose right vertical map is the decomposition map, and whose bottom horizontal map $\eta_G$ sends $[M]$ to $[M]$ for any $M\in\ltriv{FGe}$. In other words, if $M$ is indecomposable (i.e., $[M]\in T(FGe)$ a standard basis element) then $[M]$ is mapped to the sum of its composition factors (in terms of the standard basis in $R(FGe)$). Recall from Proposition~\ref{prop p-perm equiv}(b) that the left vertical maps are indeed isomorphisms preserving the standard basis elements and vertices. Recall also from \cite[Theorem~3.6.15(i)]{NagaoTsushima1989} that the map $\kappa_G$ is injective on $Pr(\calO G e)$.

For an element $\omega\in T(FG\ebar)$ we will denote the image under $\kappa_G$ of the corresponding element in $T(\calO Ge)$ by $\omega^{\KK}\in R(\KK Ge)$.
 \end{nothing}

The following proposition is well-known to specialists. We state it for easy reference.

\begin{proposition}\label{prop ghost group}
Let $A$ be a block of $\calO G$ and let $\calBPtilde(A)$ denote a set of representatives of the $G$-orbits of $A$-Brauer pairs.

\smallskip
{\rm (a)} The map
\begin{equation*}
  T(A)\mapsto \prod_{(P,e)\in\calBPtilde(A)} R(F[N_G(P,e)]e)\,, \quad 
  \omega\mapsto \bigl(\eta_{N_G(P,e)}(\omega(P,e))\bigr)_{(P,e)\in\calBPtilde(A)}\,,
\end{equation*}
is an injective group homomorphism and has finite cokernel.

\smallskip
{\rm (b)} The map
\begin{equation*}
  T(A)\mapsto \prod_{(P,e)\in\calBPtilde(A)} R(\KK[N_G(P,e)]e)\,, \quad 
  \omega\mapsto \bigl((\omega(P,e)^\KK)\bigr)_{(P,e)\in\calBPtilde(A)}\,,
\end{equation*}
is an injective group homomorphism.
\end{proposition}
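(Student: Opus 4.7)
Both maps are clearly group homomorphisms: they are composites of $\omega \mapsto \omega(P,e)$ with $\eta_{N_G(P,e)}$ in case~(a), and with $\kappa_{N_G(P,e)}$ precomposed with the lift isomorphism $T(FN_G(P,e)e) \cong T(\calO N_G(P,e)e)$ from Proposition~\ref{prop p-perm equiv}(b) in case~(b). The substantive claims are injectivity in both parts and the finite-cokernel statement in~(a), which I will establish via a ``highest support'' argument on the poset $\calBPtilde(A)$.

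Given $0\neq\omega=\sum_M n_M[M]\in T(A)$ with $M$ ranging over indecomposable $p$-permutation $A$-modules, each such $M$ has by Proposition~\ref{prop Brauer pairs for M}(c) a unique $G$-conjugacy class of maximal $M$-Brauer pairs, represented in $\calBPtilde(A)$. I will choose $(P_j,e_j)\in\calBPtilde(A)$ maximal in the partial order induced by $\le_G$ among representatives for which some $M$ with $n_M\neq 0$ has its maximal $M$-Brauer pair conjugate to $(P_j,e_j)$. By Proposition~\ref{prop Brauer pairs for M}(b),(c), if $M$ appears in $\omega$ and $M(P_j,e_j)\neq 0$, then $(P_j,e_j)\le_G$ the maximal $M$-Brauer pair of $M$; by maximality of $j$, the latter must be conjugate to $(P_j,e_j)$ itself. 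Setting $I_j:=N_G(P_j,e_j)$, we therefore obtain
\begin{equation*}
  \omega(P_j,e_j) \ =\ \sum_{M:\,\text{max BP}\sim(P_j,e_j)} n_M\,[M(P_j,e_j)] \ \in\ T(FI_je_j)\,.
\end{equation*}

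To show that $\eta_{I_j}(\omega(P_j,e_j))\neq 0$, I will argue that the summands $M(P_j,e_j)$ appearing are pairwise non-isomorphic indecomposable $p$-permutation $FI_je_j$-modules with vertex $P_j$: non-isomorphism is exactly Proposition~\ref{prop isomorphic test}, while the vertex claim follows from the Morita equivalence of~\ref{noth covering} combined with the fact that the Green correspondent $M(P_j)$ has vertex $P_j$. Since $P_j$ is normal in $I_j$, Lemma~\ref{lem Brauer con and vertex}(b) forces $P_j$ to act trivially on each $M(P_j,e_j)$, so each is the inflation of an indecomposable projective $F[I_j/P_j]\ebar_j$-module $V_M$, with distinct $M$ giving distinct $V_M$ by Proposition~\ref{prop p-perm equiv}(c). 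Since the $V_M$ are projective indecomposables with pairwise distinct simple heads and the Cartan matrix of the block $F[I_j/P_j]\ebar_j$ has non-zero determinant (a power of $p$), the classes $[V_M]$ are $\ZZ$-linearly independent in $R(F[I_j/P_j]\ebar_j)$, and this independence persists under inflation to $R(FI_je_j)$. Thus $\eta_{I_j}(\omega(P_j,e_j))\neq 0$, proving injectivity in~(a).

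Part~(b) will follow from~(a) via the commutative diagram of~\ref{noth T(G)}(c), which gives $d_{I_j}(\omega(P,e)^{\KK})=\eta_{I_j}(\omega(P,e))$; hence the vanishing hypothesis of~(b) implies that of~(a). For the finite-cokernel claim in~(a), both groups are finitely generated free abelian, so it suffices to match ranks. By Propositions~\ref{prop Brauer pairs for M}(c) and~\ref{prop p-perm equiv}(c), the rank of $T(A)$ equals $\sum_{(P,e)\in\calBPtilde(A)}$ the number of indecomposable projective $F[N_G(P,e)/P]\ebar$-modules, i.e.\ the number of simple $F[N_G(P,e)/P]\ebar$-modules; and since $P$ is a normal $p$-subgroup of $N_G(P,e)$, every simple $FN_G(P,e)e$-module is inflated from such a simple, so this coincides with the rank of the codomain. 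An injective $\ZZ$-linear map between free abelian groups of equal finite rank has finite cokernel. I expect the Cartan-matrix invertibility step in the third paragraph to be the most delicate point; the rest reduces to bookkeeping about the filtration by maximal Brauer pair.
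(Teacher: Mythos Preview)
Your proof is correct, and it takes a genuinely different route from the paper. The paper's argument for (a) is much shorter: it quotes Conlon's theorem for the injective map $T(\calO G)\to\prod_P R(F[N_G(P)])$ with finite cokernel, observes that this map respects the block decomposition via Lemma~\ref{lem Brauer map compatibility}, and then uses the Morita equivalence of~\ref{noth covering} to pass from the index set of $p$-subgroups $P$ to the index set of Brauer pairs $(P,e)$. Your approach instead gives a direct ``highest Brauer pair'' filtration argument, essentially reproving the relevant part of Conlon's theorem in this block-wise setting. What you gain is a self-contained proof that makes the triangular structure explicit; what the paper gains is brevity by citing a standard result. Part~(b) is handled identically in both.

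One small point in your finite-cokernel argument deserves a remark. You assert that the rank of $T(A)$ equals $\sum_{(P,e)}|\Irr(F[N_G(P,e)/P]\bar e)|$, citing Propositions~\ref{prop Brauer pairs for M}(c) and~\ref{prop p-perm equiv}(c). Those results give you the injection $M\mapsto M(P,e)$ into the set of indecomposable projective $F[N_G(P,e)/P]\bar e$-modules, but not its surjectivity. For that you need one more step: given such a projective $V$, apply $\Ind_{N_G(P,e)}^{N_G(P)}$ (the inverse of the Morita equivalence in~\ref{noth covering}) to obtain an indecomposable projective $F[N_G(P)/P]$-module, then take the corresponding indecomposable $p$-permutation $FG$-module via Proposition~\ref{prop p-perm equiv}(c), and check it lies in $A$ and has maximal Brauer pair $(P,e)$. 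This is routine, but since you are not quoting Conlon's theorem you should include it.
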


\begin{proof}
(a) By Conlon's Theorem (see \cite[Theorem~5.5.4]{Benson1998}), one has an injective map
\begin{equation*}
  T(\calO G)\mapsto \prod_{P} R(F[N_G(P)])\,, \quad 
  \omega\mapsto \bigl(\eta_{N_G(P)}(\omega(P))\bigr)_{P}\,,
\end{equation*}
where $P$ runs through a set of representatives of the $G$-conjugacy classes of $p$-subgroups of $G$. By Proposition~\ref{prop p-perm equiv}(c) it has finite cokernel. By Lemma~\ref{lem Brauer map compatibility}, this map splits into a direct sum of maps with respect to each block $A$ of $\calO G$. Further, the Morita equivalence in 
\ref{noth covering} gives the statement of Part~(a).

\smallskip
(b) This follows from Part~(a) and the commutativity of the diagram in \ref{noth T(G)}(c).
\end{proof}

The following Lemma will be used in the proof of Lemma~\ref{lem M Bp implies gamma Bp}.

\begin{lemma}\label{lem equal coefficients}
Let $0\neq\omega\in T(FG)$ and let $(P,e)$ be a Brauer pair of $FG$ which is maximal among all the Brauer pairs $(Q,f)$ of $FG$ satisfying $M(Q,f)\neq \{0\}$ for some indecomposable $FG$-module $M$ appearing in $\omega$.

\smallskip
{\rm (a)} Let $M$ be an indecomposable $p$-permutation $FG$-module appearing in $\omega$ and satisfying $M(P,e)\neq \{0\}$. Then $(P,e)$ is a maximal $M$-Brauer pair, $M(P,e)$ is an indecomposable $p$-permutation $F[N_G(P,e)]$-module, and the coefficient of $[M]$ in $\omega\in T(FG)$ equals the coefficient of $[M(P,e)]$ in $\omega(P,e)\in T(F[N_G(P,e)])$. 

\smallskip
{\rm (b)} One has $0\neq\omega(P,e)\in Pr(F[N_G(P,e)/P])$.
\end{lemma}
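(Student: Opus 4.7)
The plan is to leverage the maximality hypothesis on $(P,e)$ to reduce everything about indecomposable $p$-permutation modules $N$ appearing in $\omega$ with $N(P,e)\neq\{0\}$ to the Green correspondence / Brauer construction machinery, and then to identify the coefficient via Proposition~\ref{prop isomorphic test}.

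Write $\omega = \sum_N a_N[N] \in T(FG)$ in the standard basis. First I claim that for every indecomposable $N$ appearing in $\omega$ with $N(P,e)\neq\{0\}$, the pair $(P,e)$ is a maximal $N$-Brauer pair. Indeed, if not, then there exists an $N$-Brauer pair $(P',e')>(P,e)$; since $N$ appears in $\omega$, this directly contradicts the maximality assumption on $(P,e)$ in the statement of the lemma. Applied to $M$ itself, this gives the first assertion of (a).

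Next, by Proposition~\ref{prop Brauer pairs for M}(c) applied to such an $N$, the group $P$ is a vertex of $N$; by Proposition~\ref{prop p-perm equiv}(c) the $F[N_G(P)]$-module $N(P)$ is then the (indecomposable $p$-permutation) Green correspondent of $N$. The Morita equivalence of \ref{noth covering} between $F[N_G(P)]\tilde e$ (where $\tilde e:=\tr_{N_G(P,e)}^{N_G(P)}(e)$) and $F[N_G(P,e)]e$, applied to the summand of $N(P)$ in the block covering $e$, shows that $N(P,e)=eN(P)$ is an indecomposable $p$-permutation $F[N_G(P,e)]e$-module; in particular $[N(P,e)]$ is a standard basis element of $T(F[N_G(P,e)])$. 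Now compute $\omega(P,e)=\sum_N a_N[N(P,e)]$: for the coefficient of $[M(P,e)]$ in the standard basis, only those $N$ with $N(P,e)\neq\{0\}$ can contribute, and for each of those $(P,e)$ is a maximal $N$-Brauer pair by the first paragraph; hence Proposition~\ref{prop isomorphic test} forces $N(P,e)\cong M(P,e)$ if and only if $N\cong M$. Thus the coefficient equals $a_M$, which completes (a).

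For (b), note that by Proposition~\ref{prop p-perm equiv}(c), every $N(P)$ appearing above is a projective (indecomposable) $F[N_G(P)/P]$-module (after inflation back to $N_G(P)$). Because $e\in Z(F[C_G(P)])$ is fixed under the conjugation action of $N_G(P,e)$, it lies in $Z(F[N_G(P,e)])$, so $N(P,e)=eN(P)$ is a direct summand of the $F[N_G(P,e)]$-module $\Res^{N_G(P)}_{N_G(P,e)}(N(P))$. Since $P$ is normal in $N_G(P,e)$ and acts trivially on $N(P)$, this is also a direct summand of the restriction viewed as an $F[N_G(P,e)/P]$-module; restriction of projectives is projective and direct summands of projectives are projective, so $[N(P,e)]\in Pr(F[N_G(P,e)/P])$. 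Summing, $\omega(P,e)\in Pr(F[N_G(P,e)/P])$, and it is non-zero because by (a) the coefficient of $[M(P,e)]$ equals $a_M\neq 0$.

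The only step that requires a little care is the maximality argument in the first paragraph, which is what propagates the hypothesis from ``some $L$ appearing in $\omega$'' to every individual $N$; the rest is a straightforward assembly of Propositions~\ref{prop p-perm equiv}(c), \ref{prop Brauer pairs for M}(c), and \ref{prop isomorphic test} together with the Morita equivalence of \ref{noth covering}.
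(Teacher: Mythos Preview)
Your proof is correct and follows essentially the same approach as the paper: the maximality hypothesis combined with Proposition~\ref{prop Brauer pairs for M} forces $(P,e)$ to be a maximal $N$-Brauer pair for every relevant $N$, and then Proposition~\ref{prop isomorphic test} (together with the Morita equivalence of \ref{noth covering}) handles both the indecomposability and the coefficient matching. Part~(b) is likewise argued via projectivity of $N(P)$ as an $F[N_G(P)/P]$-module, which is exactly the content of the paper's appeal to Proposition~\ref{prop p-perm equiv}; you have simply spelled out more of the intermediate steps.
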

 
\begin{proof}
(a)  The maximality of $(P,e)$ and Proposition~\ref{prop Brauer pairs for M} imply that $P$ is a vertex of $M$ and that $(P,e)$ is a maximal $M$-Brauer pair. The rest follows from Proposition~\ref{prop isomorphic test}.

\smallskip
(b) This follows immediately from Part~(a) and Proposition~\ref{prop p-perm equiv}.
\end{proof}

\begin{remark}\label{rem BX result}
We will need to use the following result from \cite[Corollary~2.6]{BoltjeXu2008}, stating that the generalized decomposition map on characters of $p$-permutation $\calO G$-modules is an element of the Brauer character ring (without extending scalars) and can be expressed via the Brauer construction: Let $M\in\ltriv{\calO G}$ and let $u\in G$ be a $p$-element. Then 
 \begin{equation}\label{eqn BX result}
    d^u_G(\kappa_G([M]) )=   \eta_{C_G(u)}(\res^{N_G(\langle u \rangle)}_{C_G(u)}([M(\langle u\rangle)]))
 \end{equation}
 in $R(F [C_G(u)])$.
\end{remark}

\begin{lemma}\label{lem going down criterion}
Let $\omega\in T(FG)$. For every Brauer pair $(P,e)$ of $FG$ set  $\chi_{(P,e)}:=(\omega(P,e))^{\KK}\in R(\KK [N_G(P,e)])$ and $\psi_{(P,e)}:=\res^{N_G(P,e)}_{C_G(P)}(\chi_{(P,e)})\in R(\KK[C_G(P)])$. Assume that, for every Brauer pair $(P,e)$ of $FG$, the following two conditions are satisfied:

\smallskip
{\rm (i)} If $\chi_{(P,e)}\neq 0$ in $R(\KK[N_G(P,e)])$ then $\psi_{(P,e)}\neq 0$ in $R(\KK[C_G(P)])$.

\smallskip
{\rm (ii)} If $\psi_{(P,e)}\neq 0$ in $R(\KK[C_G(P)])$ then $d_{C_G(P)}(\psi_{(P,e)})\neq 0$ in $R(F[C_G(P)])$.

\smallskip
Then, for any two Brauer pairs $(P,e)\le (Q,f)$ of $\calO G$, one has: If $\psi_{(Q,f)}\neq 0$ in $R(\KK[C_G(Q)])$ then $\psi_{(P,e)}\neq 0$ in $R(\KK[C_G(P)])$.
\end{lemma}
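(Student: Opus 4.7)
The proof plan is to reduce to a single immediate-containment step with cyclic quotient of order $p$, and then deploy the Boltje--Xu identity~(\ref{eqn BX result}) between generalized decomposition numbers and Brauer constructions. Since $\le$ is by definition the transitive closure of $\unlhd$, an induction along the chain reduces the claim to the case $(P,e)\unlhd(Q,f)$. Choosing a normal refinement $P=P_0\trianglelefteq P_1\trianglelefteq\cdots\trianglelefteq P_r=Q$ inside the $p$-group $Q$ with $|P_{i+1}/P_i|=p$ and invoking Proposition~\ref{prop Brauer pairs}(a),(b) to produce unique interpolating Brauer pairs $(P_i,e_i)$ (for which $(P_i,e_i)\unlhd(P_{i+1},e_{i+1})$, since $P_i$ is normal in $P_{i+1}$), I may further assume $|Q/P|=p$ and write $Q=P\langle u\rangle$ for some $p$-element $u\in Q$.

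Assume $\psi_{(Q,f)}\neq 0$. Condition~(ii) gives $d_{C_G(Q)}(\psi_{(Q,f)})\neq 0$ in $R(FC_G(Q))$, and by the commutativity of decomposition with restriction and of the diagram in \ref{noth T(G)}(c) this quantity equals $\res^{N_G(Q,f)}_{C_G(Q)}\bigl(\eta(\omega(Q,f))\bigr)$. The next step is to express $\omega(Q,f)$ via $\omega(P,e)$: Lemma~\ref{lem Brauer map compatibility} gives $\omega(P,e)(\langle u\rangle)=(e\cdot\omega(P))(\langle u\rangle)=\br_{\langle u\rangle}(e)\cdot\omega(P)(\langle u\rangle)$, and Proposition~\ref{prop more on p-perm}(b) identifies $\omega(P)(\langle u\rangle)$ with $\omega(Q)$ as $F[N_G(P)\cap N_G(\langle u\rangle)]$-modules, since $P\langle u\rangle=Q$. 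The equality $C_G(P)\cap C_G(u)=C_G(Q)$ forces $\br_{\langle u\rangle}(e)=\br_Q(e)$, and combining this with the defining condition $f\br_Q(e)=f$ of $(P,e)\unlhd(Q,f)$ yields $f\cdot\omega(P,e)(\langle u\rangle)=f\cdot\omega(Q)=\omega(Q,f)$. Taking Brauer characters, restricting to $C_G(Q)$ (noting $C_G(Q)\le C_{N_G(P,e)}(u)$, because inner conjugation fixes the central idempotent $e\in FC_G(P)$), and applying~(\ref{eqn BX result}) to $\omega(P,e)\in T(\calO N_G(P,e)e)$ at the $p$-element $u$, one arrives at
\begin{equation*}
   d_{C_G(Q)}(\psi_{(Q,f)}) \;=\; f\cdot\res^{C_{N_G(P,e)}(u)}_{C_G(Q)}\bigl(d^u_{N_G(P,e)}(\chi_{(P,e)})\bigr).
\end{equation*}

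Non-vanishing of the left-hand side therefore forces $d^u_{N_G(P,e)}(\chi_{(P,e)})\neq 0$, whence $\chi_{(P,e)}\neq 0$, and condition~(i) then delivers $\psi_{(P,e)}\neq 0$, as required. The main obstacle will be the careful bookkeeping in the middle paragraph: tracking the central idempotents $e$ and $f$ and their images under the Brauer maps $\br_{\langle u\rangle}$ and $\br_Q$, verifying the centralizer identities $C_G(P)\cap C_G(u)=C_G(Q)$ and $C_G(Q)\le N_G(P,e)$, and checking that the module-level equality $f\cdot\omega(P,e)(\langle u\rangle)=\omega(Q,f)$ really propagates to the displayed identity of Brauer characters after restriction to $C_G(Q)$.
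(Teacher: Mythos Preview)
Your proof is correct and follows essentially the same approach as the paper's: reduce to a normal containment with cyclic quotient, then use the identity~(\ref{eqn BX result}) together with Lemma~\ref{lem Brauer map compatibility} and Proposition~\ref{prop more on p-perm}(b) to relate $\omega(Q,f)$ to the generalized $u$-decomposition of $\chi_{(P,e)}$. The only cosmetic differences are that the paper argues by contrapositive (assuming $\psi_{(P,e)}=0$ and deducing $d_{C_G(Q)}(\psi_{(Q,f)})=0$) and reduces only to $Q/P$ cyclic rather than of order $p$; neither affects the substance.
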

 
\begin{proof}
Arguing by induction on $[Q:P]$ we may assume that $(P,e)\trianglelefteq (Q,f)$ and that $Q/P$ is cyclic. Thus, there exists $u\in Q$ such that $P\langle u\rangle = Q$. Assume that $\psi_{(P,e)}=0$ in $R(\KK[C_G(P)])$ and set $I:=N_G(P,e)$. Then, by (i), also $\chi_{(P,e)}=0$ in $R(\KK I)$. By Lemma~\ref{lem Brauer map compatibility} applied to the $I$-stable idempotent $e$, the element $\omega(P)\in T(FN_G(P))$ and the $p$-subgroup $\langle u\rangle\le I$, we obtain $(e\omega(P))(\langle u\rangle) = \br_{\langle u\rangle}(e)(\omega(P)(\langle u\rangle))$ in $T(N_I(\langle u\rangle))$. Note that $\br_{\langle u\rangle}(e)= \br_Q(e)$. Thus, by Proposition~\ref{prop more on p-perm}(b), after further restriction (omitted in the notation) we obtain
\begin{equation*}
  (e\omega(P))(\langle u \rangle) = \br_Q(e) \omega(Q)
\end{equation*}
in $T(F[N_I(\langle u\rangle)\cap N_G(Q)])$. Applying Equation~(\ref{eqn BX result}) to $e\omega(P)\in T(FI)$ and noting that $C_I(u)\le N_I(\langle u\rangle)\cap N_G(Q)$, we obtain further
\begin{equation*}
  \eta_{C_I(u)}(\br_Q(e) \res^{N_G(Q)}_{C_I(u)}(\omega(Q)) )
  = \eta_{C_I(u)}\res^{N_I(\langle u \rangle)}_{C_I(u)}((e\omega(P))(\langle u\rangle))
  = d_I^u((e\omega(P))^{\KK}) = d_I^u ( \chi_{(P,e)}) = 0
\end{equation*}
Restricting further to $C_G(Q)$, multiplying by $f$, and using that $f\br_Q(e)=f$, we finally have
\begin{align*}
  0 & =\eta_{C_G(Q)}\bigl(f\br_Q(e)\res^{N_G(Q)}_{C_G(Q)}(\omega(Q))\bigr) 
  = \eta_{C_G(Q)}\bigl(\res^{N_G(Q,f)}_{C_G(Q)}(f\omega(Q))\bigr) \\
  & = d_{C_G(Q)}(\res^{N_G(Q,f)}_{C_G(Q)}(f\omega(Q))^\KK)
  = d_{C_G(Q)}(\psi_{(Q,f)})\,.
\end{align*}
Condition~(ii) now implies that $\psi_{(Q,f)}=0$, and the result follows.
\end{proof}

The following definition is similar to Definition~\ref{def Brauer pairs for M}. It will be used extensively in Section~\ref{sec Brauer pairs of ppeqs}.

\begin{definition}\label{def Brauer pairs for T(FG)}
Let $\omega\in T(FG)$ or $\omega\in T(\calO G)$. A Brauer pair $(P,e)$ of $FG$ is called an {\em $\omega$-Brauer pair} if $\omega(P,e) = e\omega(P)\neq 0$ in $T(F[N_G(P,e)])$. The set of $\omega$-Brauer pairs is denoted by $\calBP(\omega)$. Its corresponding set of Brauer pairs over $\calO$ is denoted by $\calBP_\calO(\omega)$.
\end{definition}

\begin{notation}
Let $X$ be a subgroup of $G\times H$ and let $d\in Z(\calO[G\times H])$. We denote by $T^\Delta(\calO X d)$ the subgroup of $T(\calO Xd)$ which is spanned by all standard basis elements $[M]$, where $M$ is an indecomposable $p$-permutation $\calO Xd$-module with twisted diagonal vertices (as subgroups of $G\times H$). If $e\in Z(\calO G)$ and $f\in Z(\calO H)$ are idempotents then we set $T^\Delta(\calO Ge,\calO Hf):= T^\Delta(\calO[G\times H](e\otimes f^*))$. Similarly we define groups $T^\Delta(FXd)$ and $T^\Delta(FGe,FHf)$.
\end{notation}

\begin{definition}
Let $e\in Z(\calO G)$ and $f\in Z(\calO H)$ be non-zero idempotents. A {\em $p$-permutation equivalence} between $\calO Ge$ and $\calO Hf$ is an element $\gamma\in T^\Delta(\calO Ge, \calO Hf)$ satisfying
\begin{equation}\label{eqn ppeq def}
  \gamma\cdotH\gamma^\circ = [\calO Ge] \ \text{in $T^\Delta(\calO Ge,\calO Ge)$}\quad\text{and}\quad
  \gamma^\circ\cdotG\gamma = [\calO Hf]\ \text{in $T^\Delta(\calO Hf, \calO Hf)$}.
 \end{equation}
The set of $p$-permutation equivalences between $\calO Ge$ and $\calO Hf$ will be denoted by $T_o^\Delta(\calO Ge,\calO Hf)$ (\lq o\rq\ for \lq orthogonal\rq). Similarly, we define $p$-permutation equivalences between $FGe$ and $FHf$, and denote the resulting set by $T^\Delta_o(FGe,FHf)$. Clearly, an element $\gamma\in T^\Delta(\calO Ge,\calO Hf)$ is a $p$-permutation equivalence between $\calO Ge$ and $\calO Hf$ if and only if $\gammabar\in T^\Delta(FGe,FHf)$ is a $p$-permutation equivalence between $FGe$ and $FHf$. Moreover, we denote the set of elements $\gamma\in T^\Delta(\calO Ge,\calO Hf)$ satisfying the first (resp.~second) equation in (\ref{eqn ppeq def}) by $T_l^\Delta(\calO Ge,\calO Hf)$ (resp.~$T_r^\Delta(\calO Ge,\calO Hf)$) and call them {\em left} (resp.~{\em right}) $p$-permutation equivalences between $\calO Ge$ and $\calO Hf$. We will see later that $T^\Delta_l(\calO G e,\calO Hf) = T^\Delta_o(\calO Ge,\calO Hf) = T^\Delta_r(\calO Ge,\calO Hf)$.
 \end{definition}

 \begin{proposition}\label{prop p-perm implies perfect isom}
 Let $e\in Z(\calO G)$ and $f\in Z(\calO H)$ be non-zero idempotents and let 
 $\gamma\in T^\Delta_o(\calO Ge, \calO Hf)$. Then $\mu:=\kappa_{G\times H}(\gamma)\in R(\KK Ge,\KK Hf)$ is a perfect isometry between $\KK Ge$ and $\KK Hf$.
 \end{proposition}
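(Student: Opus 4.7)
The plan is to verify separately that $\mu$ is an isometry between $\KK Ge$ and $\KK Hf$ in the sense of Definition~\ref{def perfect isometry}(c), and that $\mu$ satisfies the two perfectness conditions (i) and (ii) of Definition~\ref{def perfect isometry}(a); combining the two gives the theorem.

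For the isometry property, I would apply $\kappa_{G\times H}$ to both sides of the defining equations $\gamma\cdotH\gamma^\circ=[\calO Ge]$ and $\gamma^\circ\cdotG\gamma=[\calO Hf]$. Since $p$-permutation modules are $\calO$-free (as direct summands of permutation modules), the natural isomorphisms $\KK\otimes_\calO(M\otimes_{\calO H}N)\cong(\KK\otimes_\calO M)\otimes_{\KK H}(\KK\otimes_\calO N)$ and $\KK\otimes_\calO M^\circ\cong(\KK\otimes_\calO M)^\circ$ show that $\kappa$ is compatible with the operations $\cdotH$ and $-^\circ$. Consequently, we obtain $\mu\cdotH\mu^\circ=[\KK Ge]$ and $\mu^\circ\cdotG\mu=[\KK Hf]$, which by Remark~\ref{rem isometry}(a) is equivalent to $\mu$ being an isometry between $\KK Ge$ and $\KK Hf$.

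For perfectness, by the very definition of $T^\Delta(\calO Ge,\calO Hf)$, we can write $\gamma=\sum_{M} n_M[M]$, where $M$ runs over a finite set of indecomposable $p$-permutation $(\calO Ge,\calO Hf)$-bimodules whose vertices (in $G\times H$) are twisted diagonal, and $n_M\in\ZZ$. Applying $\kappa_{G\times H}$ then gives $\mu=\sum_M n_M\chi_M$, where $\chi_M$ denotes the character of $\KK\otimes_\calO M$. Remark~\ref{rem isometry}(d) (quoting \cite[Proposition~1.2]{Broue1990}) asserts that each individual $\chi_M$ satisfies conditions (i) and (ii) of Definition~\ref{def perfect isometry}(a), and Remark~\ref{rem isometry}(c) asserts that the sets of virtual characters in $R(\KK G,\KK H)$ satisfying (i), respectively (ii), are subgroups. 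Therefore the $\ZZ$-linear combination $\mu$ itself satisfies both conditions, i.e., $\mu$ is perfect, completing the proof.

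There is no real obstacle to overcome here, since the heavy lifting has already been done: the twisted-diagonal-vertex hypothesis is precisely what is needed to invoke Remark~\ref{rem isometry}(d), and the isometry relations are a direct consequence of the definition of a $p$-permutation equivalence together with the functoriality of $\kappa_{G\times H}$. The only point worth emphasising in the write-up is the $\calO$-freeness of $p$-permutation modules, which guarantees the compatibility of $\kappa$ with tensor products over $\calO H$ and with $\calO$-duals — once that is noted the argument is formal.
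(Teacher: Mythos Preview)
Your proof is correct and follows essentially the same approach as the paper's own proof: perfectness via Remark~\ref{rem isometry}(c) and (d), and the isometry property by applying $\kappa$ to the two defining equations and invoking Remark~\ref{rem isometry}(a). The extra justification you give for why $\kappa$ is compatible with $\cdotH$ and $-^\circ$ (using $\calO$-freeness of $p$-permutation modules) is a helpful elaboration that the paper leaves implicit.
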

 
 \begin{proof}
 The virtual character $\mu$ is perfect by Remark~\ref{rem isometry}(c) and (d). Moreover, applying $\kappa_{G\times G}$ to the equation $\gamma\cdotH\gamma^\circ = [\calO Ge]$ in $T(\calO Ge,\calO Ge)$ implies $\mu\cdotH\mu^\circ = [\KK Ge]\in R(\KK Ge,\KK Ge)$. Similarly, $\gamma^\circ\cdotG\gamma=[\calO Hf]$ implies $\mu^\circ\cdotG\mu=[\KK Hf]\in R(\KK Hf,\KK Hf)$. By Remark~\ref{rem isometry}(a), $\mu$ is an isometry between $\KK Ge$ and $\KK Hf$.
\end{proof}
  
%%%%%%%%%%%%%%%% SECTION 10 %%%%%%%%%%%%%%%%%%%%%%%%%%%%%%%%%%%
%\newpage
\section{Brauer pairs of $p$-permutation equivalences}\label{sec Brauer pairs of ppeqs}

Throughout this section we assume that $G$ and $H$ are finite groups and that the $p$-modular system $(\KK,\calO,F)$ is large enough for $G$ and $H$. Moreover, we fix a non-zero sum $A=\calO Ge_A$ of blocks of $\calO G$ and a non-zero sum $B=\calO He_B$ of blocks of $\calO H$, with $e_A$ and $e_B$ their respective identity elements. Furthermore, we assume throughout this section that $\gamma\in T^\Delta_l(A,B)$, i.e., $\gamma\in T^\Delta(A,B)$ and $\gamma$ satisfies
\begin{equation}\label{eqn left ppeq}
  \gamma\cdotH\gamma^\circ=[A]\quad\text{in $T^\Delta(A,A)$.}
\end{equation}
Instead of requiring that $\gamma$ is a $p$-permutation equivalence between $A$ and $B$, we prove as much as we can under the weaker assumption in (\ref{eqn left ppeq}) and we will finally show in Section~\ref{sec char criterion} that this implies that $\gamma$ is a $p$-permutation equivalence between $A$ and the sum of the blocks $B'$ of $\calO H$ that {\em support} $\gamma$ from the right, i.e., the sum of those blocks $B'$ with $\gamma e_{B'}\neq 0$ in $T(\calO G,\calO H)$. Note that Equation~(\ref{eqn left ppeq}) implies that $A$ is precisely the sum of those blocks of $\calO G$ that support $\gamma$ from the left.

\smallskip
The main result in this section is Theorem~\ref{thm gamma is uniform}, especially Parts~(a) and (b), which show that the $\gamma$-Brauer pairs behave exactly as the Brauer pairs of an indecomposable $p$-permutation module.

\begin{notation}\label{not gamma mu nu}
Let $\Delta(P,\phi,Q)$ be a twisted diagonal $p$-subgroup of $G\times H$. We will abbreviate the element $\gamma(\Delta(P,\phi,Q))\in T(F[N_{G\times H}(\Delta(P,\phi,Q))])$ by $\gammabar(P,\phi,Q)$ and will denote the resulting elements in the other representation groups of $N_{G\times H}(\Delta(P,\phi,Q))$ from the diagram in \ref{noth T(G)}(c) by
\begin{diagram}[50]
  \movevertex(-20,0){\gamma(P,\phi,Q)} & \mapsto & \movevertex(20,0){\mu(P,\phi,Q)} &&
  \movearrow(-20,-3){\begin{sideways}{\small $\lmapsto$} \end{sideways}} & & 
  \movearrow(20,-3){\begin{sideways}{\small $\lmapsto$} \end{sideways}} &&
  \movevertex(-20,0){\gammabar(P,\phi,Q)} & \mapsto & \movevertex(20,0){\nu(P,\phi,Q)} &&
\end{diagram}
Via restriction, we will also view these elements in the corresponding Grothendieck groups of any subgroup of $N_{G\times H}(\Delta(P,\phi,Q))$, in particular of the subgroup $C_G(P)\times C_H(Q)=C_{G\times H}(\Delta(P,\phi,Q))$. Note that the four maps commute with restrictions, so that the restricted elements still are related through these maps. 
\end{notation}

\begin{remark}\label{rem emuf} 
(a) Let $\Delta(P,\phi,Q)$ be a twisted diagonal $p$-subgroup of $G\times H$. If $(P,e)$ is a Brauer pair of $FG$ and $(Q,e)$ is a Brauer pair of $FH$ then $(\Delta(P,\phi,Q), e\otimes f^*)$ is a Brauer pair of $F[G\times H]$, using that $C_{G\times H}(\Delta(P,\phi,Q))= C_G(P)\times C_H(Q)$. Conversely, if $(\Delta(P,\phi,Q), e\otimes f^*)$ is a Brauer pair of $F[G\times H]$ then $(P,e)$ is a Brauer pair of $FG$ and $(Q,f)$ is a Brauer pair of $FH$. In this case, with $I:=N_G(P,e)$ and $J:=N_H(Q,f)$, one has $N_{G\times H}(\Delta(P,\phi,Q),e\otimes f^*)=N_{I\times J}(\Delta(P,\phi,Q))$ and $e\gamma(P,\phi,Q)f\in T^\Delta(\calO[N_{I\times J}(\Delta(P,\phi,Q))](e\otimes f^*))$. Moreover, $(\Delta(P,\phi,Q),e\otimes f^*)$ is an $A\otimes B^*$-Brauer pair if and only if $(P,e)$ is an $A$-Brauer pair and $(Q,f)$ is a $B$-Brauer pair. We will denote the set of $A\otimes B^*$-Brauer pairs $(X,d)$, where $X\le G\times H$ is a twisted diagonal $p$-subgroup, by $\calBP^\Delta(A,B)$, or $\calBP_\calO^\Delta(A,B)$ if lifted to $\calO$.

\smallskip
(b) Note that, for every $(\Delta(P,\phi,Q),e\otimes f^*)\in\calBP^\Delta_\calO(\calO G,\calO H)$, the element $e\mu(P,\phi,Q)f\in R(\KK[C_G(P)]e,\KK[C_H(Q)]f)$ is perfect. In fact, $e\gamma(P,\phi,Q)f\in T^\Delta(\calO[C_G(P)]e,\calO[C_H(Q)]f)$, by Lemma~\ref{lem vertex of M(P)}(b) and then Remark~\ref{rem isometry}(d) applies.  Moreover, the restriction of $e\gamma(P,\phi,Q)f$ to $C_G(P)\times \{1\}$ and to $\{1\}\times C_H(Q)$ yields elements of $Pr(\calO[C_G(Q)])$ and $Pr(\calO[C_H(Q)])$, respectively, see Lemma~\ref{lem vertex of M(P)}(b). Note that $e\gamma(P,\phi,Q)f=0$ unless $(P,e)$ is an $A$-Brauer pair and $(Q,f)$ is a $B$-Brauer pair. Since every module appearing in $\gamma$ has twisted diagonal vertex, one has $\gamma(X)=0\in T(\calO[N_{G\times H}(X)])$ for every $p$-subgroup $X\le G\times H$ which is {\em not} twisted diagonal. Thus, every $\gamma$-Brauer pair has a twisted diagonal subgroup as first component.
 
\smallskip
(c) Let $\Delta(P',\phi',Q')\le\Delta(P,\phi,Q)$ be twisted diagonal subgroups of $G\times H$, i.e., $Q'\le Q$,  $P'=\phi(Q)\le P$, and $\phi'=\phi|_{Q'}$. Furthermore, let $(\Delta(P,\phi,Q),e\otimes f^*)$ and $(\Delta(P',\phi',Q'), e'\otimes f'^*)$ be $\calO[G\times H]$-Brauer pairs. Then $(\Delta(P',\phi',Q'),e'\otimes f'^*)\le (\Delta(P,\phi,Q), e\otimes f^*)$ if and only if $(P',e')\le(P,e)$ and $(Q',f')\le (Q,f)$.
\end{remark}

The proofs of the following two Lemmas use the two statements and the notation in Theorem~\ref{thm BP decomp}(d).

\begin{lemma}\label{lem unique Lambdatilde lr}
Let $(P,e)\in\calBP_\calO(A)$. Consider the set $\Lambda_B\subseteq\Lambda$ of pairs $(\phi,(Q,f))$, where $(Q,f)\in\calBP_\calO(B)$ and $\phi\colon Q\myiso P$ is an isomorphism, together with its $H$-action from Theorem~\ref{thm BP decomp}(d). There exists a unique $H$-orbit of pairs $(\phi,(Q,f))\in\Lambda_B$ such that 
\begin{equation*}
  e\mu(P,\phi,Q)f\neq 0\quad\text{in $R(\KK[C_G(P)]e,\KK[C_H(Q)]f)$.}
\end{equation*} 
Moreover, for each pair $(\phi,(Q,f))\in\Lambda_B$ which satisfies this condition, the element $e\mu(P,\phi,Q)f$ is a perfect isometry between $\KK[C_G(P)]e$ and $\KK[C_H(Q)]f$, and $e\nu(P,\phi,Q)f\neq0$ in $R(F[C_G(P)]e,F[C_H(Q)]f)$.
\end{lemma}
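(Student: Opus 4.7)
My plan is to apply the Brauer construction $-(\Delta(P))$ and multiplication by $e$ on both sides to the defining equation $\gamma\cdotH\gamma^\circ=[A]$ in $T^\Delta(A,A)$, expand the resulting right-hand side via the decomposition formula in Theorem~\ref{thm BP decomp}(d), pass to characters, and then invoke Corollary~\ref{cor isometry separation 2} to extract the uniqueness and isometry statements.

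By Proposition~\ref{prop Brauer con of block}(b), the left-hand side becomes the class $[FC_G(P)e]$, whose character under $\kappa$ is $\sum_{\chi\in\Irr(\KK C_G(P)e)}\chi\times\chi^\circ$. For the right-hand side, I will apply Equation~(\ref{eqn BP1'}) of Theorem~\ref{thm BP decomp}(d) with $G=K$, $M=\gamma$, $N=\gamma^\circ$, $(R,d)=(P,e)$, $S=T=N_G(P,e)$, and $\sigma=\id_P$. Since $\gamma=\gamma e_B$ and $\gamma^\circ=e_B\gamma^\circ$, only orbit representatives $(\phi,(Q,f))$ with $(Q,f)\in\calBP_\calO(B)$ contribute; pick such a set $\Lambdatilde_B$ of $H$-orbit representatives inside $\Lambda_B$. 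Using Proposition~\ref{prop more on p-perm}(a) to identify $\gammabar^\circ(Q,\phi^{-1},P)$ with the $F$-dual of $\gammabar(P,\phi,Q)$, and then passing to characters, I will obtain the key identity
\begin{equation*}
  \sum_{\chi\in\Irr(\KK C_G(P)e)}\chi\times\chi^\circ \ =\
  \sum_{(\phi,(Q,f))\in\Lambdatilde_B}\bigl(e\mu(P,\phi,Q)f\bigr)\dottt{C_H(Q)}\bigl(e\mu(P,\phi,Q)f\bigr)^\circ
\end{equation*}
in $R(\KK C_G(P)e,\KK C_G(P)e)$.

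Each summand $e\mu(P,\phi,Q)f\in R(\KK C_G(P)e,\KK C_H(Q)f)$ is perfect, hence quasi-perfect, by Remark~\ref{rem emuf}(b), and $\Omega:=\Irr(\KK C_G(P)e)$ is non-empty since $e$ is a non-zero primitive idempotent of $Z(FC_G(P))$. I am therefore in position to apply Corollary~\ref{cor isometry separation 2}, which yields a unique representative $(\phi,(Q,f))\in\Lambdatilde_B$ with $e\mu(P,\phi,Q)f\neq 0$, and ensures that this surviving element is an isometry between $\KK C_G(P)e$ and $\KK C_H(Q)f$; combined with the perfectness from Remark~\ref{rem emuf}(b), it is in fact a perfect isometry. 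Because non-vanishing of $e\mu(P,\phi,Q)f$ is constant on $H$-orbits by the conjugation identity~(\ref{eqn gen tens and conj}), uniqueness at the level of representatives upgrades to uniqueness of the $H$-orbit in $\Lambda_B$.

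Finally, for $e\nu(P,\phi,Q)f\neq 0$, I will apply Lemma~\ref{lem non-zero criterion} to the quasi-perfect isometry $e\mu(P,\phi,Q)f$ between the non-zero algebras $\KK C_G(P)e$ and $\KK C_H(Q)f$: this yields $d_{C_G(P)\times C_H(Q)}(e\mu(P,\phi,Q)f)\neq 0$, which by the commutative diagram in \ref{noth T(G)}(c) coincides with $e\nu(P,\phi,Q)f$. The main obstacle I anticipate is the careful bookkeeping needed to convert the module-level decomposition~(\ref{eqn BP1'}) into the character identity displayed above; in particular, correctly identifying $f\gammabar^\circ(Q,\phi^{-1},P)e$ with the $F$-linear dual of $e\gammabar(P,\phi,Q)f$, and tracking which block idempotents live in which centralizer algebra when passing through $\kappa$ and restricting from $N_{G\times H}(\Delta(P,\phi,Q))$ down to $C_G(P)\times C_H(Q)$.
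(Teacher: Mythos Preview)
Your proposal is correct and follows essentially the same route as the paper's own proof: apply the Brauer construction at $\Delta(P)$ to $\gamma\cdotH\gamma^\circ=[A]$, identify the left side with $F[C_G(P)]e$ via Proposition~\ref{prop Brauer con of block}(b), expand the right side with the bimodule-level decomposition of Theorem~\ref{thm BP decomp}(d), restrict the index set to $\Lambdatilde_B$, lift to $\KK$, and then invoke Corollary~\ref{cor isometry separation 2} and Lemma~\ref{lem non-zero criterion}. The extra care you flag (identifying $f\gammabar^\circ(Q,\phi^{-1},P)e$ with the dual of $e\gammabar(P,\phi,Q)f$, and constancy along $H$-orbits) is precisely the bookkeeping the paper glosses over but implicitly uses.
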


\begin{proof}
We apply the Brauer construction with respect to $\Delta(P)$ to Equation~(\ref{eqn left ppeq}). By Proposition~\ref{prop Brauer con of block}(b), we have $eA(\Delta(P))e\cong F[C_G(P)]e$ as $(F[C_G(P)],F[C_G(P)])$-bimodules. Thus, by Theorem~\ref{thm BP decomp}(d), the following equation holds in $T^\Delta(F[C_G(P)]e,F[C_G(P)]e)$:
\begin{equation*}
  [F[C_G(P)]e] = [eA(\Delta(P))e] = e(\gamma\cdotH\gamma^\circ)(\Delta(P))e = \sum_{(\phi,(Q,f))\in\Lambdatilde}
  e\gammabar(P,\phi, Q)f\mathop{\cdot}\limits_{C_H(Q)}f\gammabar^\circ(Q,\phi^{-1},P)e\,,
\end{equation*}
where $\Lambdatilde$ denotes a set of representatives of the $H$-orbits of $\Lambda$, as in Theorem~\ref{thm BP decomp}(d). If $(Q,f)\in \calBP_\calO(\calO G)\smallsetminus\calBP_\calO(B)$ then $e\gammabar(P,\phi,Q)f=0$, since $\gamma\in T(A,B)$. Thus, we may replace $\Lambdatilde$ in the above summation by $\Lambdatilde_B:=\Lambda_B\cap\Lambdatilde$.
Lifting the last equation from $F$ to $\calO$ and extending scalars from $\calO$ to $\KK$, we obtain the equation
\begin{equation*}
  [\KK[C_G(P)e]] = \sum_{(\phi, (Q,f))\in\Lambdatilde_B} e\mu(P,\phi, Q)f\mathop{\cdot}\limits_{C_H(Q)}(e\mu(P,\phi,Q)f)^\circ
\end{equation*}
in $R(\KK[C_G(P)]e, \KK[C_G(P)]e)$. Since each $e\mu(P,\phi,Q)f\in R(\KK[C_G(P)]e,\KK[C_H(Q)]f)$ is perfect, we can apply Corollary~\ref{cor isometry separation 2} and obtain that there exists a unique element $\lambda=(\phi,(Q,f))\in\Lambdatilde_B$ such that $e\mu(P,\phi,Q)f\neq 0$ in $R(\KK[C_G(P)]e,\KK[C_H(Q)]f)$ and that this element is a perfect isometry between $\KK[C_G(P)]e$ and $\KK[C_H(Q)]f$. The last statement of the lemma follows from Lemma~\ref{lem non-zero criterion}. 
\end{proof}

\begin{lemma}\label{lem unique Lambdahat lr}
Let $(P,e)\in\calBP_\calO(A)$ and set $I:=N_G(P,e)$ and $X:=N_{I\times I}(\Delta(P))=\Delta(I)(C_G(P)\times \{1\})$. Consider the set $\Lambda_B$ of pairs $(\phi,(Q,f))$, where $(Q,f)\in\calBP_\calO(B)$ and $\phi\colon Q\myiso P$ is an isomorphism, together with its $I\times H$-action from Theorem~\ref{thm BP decomp}(d). For each $\lambda=(\phi,(Q,f))\in\Lambda_B$ we set
\begin{equation*}
  J(\lambda):=N_H(Q,f),\quad I(\lambda):= N_{(I,\phi,J(\lambda))}\le I,\quad\text{and}\quad 
  X(\lambda):=N_{I\times J(\lambda)}(\Delta(P,\phi,Q))\,.
\end{equation*}
Then, $X*X(\lambda)=X(\lambda)$, and for each $\chi\in\Irr(\KK X(e\otimes e^*))$, there exists a unique $I\times H$-orbit of pairs $\lambda=(\phi,(Q,f))\in\Lambda_B$ such that
\begin{equation*}
  \chi\mathop{\cdot}\limits_{G}^{X,X(\lambda)} e\mu(P,\phi,Q)f\neq 0\quad\text{in}\ R(\KK[X(\lambda)](e\otimes f^*))\,.
\end{equation*}
Moreover, for each $\lambda=(\phi,(Q,f))\in\Lambda_B$ satisfying this condition, one has
\begin{equation*}
  \chi\mathop{\cdot}\limits_{G}^{X,X(\lambda)} e\mu(P,\phi,Q)f\in\pm\Irr(\KK[X(\lambda)](e\otimes f^*))\,.
\end{equation*}
\end{lemma}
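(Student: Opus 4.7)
The plan is to mirror the proof of Lemma~\ref{lem unique Lambdatilde lr}, but to use the finer $FX(e\otimes e^*)$-module decomposition of $eA(\Delta(P))e$ provided by equation~(\ref{eqn BP2'}) of Theorem~\ref{thm BP decomp}(d) in place of the coarser bimodule decomposition~(\ref{eqn BP1'}), and to replace Corollary~\ref{cor isometry separation 2} by a norm computation pairing against $\chi$ via the extended tensor product $\dott{X}{X(\lambda)}{G}$.

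First, the equality $X*X(\lambda)=X(\lambda)$ is immediate from Lemma~\ref{lem ext tp special}(a) applied with $S:=I$ and $T:=J(\lambda)$, noting that the lemma's $Y$ coincides with the present $X(\lambda)$. I would then apply the Brauer construction at $\Delta(P)$ to both sides of Equation~(\ref{eqn left ppeq}), cut by $e\otimes e^*$, identify $eA(\Delta(P))e\cong F[C_G(P)]e$ as $FX(e\otimes e^*)$-modules via Proposition~\ref{prop Brauer con of block}(b), and apply equation~(\ref{eqn BP2'}) of Theorem~\ref{thm BP decomp}(d) to the right hand side; summands attached to pairs $(Q,f)\notin\calBP_\calO(B)$ vanish because $\gamma\in T^\Delta(A,B)$. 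After lifting to $\calO$ and extending to $\KK$, this yields the identity
\begin{equation*}
  [\KK[C_G(P)]e] \;=\; \sum_{\lambda=(\phi,(Q,f))\in\Lambdahat_B}
  \bigl[\Ind_{\Delta(I(\lambda))(C_G(P)\times\{1\})}^{X}
  \bigl(M_\lambda\dott{X(\lambda)}{Y(\lambda)}{H}N_\lambda\bigr)\bigr]
\end{equation*}
in $R(\KK X(e\otimes e^*))$, where I write $\Lambdahat_B:=\Lambdahat\cap\Lambda_B$, $M_\lambda:=e\mu(P,\phi,Q)f$, $N_\lambda:=f\mu^\circ(Q,\phi^{-1},P)e$, and $Y(\lambda):=X(\lambda)^\circ$.

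For fixed $\chi\in\Irr(\KK X(e\otimes e^*))$ and each $\lambda\in\Lambdahat_B$, I would then invoke Lemma~\ref{lem ext tp special}(b) (applied over $\KK$, with $S:=I$, $T:=J(\lambda)$, with $V=W$ a module affording $\chi$, and with $M:=M_\lambda$ and $N:=N_\lambda$). Since Proposition~\ref{prop more on p-perm}(a) yields $N_\lambda^\circ=M_\lambda$, the Hom-space identification of that lemma, extended bilinearly from actual modules to virtual characters, gives
\begin{equation*}
  \bigl(\chi\dott{X}{X(\lambda)}{G}M_\lambda,\,\chi\dott{X}{X(\lambda)}{G}M_\lambda\bigr)_{X(\lambda)}
  \;=\;
  \bigl(\chi\dott{X}{X}{G}[\lambda\text{-summand}],\,\chi\bigr)_X.
\end{equation*}
Summing over $\lambda\in\Lambdahat_B$ and combining with the previous displayed identity yields
\begin{equation*}
  \sum_{\lambda\in\Lambdahat_B}
  \bigl\|\chi\dott{X}{X(\lambda)}{G}M_\lambda\bigr\|^2_{X(\lambda)}
  \;=\;
  \bigl(\chi\dott{X}{X}{G}[\KK[C_G(P)]e],\,\chi\bigr)_X.
\end{equation*}

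To finish, the right hand side must equal $1$. Since $X=N_{I\times I}(\Delta(P))$ is invariant under the flip, the second isomorphism of Proposition~\ref{prop ext hom duals}(c) specializes to a natural $\KK X$-module isomorphism $V_\chi\tens{X}{X}{\KK G}\KK[C_G(P)]\cong V_\chi$ given by $v\otimes a\mapsto va$. Restricting to the $\KK X$-submodule $\KK[C_G(P)]e\subseteq\KK[C_G(P)]$, the image becomes $V_\chi\cdot e$; since the cut relation $(e\otimes e^*)v=v$ on $V_\chi$ forces $(e,1)v=v$ and $(1,e^*)v=v$ individually (by multiplying $(e\otimes e^*)$ on either side and using the idempotence of $(e,1)$ and $(1,e^*)$), one has $V_\chi\cdot e=V_\chi$, hence $V_\chi\tens{X}{X}{\KK G}\KK[C_G(P)]e\cong V_\chi$ and $\chi\dott{X}{X}{G}[\KK[C_G(P)]e]=\chi$. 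The right hand side thus equals $(\chi,\chi)_X=1$, forcing the non-negative integers on the left hand side to consist of exactly one $1$ and the rest $0$. For the unique contributing $\lambda\in\Lambdahat_B$, the virtual character $\chi\dott{X}{X(\lambda)}{G}M_\lambda\in R(\KK[X(\lambda)](e\otimes f^*))$ has norm $1$ and is therefore $\pm$ an irreducible character. The main technical obstacle is the bookkeeping around the application of Lemma~\ref{lem ext tp special}(b): verifying $N_\lambda^\circ=M_\lambda$, matching the data between that lemma and the present setting, and establishing the idempotent-cut refinement of Proposition~\ref{prop ext hom duals}(c) that collapses $\chi\dott{X}{X}{G}[\KK[C_G(P)]e]$ to $\chi$.
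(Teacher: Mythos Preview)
Your proposal is correct and follows essentially the same argument as the paper's own proof: apply the Brauer construction at $\Delta(P)$ to (\ref{eqn left ppeq}), identify $eA(\Delta(P))e\cong F[C_G(P)]e$ via Proposition~\ref{prop Brauer con of block}(b), decompose via (\ref{eqn BP2'}) of Theorem~\ref{thm BP decomp}(d), lift to $\KK$, and then pair against $\chi$ using Lemma~\ref{lem ext tp special}(b) and Proposition~\ref{prop ext hom duals}(c) to obtain the norm identity $\sum_\lambda\|\chi\dott{X}{X(\lambda)}{G}M_\lambda\|^2=1$. Your treatment is in fact slightly more explicit than the paper's in two places: you spell out the identification $N_\lambda^\circ=M_\lambda$ via Proposition~\ref{prop more on p-perm}(a), and you justify why the idempotent cut $\KK[C_G(P)]e$ (rather than $\KK[C_G(P)]$) still yields $\chi\dott{X}{X}{G}[\KK[C_G(P)]e]=\chi$, whereas the paper simply invokes Proposition~\ref{prop ext hom duals}(c) without comment.
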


\begin{proof}
First note that $X*X(\lambda)=X(\lambda)$ for each $\lambda\in\Lambda_B$, by Lemma~\ref{lem  ext tp special}(b) with $S=I$, $T=J(\lambda)$, and $Y=X(\lambda)$. Next, we apply the Brauer construction with respect to $\Delta(P)$ to Equation (\ref{eqn left ppeq}). By Proposition~\ref{prop Brauer con of block}(b), we have $eA(\Delta(P))e\cong F[C_G(P)]e$ in $\lmod{FX}$. Thus, by Theorem~\ref{thm BP decomp}(d) applied to $S=I$, we have \begin{equation*}
  [F[C_G(P)]e] = \sum_{\lambda=(\phi,(Q,f))\in\Lambdahat_B} \ind_{X'(\lambda)}^X\Bigl(e\gammabar(P,\phi,Q)f
  \mathop{\cdot}\limits_{H}^{X(\lambda),X(\lambda)^\circ} f\gammabar^\circ(Q,\phi^{-1},P)e\Bigr)
\end{equation*}
in $T^\Delta(FX(e\otimes e^*))$, where $X'(\lambda):=X(\lambda)*X(\lambda)^\circ=\Delta(I(\lambda))\cdot(C_G(P)\times \{1\})\le X$ and $\Lambdahat_B$ is a set of representatives of the $I\times H$-orbits of $\Lambda_B$. Lifting this equation from $F$ to $\calO$ and extending scalars from $\calO$ to $\KK$, we obtain
\begin{equation*}
  [\KK[C_G(P)]e] = \sum_{\lambda=(\phi,(Q,f))\in\Lambdahat_B} \ind_{X'(\lambda)}^X\Bigl( e\mu(P,\phi,Q)f
  \mathop{\cdot}\limits_{H}^{X(\lambda),X(\lambda)^\circ} (e\mu(P,\phi,Q)f)^\circ\Bigr)
\end{equation*}
in $R(\KK X(e\otimes e^*))$. Now let $\chi\in\Irr(\KK X(e\otimes e^*))$ and apply the group homomorphism
\begin{equation*}
  \bigl(\chi\mathop{\cdot}\limits_G^{X,X}-,\chi\bigr)_X\colon R(\KK X(e\otimes e^*))\to \ZZ
\end{equation*}
to the last equation. We obtain
\begin{equation*}
  \bigl(\chi\mathop{\cdot}\limits_G^{X,X} [\KK C_G(P)e],\chi\bigr)_X 
  = \sum%_{\lambda=(\phi,(Q,f))\in\Lambdahat}
  \Bigl(\chi\mathop{\cdot}\limits_G^{X,X}\ind_{X'(\lambda)}^X\Bigl(e\mu(P,\phi,Q)f
  \mathop{\cdot}\limits_{H}^{X(\lambda),X(\lambda)^\circ} (e\mu(P,\phi,Q)f)^\circ\Bigr)\ ,\ \chi\Bigr)_X\,,
\end{equation*}
where the sum runs over elements $\lambda=(\phi,(Q,f))\in\Lambdahat_B$. Applying Proposition~\ref{prop ext hom duals}(c) to the left hand side and Lemma~\ref{lem ext tp special}(b) to the right hand side of the last equation, we obtain
\begin{equation*}
  1 = \sum_{\lambda=(\phi,(Q,f))\in\Lambdahat_B} \Bigl(\chi\mathop{\cdot}\limits_G^{X,X(\lambda)}
  e\mu(P,\phi,Q)f, \chi\mathop{\cdot}\limits_G^{X,X(\lambda)}e\mu(P,\phi,Q)f\Bigr)_{X(\lambda)}\,.
\end{equation*}
The statements in the lemma are now immediate.
\end{proof}

Note that in general restrictions of virtual non-zero characters can vanish. However:

\begin{corollary}\label{cor mu non-zero}
Let $(\Delta(P,\phi,Q), e\otimes f^*)\in\calBP_\calO^\Delta(A,B)$ and set $I:=N_G(P,e)$ and $J:=N_H(Q,f)$. Then
\begin{equation*}
  e\mu(P,\phi,Q)f\neq0\quad\text{in $R(\KK[N_{I\times J}(\Delta(P,\phi,Q))](e\otimes f^*))$}
\end{equation*}
if and only if, after restriction,
\begin{equation*}
  e\mu(P,\phi,Q)f\neq 0 \quad \text{in $R(\KK [C_G(P)]e,\KK[C_H(Q)]f)$.}
\end{equation*}
\end{corollary}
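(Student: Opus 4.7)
The implication ``$\Leftarrow$'' is immediate, so I focus on ``$\Rightarrow$''. Set $Y := N_{I \times J}(\Delta(P,\phi,Q))$, so that $k_1(Y) = C_G(P)$ and $k_2(Y) = C_H(Q)$ by Proposition~\ref{prop Nphi}(c). Set also $X := N_{I \times I}(\Delta(P))$, for which $k_1(X) = k_2(X) = C_G(P)$ and $X*Y = Y$ by Lemma~\ref{lem ext tp special}(a) applied with $S=I$, $T=J$. By Proposition~\ref{prop Brauer con of block}(b), $\KK[C_G(P)]e$ is naturally a $\KK X(e \otimes e^*)$-module, hence defines a class $[\KK[C_G(P)]e] \in R(\KK X(e \otimes e^*))$.

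The crux of the argument will be the identity
\begin{equation*}
  [\KK[C_G(P)]e] \dott{X}{Y}{G} \beta \;=\; \beta \qquad\text{for every }\beta \in R(\KK Y(e \otimes f^*)).
\end{equation*}
To check this at the module level on a $\KK Y(e \otimes f^*)$-module $V$, I would observe that $\KK[C_G(P)]e \tens{X}{Y}{\KK G} V$ has underlying $\KK$-space $\KK[C_G(P)]e \otimes_{\KK C_G(P)} V$, and that the map $a \otimes v \mapsto av$ (using the $C_G(P)$-action on $V$ via the inclusion $C_G(P) \times \{1\} \subseteq Y$) is a $\KK Y$-module isomorphism with inverse $v \mapsto e \otimes v$; $\KK Y$-equivariance reduces to the computation $(g,h)\cdot(e \otimes v) = (geg^{-1}) \otimes (g,h)v = e \otimes (g,h)v$, using $geg^{-1} = e$ for every $g \in I = N_G(P,e)$.

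Granting the identity, I would decompose $[\KK[C_G(P)]e] = \sum_{\chi \in \Irr(\KK X(e \otimes e^*))} a_\chi \chi$ with $a_\chi \in \ZZ_{\ge 0}$ and write $\beta := e\mu(P,\phi,Q)f$. Then $\beta = \sum_\chi a_\chi(\chi \dott{X}{Y}{G} \beta)$, so the hypothesis $\beta \neq 0$ forces some $\chi \in \Irr(\KK X(e \otimes e^*))$ with $\chi \dott{X}{Y}{G} \beta \neq 0$. For such a $\chi$, Lemma~\ref{lem unique Lambdahat lr} (applied to $\lambda = (\phi,(Q,f))$, for which $X(\lambda) = Y$) yields $\chi \dott{X}{Y}{G} \beta = \pm[N]$ for some non-zero irreducible $\KK Y(e \otimes f^*)$-module $N$. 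Finally, Lemma~\ref{lem non-zero res}, applicable because $k_1(Y) = C_G(P) = k_2(X)$, forces $\res^Y_{C_G(P) \times C_H(Q)}(\beta) \neq 0$ in $R(\KK[C_G(P)]e, \KK[C_H(Q)]f)$, which is the desired conclusion. The principal obstacle is producing the auxiliary $\chi$; the identity for $[\KK[C_G(P)]e]$, combined with Lemma~\ref{lem unique Lambdahat lr}, supplies such a $\chi$ automatically from the non-vanishing of $\beta$, after which Lemma~\ref{lem non-zero res} transfers the non-vanishing from the extended to the restricted setting.
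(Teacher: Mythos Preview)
Your proof is correct and follows essentially the same route as the paper's: both establish the identity $[\KK[C_G(P)]e]\dott{X}{Y}{G}\mu=\mu$ (the paper via \ref{noth ext tp and res} and Proposition~\ref{prop ext hom duals}(c), you by a direct verification that amounts to the same thing), then extract an irreducible constituent $\chi$ of $[\KK[C_G(P)]e]$ with $\chi\dott{X}{Y}{G}\mu\neq 0$, invoke Lemma~\ref{lem unique Lambdahat lr} to get $\pm$ an irreducible, and finish with Lemma~\ref{lem non-zero res}. The only cosmetic difference is that you obtain $X*Y=Y$ from Lemma~\ref{lem ext tp special}(a), while the paper reaches it through $\Xtilde=Y*Y^\circ$ and Lemma~\ref{lem comp form}(b).
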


\begin{proof}
Set $Y:=N_{I\times J}(\Delta(P,\phi,Q))$ and $\mu:=e\mu(P,\phi,Q)f\in R(\KK Y(e\otimes f^*))$. We need to show that if $\mu\neq 0$ then also $\res^Y_{C_G(P)\times C_H(Q)}(\mu)\neq 0$. Set $X:=N_{I\times I}(\Delta(P))=\Delta(I)\cdot(C_G(P)\times \{1\})$. Then, following the construction in \ref{noth ext tp and res}, we have $\Xtilde= \Delta(N_{(I,\phi,J)})\cdot(C_G(P)\times \{1\})$ (with respect to $Y$), since $p_1(Y)=N_{(I,\phi,J)}$ by Proposition~\ref{prop Nphi}(c). Moreover, Lemma~\ref{lem comp form}(a) applied to $Y$ implies $\Xtilde=Y*Y^\circ$. By \ref{noth ext tp and res} and Proposition~\ref{prop ext hom duals}(c)  we obtain
\begin{equation*}
  [\KK[C_G(P)]e]\mathop{\cdot}\limits_{H}^{X,Y}\mu = 
  \res^{X}_{\Xtilde}([\KK[C_G(P)]e])\mathop{\cdot}\limits_{H}^{\Xtilde,Y} \mu = \mu
\end{equation*}
in $R(\KK Y(e\otimes f^*))$, since $X*Y=\Xtilde*Y=Y*Y^\circ *Y=Y$. Therefore, $\KK[C_G(P)]e$ has an irreducible constituent $\chi\in\Irr(\KK X(e\otimes e^*))$ such that
\begin{equation*}
  \chi\mathop{\cdot}\limits_{H}^{X,Y}\mu\neq 0 \quad\text{in $R(\KK Y(e\otimes f^*))$.}
\end{equation*}
By Lemma~\ref{lem unique Lambdahat lr}, $\chi\mathop{\cdot}\limits_{H}^{X,Y}\mu\in\pm\Irr(\KK X (e\otimes e^*))$. Now, Lemma~\ref{lem non-zero res} implies that $\res^Y_{C_G(P)\times C_H(Q)}(\mu)\neq 0$.
\end{proof}

\begin{corollary}\label{cor mu non-zero going down}
Let $(\Delta(P',\phi',Q'), e'\otimes f'^*)\le(\Delta(P,\phi,Q), e\otimes f^*)\in\calBP^\Delta_\calO(A,B)$. If $e\mu(P,\phi,Q)f\neq 0$ in $R(\KK[C_G(P)]e,\KK[C_H(Q)]f)$ then $e'\mu(P',\phi',Q')f'\neq 0$ in $R(\KK[C_G(P')]e',\KK[C_H(Q')]f')$.
\end{corollary}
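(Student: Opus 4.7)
The plan is to deduce the statement from the general criterion in Lemma~\ref{lem going down criterion}, applied to the group $G\times H$, the element $\gamma\in T^\Delta(A,B)\subseteq T(F[G\times H])$, and the $(A\otimes B^*)$-Brauer pairs of $F[G\times H]$. For a Brauer pair $(X,d)$ of $F[G\times H]$, write $\chi_{(X,d)}$ and $\psi_{(X,d)}$ as in Lemma~\ref{lem going down criterion}. I will check that conditions~(i) and~(ii) of that Lemma are satisfied for every Brauer pair. The conclusion, applied to $(\Delta(P',\phi',Q'),e'\otimes f'^*)\le(\Delta(P,\phi,Q),e\otimes f^*)$, will be exactly the statement of the corollary, because $\psi_{(\Delta(P,\phi,Q),e\otimes f^*)}$, viewed as an element of $R(\KK[C_G(P)\times C_H(Q)])$, is precisely $e\mu(P,\phi,Q)f\in R(\KK[C_G(P)]e,\KK[C_H(Q)]f)$ (and similarly with primes).

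For Brauer pairs $(X,d)$ of $F[G\times H]$ where either $X$ is not twisted diagonal, or $d$ is not of the form $e\otimes f^*$ with $(P,e)\in\calBP_\calO(A)$ and $(Q,f)\in\calBP_\calO(B)$, one has $d\,\gammabar(X)=0$ since $\gamma\in T^\Delta(A,B)$ (cf.~Remark~\ref{rem emuf}(b)). Consequently $\chi_{(X,d)}=0$ and both conditions are vacuously satisfied. Thus it suffices to consider Brauer pairs of the form $(\Delta(P,\phi,Q),e\otimes f^*)\in\calBP_\calO^\Delta(A,B)$. Condition~(i) is then precisely the content of Corollary~\ref{cor mu non-zero}: passing from $\chi_{(\Delta(P,\phi,Q),e\otimes f^*)}\neq 0$ to its restriction $\psi_{(\Delta(P,\phi,Q),e\otimes f^*)}$ being nonzero is exactly saying that $e\mu(P,\phi,Q)f$ survives restriction from $N_{I\times J}(\Delta(P,\phi,Q))$ to $C_G(P)\times C_H(Q)$, with $I=N_G(P,e)$ and $J=N_H(Q,f)$.

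The main step is condition~(ii): assuming $\psi_{(\Delta(P,\phi,Q),e\otimes f^*)}\neq 0$, i.e.\ $e\mu(P,\phi,Q)f\neq 0$ in $R(\KK[C_G(P)]e,\KK[C_H(Q)]f)$, I must show that its image under the decomposition map $d_{C_G(P)\times C_H(Q)}$ is nonzero. For this I will invoke Lemma~\ref{lem unique Lambdatilde lr}: applied to the $A$-Brauer pair $(P,e)$, it singles out a unique $H$-orbit of pairs $(\phi,(Q,f))\in\Lambda_B$ with $e\mu(P,\phi,Q)f\neq 0$, and for every pair in this orbit, $e\mu(P,\phi,Q)f$ is a perfect isometry between the (nonzero) block algebras $\KK[C_G(P)]e$ and $\KK[C_H(Q)]f$. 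Lemma~\ref{lem non-zero criterion} then yields $d_{C_G(P)\times C_H(Q)}(e\mu(P,\phi,Q)f)\neq 0$, which is condition~(ii).

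With both conditions verified, Lemma~\ref{lem going down criterion} applies and gives the implication $\psi_{(\Delta(P,\phi,Q),e\otimes f^*)}\neq 0\Rightarrow\psi_{(\Delta(P',\phi',Q'),e'\otimes f'^*)}\neq 0$. Translating back via the identification of $\psi$ with the bimodule element $e'\mu(P',\phi',Q')f'\in R(\KK[C_G(P')]e',\KK[C_H(Q')]f')$, this is the desired conclusion. The only real subtlety is ensuring that Lemma~\ref{lem unique Lambdatilde lr} can be invoked for the particular pair $(\phi,(Q,f))$ in question; but this is automatic because the hypothesis $e\mu(P,\phi,Q)f\neq 0$ forces $(\phi,(Q,f))$ to lie in the distinguished $H$-orbit produced by that lemma.
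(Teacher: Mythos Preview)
Your proof is correct and follows essentially the same approach as the paper: apply Lemma~\ref{lem going down criterion} to $\gammabar\in T(F[G\times H])$, verify condition~(i) via Corollary~\ref{cor mu non-zero}, and verify condition~(ii) via Lemma~\ref{lem unique Lambdatilde lr}. Your explicit treatment of the vacuous Brauer pairs and your unpacking of condition~(ii) through Lemma~\ref{lem non-zero criterion} simply spell out what the paper's terse proof leaves implicit (the latter being exactly how the last clause of Lemma~\ref{lem unique Lambdatilde lr} is proved).
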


\begin{proof}
We apply Lemma~\ref{lem going down criterion} to $\gammabar\in T(F[G\times H])$ and the inclusion of $A\otimes B^*$-Bauer pairs $(\Delta(P',\phi',Q'),e'\otimes f'^*)\le (\Delta(P,\phi,Q), e\otimes f^*)$. Condition~(i) in Lemma~\ref{lem going down criterion} is satisfied by Corollary~\ref{cor mu non-zero}. And Condition~(ii) in Lemma~\ref{lem going down criterion} is satisfied by Lemma~\ref{lem unique Lambdatilde lr}.
\end{proof}

\begin{lemma}\label{lem M Bp implies gamma Bp}
Let $M$ be an indecomposable $p$-permutation $(A,B)$-bimodule that appears in $\gamma$ and let $(\Delta(P,\phi,Q),e\otimes f^*)\in\calBP_\calO(M)$. Then $e\mu(P,\phi,Q)f\neq 0$ in $R(\KK[C_G(P)]e,\KK[C_H(Q)]f)$.
\end{lemma}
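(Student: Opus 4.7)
My plan is to reduce to the maximal case and then force a contradiction using uniqueness from Lemma~\ref{lem unique Lambdatilde lr}, via a projective/non-projective decomposition of the Brauer construction.

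First I would reduce to the case where $(\Delta(P,\phi,Q),e\otimes f^*)$ is a maximal $M$-Brauer pair. By Proposition~\ref{prop Brauer pairs for M}(c) our pair is contained in some maximal $M$-Brauer pair $(\Delta(D,\psi,E),e_D\otimes f_E^*)$, and by Corollary~\ref{cor mu non-zero going down} the non-vanishing of $e_D\mu(D,\psi,E)f_E$ descends to non-vanishing of $e\mu(P,\phi,Q)f$. So we may assume $\Delta(P,\phi,Q)$ is a vertex of $M$. Set $I:=N_G(P,e)$, $J:=N_H(Q,f)$ and $Y:=N_{I\times J}(\Delta(P,\phi,Q))$. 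Then by Proposition~\ref{prop isomorphic test} together with Proposition~\ref{prop p-perm equiv}(c) and Lemma~\ref{lem Brauer con and vertex}(b), the module $N:=eM(\Delta(P,\phi,Q))f$ is indecomposable, has vertex $\Delta(P,\phi,Q)$ as an $FY(e\otimes f^*)$-module, and is an indecomposable projective $F[Y/\Delta(P,\phi,Q)](e\otimes f^*)$-module (inflated to $Y$).

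Writing $\gamma=\sum_i n_i[M_i]$ with $M=M_1$ and $n_1\neq 0$, I would then split
\begin{equation*}
  e\gammabar(P,\phi,Q)f = \alpha+\beta \in T^\Delta(FY(e\otimes f^*))
\end{equation*}
into the projective part $\alpha$, coming from those $M_i$ whose vertex is $G\times H$-conjugate to $\Delta(P,\phi,Q)$, and the non-projective part $\beta$, coming from those $M_i$ whose vertex strictly contains $\Delta(P,\phi,Q)$. A Burry--Carlson--Puig argument shows that for such $M_i$ with larger vertex every indecomposable summand of $eM_i(\Delta(P,\phi,Q))f$ keeps a vertex strictly containing $\Delta(P,\phi,Q)$, so the split is clean: $\alpha\in Pr(F[Y/\Delta(P,\phi,Q)](e\otimes f^*))$ and $\beta$ lies in the complementary part. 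By Proposition~\ref{prop isomorphic test} distinct $M_i$'s with vertex $\Delta(P,\phi,Q)$ give pairwise non-isomorphic indecomposable projectives, so the coefficient of $[N]$ in $\alpha$ equals $n_1\neq 0$; hence $\alpha\neq 0$. Since $\kappa$ restricted to $Pr$ is injective (principal indecomposable characters are linearly independent), $\kappa(\alpha)\neq 0$ in $R(\KK[Y/\Delta(P,\phi,Q)](e\otimes f^*))$, and the same holds after inflation to $Y$.

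To close the argument I would invoke Lemma~\ref{lem unique Lambdatilde lr} applied to $(P,e)\in\calBP_\calO(A)$, which provides the unique $H$-orbit $[\lambda^0]$ of $\Lambda_B$ with $e\mu(P,\phi^0,Q^0)f^0\neq 0$. Suppose for contradiction that $(\phi,(Q,f))\notin[\lambda^0]$, so $e\mu(P,\phi,Q)f = \kappa(\alpha)+\kappa(\beta) = 0$, and therefore $\kappa(\beta)=-\kappa(\alpha)$ is a non-zero virtual inflated principal character. The hard part will be ruling out this cancellation between a projective character $\kappa(\alpha)$ and a sum $\kappa(\beta)$ of characters of non-projective modules. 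I would attack this by picking a $p$-element $u\in Y$ whose class $\bar u$ in $Y/\Delta(P,\phi,Q)$ is non-trivial and lies in the vertex of some non-projective summand of $\beta$, then applying the generalized decomposition map $d^u$: by Equation~(\ref{eqn BX result}), $d^u$ annihilates $\kappa(\alpha)$ since every projective summand has vertex $\Delta(P,\phi,Q)\not\ni u$, while it captures a non-trivial contribution from $\beta$. Feeding this into the decomposition of $e(\gamma\cdotH\gamma^\circ)(\Delta(P))e=[FC_G(P)e]$ provided by Theorem~\ref{thm BP decomp}(d) and combining it with the orbit-wise uniqueness in Lemma~\ref{lem unique Lambdahat lr} should localize the contradiction on the $\lambda$-component and force $(\phi,(Q,f))\in[\lambda^0]$, whence $e\mu(P,\phi,Q)f\neq 0$.
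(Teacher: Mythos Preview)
Your reduction step is the source of the difficulty, and the paper's proof makes a subtly stronger choice that eliminates it entirely. You pass to a maximal $M$-Brauer pair; the paper instead passes, via Corollary~\ref{cor mu non-zero going down}, to an $(A\otimes B^*)$-Brauer pair $(X,e\otimes f^*)$ that is maximal among those with $eN(X)f\neq\{0\}$ for \emph{some} indecomposable $N$ appearing in $\gamma$. With that choice, Lemma~\ref{lem equal coefficients}(b) gives $0\neq e\gamma(X)f\in Pr(\calO[N_{I\times J}(X)/X])$ outright --- there is no non-projective piece $\beta$ at all. Injectivity of $\kappa$ on $Pr$, followed by inflation and Corollary~\ref{cor mu non-zero}, then finishes the argument in a few lines.

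By contrast, your split $e\gammabar(P,\phi,Q)f=\alpha+\beta$ with $\alpha\neq0$ projective does not by itself prevent $\kappa(\alpha)+\kappa(\beta)=0$: the map $\kappa$ is \emph{not} injective on $T(\calO Y)$ in general (already for $Y/\Delta(P,\phi,Q)\cong C_p\times C_p$ there are $\ZZ$-linear relations among the characters of the indecomposable $p$-permutation modules), so cancellation between a virtual projective character and a virtual sum of non-projective characters is a genuine possibility. Your proposed repair via a generalized decomposition map $d^u$ is not a complete argument: choosing $u$ outside $\Delta(P,\phi,Q)$ does not guarantee $\langle u\rangle$ is not $Y$-conjugate into $\Delta(P,\phi,Q)$, so $d^u(\kappa(\alpha))$ need not vanish; and the appeal to Lemma~\ref{lem unique Lambdahat lr} to ``localize the contradiction'' is left as a hope rather than carried out. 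The cleaner move is simply to strengthen the maximality hypothesis in your first paragraph so that $\beta=0$ from the start.
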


\begin{proof}
We abbreviate $X:=\Delta(P,\phi,Q)$. By Corollary~\ref{cor mu non-zero going down} we may assume that the $A\otimes B^*$-Brauer pair $(X,e\otimes f^*)$ is maximal with respect to the property that there exists an indecomposable $p$-permutation $(A,B)$-bimodule $N$ appearing in $\gamma$ such that $eN(X)f\neq \{0\}$. Then, by Lemma~\ref{lem equal coefficients}(b), we have $0\neq e\gamma(X)f\in Pr(\calO[N_{I\times J}(X)/X])$, where $I:=N_G(P,e)$ and $J:=N_H(Q,f)$. Since the map $\kappa$ in \ref{noth T(G)}(c) is injective on $Pr(\calO[N_{I\times J}(X)/X])$ we have $e\mu(P,\phi,Q)f\neq 0$ in $R(\KK[N_{I\times J}(X)/X])$. Since inflation is injective on character groups and commutes with the map $\kappa$, we obtain that $e\mu(P,\phi,Q)f\neq 0$ in $R(\KK[N_{I\times J}(X)])$. Finally, Corollary~\ref{cor mu non-zero} implies that, after restriction, $e\mu(P,\phi,Q)f\neq 0$ in $R(\KK[C_G(P)]e,\KK[C_H(Q)]f)$.
\end{proof}

The following proposition gives convenient reformulations of being a $\gamma$-Brauer pair, see Definition~\ref{def Brauer pairs for T(FG)}.

\begin{proposition}\label{prop equiv gamma Brpair cond}
Let $(\Delta(P,\phi,Q), e\otimes f^*)\in \calBP^\Delta_{\calO}(A,B)$ and set $I:=N_G(P,e)$ and $J:=N_H(Q,f)$. Then the following are equivalent:

\smallskip
{\rm (i)} $(\Delta(P,\phi,Q),e\otimes f^*)\in\calBP_\calO(\gamma)$, i.e., $e\gammabar(P,\phi,Q)f\neq 0$ in $T(F[N_{I\times J}(\Delta(P,\phi,Q))](e\otimes f^*))$.

\smallskip
{\rm (ii)} $e\gammabar(P,\phi,Q)f\neq0$ in $T(F[C_G(P)]e,F[C_H(Q)]f)$.

\smallskip
{\rm (iii)} $e\mu(P,\phi,Q)f\neq 0$ in $R(\KK[N_{I\times J}(\Delta(P,\phi,Q))](e\otimes f^*))$.

\smallskip
{\rm (iv)} $e\mu(P,\phi,Q)f\neq 0$ in $R(\KK[C_G(P)]e,\KK[C_H(Q)]f)$.

\smallskip
{\rm (v)} $e\nu(P,\phi,Q)f\neq 0$ in $R(F[N_{I\times J}(\Delta(P,\phi,Q))](e\otimes f^*))$.

\smallskip
{\rm (vi)} $e\nu(P,\phi,Q)f\neq 0$ in $R(F[C_G(P)]e,F[C_H(Q)]f)$.

\smallskip
{\rm (vii)} $(\Delta(P,\phi,Q),e\otimes f^*)\in\calBP_\calO(M)$ for some indecomposable module $M\in\ltriv{A}_B$ appearing in $\gamma$.
\end{proposition}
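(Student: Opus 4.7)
The plan is to prove the equivalence of the seven conditions through the main cycle
$$(\mathrm{iv}) \,\Rightarrow\, (\mathrm{vi}) \,\Rightarrow\, (\mathrm{v}) \,\Rightarrow\, (\mathrm{i}) \,\Rightarrow\, (\mathrm{vii}) \,\Rightarrow\, (\mathrm{iv}),$$
together with the side implications $(\mathrm{iv}) \Rightarrow (\mathrm{ii}) \Rightarrow (\mathrm{i})$ and $(\mathrm{iv}) \Leftrightarrow (\mathrm{iii})$, which attach (ii) and (iii) to the cycle. The essential content has already been isolated into Lemmas \ref{lem unique Lambdatilde lr} and \ref{lem M Bp implies gamma Bp} and Corollary \ref{cor mu non-zero}; the proof of the proposition will amount to assembling these, plus one elementary additivity argument, and the only thing I expect to require care is checking that the various maps of Grothendieck groups behave as expected with respect to the idempotent $e\otimes f^*$ and restriction to subgroups.

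First I would dispatch the formal implications. The transition maps $\kappa$ and $\eta$ of diagram \ref{noth T(G)}(c), as well as restriction along any subgroup inclusion, are additive homomorphisms of Grothendieck groups that commute with multiplication by the central idempotent $e\otimes f^*$; each of them sends $0$ to $0$, so non-vanishing of the image implies non-vanishing of the preimage. This gives $(\mathrm{iii}) \Rightarrow (\mathrm{i})$ and $(\mathrm{iv}) \Rightarrow (\mathrm{ii})$ by applying $\kappa$ (using its compatibility with restriction for the latter); $(\mathrm{v}) \Rightarrow (\mathrm{i})$ by applying $\eta$; and $(\mathrm{ii}) \Rightarrow (\mathrm{i})$, $(\mathrm{iv}) \Rightarrow (\mathrm{iii})$, $(\mathrm{vi}) \Rightarrow (\mathrm{v})$ directly by restriction. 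The reverse implication $(\mathrm{iii}) \Rightarrow (\mathrm{iv})$ is precisely Corollary \ref{cor mu non-zero}.

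Next I would complete the cycle using the prepared machinery. The implication $(\mathrm{vii}) \Rightarrow (\mathrm{iv})$ is exactly Lemma \ref{lem M Bp implies gamma Bp}. For $(\mathrm{iv}) \Rightarrow (\mathrm{vi})$, the hypothesis $(\Delta(P,\phi,Q),e\otimes f^*)\in\calBP^\Delta_\calO(A,B)$ combined with Remark \ref{rem emuf}(a) gives $(P,e)\in\calBP_\calO(A)$ and $(Q,f)\in\calBP_\calO(B)$, so the pair $(\phi,(Q,f))$ lies in the set $\Lambda_B$ of Lemma \ref{lem unique Lambdatilde lr}; the final clause of that lemma asserts that whenever such a pair satisfies (iv) one automatically has $e\nu(P,\phi,Q)f\neq 0$ in $R(F[C_G(P)]e,F[C_H(Q)]f)$, which is exactly (vi).

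The last remaining implication $(\mathrm{i}) \Rightarrow (\mathrm{vii})$ is a one-line additivity argument. Writing $\gamma = \sum_M n_M [M]$ in $T^\Delta(A,B)$ as a finite $\ZZ$-linear combination of classes of indecomposable $p$-permutation $(A,B)$-bimodules $M$ with twisted diagonal vertices, additivity of the Brauer construction and of the idempotent cut gives
$$e\gammabar(P,\phi,Q)f \;=\; \sum_{M} n_M\cdot e\overline{M}(\Delta(P,\phi,Q))f$$
in $T(F[N_{I\times J}(\Delta(P,\phi,Q))](e\otimes f^*))$. If the left-hand side is non-zero then some summand on the right is non-zero; for the corresponding $M$ one has $n_M\neq 0$ (so $M$ appears in $\gamma$) and $e\overline{M}(\Delta(P,\phi,Q))f\neq 0$, i.e.\ $(\Delta(P,\phi,Q),e\otimes f^*)\in\calBP_\calO(M)$. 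This establishes (vii) and closes the cycle.
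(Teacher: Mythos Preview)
Your proof is correct and follows essentially the same approach as the paper: the three non-trivial steps are $(\mathrm{iv})\Rightarrow(\mathrm{vi})$ via Lemma~\ref{lem unique Lambdatilde lr}, $(\mathrm{vii})\Rightarrow(\mathrm{iv})$ via Lemma~\ref{lem M Bp implies gamma Bp}, and $(\mathrm{i})\Rightarrow(\mathrm{vii})$ by additivity, with all remaining implications formal via the diagram in~\ref{not gamma mu nu}. The only organisational difference is that you invoke Corollary~\ref{cor mu non-zero} for $(\mathrm{iii})\Rightarrow(\mathrm{iv})$, whereas the paper obtains this implicitly from the formal observation that (vi) implies each of (i)--(v); your route is slightly redundant but harmless.
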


\begin{proof}
Clearly, by the diagram in \ref{not gamma mu nu}, each of the conditions (ii)--(vi) implies (i). Similarly, the condition in (vi) implies each of the conditions (i)--(v). Moreover, (iv) implies (vi) by Lemma~\ref{lem unique Lambdatilde lr}. Finally, (vii) implies (iv) by Lemma~\ref{lem M Bp implies gamma Bp}, and clearly (i) implies (vii). This completes the proof of the lemma.
\end{proof}

\begin{corollary}\label{cor connecting Brauer pairs}
Let $(P,e)$ be an $A$-Brauer pair and define $\Lambda_B$ as in Lemma~\ref{lem unique Lambdatilde lr}. Then there exists a unique $H$-orbit of pairs $(\phi,(Q,f))\in\Lambda_B$ with the property that $(\Delta(P,\phi,Q),e\otimes f^*)$ is a $\gamma$-Brauer pair.
\end{corollary}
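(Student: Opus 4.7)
The proof is essentially an immediate combination of Lemma~\ref{lem unique Lambdatilde lr} with the list of equivalent reformulations in Proposition~\ref{prop equiv gamma Brpair cond}. The plan is therefore to translate the condition \lq\lq$(\Delta(P,\phi,Q),e\otimes f^*)$ is a $\gamma$-Brauer pair\rq\rq\ into a condition about $e\mu(P,\phi,Q)f$, and then quote the uniqueness result already established.

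First I would fix $(P,e)\in\calBP_\calO(A)$ and note that for any pair $\lambda=(\phi,(Q,f))\in\Lambda_B$, the triple $(\Delta(P,\phi,Q),e\otimes f^*)$ is indeed an element of $\calBP^\Delta_\calO(A,B)$ (since $(P,e)\in\calBP_\calO(A)$ and $(Q,f)\in\calBP_\calO(B)$, this follows from Remark~\ref{rem emuf}(a)). Hence Proposition~\ref{prop equiv gamma Brpair cond} applies, and in particular the equivalence (i)$\Leftrightarrow$(iv) shows that
\[
   (\Delta(P,\phi,Q),e\otimes f^*)\in\calBP_\calO(\gamma)
   \ \Longleftrightarrow\
   e\mu(P,\phi,Q)f\neq 0 \ \text{in}\ R(\KK[C_G(P)]e,\KK[C_H(Q)]f)\,.
\]
Moreover, this condition clearly depends only on the $H$-orbit of $\lambda$ in $\Lambda_B$ (the $H$-action conjugates everything compatibly, cf.\ the action described in Theorem~\ref{thm BP decomp}(d) and the conjugation formula in~(\ref{eqn gen tens and conj})).

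Next I would invoke Lemma~\ref{lem unique Lambdatilde lr}, which asserts precisely that there is a unique $H$-orbit of pairs $(\phi,(Q,f))\in\Lambda_B$ such that $e\mu(P,\phi,Q)f\neq 0$ in $R(\KK[C_G(P)]e,\KK[C_H(Q)]f)$. Combining the two statements gives the corollary.

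There is no real obstacle here; the only thing to double-check is that the $H$-action on $\Lambda_B$ used in Lemma~\ref{lem unique Lambdatilde lr} coincides with the one implicit in the statement of the corollary (it does, being $\lexp{h}{(\phi,(Q,f))} = (\phi c_h^{-1},\lexp{h}{(Q,f)})$), and that the $H$-orbit whose existence and uniqueness is established indeed lies inside $\Lambda_B$ rather than in all of $\Lambda$ — which is already built into the statement of Lemma~\ref{lem unique Lambdatilde lr}.
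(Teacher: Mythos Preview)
Your proposal is correct and takes essentially the same approach as the paper: the paper's proof is a single sentence stating that the corollary follows immediately from Lemma~\ref{lem unique Lambdatilde lr} together with the equivalence of (i) and (iv) in Proposition~\ref{prop equiv gamma Brpair cond}. Your additional remarks about the $H$-action being compatible are fine but not needed.
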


\begin{proof}
This follows immediately from Lemma~\ref{lem unique Lambdatilde lr} and the equivalence of (i) and (iv) in Proposition~\ref{prop equiv gamma Brpair cond}.
\end{proof}

The following Theorem shows that every $p$-permutation equivalence between $A$ and $B$ determines a bijection between the block direct summands of $A$ and of $B$, and that it is the sum of $p$-permutation equivalences between corresponding blocks.

\begin{theorem}\label{thm blockwise ppeq}
Assume that $\gamma\in T^\Delta_l(A,B)$. Let $\calI$ denote the set of primitive idempotents of $Z(A)$ and let $\calJ$ denote the set of primitive idempotents of $Z(B)$. For each $e\in\calI$ there exists a unique $f$ in $\calJ$ such that $e\gamma f\neq 0$ in $T^\Delta(\calO Ge, \calO Hf)$. If $e\in\calI$ and $f\in\calJ$ satisfies $e\gamma f\neq 0$ then $e\gamma f\in T^\Delta_l(\calO Ge, \calO Hf)$.

In particular, if $\gamma$ is a $p$-permutation equivalence between $A$ and $B$, then the condition $e\gamma f\neq 0$ defines a bijection between $\calI$ and $\calJ$, and if $e\gamma f\neq 0$ then $e\gamma f$ is a $p$-permutation equivalence between $\calO Ge$ and $\calO Hf$. Moreover, $\gamma=\sum_{e\in\calI} e\gamma f$, where $f\in\calJ$ corresponds to $e$.
\end{theorem}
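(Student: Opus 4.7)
The plan is to derive the block-wise splitting from the identity $\gamma\cdotH\gamma^\circ = [A]$, using orthogonality of block idempotents together with the uniqueness part of Corollary~\ref{cor connecting Brauer pairs} applied to the trivial Brauer pair. First, I would write
\begin{equation*}
  \gamma = \sum_{e\in\calI,\,f\in\calJ} \gamma_{e,f}\,,\quad \gamma_{e,f} := e\gamma f \in T^\Delta(\calO Ge,\calO Hf)\,,
\end{equation*}
and observe that for distinct $f,f'\in\calJ$ the relation $ff'=0$ in $\calO H$ forces $\gamma_{e,f}\cdotH \gamma_{e',f'}^\circ = 0$ for any $e,e'\in\calI$. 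Comparing the $(\calO Ge,\calO Ge')$-components in $\gamma\cdotH\gamma^\circ = [A] = \sum_{e''\in\calI}[\calO Ge'']$, where $[\calO Ge'']$ is supported in the single $(\calO Ge'',\calO Ge'')$-component, the left equation of (\ref{eqn ppeq def}) reduces, for each fixed $e\in\calI$, to
\begin{equation*}
  \sum_{f\in\calJ} \gamma_{e,f}\cdotH\gamma_{e,f}^\circ = [\calO Ge] \quad\text{in $T^\Delta(\calO Ge,\calO Ge)$.}
\end{equation*}

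Next, I would apply Corollary~\ref{cor connecting Brauer pairs} to the $A$-Brauer pair $(\{1\},e)$: here the only admissible data consist of $Q=\{1\}$, $\phi=\id$, and $f'$ ranging over the block idempotents of $B$, and $H$ acts trivially on such pairs since central idempotents are $H$-fixed. The corollary therefore yields a unique $f_e\in\calJ$ such that $(\Delta(\{1\}),e\otimes f_e^*)$ is a $\gamma$-Brauer pair; equivalently, $e\gammabar f_e \neq 0$ in $T(F[G\times H])$. Since the reduction functor $\overline{?}$ induces a bijection on isomorphism classes of indecomposable $p$-permutation modules (Proposition~\ref{prop p-perm equiv}(b)), the condition $\gamma_{e,f}\neq 0$ is equivalent to $\gammabar_{e,f}\neq 0$. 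Consequently $\gamma_{e,f_e}\neq 0$ while $\gamma_{e,f}=0$ for every $f\in\calJ\smallsetminus\{f_e\}$, so the displayed sum collapses to $\gamma_{e,f_e}\cdotH\gamma_{e,f_e}^\circ = [\calO Ge]$, placing $\gamma_{e,f_e}$ in $T^\Delta_l(\calO Ge,\calO Hf_e)$.

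For the \lq\lq in particular\rq\rq\ statement, if $\gamma$ additionally satisfies $\gamma^\circ\cdotG\gamma=[B]$, then $\gamma^\circ\in T^\Delta_l(B,A)$, and the same argument applied to $\gamma^\circ$ produces, for each $f\in\calJ$, a unique $e_f\in\calI$ with $\gamma_{e_f,f}\neq 0$. Since the two characterizations of non-vanishing coincide ($f=f_e \iff \gamma_{e,f}\neq 0 \iff e=e_f$), the maps $e\mapsto f_e$ and $f\mapsto e_f$ are mutually inverse bijections between $\calI$ and $\calJ$, and the decomposition $\gamma = \sum_{e\in\calI} e\gamma f_e$ is immediate upon discarding the vanishing terms in $\gamma = \sum_{e,f}\gamma_{e,f}$. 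The one technical point requiring care is the translation from the Brauer-pair language of Corollary~\ref{cor connecting Brauer pairs} at the trivial subgroup into the block-idempotent language of the theorem; no substantial obstacle arises, since all the heavy lifting has already been done in the earlier block-wise analysis of $\gamma$-Brauer pairs.
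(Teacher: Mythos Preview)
Your proof is correct and follows essentially the same route as the paper: both apply Corollary~\ref{cor connecting Brauer pairs} to the trivial $A$-Brauer pair $(\{1\},e)$ to obtain the unique $f_e$, and then multiply the equation $\gamma\cdotH\gamma^\circ=[A]$ by $e$ on both sides (the paper writes this as $e\gamma = e\gamma f$, you as the collapse of the sum $\sum_f\gamma_{e,f}\cdotH\gamma_{e,f}^\circ$). Your write-up is in fact more detailed than the paper's, in particular in making explicit the translation between the $\gamma$-Brauer-pair condition at $\Delta(\{1\})$ and the non-vanishing of $e\gamma f$ via the bijection of Proposition~\ref{prop p-perm equiv}(b), and in spelling out the symmetric application to $\gamma^\circ$ for the ``in particular'' clause.
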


\begin{proof}
The first statement follows immediately from Corollary~\ref{cor connecting Brauer pairs} applied to the $A$-Brauer pair $(\{1\}, e)$. Thus, $\gamma=\sum_{e\in\calI}e\gamma f$, where $f\in\calJ$ denotes the element corresponding to $e$. Suppose that $e\gamma f\neq 0$. Then $e\gamma=e\gamma f$ and multiplying the equation $\gamma\cdotH\gamma^\circ=[A]$ with $e$ from left and right yields $e\gamma f\cdotH(e\gamma f)^\circ=[\calO Ge]$. Thus, $e\gamma f\in T^\Delta_l(\calO Ge,\calO Hf)$.
\end{proof}

We are now ready to prove the main theorem of this section. It shows that $\gamma$-Brauer pairs behave very similar to $M$-Brauer pairs of an indecomposable $p$-permutation module $M$, cf.~Proposition~\ref{prop Brauer pairs for M}(b).

\begin{theorem}\label{thm gamma is uniform}
{\rm (a)} The set of $\gamma$-Brauer pairs form a $G\times H$-stable ideal in the poset of $A\otimes B^*$-Brauer pairs.

\smallskip
{\rm (b)} If $A$ and $B$ are blocks then any two maximal $\gamma$-Brauer pairs are $G\times H$-conjugate.

\smallskip
{\rm (c)} For $(\Delta(P,\phi,Q), e\otimes f^*)\in\calBP_\calO^\Delta(\gamma)$ the following are equivalent:

\smallskip
\quad {\rm (i)} $(\Delta(P,\phi,Q), e\otimes f^*)$ is a maximal $\gamma$-Brauer pair;

\smallskip
\quad {\rm (ii)} $(P,e)$ is a maximal $A$-Brauer pair;

\smallskip
\quad {\rm (iii)} $(Q,f)$ is a maximal $B$-Brauer pair.
\end{theorem}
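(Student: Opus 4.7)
The plan for Part~(a) is to handle the two closure properties separately. For $G\times H$-stability, note that the Brauer construction is equivariant, so for any $(g,h)\in G\times H$ and any twisted diagonal Brauer pair $(X,d)$ the Grothendieck classes satisfy $\lexp{(g,h)}{\gamma(X,d)}\cong\gamma(\lexp{(g,h)}{X},\lexp{(g,h)}{d})$; in particular, the non-vanishing condition is preserved. For downward closure, since Remark~\ref{rem emuf}(b) shows every $\gamma$-Brauer pair has twisted diagonal first component, it suffices to treat Brauer pairs of the form $(\Delta(P,\phi,Q),e\otimes f^*)$, where the equivalence (i)$\Leftrightarrow$(iv) of Proposition~\ref{prop equiv gamma Brpair cond} reduces the claim to the corresponding statement for the characters $e\mu(P,\phi,Q)f$, which is precisely Corollary~\ref{cor mu non-zero going down}.

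For Part~(c), the implications (ii)$\Rightarrow$(i) and (iii)$\Rightarrow$(i) will be formal: supposing $(P,e)$ is maximal in $\calBP(A)$ and that $(\Delta(P,\phi,Q),e\otimes f^*)\le(\Delta(P_1,\phi_1,Q_1),e_1\otimes f_1^*)\in\calBP(\gamma)$, projection onto the first factor (Remark~\ref{rem emuf}(c)) yields $(P,e)\le(P_1,e_1)$ in $\calBP(A)$, forcing $P=P_1$ and $e=e_1$; the remaining equalities $Q=Q_1$, $\phi=\phi_1$, $f=f_1$ then follow from $|Q|=|Q_1|=|P|$ and the uniqueness in Proposition~\ref{prop Brauer pairs}(a).

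The main task will be the direction (i)$\Rightarrow$(ii), which I will prove by contradiction. Assume $(\Delta(P,\phi,Q),e\otimes f^*)$ is maximal in $\calBP(\gamma)$ and that some $(P_1,e_1)\in\calBP(A)$ strictly dominates $(P,e)$. Applying Corollary~\ref{cor connecting Brauer pairs} to $(P_1,e_1)$ produces a triple $(\phi_1,(Q_1,f_1))$, unique up to $H$-conjugation, with $(\Delta(P_1,\phi_1,Q_1),e_1\otimes f_1^*)\in\calBP(\gamma)$. Setting $Q_0:=\phi_1^{-1}(P)$ and $\phi_0:=\phi_1|_{Q_0}$ gives a twisted diagonal subgroup $\Delta(P,\phi_0,Q_0)\le\Delta(P_1,\phi_1,Q_1)$; Proposition~\ref{prop Brauer pairs}(a) selects a unique Brauer pair $(\Delta(P,\phi_0,Q_0),e_0\otimes f_0^*)$ below this $\gamma$-Brauer pair, which by Part~(a) lies in $\calBP(\gamma)$, and whose first-factor projection $(P,e_0)$ must equal $(P,e)$ by uniqueness in $\calBP(A)$. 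Corollary~\ref{cor connecting Brauer pairs} then places $(\phi_0,(Q_0,f_0))$ in the same $H$-orbit as $(\phi,(Q,f))$, and conjugating $(\Delta(P_1,\phi_1,Q_1),e_1\otimes f_1^*)$ by $(1,h^{-1})$ for a suitable $h\in H$ realizing this conjugacy moves the sub-Brauer pair onto $(\Delta(P,\phi,Q),e\otimes f^*)$ (using that inner automorphisms fix central idempotents to identify the $f$-component) while, by Part~(a), keeping the resulting super-Brauer pair in $\calBP(\gamma)$---contradicting maximality since $|P|<|P_1|$. The implication (i)$\Rightarrow$(iii) is obtained symmetrically.

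Finally, Part~(b) follows by combining (c) with Proposition~\ref{prop Brauer pairs}(c): two maximal $\gamma$-Brauer pairs project to two maximal $A$-Brauer pairs, which are $G$-conjugate because $A$ is a block; after $G$-conjugation both project to the same $(P,e)$, at which point Corollary~\ref{cor connecting Brauer pairs} asserts that the $H$-orbit of the matching triples $(\phi,(Q,f))$ is unique. The principal obstacle will be the lifting argument in (i)$\Rightarrow$(ii), since the hypothesis that $(P_1,e_1)$ strictly dominates $(P,e)$ in $\calBP(A)$ does not directly produce any $\gamma$-Brauer pair dominating $(\Delta(P,\phi,Q),e\otimes f^*)$; the uniqueness up to $H$-conjugacy supplied by Corollary~\ref{cor connecting Brauer pairs} is exactly what makes the lift possible.
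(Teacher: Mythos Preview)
Your arguments for Part~(a), for the formal implications (ii)$\Rightarrow$(i) and (iii)$\Rightarrow$(i), and for (i)$\Rightarrow$(ii) are correct and essentially coincide with the paper's (your contradiction argument for (i)$\Rightarrow$(ii) is the same lift-and-compare step the paper uses in proving Part~(b), just repackaged). Your derivation of (b) from (c)(i)$\Rightarrow$(ii) plus Corollary~\ref{cor connecting Brauer pairs} is also fine.

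There is, however, a genuine gap in your claim that (i)$\Rightarrow$(iii) is ``obtained symmetrically.'' The standing hypothesis of this section is only $\gamma\in T^\Delta_l(A,B)$, i.e., $\gamma\cdot_H\gamma^\circ=[A]$; the right equation $\gamma^\circ\cdot_G\gamma=[B]$ is \emph{not} assumed. All the tools you invoke---Lemma~\ref{lem unique Lambdatilde lr}, Corollary~\ref{cor connecting Brauer pairs}, Proposition~\ref{prop equiv gamma Brpair cond}---are asymmetric in $A$ and $B$: they attach to each $A$-Brauer pair a unique $H$-orbit of matching data, not the reverse. The reverse direction (Lemma~\ref{lem connecting Brauer pairs rl}) is only established \emph{after} the present theorem and its proof uses Parts~(a) and~(c). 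So your symmetric lift would be circular.

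The paper supplies the missing implication by an entirely different, arithmetic argument. After reducing to the case where $A$ and $B$ are single blocks, one knows from (b) that every indecomposable module appearing in $\gamma$ has vertex contained in some $\Delta(D,\psi,E)$ with $D$ a defect group of $A$. From $\gamma\cdot_H\gamma^\circ=[A]$ one deduces via Lemma~\ref{lem isometry crit} that $\kappa_{G\times H}(\gamma)$ is an isometry, whence $\rk_\calO(B)=\rk_\calO(\gamma^\circ\cdot_G\gamma)$. The Mackey formula (Theorem~\ref{thm Bouc-Mackey}) then bounds the vertices of the summands of $\gamma^\circ\cdot_G\gamma$ by $|E|$, so \cite[Theorem~4.7.5]{NagaoTsushima1989} forces $|H|_p^2/|E|$ to divide $\rk_\calO(B)_p=|H|_p^2/q$ (with $q$ the defect-group order of $B$), giving $q\mid |E|$ and hence that $E$ is a defect group of $B$. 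This rank comparison is the substantive extra ingredient you are missing.
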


\begin{proof}
(a) Clearly, the set of $\gamma$-Brauer pairs is closed under $G\times H$-conjugation. Moreover, by Corollary~\ref{cor mu non-zero going down} and Proposition~\ref{prop equiv gamma Brpair cond}, the set of $\gamma$-Brauer pairs is an ideal in the poset of $A\otimes B^*$-Brauer pairs.

\smallskip
(b) Now assume that $A$ and $B$ are blocks. Let $(D,e_D)$ be a maximal $A$-Brauer pair over $\calO$. By Corollary~\ref{cor connecting Brauer pairs} there exists a $B$-Brauer pair $(E,f_E)$ over $\calO$ and an isomorphism $\psi\colon E\myiso D$ such that $(\Delta(D,\psi,E),e_D\otimes f_E^*)$ is a $\gamma$-Brauer pair. Let $(\Delta(P',\phi',Q'),e'\otimes f'^*)$ be an arbitrary $\gamma$-Brauer pair. We will first prove that
\begin{equation}\label{eqn subcon}
  (\Delta(P',\phi',Q'),e'\otimes f'^*)\le_{G\times H} (\Delta(D,\psi,E),e_D\otimes f_E^*)\,.
\end{equation}
Since any two maximal $A$-Brauer pairs are $G$-conjugate, we may assume that $(P',e')\le (D,e_D)$. Set $R:=\psi^{-1}(P')\le \psi^{-1}(D)=E$ and let $f_R$ denote the unique block idempotent of $\calO[C_H(R)]$ such that $(R,f_R)\le(E,f_E)$. Since $(\Delta(D,\psi,E), e_D\otimes f_E^*)$ is a $\gamma$-Brauer pair, Part~(a) implies that also $(\Delta(P', \psi|_R,R), e'\otimes f_R^*)$ is a $\gamma$-Brauer pair. Since also $(\Delta(P',\phi',Q'), e'\otimes f'^*)$ is a $\gamma$-Brauer pair, Corollary~\ref{cor connecting Brauer pairs} implies that $(\psi|_R,(R,f_R))$ and $(\phi', (Q',f'))$ are $H$-conjugate. Thus,
\begin{equation*}
  (\Delta(P',\phi',Q'), e'\otimes f'^*) =_{\{1\}\times H} (\Delta(P',\psi|_R,R), e'\otimes f_R^*) \le (\Delta(D,\psi,E), e_D\otimes f_E^*)
\end{equation*}
and the claim is proven. This implies that $(\Delta(D,\psi,E), e_D\otimes f_E^*)$ is a maximal $\gamma$-Brauer pair, and also that every other maximal $\gamma$-Brauer pair is $G\times H$-conjugate to $(\Delta(D,\psi,E), e_D\otimes f_E^*)$.

\smallskip
(c) Let $A$ and $B$ be again sums of blocks. First recall from Remark~\ref{rem emuf}(b) that every $\gamma$-Brauer pair has a twisted diagonal subgroup as first component. Thus, (ii) implies (i) and (iii) implies (i), by the last statement in Remark~\ref{rem emuf}(a). 

In order to see that (i) implies (ii) and (iii) we claim that it suffices to show this in the situation where $A$ and $B$ are blocks. In fact, $(\Delta(P,\phi,Q),e\otimes f^*)$ is an $A'\otimes B'^*$-Brauer pair for the unique block direct summands $A'$ of $A$ and $B'$ of $B$, respectively, that satisfy $(\{1\},e_{A'})\le (P,e)$ and $(\{1\},e_{B'})\le (Q,f)$. Applying Corollary~\ref{cor connecting Brauer pairs} to the $A$-Brauer pair $(\{1,\},e_{A'})$, we see that $B'$ is uniquely determined by $A'$. Writing $\gamma$ as the sum of the elements $e'\gamma f'$, where $e'$ and $f'$ run through the block idempotents of $\calO G$ and $\calO H$ with $e_A e'=e'$ and $e_B f'=f'$, then multiplying both sides of (\ref{eqn left ppeq}) by $e_{A'}$ from the left and right, we see that $(e_{A'}\gamma e_{B'})\cdotH(e_{A'}\gamma e_{B'})^\circ = [A']$ in $T^\Delta(A',A')$. Since $(\Delta(P,\phi,Q), e\otimes f^*)$ is a maximal $\gamma$-Brauer pair, it is also a maximal $e_{A'}\gamma e_{B'}$-Brauer pair. Since maximal $A'$-Brauer pairs (resp.~$B'$-Brauer pairs) are also maximal $A$-Brauer pairs (resp.~$B$-Brauer pairs), we may assume from now on that $A$ and $B$ are blocks.

Now let $(D,e_D)$ be a maximal $A$-Brauer pair and let the $\gamma$-Brauer pair $(\Delta(D,\psi,E),e_D\otimes f_E^*)$ be chosen as in the proof of Part~(b). Then, by the claim proved there, we know that $(\Delta(D,\psi,E), e_D\otimes f_E^*)$ is a maximal $\gamma$-Brauer pair. Thus, by Part~(b), $(\Delta(P,\phi,Q), e\otimes f^*) \le_{G\times H} (\Delta(D,\psi,E),e_D\otimes f_E^*)$. This already shows that (i) implies (ii). Finally, in order to show that (i) implies (iii), it suffices to show that $(E,f_E)$ is a maximal $B$-Brauer pair, i.e., that $E$ is a defect group of $B$.

So let $q$ denote the order of a defect group of $B$. It suffices to show that $q\le |E|$, since $(E,f_E)$ is a $B$-Brauer pair. Set $\mu:=\kappa_{G\times H}(\gamma)\in R(\KK G, \KK H)$. Then $\mu\cdotH \mu^\circ=[\KK G e_A]$ in $R(\KK G, \KK G)$. Lemma~\ref{lem isometry crit} now implies that $\mu$ is an isometry between $\KK Ge_A$ and $\KK H e_B$. This implies further that
\begin{equation}\label{eqn block rank equation}
  \rk_{\calO}(B)=\dim_{\KK}(\KK H e_B) = \dim_{\KK}(\mu^\circ\cdotG \mu) = \rk_{\calO}(\gamma^\circ\cdotG\gamma)\,,
\end{equation}
where $\dim_\KK$ and $\rk_\calO$ also denote the integer-valued maps induced on the Grothendieck groups.
Since every $\gamma$-Brauer pair is contained in a $G\times H$-conjugate of $(\Delta(D,\psi,E), e_D\otimes f_E^*)$, Lemma~\ref{lem M Bp implies gamma Bp} and Proposition~\ref{prop equiv gamma Brpair cond} imply that every indecomposable $A\otimes B^*$-module that appears in $\gamma$ has a vertex contained in $\Delta(D,\psi,E)$. Therefore, every indecomposable $B\otimes A^*$-module that appears in $\gamma^\circ$ has a vertex contained in $\Delta(E,\psi^{-1},D)$. The Mackey formula for bimodules, see Theorem~\ref{thm Bouc-Mackey}, implies that each indecomposable $B\otimes B^\circ$-module that appears in $\gamma^\circ\cdotG \gamma$ has a vertex of order dividing $|E|$. By Theorem~4.7.5 in \cite{NagaoTsushima1989}, the $p$-part of the $\calO$-rank of each such indecomposable $B\otimes B^\circ$-module is a multiple of $|H|_p^2/|E|$. Thus, using Equation~(\ref{eqn block rank equation}), $\rk_{\calO}(B)_p$ is a multiple of $|H|_p^2/|E|$. On the other hand, by Theorem~5.10.1 in \cite{NagaoTsushima1989} we know that $\rk_{\calO}(B)_p=|H|^2_p/q$. This implies that $q$ divides $|E|$ and the proof of Part~(c) is complete.
\end{proof}

We conclude this section by proving an inverse to the association constructed in Corollary~\ref{cor connecting Brauer pairs}. This will be used in Section~\ref{sec fusion systems} to show that if $\gamma\in T^{\Delta}(A,B)$ satisfies (\ref{eqn left ppeq}) then the fusion systems of $A$ and $B$ are isomorphic.

\begin{lemma}\label{lem connecting Brauer pairs rl}
Let $(Q,f)\in\calBP_\calO(B')$ for a block $B'$ of $\calO H$ satisfying $\gamma e_{B'}\neq 0$. Then there exists a unique $G$-conjugacy class of pairs $((P,e),\phi)$, where $(P,e)\in\calBP_\calO(A)$ and $\phi\colon Q\myiso P$ is an isomorphism, such that $(\Delta(P,\phi,Q), e\otimes f^*)$ is a $\gamma$-Brauer pair. Here, $G$ acts on the set of such pairs via $\lexp{g}{((P,e),\phi)}=((\lexp{g}{P}, \lexp{g}{e}), c_{g}\circ\phi)$.
\end{lemma}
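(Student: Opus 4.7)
The lemma splits into existence and uniqueness, with uniqueness being the substantive content.

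\textbf{Existence.} Since $\gamma e_{B'}\neq 0$, Theorem~\ref{thm blockwise ppeq} furnishes a block summand $A'$ of $A$ such that $\gamma':=e_{A'}\gamma e_{B'}\in T^\Delta_l(A',B')$ is nonzero. Applying Theorem~\ref{thm gamma is uniform}(b) and (c) to $\gamma'$, fix a maximal $\gamma'$-Brauer pair $\Omega:=((D,e_D),\psi,(E,f_E))$, noting that $(E,f_E)$ is a maximal $B'$-Brauer pair. As maximal $B'$-Brauer pairs form a single $H$-conjugacy class, replacing $\Omega$ by its $(1,h)$-conjugate for a suitable $h\in H$, we may assume $(Q,f)\le(E,f_E)$. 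Then with $P:=\psi(Q)$, $e$ the unique block idempotent satisfying $(P,e)\le(D,e_D)$ (Proposition~\ref{prop Brauer pairs}(a)), and $\phi:=\psi|_Q$, the containment $(\Delta(P,\phi,Q),e\otimes f^*)\le(\Delta(D,\psi,E),e_D\otimes f_E^*)$ together with Theorem~\ref{thm gamma is uniform}(a) makes $(\Delta(P,\phi,Q),e\otimes f^*)$ a $\gamma$-Brauer pair.

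\textbf{Block rigidity for uniqueness.} Let $((P_i,e_i),\phi_i,(Q,f))$, $i=1,2$, be $\gamma$-Brauer pairs, and denote by $A'_i$ the block of $\calO G$ containing $e_i$. Setting $\mu:=\kappa_{G\times H}(\gamma)$, the equation $\gamma\cdotH\gamma^\circ=[A]$ yields $(e_{A'_i}\mu)\cdotH(e_{A'_i}\mu)^\circ=[\KK Ge_{A'_i}]$, so Corollary~\ref{cor isometry separation 3} (combined with Proposition~\ref{prop equiv gamma Brpair cond}) identifies $e_{A'_i}\mu=e_{A'_i}\mu e_{B'}$ as a perfect isometry $\KK Ge_{A'_i}\myiso\KK He_{B'}$. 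If $A'_1\neq A'_2$, then expanding both isometries and applying~(\ref{eqn char tens 2}) writes the cross-term $(e_{A'_1}\mu e_{B'})\cdotH(e_{A'_2}\mu e_{B'})^\circ$ as a nonzero $\pm 1$-coefficient sum indexed by the nonempty $\Irr(\KK He_{B'})$; yet this cross-term equals $e_{A'_1}[\KK Ge_A]e_{A'_2}=0$, a contradiction. Hence $A'_1=A'_2=:A'$, and $\gamma':=e_{A'}\gamma e_{B'}\in T^\Delta_l(A',B')$ contains both $\gamma$-Brauer pairs in the block case.

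\textbf{Setup for $G$-conjugacy.} Extend $\omega_i:=((P_i,e_i),\phi_i,(Q,f))$ to maximal $\gamma'$-Brauer pairs $\Omega_i$ and, by Theorem~\ref{thm gamma is uniform}(b), fix $(g_0,h_0)\in G\times H$ with $\lexp{(g_0,h_0)}{\Omega_1}=\Omega_2=:((D,e_D),\psi,(E,f_E))$. Inside $\Omega_2$, each sub-$\gamma'$-Brauer pair is rigidly determined by its $B'$-Brauer-pair component (via Remark~\ref{rem emuf}(c) and Proposition~\ref{prop Brauer pairs}(a)), so $\omega_2$ corresponds to $(Q,f)\le(E,f_E)$ while $\lexp{(g_0,h_0)}{\omega_1}$ corresponds to $\lexp{h_0}{(Q,f)}\le(E,f_E)$. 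Transporting by $\psi$, we find $(P_2,e_2)=(\psi(Q),e_{\psi(Q)})$ and $\lexp{g_0}{(P_1,e_1)}=(\psi(\lexp{h_0}{Q}),e_{\psi(\lexp{h_0}{Q})})$.

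\textbf{Main obstacle.} To conclude $G$-conjugacy of $((P_1,e_1),\phi_1)$ with $((P_2,e_2),\phi_2)$, one must show that the $B'$-fusion morphism $c_{h_0}|_Q\colon Q\to\lexp{h_0}{Q}$ is realized via $\psi$ by an $A'$-fusion morphism coming from $N_G(D,e_D)$, compatibly with the $\phi$-components. Proposition~\ref{prop Nphi}(c) describes $\stab_{G\times H}(\Omega_2)=N_{I\times J}(\Delta(D,\psi,E))$ (with $I:=N_G(D,e_D)$, $J:=N_H(E,f_E)$) as projecting to $J$ with image $N_{(J,\psi^{-1},I)}$, which consists exactly of those $h\in J$ whose conjugation on $E$ corresponds via $\psi$ to a conjugation on $D$ by some element of $I$. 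Combined with Lemma~\ref{lem unique Lambdahat lr} applied at $(D,e_D)$---which pins down a unique $I\times H$-orbit of $\Lambda_B$ per irreducible character of $\KK X(e_D\otimes e_D^*)$---one extracts a compensating $g_1\in N_G(\lexp{g_0}{P_1},\lexp{g_0}{e_1})$ absorbing the $h_0$-motion. Then $g:=g_1^{-1}g_0\in G$ witnesses $\lexp{g}{((P_1,e_1),\phi_1)}=((P_2,e_2),\phi_2)$, completing the proof.
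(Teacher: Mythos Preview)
Your existence argument is fine and is essentially the paper's argument. Your block-rigidity step is also correct; the paper does not isolate this step, but it is implicitly absorbed into the character-theoretic argument for uniqueness.

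The genuine problem is the paragraph labelled ``Main obstacle''. What you actually need is: given two $\gamma$-Brauer pairs $(\Delta(P_i,\phi_i,Q),e_i\otimes f^*)$, $i=1,2$, produce $g\in G$ with $\lexp{(g,1)}{\omega_1}=\omega_2$. Your strategy is to embed each $\omega_i$ into a maximal $\gamma'$-Brauer pair $\Omega_i$, conjugate $\Omega_1$ to $\Omega_2$ by some $(g_0,h_0)\in G\times H$, and then ``absorb the $h_0$-motion'' by a further $G$-conjugation. But after conjugating, $\lexp{(g_0,h_0)}{\omega_1}$ sits below $\Omega_2$ with $H$-component $\lexp{h_0}{(Q,f)}$, not $(Q,f)$. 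To pass from $\lexp{h_0}{(Q,f)}$ to $(Q,f)$ \emph{while staying below} $\Omega_2$ and moving only on the $G$-side, you would need the $\calB$-morphism $c_{h_0}|_Q$ to correspond via $\psi$ to an $\calA$-morphism. That is precisely the content of Theorem~\ref{thm isomorphic fusion systems}, which is proved \emph{after} this lemma and in fact \emph{uses} this lemma (through Proposition~\ref{prop isomorphic inertia quotients}). Neither Lemma~\ref{lem unique Lambdahat lr} (which runs in the forward direction, from an $A$-Brauer pair to an $I\times H$-orbit on the $B$-side) nor the description of $N_{I\times J}(\Delta(D,\psi,E))$ in Proposition~\ref{prop Nphi}(c) supplies the missing $g_1$, because $h_0$ need not lie in $J$. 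As written, the argument is circular.

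The paper avoids this by a direct character-theoretic computation that does not go through maximal $\gamma$-Brauer pairs at all. One multiplies $\gamma\cdotH\gamma^\circ=[A]$ on the right by $\gamma$ to obtain $\gamma\cdotH\gamma^\circ\cdotG\gamma=\gamma$, applies the Brauer construction at $\Delta(P,\phi,Q)$, cuts by $e\otimes f^*$, and uses Theorem~\ref{thm BP decomp}(b) together with Corollary~\ref{cor connecting Brauer pairs} to simplify the $\gamma\cdotH\gamma^\circ$ factor. Passing to $\KK$ and cancelling the isometry $e\mu(P,\phi,Q)f$ (Lemma~\ref{lem unique Lambdatilde lr}) yields $[\KK C_H(Q)f]=\bigl(f(\gamma^\circ\cdotG\gamma)(\Delta(Q))f\bigr)^\KK$. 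Expanding the right-hand side by Theorem~\ref{thm BP decomp}(b) as a sum over $G$-orbits of pairs $((P',e'),\phi')$ and invoking Corollary~\ref{cor isometry separation 2} then forces exactly one orbit to contribute. This is the missing idea: one must produce the equation $[\KK C_H(Q)f]=\sum_{((P',e'),\phi')} (e'\mu(P',\phi',Q)f)^\circ\cdotG e'\mu(P',\phi',Q)f$ and apply the separation lemma, rather than try to descend from the conjugacy of maximal Brauer pairs.
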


\begin{proof}
We first show the existence part of the lemma. There exists a block direct summand $A'$ of $A$ such that $e_{A'}\gamma e_{B'}\neq 0$ in $T(\calO G,\calO H)$. In other words, $(\{1\}, e_{A'}\otimes e_{B'}^*)$ is a $\gamma$-Brauer pair. Let $(\Delta(D,\psi,E), e_D\otimes f_E^*)$ be a maximal $\gamma$-Brauer pair containing $(\{1\}, e_{A'}\otimes e_{B'})$, then $(\Delta(D,\psi,E), e_D\otimes f_E^*)$ is also an $A'\otimes B'^*$-Brauer pair and $(E,f_E)$ is a maximal $B'$-Brauer pair by Theorem~\ref{thm gamma is uniform}(c). After conjugating this maximal $\gamma$-Brauer pair by an element in $\{1\}\times H$, if necessary, we may assume that $(Q,f)\le (E,f_E)$. Setting $P:=\psi(Q)$ and $\phi:=\psi|_Q\colon Q\myiso P$, there exists a unique primitive idempotent $e$ of $Z(\calO[C_G(P)])$ such that $(P,e)\le(D,e_D)$. This implies that $(\Delta(P,\phi,Q), e\otimes f^*)\le (\Delta(D,\psi,E), e_D\otimes f_E^*)$. Since the set of $\gamma$ -Brauer pairs is an ideal, see Theorem~\ref{thm gamma is uniform}(a), also $(\Delta(P,\phi,Q), e\otimes f^*)$ is a $\gamma$-Brauer pair.

\smallskip
Next, we show the uniqueness part. Consider $e\gamma(P,\phi,Q)f$ as element in $T^\Delta(\calO[C_G(P)]e,\calO[C_H(Q)]f)$ and $e\mu(P,\phi,Q)f$ as element in $R(\KK[C_G(P)]e,\KK[C_H(Q)]f)$. Since $\gamma\cdotH\gamma^\circ=[A]$ in $T^\Delta(A,A)$, we obtain $\gamma\cdotH\gamma^\circ\cdotG\gamma = \gamma$ in $T^\Delta(A,B)$ and
\begin{equation*}
  e\gamma(P,\phi,Q)f = e(\gamma\cdotH\gamma^\circ\cdotG \gamma)(\Delta(P,\phi,Q))f 
  = e\gamma(P,\phi,Q)f\mathop{\cdot}\limits_{C_H(Q)} f(\gamma^\circ\cdotG\gamma)(\Delta(Q))f\,,
\end{equation*}
in $T^\Delta(\calO[C_G(P)]e,\calO[C_H(Q)]f)$, where the last equation follows from Theorem~\ref{thm BP decomp}(b) and Corollary~\ref{cor connecting Brauer pairs}. Extending scalars from $\calO$ to $\KK$, the last equation implies
\begin{equation*}
  e\mu(P,\phi,Q)f = e\mu(P,\phi|_Q,Q)f \mathop{\cdot}\limits_{C_H(Q)} \bigl(f(\gamma^\circ\cdotG \gamma)(\Delta(Q))f\bigr)^{\KK}
\end{equation*}
in $R(\KK[C_G(P)]e,\KK[C_H(Q)]f)$. By Lemma~\ref{lem unique Lambdatilde lr}, $e\mu(P,\phi|_Q,Q)f$ is an isometry between $\KK[C_G(P)]e$ and $\KK[C_H(Q)]f$. Thus, the last equation implies
\begin{equation*}
  [\KK[C_H(Q)]f] = \bigl(f(\gamma^\circ\cdotG \gamma)(\Delta(Q))f\bigr)^{\KK}
\end{equation*}
in $R(\KK[C_H(Q)]f,\KK[C_H(Q)]f)$. Using again Theorem~\ref{thm BP decomp}(b), we can write
\begin{equation*}
  f(\gamma^\circ\cdotG\gamma)(\Delta(Q))f = \sum_{((P',e'),\phi')}
  f\gamma^\circ(\Delta(Q,\phi'^{-1},P'))e'\cdotG e'\gamma(P',\phi',Q)f\,,
\end{equation*}
in $T^\Delta(\calO[C_H(Q)]f,\calO[C_H(Q)]f)$, where $((P',e'),\phi')$ runs through representatives of the $G$-conjugacy classes of pairs as described in the statement of the lemma. Thus,
\begin{equation*}
  [\KK[C_H(Q)]f] = \sum_{((P',e'),\phi')} f\mu(P',\phi',Q)^\circ e' \cdotG e'\mu(P',\phi',Q) f
\end{equation*}
in $R(\KK[C_H(Q)]f,\KK[C_H(Q)]f)$. Corollary~\ref{cor isometry separation 2} now implies that, up to $G$-conjugacy, there exists a unique pair $((P',e'),\phi')$ such that $e'\mu(P',\phi',Q) f \neq 0$ in $R(\KK[C_G(P)]e',\KK[C_H(Q)]f)$. Together with Proposition~\ref{prop equiv gamma Brpair cond} this implies the desired uniqueness statement.
\end{proof}

%%%%%%%%%%%%%%%% SECTION 11 %%%%%%%%%%%%%%%%%%%%%%%%%%%%%%%%%%%
%\newpage
\section{Fusion systems, local equivalences and finiteness}\label{sec fusion systems}

Throughout this section we assume again that $G$ and $H$ are finite groups, and that the $p$-modular system $(\KK,\calO, F)$ is large enough for $G$ and $H$. Moreover, we assume that $A=\calO Ge_A$ is a block of $\calO G$, $B=\calO He_B$ is a block of $\calO H$, and that $\gamma\in T^\Delta_l(A,B)$ is a {\em left} $p$-permutation equivalence between $A$ and $B$, i.e, $\gamma\cdotH\gamma^\circ=[A]$ in $T(A,A)$. Furthermore, we assume the notation from \ref{not gamma mu nu} and that $(\Delta(D,\phi,E), e_D\otimes f_E^*)$ is a maximal $\gamma$-Brauer pair. Then, by Theorem~\ref{thm gamma is uniform}(c), $(D,e_D)$ is a maximal $A$-Brauer pair and $(E,f_E)$ is a maximal $B$-Brauer pair. By $\calA$ we denote the fusion system of $A$ associated with $(D,e_D)$ and by $\calB$ we denote the fusion system of $B$ associated with $(E,f_E)$.

\smallskip
In Theorem~\ref{thm isomorphic fusion systems} we show that $\phi$ is an isomorphism between the fusion systems $\calB$ and $\calA$. And in Theorem~\ref{thm local ppeqs} we show that Brauer constructions with respect to subgroups of $\Delta(D,\phi,E)$ applied to $\gamma$ yields again local $p$-permutation equivalences at various levels. Finally in Theorem~\ref{thm finiteness} we show that there can only be finitely many $p$-permutation equivalences between given blocks $A$ and $B$.

\begin{proposition}\label{prop isomorphic inertia quotients}
Let $(\Delta(P,\psi,Q), e\otimes f^*)$ be a $\gamma$-Brauer pair and set $I:=N_G(P,e)$, $J:=N_H(Q,f)$. For every $g\in I$ there exists a unique element $hC_H(Q)\in J/C_H(Q)$ such that $c_g\circ \psi = \psi \circ c_h \colon Q\myiso P$. Similarly, for every $h\in J$ there exists a unique element $gC_G(P)\in I/C_G(P)$ such that $c_g\circ \psi = \psi \circ c_h$. These associations define mutually inverse group isomorphisms between $I/C_G(P)$ and $J/C_H(Q)$. The isomorphism $J/C_H(Q)\myiso I/C_G(P)$ restricts to the isomorphism $QC_H(Q)/C_H(Q)\myiso PC_G(P)/P$, $hC_H(Q)\mapsto \psi(h)C_G(P)$, for $h\in Q$. The group $Y:=N_{G\times H}(\Delta(P,\psi,Q), e\otimes f^*)$ satisfies $p_1(Y)=I$, $p_2(Y)=J$, $k_1(Y)=C_G(P)$, $k_2(Y)=C_H(Q)$ and the resulting isomorphism $\eta_Y\colon J/C_H(Q)\myiso I/C_G(P)$ from \ref{noth dir prod}(a) is equal to the one described above.
\end{proposition}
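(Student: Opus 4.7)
The plan is to reduce the entire statement to a computation of $Y = N_{G\times H}(\Delta(P,\psi,Q), e\otimes f^*)$, using the uniqueness results for \lq\lq connecting\rq\rq\ Brauer pairs from Section~\ref{sec Brauer pairs of ppeqs} to pin down the projections $p_1(Y)$ and $p_2(Y)$. I would first observe, directly from the conjugation formula in~\ref{noth dir prod}(b), that a pair $(g,h)\in G\times H$ lies in $Y$ if and only if $g\in N_G(P)$, $h\in N_H(Q)$, $c_g\psi = \psi c_h$, $\lexp{g}{e}=e$, and $\lexp{h}{f}=f$. This immediately yields $p_1(Y)\subseteq I$, $p_2(Y)\subseteq J$, $k_1(Y)=C_G(P)$ and $k_2(Y)=C_H(Q)$ (noting that $C_G(P)\subseteq N_G(P,e)$ because $e\in FC_G(P)$, and symmetrically for $H$).

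The substantive step is to establish the reverse inclusions $I\subseteq p_1(Y)$ and $J\subseteq p_2(Y)$; this is where the $\gamma$-Brauer pair hypothesis enters. Given $g\in I$, conjugation of $(\Delta(P,\psi,Q),e\otimes f^*)$ by $(g,1)$ produces the pair $(\Delta(P,c_g\psi,Q),e\otimes f^*)$, which is again a $\gamma$-Brauer pair by Theorem~\ref{thm gamma is uniform}(a). Applying Corollary~\ref{cor connecting Brauer pairs} to the $A$-Brauer pair $(P,e)$, both $(\psi,(Q,f))$ and $(c_g\psi,(Q,f))$ lie in the unique $H$-orbit associated with $(P,e)$. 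This yields some $h\in H$ with $c_g\psi = \psi c_h$ and $\lexp{h}{(Q,f)}=(Q,f)$, i.e.\ $h\in J$, so that $(g,h)\in Y$ and $g\in p_1(Y)$. The analogous argument using Lemma~\ref{lem connecting Brauer pairs rl} applied to $(Q,f)$ shows $J\subseteq p_2(Y)$. Uniqueness of $h$ modulo $C_H(Q)$ (resp.\ of $g$ modulo $C_G(P)$) is immediate: if $c_g\psi = \psi c_{h_1} = \psi c_{h_2}$ then $c_{h_1h_2^{-1}}$ acts trivially on $Q$, hence $h_1h_2^{-1}\in C_H(Q)$.

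With $p_i(Y)$ and $k_i(Y)$ identified, the map $\eta_Y\colon J/C_H(Q)\myiso I/C_G(P)$ from~\ref{noth dir prod}(a), defined by $hC_H(Q)\mapsto gC_G(P)$ whenever $(g,h)\in Y$, is precisely the correspondence described in the statement, and its inverse encodes the first assertion. The fact that this is a group homomorphism follows from $c_{g_1g_2}\psi = c_{g_1}\psi c_{h_2} = \psi c_{h_1h_2}$. For the last assertion, take $h\in Q$ and set $g:=\psi(h)\in P\subseteq PC_G(P)\subseteq I$; a one-line computation $c_{\psi(h)}(\psi(x)) = \psi(hxh^{-1}) = \psi c_h(x)$ for $x\in Q$ confirms $(\psi(h),h)\in Y$, so $\eta_Y(hC_H(Q))=\psi(h)C_G(P)$. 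The main obstacle is the existence step $I\subseteq p_1(Y)$ and $J\subseteq p_2(Y)$; everything else is a bookkeeping exercise once the uniqueness results of Corollary~\ref{cor connecting Brauer pairs} and Lemma~\ref{lem connecting Brauer pairs rl} are invoked.
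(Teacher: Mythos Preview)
Your proof is correct and follows essentially the same approach as the paper: the key existence step uses Corollary~\ref{cor connecting Brauer pairs} for $I\subseteq p_1(Y)$ and Lemma~\ref{lem connecting Brauer pairs rl} for $J\subseteq p_2(Y)$, exactly as in the paper's argument. The only difference is organizational: you compute $Y$ first and read off the isomorphism as $\eta_Y$, whereas the paper builds the correspondence $I/C_G(P)\leftrightarrow J/C_H(Q)$ directly and then deduces the description of $Y$ at the end via Proposition~\ref{prop Nphi}(c).
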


\begin{proof}
Let $g\in I$. With $(\Delta(P,\psi,Q),e\otimes f^*)$ also $\lexp{(g,1)}{(\Delta(P,\psi,Q),e\otimes f^*)} = (\Delta(P, c_g\circ \psi, Q), e \otimes f^*)$ is a $\gamma$-Brauer pair. Now, the uniqueness statement in Corollary~\ref{cor connecting Brauer pairs} implies that there exists an element $h\in H$ such that $\lexp{h}{(c_g\circ\psi, (Q,f))}=(\psi, (Q,f))$. This implies that $h\in J$ and that $c_g\circ\psi = \psi\circ c_h$. Clearly, $h\in J$ is uniquely determined up to multiplication with elements of $C_H(Q)\trianglelefteq J$ by this condition. It is easy to see that this defines a group homomorphism from $I/C_G(P)$ to $J/C_H(Q)$. A similar argument, now using Lemma~\ref{lem connecting Brauer pairs rl}, implies the second statement and defines a group homomorphism from $J/C_H(Q)$ to $I/C_G(P)$. Clearly, these two homomorphisms are mutually inverses. The following statement is clear, since $\psi c_h \psi^{-1}=c_{\psi(h)}$. The last statements about $Y$ follow immediately from the above and Propostion~\ref{prop Nphi}(c).
\end{proof}

Note that for the following theorem we only assume that $\gamma\in T^\Delta_l(A,B)$. 

\begin{theorem}\label{thm isomorphic fusion systems}
The isomorphism $\phi\colon E\myiso D$ is an isomorphism between the fusion systems $\calB$ and $\calA$.
\end{theorem}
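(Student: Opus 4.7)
The plan is to show that conjugation by $\phi$ induces, for every pair of subgroups $Q,Q'\le E$, a bijection $\Hom_\calB(Q,Q')\myiso \Hom_\calA(\phi(Q),\phi(Q'))$ sending $c_h$ to $\phi\circ c_h\circ\phi^{-1}$. The key technical inputs will be the ideal property of $\gamma$-Brauer pairs (Theorem~\ref{thm gamma is uniform}(a)), the characterization of containment of twisted diagonal Brauer pairs (Remark~\ref{rem emuf}(c)), and the two uniqueness statements Corollary~\ref{cor connecting Brauer pairs} and Lemma~\ref{lem connecting Brauer pairs rl}.

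For the setup, for every $Q\le E$ denote by $f_Q$ the unique block idempotent of $F[C_H(Q)]$ with $(Q,f_Q)\le(E,f_E)$, and similarly $e_P$ for $P\le D$. Since $(Q,f_Q)\le(E,f_E)$ and $\phi|_Q\colon Q\myiso P:=\phi(Q)$, Remark~\ref{rem emuf}(c) together with Theorem~\ref{thm gamma is uniform}(a) implies that $(\Delta(P,\phi|_Q,Q), e_P\otimes f_Q^*)$ is itself a $\gamma$-Brauer pair.

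For the forward direction, take $c_h\in\Hom_\calB(Q,Q')$, so $h\in H$ satisfies $\lexp{h}{(Q,f_Q)}\le(Q',f_{Q'})$. By Theorem~\ref{thm gamma is uniform}(a), $(1,h)$-conjugation turns $(\Delta(P,\phi|_Q,Q),e_P\otimes f_Q^*)$ into the $\gamma$-Brauer pair $(\Delta(P,\phi|_Q\circ c_h^{-1},\lexp{h}{Q}), e_P\otimes\lexp{h}{f_Q}^*)$. On the other hand, setting $P'':=\phi|_{Q'}(\lexp{h}{Q})\le P'$ and $e''$ the unique block idempotent of $F[C_G(P'')]$ with $(P'',e'')\le(P',e_{P'})$, Remark~\ref{rem emuf}(c) applied to $(\Delta(P',\phi|_{Q'},Q'),e_{P'}\otimes f_{Q'}^*)$ together with the ideal property produces a second $\gamma$-Brauer pair with the same second component $(\lexp{h}{Q},\lexp{h}{f_Q})$, namely $(\Delta(P'',\phi|_{Q'}|_{\lexp{h}{Q}},\lexp{h}{Q}),e''\otimes\lexp{h}{f_Q}^*)$. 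Lemma~\ref{lem connecting Brauer pairs rl} then forces these two pairs to lie in one $G$-orbit, yielding $g\in G$ with $\lexp{g}{(P,e_P)}=(P'',e'')$ and $c_g\circ\phi|_Q=\phi|_{Q'}\circ c_h$. Since $(P'',e'')\le(P',e_{P'})$, the element $g$ witnesses $c_g\in\Hom_\calA(P,P')$, and the intertwining relation shows $c_g=\phi\circ c_h\circ\phi^{-1}$ on $P$.

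The reverse direction is obtained by the symmetric argument, interchanging the roles of the two components and applying Corollary~\ref{cor connecting Brauer pairs} in place of Lemma~\ref{lem connecting Brauer pairs rl} to match the $H$-orbit associated to a given $A$-Brauer pair $(P'',e'')$. The two assignments are mutually inverse at the level of underlying functions (the morphism $\phi\circ c_h\circ\phi^{-1}$ is determined by the intertwining formula independently of the particular choice of representing element $g$ or $h$), and compatibility with composition is automatic from the formula. The main obstacle is ensuring that the idempotent data match up on both sides simultaneously when translating the maximal $\gamma$-Brauer pair along $\phi$; this is precisely what the ideal property of $\gamma$-Brauer pairs combined with Remark~\ref{rem emuf}(c) delivers, allowing the uniqueness Lemmas to be invoked cleanly.
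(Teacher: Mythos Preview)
Your argument is correct. The core mechanism is the same as in the paper: both rely on the ideal property of $\gamma$-Brauer pairs (Theorem~\ref{thm gamma is uniform}(a)) together with the uniqueness statements Corollary~\ref{cor connecting Brauer pairs} and Lemma~\ref{lem connecting Brauer pairs rl} to transport conjugacy in $H$ to conjugacy in $G$ and back. The difference is in the packaging. The paper first invokes Alperin's fusion theorem to reduce to showing $\psi^{-1}\circ\Hom_{\calA}(P,P)\circ\psi=\Hom_{\calB}(Q,Q)$ for each $Q\le E$ and then reads this off directly from Proposition~\ref{prop isomorphic inertia quotients}, which (looking at its proof) is exactly your argument in the special case $Q=Q'$. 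You instead run that argument in full generality for $\Hom_{\calB}(Q,Q')$, thereby bypassing Alperin entirely. Your route is more self-contained and shows explicitly that the functor $\phi\circ(-)\circ\phi^{-1}$ is a bijection on every morphism set; the paper's route is shorter on the page because the $Q=Q'$ case has already been isolated as Proposition~\ref{prop isomorphic inertia quotients}.
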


\begin{proof}
Using Alperin's fusion theorem, see \cite[Theorem~I.3.6]{AKO2011}, it suffices to show that for every subgroup $Q\le E$ with $P:=\phi(Q)\le D$ and $\psi:=\phi|_Q\colon Q\myiso P$, one has $\psi^{-1}\circ\Hom_{\calA}(P,P)\circ \psi = \Hom_{\calB}(Q,Q)$, an equation of sets of automorphisms of $Q$. Let $e_P\in Z(\calO[C_G(P)])$ and $f_Q\in Z(\calO[C_H(Q)])$ be the unique primitive idempotents such that $(P,e_P)\le (D,e_D)$ and $(Q,f_Q)\le (E,f_E)$, and set $I:=N_G(P,e_P)$ and $J:=N_H(Q,f_Q)$. Note that $I/C_G(P)\to\Hom_{\calA}(P,P)$, $gC_G(P)\mapsto c_g$, and $J/C_H(Q)\to \Hom_{\calB}(Q,Q)$, $hC_H(Q)\mapsto c_h$, are group isomorphisms. Now the claim follows immediately from Proposition~\ref{prop isomorphic inertia quotients}, noting that $(\Delta(P,\psi,Q), e _P\otimes f_Q^*)\le (\Delta(D,\phi,E), e_D\otimes f_E^*)$ , by Remark~\ref{rem emuf}(c), so that also $(\Delta(P,\psi,Q), e_P\otimes f_Q^*)$ is a $\gamma$-Brauer pair by Theorem~\ref{thm gamma is uniform}(a).
\end{proof}

We can now improve the formulation of Lemma~\ref{lem unique Lambdahat lr}. Recall the definition of $\mu(P,\phi,Q)$ from \ref{not gamma mu nu}.

\begin{lemma}\label{lem unique Lambdahat lr 2}
Let $(\Delta(P,\psi,Q),e\otimes f^*)\in \calBP_\calO^\Delta(\gamma)$ and set $I:=N_G(P,e)$, $J:=N_H(Q,f)$, $X:=N_{I\times I}(\Delta(P))\le G\times G$ and $Y:=N_{I\times J}(\Delta(P,\psi,Q))\le G\times H$. Then $X*Y=Y$ and for every $\chi\in\Irr(\KK X(e\otimes e^*))$ one has
\begin{equation*}
  \chi\mathop{\cdot}\limits_{G}^{X,Y} e\mu(P,\psi,Q)f\in\pm\Irr(\KK Y(e\otimes f^*))\,.
\end{equation*}
\end{lemma}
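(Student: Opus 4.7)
The plan is to deduce this stronger statement directly from Lemma~\ref{lem unique Lambdahat lr} combined with the uniqueness provided by Corollary~\ref{cor connecting Brauer pairs}. First, the equality $X*Y=Y$ is an immediate consequence of Lemma~\ref{lem ext tp special}(a), applied with $S:=I$, $T:=J$, and the isomorphism $\psi$.

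For the second assertion, I would apply Lemma~\ref{lem unique Lambdahat lr} to the $A$-Brauer pair $(P,e)$, choosing a set of $I\times H$-orbit representatives $\Lambdahat_B\subseteq\Lambda_B$ that contains $\lambda_0:=(\psi,(Q,f))$, so that $X(\lambda_0)=Y$. That lemma then asserts, for every $\chi\in\Irr(\KK X(e\otimes e^*))$, the existence of a unique $\lambda=(\phi,(Q',f'))\in\Lambdahat_B$ for which $\chi\mathop{\cdot}\limits^{X,X(\lambda)}_G e\mu(P,\phi,Q')f'$ is non-zero, and that this element then lies in $\pm\Irr(\KK X(\lambda)(e\otimes f'^*))$.

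The key step is to show that for every $\lambda\in\Lambdahat_B$ distinct from $\lambda_0$ one has $e\mu(P,\phi,Q')f'=0$, so that all such summands vanish regardless of $\chi$. Corollary~\ref{cor connecting Brauer pairs} guarantees a unique $H$-orbit of pairs $(\phi,(Q',f'))\in\Lambda_B$ for which $(\Delta(P,\phi,Q'),e\otimes f'^*)$ is a $\gamma$-Brauer pair, and by our hypothesis on $(\Delta(P,\psi,Q),e\otimes f^*)$ this $H$-orbit is represented by $\lambda_0$. Hence for any $\lambda\neq\lambda_0$ the pair $(\Delta(P,\phi,Q'),e\otimes f'^*)$ is not a $\gamma$-Brauer pair, and the equivalence (i)$\Leftrightarrow$(iv) of Proposition~\ref{prop equiv gamma Brpair cond} forces $e\mu(P,\phi,Q')f'=0$. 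Since $I$ stabilizes $(P,e)$ and the set of $\gamma$-Brauer pairs is $G\times H$-stable by Theorem~\ref{thm gamma is uniform}(a), this vanishing persists under passage from one $I\times H$-orbit representative to another, so no choice of representative within a non-$\lambda_0$ orbit can resuscitate a non-zero contribution.

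Putting everything together, for each $\chi\in\Irr(\KK X(e\otimes e^*))$ the unique orbit supplying a non-zero value in Lemma~\ref{lem unique Lambdahat lr} is forced to be $\lambda_0$, and consequently $\chi\mathop{\cdot}\limits^{X,Y}_G e\mu(P,\psi,Q)f\in\pm\Irr(\KK Y(e\otimes f^*))$. The main (mild) obstacle I expect is the bookkeeping between the abstract orbit representative implicit in Lemma~\ref{lem unique Lambdahat lr} and the specific representative $(\psi,(Q,f))$ fixed in the statement; since both the vanishing conclusion and the $\pm$-irreducibility conclusion of the earlier lemma are invariant under $I\times H$-conjugation of the representative, this is routine and does not affect the argument.
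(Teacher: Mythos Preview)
Your proposal is correct and follows essentially the same approach as the paper: both reduce to Lemma~\ref{lem unique Lambdahat lr} together with the fact (via Proposition~\ref{prop equiv gamma Brpair cond} and Corollary~\ref{cor connecting Brauer pairs}) that the unique $I\times H$-orbit producing a nonzero product must be the one containing $(\psi,(Q,f))$. The paper argues this by taking the nonzero orbit from Lemma~\ref{lem unique Lambdahat lr} and conjugating it back to $(\psi,(Q,f))$ via Equation~(\ref{eqn gen tens and conj}), whereas you argue eliminatively by showing $e\mu(P,\phi,Q')f'=0$ for all other orbits; both organizations are valid, and yours arguably avoids the explicit conjugation bookkeeping (a small cosmetic remark: condition~(iii) rather than~(iv) of Proposition~\ref{prop equiv gamma Brpair cond} is the one that directly vanishes at the $X(\lambda)$-level, though all of (i)--(vii) are equivalent so this is immaterial).
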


\begin{proof}
By Proposition~\ref{prop isomorphic inertia quotients}, we have $p_1(Y)=I$. Therefore, Lemma~\ref{lem comp form}(c) implies that $X*Y=Y$. By Lemma~\ref{lem unique Lambdahat lr} there exists a $B$-Brauer pair $(Q',f')$ and an isomorphism $\psi'\colon Q'\myiso P$ such that $0\neq \chi\mathop{\cdot}\limits_{G}^{X,Y'} e\mu(P,\psi',Q')f'\in R(\KK Y(e\otimes {f'}^*))$, where $Y':=N_{I\times J'}(\Delta(P,\psi', Q'))$ with $J':=N_H(Q',f')$. Therefore, by Proposition~\ref{prop equiv gamma Brpair cond}, $(\Delta(P,\psi',Q'), e\otimes{f'}^*)$ is a $\gamma$-Brauer pair. By Lemma~\ref{lem unique Lambdatilde lr} there exists $h\in H$ such that $(\Delta(P,\psi,Q),e\otimes f^*)=\lexp{(1,h)}{(\Delta(P,\psi',Q'), e\otimes {f'}^*)}$. This, together with Equation~(\ref{eqn gen tens and conj}), implies
\begin{equation*}
  \chi\mathop{\cdot}\limits_{G}^{X,Y} e\mu(P,\psi,Q)f = 
  \chi\mathop{\cdot}\limits_{G}^{X,Y} \lexp{(1,h)}{(e\mu(P,\psi',Q')f')} =
  \lexp{(1,h)}{\bigl(\chi\mathop{\cdot}\limits_{G}^{X,Y'} e\mu(P,\psi',Q')f'\bigr)} \neq 0
\end{equation*}
in $R(\KK X(e\otimes f^*))$, since $\lexp{(1,h)}{Y'}=Y$. By Lemma~\ref{lem unique Lambdahat lr}, this virtual character belongs to $\pm\Irr(\KK Y(e\otimes f^*))$.
\end{proof}

\begin{theorem}\label{thm local ppeqs}
Let $(\Delta(P,\psi,Q),e\otimes f^*)\in\calBP^\Delta_\calO(\gamma)$ be a $\gamma$-Brauer pair and set $I:=N_G(P,e)$ and $J:=N_H(Q,f)$. Suppose that $C_G(P)\le S\le I$ and $C_H(Q)\le T\le J$ are intermediate groups related via the isomorphism in Proposition~\ref{prop isomorphic inertia quotients} and set $Y:=N_{S\times H}(\Delta(P,\psi,Q), e\otimes f^*) = N_{S\times T}(\Delta(P,\psi,Q))$. Then, the element $e\gamma(P,\psi,Q)f\in T(\calO Y(e\otimes f^*))$, defined as in \ref{not gamma mu nu} and restricted to $Y$, satisfies
\begin{equation}\label{eqn Delta(P)}
  [\calO[C_G(P)]e] = e\gamma(P,\psi,Q)f \mathop{\cdot}\limits^{Y,Y^\circ}_{H} f\gamma(P,\psi,Q)^\circ e \quad 
  \text{in $T(\calO[N_{S\times S}(\Delta(P))](e\otimes e^*))$.}
\end{equation}
Moreover, the element
\begin{equation*}
  \gammatilde:= \ind_{Y}^{S\times T}\bigl(e\gamma(P,\psi,Q)f\bigr) \in 
  T^\Delta(\calO Se,\calO Tf)
\end{equation*}
satisfies $\gammatilde\mathop{\cdot}\limits_{T}{\gammatilde}^\circ = [\calO Se]$ in $T^\Delta(\calO Se,\calO Se)$.
In particular, if $\gamma$ is a $p$-permutation equivalence between $A$ and $B$ then $\gammatilde$ is a $p$-permutation equivalence between $\calO Se$ and $\calO Tf$.
\end{theorem}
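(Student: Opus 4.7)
The plan is to derive Equation~(\ref{eqn Delta(P)}) by applying the Brauer construction at $\Delta(P)$ to the identity $\gamma \cdotH \gamma^\circ = [A]$ and cutting by $e \otimes e^*$, then to deduce the identity for $\gammatilde$ via the Mackey formula for bimodules. By Proposition~\ref{prop Brauer con of block}(b), the left hand side of the resulting equation in $T^\Delta(F[N_{S \times S}(\Delta(P))](e \otimes e^*))$ is $[F[C_G(P)]e]$. For the right hand side, Theorem~\ref{thm BP decomp}(d)---applied with its ambient subgroup equal to our $S$---produces a sum indexed by $\Lambdahat_B$. By Corollary~\ref{cor connecting Brauer pairs} and Proposition~\ref{prop equiv gamma Brpair cond}, only the $H$-orbit of $(\psi,(Q,f))$ contributes: for any other pair $(\phi,(Q',f'))$ the element $e\gammabar(P,\phi,Q')f'$ vanishes since $(\Delta(P,\phi,Q'), e\otimes f'^*)$ is not a $\gamma$-Brauer pair.

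For the surviving summand $\lambda = (\psi,(Q,f))$, the formula involves $X(\lambda) = N_{S \times H}(\Delta(P,\psi,Q), e \otimes f^*)$, $Y(\lambda) = N_{H \times S}(\Delta(Q,\psi^{-1},P), f \otimes e^*)$, and $I(\lambda) = N_{(S,\psi,J)}$. Unpacking these subgroups requires the hypothesis that $S/C_G(P) \le I/C_G(P)$ and $T/C_H(Q) \le J/C_H(Q)$ correspond under the isomorphism of Proposition~\ref{prop isomorphic inertia quotients}: this gives $N_{(S,\psi,J)} = S$ and $N_{(J,\psi^{-1},S)} = T$, whence $X(\lambda) = N_{S \times T}(\Delta(P,\psi,Q)) = Y$, $Y(\lambda) = Y^\circ$, and $I(\lambda) = S$. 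Consequently $\Delta(I(\lambda))(C_G(P) \times \{1\}) = N_{S \times S}(\Delta(P))$, so the induction in Theorem~\ref{thm BP decomp}(d) is the identity. Lifting the resulting equation from $F$ to $\calO$ via Proposition~\ref{prop p-perm equiv}(b) establishes Equation~(\ref{eqn Delta(P)}).

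For the identity $\gammatilde \dottt{T} \gammatilde^\circ = [\calO Se]$, Theorem~\ref{thm Bouc-Mackey} applied on Grothendieck groups gives
\[
\gammatilde \dottt{T} \gammatilde^\circ \;=\; \sum_{t \in [T \backslash T / T]} \ind_{Y * \lexp{(t,1)}{Y^\circ}}^{S \times S}\bigl( e\gamma(P,\psi,Q)f \dott{Y}{\lexp{(t,1)}{Y^\circ}}{\calO H} \lexp{(t,1)}{(f\gamma(P,\psi,Q)^\circ e)} \bigr)\,,
\]
which collapses to the single summand at $t = 1$ since $p_2(Y) = T = p_1(Y^\circ)$. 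By Lemma~\ref{lem comp form}(a), $Y * Y^\circ = \Delta(S)(C_G(P) \times \{1\}) = N_{S \times S}(\Delta(P))$, and by Equation~(\ref{eqn Delta(P)}) the bracketed element equals $[\calO[C_G(P)]e]$. It remains to identify $\calO Se \cong \ind_{N_{S \times S}(\Delta(P))}^{S \times S}(\calO[C_G(P)]e)$ as $\calO[S \times S]$-modules, which is given by the surjection $(s_1, s_2) \otimes a \mapsto s_1 a s_2^{-1}$; both sides are free $\calO$-modules of equal rank $[S : C_G(P)] \cdot \rk_\calO(\calO[C_G(P)]e)$, using the decomposition $\calO S = \bigoplus_{s \in [S/C_G(P)]} s\,\calO[C_G(P)]$ of right $\calO[C_G(P)]$-modules. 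The final assertion follows by applying the same construction to $\gamma^\circ \in T^\Delta_l(B,A)$ (which is a left $p$-permutation equivalence between $B$ and $A$ whenever $\gamma$ is a $p$-permutation equivalence), yielding the companion identity $\gammatilde^\circ \dottt{S} \gammatilde = [\calO Tf]$.

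The principal obstacle is the verification in paragraph 2 that the hypothesis relating $S$ and $T$ through Proposition~\ref{prop isomorphic inertia quotients} forces $X(\lambda) = Y$, $Y(\lambda) = Y^\circ$, and $I(\lambda) = S$; these identifications are precisely what collapse the induction to the identity and ensure that the extended tensor product lives over the correct subgroup $Y$. The rank-matching step at the end is a secondary subtlety, since $\calO[C_G(P)]e$ need not have $\calO$-rank $|C_G(P)|$, but the block-wise decomposition of $\calO S$ resolves it cleanly.
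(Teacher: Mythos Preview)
Your proposal is correct and follows essentially the same route as the paper's proof: apply the Brauer construction at $\Delta(P)$ to $\gamma\cdotH\gamma^\circ=[A]$, invoke Proposition~\ref{prop Brauer con of block}(b) and Theorem~\ref{thm BP decomp}(d), use Corollary~\ref{cor connecting Brauer pairs} to kill all but the $(\psi,(Q,f))$-summand, and then pass to $S\times S$ via Theorem~\ref{thm Bouc-Mackey}. The only cosmetic difference is in the identification $\Ind_{N_{S\times S}(\Delta(P))}^{S\times S}(\calO[C_G(P)]e)\cong \calO Se$: the paper argues via $\calO[C_G(P)]\cong\Ind_{\Delta(S)}^{N_{S\times S}(\Delta(P))}(\calO)$ and \ref{noth bimodules}(f), while you give an explicit surjection together with a rank count; both are straightforward.
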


\begin{proof}
We apply the Brauer construction with respect to $\Delta(P)$ to Equation~(\ref{eqn left ppeq}) and obtain after restriction
\begin{equation*}
  [\calO[C_G(P)]e] = e(\gamma\cdotH \gamma^\circ)(\Delta(P))e \quad 
  \text{in $T(\calO[N_{S\times S}(\Delta(P))](e\otimes e^*))$.}
\end{equation*}
Next we apply Theorem~\ref{thm BP decomp}(d) to the right hand side of the last equation. Note that $p_1(Y)=S$ and $p_2(Y)=T$ by Propositions~\ref{prop Nphi}(c) and \ref{prop isomorphic inertia quotients}. Thus, $Y*Y^\circ = \Delta(S)(C_G(P)\times \{1\}) = N_{S\times S}(\Delta(P))$ by Lemma~\ref{lem comp form} and Proposition~\ref{prop Nphi}. By Corollary~\ref{cor connecting Brauer pairs}, the pair $(\psi,(Q,f))$ is, up to $H$-conjugation, the only pair such that $e\gamma(P,\psi,Q)f\neq 0$. Thus, Theorem~\ref{thm BP decomp}(d) implies Equation~(\ref{eqn Delta(P)}).

\smallskip
Next we apply $\ind_{N_{S\times S}(\Delta(P))}^{S\times S}$ to both sides of Equation~(\ref{eqn Delta(P)}). First we show that the left hand side yields $[\calO Se]\in T^\Delta(\calO Se,\calO Se)$. In fact, $\calO[C_G(P)]$ is an $\calO[N_{S\times S}(\Delta(P))]$-permutation module with $\Delta(S)$ as stabilizer of the standard basis element $1\in C_G(P)$. Thus, $\calO[C_G(P)]\cong \Ind_{\Delta(S)}^{N_{S\times S}(\Delta(P))}(\calO)$ in $T(\calO[N_{S\times S}(\Delta(P))])$ and \ref{noth bimodules}(f), applied to the central idempotent $e\otimes e^*$ of $\calO[N_{S\times S}(\Delta(P))]$, implies that \begin{equation*}
  \Ind_{N_{S\times S}(\Delta(P))}^{S\times S} (\calO[C_G(P)]e) 
  \cong e\bigl(\Ind_{N_{S\times S}(\Delta(P))}^{S\times S}(\calO[C_G(P)])\bigr)e
  \cong e\,\Ind_{\Delta(S)}^{S\times S}(\calO) e \cong e\calO S e = \calO Se
\end{equation*}
as $(\calO Se,\calO Se)$-bimodules. Next we show that applying $\ind_{N_{S\times S}(\Delta(P))}^{S\times S}$ to the right hand side of Equation~(\ref{eqn Delta(P)}) yields
\begin{equation}\label{eqn S T expression}
  \ind_Y^{S\times T}(e\gamma(P,\psi,Q)f) \mathop{\cdot}\limits_T 
  \bigl( \ind_{Y}^{S\times T}(e\gamma(P,\psi,Q) f)\bigr)^\circ
\end{equation}
In fact, rewriting $\bigl(\ind_{Y}^{S\times T}(e\gamma(P,\psi,Q) f)\bigr)^\circ = \ind_{Y^\circ}^{T\times S}(f\gamma(P,\psi,Q)^\circ e)$ in the above expression, then applying the Mackey formula in Theorem~\ref{thm Bouc-Mackey} to the resulting expression and noting that $p_2(Y)=T$ and $Y*Y^\circ = N_{S\times S}(\Delta(P))$, one obtains that the expression in (\ref{eqn S T expression}) is equal to
\begin{equation*}
  \ind_{N_{S\times S}(\Delta(P))}^{S\times S}
  \bigl(e\gamma(P,\psi,Q)f \mathop{\cdot}\limits^{Y,Y^\circ}_{H} f\gamma(P,\psi,Q)^\circ e\bigr)
\end{equation*}
in $T(\calO Se,\calO Se)$ as desired. 
\end{proof}

The following proposition investigates maximal Brauer pairs of the local $p$-permutation equivalences from Theorem~\ref{thm local ppeqs} for the two extreme choices of $S$ and $T$. It will use the notions introduced in \ref{noth fusion system of B} and the results from Proposition~\ref{prop fully centralized and centric}.

\begin{proposition}\label{prop local maximal Brauer pair}
For any subgroup $P$ of $D$ let $(P,e_P)$ denote the unique $A$-Brauer pair with $(P,e_P)\le (D,e_D)$ and for any subgroup $Q$ of $E$ let $(Q,f_Q)$ denote the unique $B$-Brauer pair with $(Q,f_Q)\le (E,f_E)$. Let $Q\le E$ and set $P:=\phi(Q)$. Then $(\Delta(P,\phi,Q), e_P\otimes f_Q^*)$ is a $\gamma$-Brauer pair (cf.~Remark~\ref{rem emuf}(c) and Theorem~\ref{thm gamma is uniform}(a)).

\smallskip
{\rm (a)} Set $\gamma':=e_P\gamma(P,\phi,Q)f_Q\in T^\Delta_l(\calO[C_G(P)]e_P,\calO[C_H(Q)]f_Q)$. The $\calO[C_G(P)]e_P\otimes \calO[C_H(Q)]f_Q^*$-Brauer pair $(\Delta(C_D(P),\phi,C_E(Q)), e_{PC_D(P)}\otimes f_{QC_E(Q)}^*)$ is a $\gamma'$-Brauer pair. It is a maximal $\gamma'$-Brauer pair if and only if $P$ is fully $\calA$-centralized. In particular, $(\Delta(Z(P),\phi,Z(Q)),e_P\otimes f_Q^*)$ is a maximal $\gamma'$-Brauer pair if and only if $P$ is $\calA$-centric.

\smallskip
{\rm (b)} Set $I:=N_G(P,e_P)$, $J:=N_H(Q,f_Q)$ and $\gamma'':=\ind_{N_{I\times J}(\Delta(P,\phi,Q))}^{I\times J}(e_P\gamma(P,\phi,Q)f_Q)\in T^\Delta_l(\calO I e_P,\calO J f_Q)$. The $\calO Ie_P\otimes \calO Jf_Q^*$-Brauer pair $(\Delta(N_D(P),\phi,N_E(Q)), e_{N_D(P)}\otimes f_{N_E(Q)}^*)$ is a $\gamma''$-Brauer pair. It is a maximal $\gamma''$-Brauer pair if and only if $P$ is fully $\calA$-normalized.
\end{proposition}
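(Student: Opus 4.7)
Both parts follow a common two-step strategy based on applying Theorem~\ref{thm local ppeqs} to the $\gamma$-Brauer pair $(\Delta(P,\phi,Q), e_P \otimes f_Q^*)$ with suitable intermediate subgroups. First, this produces $\gamma'$ (resp.~$\gamma''$) as a left $p$-permutation equivalence between the stated block algebras. Second, I invoke Theorem~\ref{thm gamma is uniform}(c) applied to this new left $p$-permutation equivalence, which identifies maximality of a given Brauer pair with maximality of its first-component pair in the source block; Proposition~\ref{prop fully centralized and centric} then translates this defect-group condition into the fusion-theoretic one. Existence of the specified Brauer pair follows in both cases from the fact that the ambient pair in $G \times H$ sits below the maximal $\gamma$-Brauer pair $(\Delta(D,\phi,E), e_D \otimes f_E^*)$ by Remark~\ref{rem emuf}(c), hence is itself a $\gamma$-Brauer pair by Theorem~\ref{thm gamma is uniform}(a); non-vanishing transfers to the local level via iterated Brauer construction (Proposition~\ref{prop more on p-perm}(b)) together with Lemma~\ref{lem Brauer map compatibility}.

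For Part (a), choose $S := C_G(P)$ and $T := C_H(Q)$ in Theorem~\ref{thm local ppeqs}. Then $Y = N_{S \times T}(\Delta(P,\phi,Q)) = S \times T$, the induction is trivial, and $\gamma' = e_P\gamma(P,\phi,Q)f_Q$ itself is the resulting left $p$-permutation equivalence between $\calO[C_G(P)]e_P$ and $\calO[C_H(Q)]f_Q$. Applying iterated Brauer construction and Lemma~\ref{lem Brauer map compatibility} to the chain $\Delta(P,\phi,Q) \leq \Delta(PC_D(P), \phi, QC_E(Q))$ shows that $(e_{PC_D(P)} \otimes f_{QC_E(Q)}^*) \gamma'(\Delta(C_D(P),\phi,C_E(Q)))$ is non-zero, since $(\Delta(PC_D(P),\phi,QC_E(Q)), e_{PC_D(P)} \otimes f_{QC_E(Q)}^*)$ is a $\gamma$-Brauer pair. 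Hence $(\Delta(C_D(P),\phi,C_E(Q)), e_{PC_D(P)} \otimes f_{QC_E(Q)}^*)$ is a $\gamma'$-Brauer pair. Theorem~\ref{thm gamma is uniform}(c) applied to $\gamma'$ makes this pair maximal iff $C_D(P)$ is a defect group of $\calO[C_G(P)]e_P$, which by Proposition~\ref{prop fully centralized and centric}(a) is equivalent to $P$ being fully $\calA$-centralized. The centric special case follows since $P$ is $\calA$-centric iff $Z(P) = C_D(P)$ and $P$ is fully $\calA$-centralized; then $PC_D(P) = P$ and, using that $\phi$ is a fusion isomorphism by Theorem~\ref{thm isomorphic fusion systems}, also $Z(Q) = C_E(Q)$ and $QC_E(Q) = Q$, so $e_{PC_D(P)} = e_P$ and $f_{QC_E(Q)} = f_Q$.

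For Part (b), choose $S := I$ and $T := J$ in Theorem~\ref{thm local ppeqs}; Proposition~\ref{prop isomorphic inertia quotients} guarantees that $I$ and $J$ correspond under the isomorphism $\eta_Y$, and the resulting $\gamma''$ is a left $p$-permutation equivalence between $\calO I e_P$ and $\calO J f_Q$. Setting $X := \Delta(N_D(P),\phi,N_E(Q))$, a direct check shows $N_{I \times J}(X) \leq Y = N_{I \times J}(\Delta(P,\phi,Q))$: any $(g,h) \in I \times J$ normalizing $X$ already normalizes $P$ and $Q$ (since $g \in I \leq N_G(P)$, $h \in J \leq N_H(Q)$) and satisfies $c_g \phi = \phi c_h$ on $N_E(Q) \supseteq Q$. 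By the Mackey formula for the Brauer construction of an induced module, the trivial double coset contribution to $\gamma''(X)$ is $\gamma'(X)$, whose $(e_{N_D(P)} \otimes f_{N_E(Q)}^*)$-component is non-zero by applying iterated Brauer construction and Lemma~\ref{lem Brauer map compatibility} to the chain $\Delta(P,\phi,Q) \leq \Delta(N_D(P),\phi,N_E(Q))$ and the fact that $(\Delta(N_D(P),\phi,N_E(Q)), e_{N_D(P)} \otimes f_{N_E(Q)}^*)$ is a $\gamma$-Brauer pair. The other double coset contributions either vanish by Lemma~\ref{lem Brauer con and Ind 2} or lie in block components different from $e_{N_D(P)} \otimes f_{N_E(Q)}^*$. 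Hence this pair is a $\gamma''$-Brauer pair, and Theorem~\ref{thm gamma is uniform}(c) together with Proposition~\ref{prop fully centralized and centric}(b) gives maximality iff $P$ is fully $\calA$-normalized.

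The main obstacle will be the Mackey-theoretic argument in Part (b), specifically ruling out that the cross-coset contributions in $\gamma''(X) = (\ind_Y^{I\times J}(\gamma'))(X)$ cancel the trivial-coset contribution after cutting by $e_{N_D(P)} \otimes f_{N_E(Q)}^*$. This will require either a vertex analysis via Lemma~\ref{lem Brauer con and Ind 2} applied to each indecomposable summand of $\gamma'$ (using that such vertices are twisted diagonal subgroups of $Y$ by Lemma~\ref{lem vertex of M(P)}(b)) or a careful tracking of how the conjugates $\lexp{(g,h)}{(e_{N_D(P)} \otimes f_{N_E(Q)}^*)}$ differ from $e_{N_D(P)} \otimes f_{N_E(Q)}^*$ for $(g,h) \notin Y$, so that the block-idempotent cut isolates the trivial double coset.
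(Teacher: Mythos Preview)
Your strategy matches the paper's exactly: Theorem~\ref{thm local ppeqs} for the local left $p$-permutation equivalences, Proposition~\ref{prop more on p-perm}(b) with Lemma~\ref{lem Brauer map compatibility} to transfer non-vanishing from $\gamma$ to $\gamma'$ (resp.\ $\gamma''$), then Theorem~\ref{thm gamma is uniform}(c) combined with Proposition~\ref{prop fully centralized and centric}. Part~(a) is fine as written.

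The only real gap is the one you yourself flag in Part~(b), and your two suggested fixes are not quite right. The paper does \emph{not} argue coset by coset, nor does it use any block-idempotent separation of the cross terms. It applies Lemma~\ref{lem Brauer con and Ind 2} once, to each indecomposable $\calO Y$-module $L$ appearing in $e_P\gamma(P,\phi,Q)f_Q$, to conclude directly that
\[
  \bigl(\Ind_Y^{I\times J}(L)\bigr)(X)\cong L(X)
  \quad\text{for } X=\Delta(N_D(P),\phi,N_E(Q)),
\]
hence $\gamma''(X)=e_P\gamma(P,\phi,Q)f_Q\,(X)$ before any idempotent cut. What you are missing is the verification of the vertex hypothesis of Lemma~\ref{lem Brauer con and Ind 2}: each such $L$ has a vertex $R$ contained in $\Delta(D,\phi,E)$. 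This is \emph{stronger} than ``$R$ is twisted diagonal'' (your citation of Lemma~\ref{lem vertex of M(P)}(b)); it comes from Lemma~\ref{lem vertex of M(P)}(a) together with the fact that every indecomposable $(A,B)$-bimodule $M$ appearing in $\gamma$ has all its $M$-Brauer pairs among the $\gamma$-Brauer pairs (Lemma~\ref{lem M Bp implies gamma Bp} and Proposition~\ref{prop equiv gamma Brpair cond}), hence vertices bounded by $\Delta(D,\phi,E)$ via Theorem~\ref{thm gamma is uniform}. Once $R\le\Delta(D,\phi,E)$, any containment $X\le\lexp{(g,h)}{R}\le\lexp{(g,h)}{\Delta(D,\phi,E)}$ forces $c_g\phi=\phi c_h$ on $N_E(Q)\supseteq Q$, so $(g,h)\in Y$; combined with your (correct) check that $N_{I\times J}(X)\le Y$, the hypotheses of Lemma~\ref{lem Brauer con and Ind 2} hold and the proof of Part~(b) is complete.
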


\begin{proof}
By Theorem~\ref{thm local ppeqs}, we have $\gamma'\in T^\Delta_l(\calO[C_G(P)]e_P,\calO[C_H(Q)]f_Q)$ and $\gamma''\in T^\Delta_l(\calO I e_P,\calO J f_Q)$, so that we can apply results from Section~\ref{sec Brauer pairs of ppeqs} to $\gamma'$ and to $\gamma''$.

\smallskip
(a) First note that $(\Delta(C_D(P),\phi,C_E(Q)), e_{PC_D(P)}\otimes f_{QC_E(Q)}^*)$ is an $\calO[C_G(P)]e_P\otimes \calO[C_H(Q)]f_Q^*$-Brauer pair, since $C_{C_G(P)}(C_D(P))=C_G(P)\cap C_G(C_D(P))=C_G(PC_D(P))$ and $C_{C_H(Q)}(C_E(Q)) = C_H(QC_E(Q))$. It is a $\gamma'$-Brauer pair, since 
\begin{equation*}
  e_{PC_D(P)}\gamma'(\Delta(C_D(P),\phi,C_E(Q))) f_{QC_E(Q)} =  
  e_{PC_D(P)}\gamma(\Delta(PC_D(P),\phi,EC_E(Q))) f_{QC_E(Q)}\neq 0
\end{equation*}
in $T^\Delta(\calO[C_G(PC_D(P))]e_{PC_D(P)},\calO[C_H(QC_E(Q))]f_{QC_E(Q)})$ by Proposition~\ref{prop more on p-perm}(b) and Lemma~\ref{lem Brauer map compatibility}, and since $(\Delta(PC_D(P),\phi,EC_E(Q)), e_{PC_D(P)}\otimes f_{QC_E(Q)}^*)$ is a $\gamma$-Brauer pair by Theorem~\ref{thm gamma is uniform}(a) and Proposition~\ref{prop equiv gamma Brpair cond}. By Theorem~\ref{thm gamma is uniform}(c), $(\Delta(C_D(P),\phi,C_E(Q)), e_{PC_D(P)}\otimes f_{QC_E(Q)}^*)$ is a maximal $\gamma'$-Brauer pair if and only if $C_D(P)$ is a defect group of $\calO[C_G(P)]e_P$. But, by Proposition~\ref{prop fully centralized and centric}(a), this is equivalent to $P$ being fully $\calA$-centralized.

\smallskip
(b) Note that $(\Delta(N_D(P),\phi, N_E(Q)), e_{N_D(P)}\otimes f_{N_E(Q)}^*)$ is an $\calO Ie_P\otimes \calO Jf_Q^*$-Brauer pair, since $C_{I}(N_D(P))=C_G(N_D(P))$ and $C_J(N_E(Q))=C_G(N_E(Q))$. It is a straightforward verification that the $p$-subgroups $\Delta(P,\phi,Q)$ and $\Delta(N_D(P),\phi, N_E(Q))$ of $I\times J$ satisfy the hypothesis of Lemma~\ref{lem Brauer con and Ind 2} (in place of the subgroups $Q$ and $P$ of $G$), since every indecomposable $\calO [N_{I\times J}(\Delta(P,\phi,Q))]$-module appearing in $e_P\gamma(P,\phi,Q)f_Q$ has a vertex contained in $\Delta(D,\phi,E)$ by Lemma~\ref{lem M Bp implies gamma Bp}, Proposition~\ref{prop equiv gamma Brpair cond}, and Theorem~\ref{thm gamma is uniform}. 
Thus, Lemma~\ref{lem Brauer con and Ind 2} together with Lemma~\ref{lem Brauer map compatibility} implies
\begin{equation*}
  e_{N_D(P)}\gamma''(N_D(P),\phi,N_E(Q)) f_{N_E(Q)} = 
  e_{N_D(P)}\gamma(N_D(P),\phi,N_E(Q))f_{N_E(Q)}
\end{equation*}
in $T^\Delta\bigl(\calO[C_G(N_D(P))]e_{N_D(P)},\calO[C_H(N_E(Q))]f_{N_H(Q)}\bigr)$. Since $(\Delta(N_D(P),\phi, N_E(Q)), e_{N_D(P)}\otimes f_{N_E(Q)}^*)$ is a $\gamma$-Brauer pair by Theorem~\ref{thm gamma is uniform}(a), the element in the above equation is non-zero. This implies that $(\Delta(N_D(P),\phi, N_E(Q)), e_{N_D(P)}\otimes f_{N_E(Q)}^*)$ is a $\gamma''$-Brauer pair. By Theorem~\ref{thm gamma is uniform}(c), it is a maximal $\gamma''$-Brauer pair if and only if $N_D(P)$ is a defect group of  $\calO I e_P$. But, by Proposition~\ref{prop fully centralized and centric}(b), this is equivalent to $P$ being fully $\calA$-normalized.
\end{proof}

\begin{nothing}\label{noth Schur} {\it Schur classes.}\quad
Let $\kk$ be a field. Recall from the proof of \cite[Theorem~3.5.7]{NagaoTsushima1989} that whenever $N\trianglelefteq G$ and $V$ is an irreducible $G$-stable $\kk N$-module which is {\em $\kk$-split}, i.e., $\End_{\kk N}(V) = \kk$, Schur assigned to these data a canonical cohomology class $\kappa\in H^2(G/N,\kk^\times)$. This construction has the following properties: 

(a) $V$ extends to a $\kk G$-module if and only if $\kappa=1$;

(b) The class assigned to $V^\circ$ is $\kappa^{-1}$;

(c) If $N\le H\le G$ then the canonical class assigned to $N\trianglelefteq H$ and $V$ is $\res^{G/N}_{H/N}(\kappa)$; and

(d) If $\kappa_i$ is the class assigned to $N_i\trianglelefteq G_i$ and $V_i$, for $i=1,2$, then $\kappa_1\times \kappa_2\in H^2((G_1\times G_1)/(N_1\times N_2),F^\times)$ is the canonical class assigned to $N_1\times N_2\trianglelefteq G_1\times G_2$ and $V_1\otimes_{\kk} V_2$.
\end{nothing}

\begin{lemma}\label{lem Schur for Y}
Let $\kk$ be a field, let $G$ and $H$  be finite groups, and let $Y\le G\times H$ be such that $p_1(Y)=G$ and $p_2(Y)=H$. Set $M:=k_1(Y)\trianglelefteq G$, $N:=k_2(Y)\trianglelefteq H$ and let $\eta_Y\colon H/N\myiso G/M$ (see~\ref{noth dir prod}(a)) be the isomorphism induced by $Y$. Suppose that $V\in\lmod{\kk M}$ is irreducible, $G$-stable, and $\kk$-split, and that $W\in\lmod{\kk N}$ is irreducible, $H$-stable and $\kk$-split. Denote by $\kappa\in H^2(G/M,\kk^\times)$ and $\lambda\in H^2(H/N,\kk^\times)$ their respective Schur classes. Suppose further that there exists $U\in\lmod{\kk Y}$ with
$\Res^Y_{M\times N}(U)=V\otimes_{\kk}W^\circ$. Then
\begin{equation*}
  \lambda=\eta_Y^*(\kappa)\in H^2(H/N,\kk^\times)\,.
\end{equation*}
\end{lemma}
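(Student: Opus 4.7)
The plan is to derive the identity $\lambda=\eta_Y^*(\kappa)$ by applying the functorial properties (a)--(d) of Schur classes listed in \ref{noth Schur} to the single irreducible $\kk$-split $\kk[M\times N]$-module $V\otimes_\kk W^\circ$, viewed first over the ambient group $G\times H$ and then restricted to $Y$.

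First I would observe that $V\otimes_\kk W^\circ$ is irreducible and $\kk$-split as a $\kk[M\times N]$-module, and that it is $(G\times H)$-stable: the action of $(g,h)\in G\times H$ on $M\times N$ by conjugation respects the direct product decomposition, and $V$ is $G$-stable while $W^\circ$ is $H$-stable (the latter because $W$ is). By property (d) combined with property (b) applied to $W^\circ$, the Schur class of $V\otimes_\kk W^\circ$ associated with the normal inclusion $M\times N\trianglelefteq G\times H$ is
\begin{equation*}
  \kappa\times\lambda^{-1}\in H^2(G/M\times H/N,\kk^\times)\,.
\end{equation*}

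Next, since $M\times N\le Y$ (both $M\times\{1\}$ and $\{1\}\times N$ lie in $Y$) and $Y$ normalises $M\times N$ because $M\trianglelefteq G$ and $N\trianglelefteq H$, property (c) identifies the Schur class of $V\otimes_\kk W^\circ$ associated with $M\times N\trianglelefteq Y$ with the image of $\kappa\times\lambda^{-1}$ under the restriction map $H^2((G\times H)/(M\times N),\kk^\times)\to H^2(Y/(M\times N),\kk^\times)$. By hypothesis, $V\otimes_\kk W^\circ$ extends to the $\kk Y$-module $U$, so property (a) forces this restricted class to be trivial.

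Finally, I would use the second projection to identify $Y/(M\times N)$ with $H/N$, which is an isomorphism because $p_2(Y)=H$ and $k_2(Y)=N$. Under this identification and the definition of $\eta_Y$ in \ref{noth dir prod}(a), the inclusion $Y/(M\times N)\hookrightarrow (G\times H)/(M\times N)=G/M\times H/N$ becomes the graph embedding $hN\mapsto (\eta_Y(hN),hN)$. Pulling $\kappa\times\lambda^{-1}$ back along this embedding yields $\eta_Y^*(\kappa)\cdot\lambda^{-1}\in H^2(H/N,\kk^\times)$, whose triviality gives $\lambda=\eta_Y^*(\kappa)$. The only real point of care is the bookkeeping under the identifications $Y/(M\times N)\cong H/N\cong G/M$ and the sign convention for the Schur class of a dual module; once these are handled, the argument is a formal consequence of (a)--(d), so no substantial obstacle is anticipated.
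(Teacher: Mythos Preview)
Your proposal is correct and follows essentially the same approach as the paper: compute the Schur class of $V\otimes_\kk W^\circ$ for $M\times N\trianglelefteq G\times H$ as $\kappa\times\lambda^{-1}$ via properties (b) and (d), restrict to $Y$ via (c), invoke (a) to see that the restriction is trivial, and then identify $Y/(M\times N)\cong H/N$ via $p_2$ so that the restricted class becomes $\eta_Y^*(\kappa)\cdot\lambda^{-1}$. The paper phrases the last step as ``a straightforward cocycle computation,'' which is exactly your graph-embedding observation.
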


\begin{proof}
The Schur class of $V\otimes_{\kk} W^\circ$ with respect to $M\times N\trianglelefteq Y$ is $\res^{(G\times H)/(M\times N)}_{Y/(M\times N)}(\kappa\times \lambda^{-1})$, by \ref{noth Schur}(b), (c), and (d). Since $V\otimes_{\kk} W^\circ$ extends to $Y$, the latter class is trivial. Let $\pbar_2\colon Y/(M\times N)\myiso H/N$ be the isomorphism induced by the projection $p_2\colon G\times H\to H$. Then also $(\pbar_2^{-1})^*(\res^{(G\times H)/(M\times N)}_{Y/(M\times N)}(\kappa\times\lambda^{-1}))=1$ in $H^2(H/N,\kk^\times)$. However, a straightforward cocycle computation shows that $(\pbar_2^{-1})^*(\res^{(G\times H)/(M\times N)}_{Y/(M\times N)}(\kappa\times\lambda^{-1}))=\eta^*(\kappa)\cdot\lambda^{-1}$, and the proof is complete.
\end{proof}

The element $\mu$ in the next proposition will be specified in the follow-up proposition to $e\mu(P,\phi,Q)f$ with the notation as in \ref{not gamma mu nu}.

\begin{proposition}\label{prop diagonal extension}
Let $G$ and $H$ be finite groups, $Y\le G\times H$ with $p_1(Y)=G$ and $p_2(Y)=H$. Set $M:=k_1(Y)\trianglelefteq G$ and $N:=k_2(Y)\trianglelefteq H$, and let $\eta_Y\colon H/N\myiso G/M$ denote the isomorphism induced by $Y$. Suppose that $e$ is a $G$-stable idempotent of $Z(\KK M)$, $f$ is an $H$-stable idempotent of $Z(\KK N)$, and that $\mu\in R(\KK Y(e\otimes f^*))$ is a virtual character such that $\res^Y_{M\times N}(\mu)$ is an isometry between $\KK Me$ and $\KK Nf$.

\smallskip
{\rm (a)} If $\alpha\colon \Irr(\KK Nf)\myiso\Irr(\KK Me)$ denotes the bijection induced by $\res^Y_{M\times N}(\mu)$ then $\alpha(\lexp{hN}{\chi}) = \lexp{\eta_Y(hN)}{\alpha(\chi)}$, for all $\chi\in\Irr(\KK Nf)$ and all $h\in H$. In particular, $\alpha$ induces a bijection $\alphabar\colon\Irr(\KK Nf)/H\myiso \Irr(\KK Me)/G$.

\smallskip
{\rm (b)} The group homomorphisms $\mu\dott{Y}{H}{H} -$ and $\ind_Y^{G\times H}(\mu)\cdotH -$ from $R(\KK Hf)$ to $R(\KK Ge)$ coincide.

\smallskip
{\rm (c)} Set $X_1:=(M\times M)\Delta(G)$ and $X_2:=(N\times N)\Delta(H)$. Then $Y*Y^\circ=X_1$ and $Y^\circ*Y=X_2$. Moreover, the following are equivalent:

\smallskip
\qquad {\rm (i)} $\mu\dott{Y}{Y^\circ}{H} \mu^\circ = [\KK M e]$ in $R(\KK X_1(e\otimes e^*))$.

\smallskip
\qquad {\rm (ii)} $\mu^\circ\dott{Y^\circ}{Y}{G} \mu = [\KK Nf]$ in $R(\KK X_2(f\otimes f^*))$.

\smallskip
\qquad {\rm (iii)} $(\mu,\mu)_Y = |\Irr(\KK Me)/G|$.

\smallskip
\qquad {\rm (iv)} $\mu$ has precisely $|\Irr(\KK Me)/G|$ distinct irreducible constituents and each of them occurs with multiplicity $\pm 1$.

\smallskip
{\rm (d)} Assume that the equivalent conditions (i)--(iv) in Part~(c) hold and let $\calJ$ be a set of representatives of the $H$-orbits of $\Irr(\KK Nf)$. Then there exist irreducible characters $\mu_\chi\in\Irr(\KK Y(e\otimes f^*))$ and signs $\varepsilon_\chi\in\{\pm 1\}$, for $\chi\in\calJ$, such that $\mu=\sum_{\chi\in\calJ}\varepsilon_\chi\cdot \mu_\chi$ and $\res^Y_{M\times N}(\mu_\chi)=\sum_{\chi'\in[\chi]_H} \alpha(\chi')\times {\chi'}^\circ$ for all $\chi\in\calJ$.

\smallskip
{\rm (e)} Assume that the equivalent conditions (i)--(iv) in Part~(c) hold. Let $\chi\in\Irr(\KK N f)$ and set $\zeta:=\alpha(\chi)\in\Irr(\KK M e)$, $G_1:=\stab_G(\zeta)$, and $H_1:=\stab_H(\chi)$. Then, $\eta_Y(H_1)=G_1$ by Part~(a). If $\kappa\in H^2(G_1/M,\KK^\times)$ and $\lambda\in H^2(H/N,\KK^\times)$ denote the respective extension classes of $\zeta$ and $\chi$ then $\kappa=\eta_Y^*(\lambda)$.

\smallskip
{\rm (f)} Assume that the equivalent conditions (i)--(iv) in Part~(c) hold. Then $\ind_Y^{G\times H}(\mu)\in R(\KK Ge,\KK Hf)$ is an isometry between $\KK Ge$ and $\KK Hf$.
\end{proposition}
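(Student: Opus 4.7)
\smallskip
\noindent\textbf{Proof proposal for Proposition~\ref{prop diagonal extension}(f).}\quad
The plan is to verify the two equations in Remark~\ref{rem isometry}(a)(ii) for the virtual character $\mu':=\ind_Y^{G\times H}(\mu)\in R(\KK Ge,\KK Hf)$, i.e., to show that
\begin{equation*}
  \mu'\cdotH(\mu')^\circ = [\KK Ge]\quad\text{in}\ R(\KK Ge,\KK Ge)\quad\text{and}\quad
  (\mu')^\circ\cdotG\mu' = [\KK Hf]\quad\text{in}\ R(\KK Hf,\KK Hf)\,.
\end{equation*}
I will treat the first equation in detail; the second is symmetric and uses condition (ii) of Part~(c) in place of condition (i).

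\smallskip
The first step is to observe that taking the $\KK$-dual commutes with induction from $Y$ in the sense that $(\ind_Y^{G\times H}(\mu))^\circ = \ind_{Y^\circ}^{H\times G}(\mu^\circ)$, which is a direct character computation: applying the induction formula to each side and using $\chi^\circ(h,g)=\chi(g,h)$ yields identical double sums over $(x,y)\in G\times H$ with $(x^{-1}gx,y^{-1}hy)\in Y$. With this identification, I apply the generalized Mackey formula (Theorem~\ref{thm Bouc-Mackey}) to the pair $(Y,Y^\circ)$; since $p_2(Y)=H=p_1(Y^\circ)$, the double coset set $[H\backslash H/H]$ is trivial, and the theorem reduces to the single isomorphism
\begin{equation*}
  \Ind_Y^{G\times H}(V)\otimes_{\KK H}\Ind_{Y^\circ}^{H\times G}(V^\circ) \cong
  \Ind_{Y*Y^\circ}^{G\times G}\bigl(V\tens{Y}{Y^\circ}{\KK H} V^\circ\bigr)\,.
\end{equation*}
Passing to characters and invoking condition (i) of Part~(c), together with $Y*Y^\circ = X_1$ from the same part, then gives
\begin{equation*}
  \mu'\cdotH(\mu')^\circ \ =\ \ind_{X_1}^{G\times G}\bigl(\mu\dott{Y}{Y^\circ}{H}\mu^\circ\bigr)
  \ =\ \ind_{X_1}^{G\times G}([\KK Me])\,.
\end{equation*}

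\smallskip
It remains to identify $\Ind_{X_1}^{G\times G}(\KK Me)\cong \KK Ge$ as $\KK[G\times G]$-modules. Since $e$ is $G$-stable in $Z(\KK M)$, it is centralized by every $g\in G$ and lies in $Z(\KK G)$, so $\KK Ge$ is well-defined as an $(e\otimes e^*)$-module. The decomposition $\KK Ge=\bigoplus_{gM\in G/M} g\KK Me$ expresses $\KK Ge$ as a direct sum indexed by $G/M$. A short check using normality of $M$ shows that the $(G\times G)$-stabilizer of the subspace $\KK Me$ equals $\{(g_1,g_2):g_1 g_2^{-1}\in M\}=(M\times M)\Delta(G)=X_1$, and the $(G\times G)$-action permutes these subspaces transitively; hence $\KK Ge\cong\Ind_{X_1}^{G\times G}(\KK Me)$. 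Substituting this into the previous display yields $\mu'\cdotH(\mu')^\circ=[\KK Ge]$.

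\smallskip
The analogous argument for $(\mu')^\circ\cdotG\mu'$ uses the Mackey reduction for $(Y^\circ,Y)$ (where $p_2(Y^\circ)=G=p_1(Y)$), condition (ii) of Part~(c), the identity $Y^\circ*Y=X_2$, and the parallel identification $\KK Hf\cong\Ind_{X_2}^{H\times H}(\KK Nf)$. Remark~\ref{rem isometry}(a) then gives the conclusion. No step is a genuine obstacle; the most delicate points are the character-level identity $\ind_Y^{G\times H}(\mu)^\circ=\ind_{Y^\circ}^{H\times G}(\mu^\circ)$ (which one must state with the paper's sign conventions in mind) and the bimodule identification $\Ind_{X_1}^{G\times G}(\KK Me)\cong \KK Ge$, both of which are routine once the indexing is sorted out.
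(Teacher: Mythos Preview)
Your proof is correct and follows essentially the same route as the paper: apply the Bouc--Mackey formula (Theorem~\ref{thm Bouc-Mackey}) to $\ind_Y^{G\times H}(\mu)\cdotH\ind_{Y^\circ}^{H\times G}(\mu^\circ)$ using that $p_2(Y)=H=p_1(Y^\circ)$, invoke condition~(i) of Part~(c), and identify $\ind_{X_1}^{G\times G}([\KK Me])=[\KK Ge]$; then argue symmetrically and conclude via Remark~\ref{rem isometry}(a). You supply a little more detail on the identification $\Ind_{X_1}^{G\times G}(\KK Me)\cong\KK Ge$ than the paper does at this point, but the argument is the same.
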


\begin{proof}
(a) By Remark~\ref{rem isometry}(a) we can write
\begin{equation}\label{eqn res mu}
  \res^Y_{M\times N}(\mu)=\sum_{\chi\in\Irr(\KK Nf)} \varepsilon_\chi\cdot \alpha(\chi)\times \chi^\circ
\end{equation}
with $\varepsilon_\chi\in\{\pm 1\}$, for $\chi\in\Irr(\KK Nf)$. Let $(g,h)\in Y$. Then $\lexp{(g,h)}{\mu}=\mu$ and therefore, for every $\chi\in\Irr(\KK Nf)$, $\varepsilon_\chi\cdot \lexp{g}{\alpha}(\chi)\times\lexp{h}{\chi^\circ}$ is again equal to one of the summands in the above sum. This implies $\lexp{g}{\alpha}(\chi)=\alpha(\lexp{h}{\chi})$ and $\varepsilon_\chi=\varepsilon_{\lexp{h}{\chi}}$, and Part~(a) is proved. For later use, let $\calJ\subseteq\Irr(\KK Nf)$ denote a set of representatives of the $H$-orbits of $\Irr(\KK Nf)$. Then the above also shows that $\chi\mapsto \alpha(\chi)\times \chi^\circ$ induces a bijection
\begin{equation}\label{eqn H-orbit Y-orbit bijection}
   \calJ\myiso \{\alpha(\chi)\times \chi^\circ \mid \chi\in\Irr(\KK Nf)\}/Y\,.
\end{equation}

\smallskip
(b) Note that identifying $H$ with $H\times \{1\}$ and $G$ with $G\times \{1\}$, the first map, given by the generalized tensor product, maps $R(\KK Hf)$ to $R(\KK Ge)$, since $Y*(H\times\{1\})=G\times\{1\}$. The statement follows directly from Lemma~\ref{lem ext tp and ind}(a) applied to $Y\le G\times H$ and $H\times\{1\}\le H\times \{1\}$.

\smallskip
(c) We will show that (i)$\Rightarrow$(iii)$\Rightarrow$(iv)$\Rightarrow$(i). Then also (ii)$\iff$(iii) by symmetry, since $(\mu^\circ,\mu^\circ)_{Y^\circ} = (\mu,\mu)_Y$ and $|\Irr(\KK Ge)/G|=|\Irr(\KK Nf)/H|$, by Part~(a). But first we establish some facts that hold without further hypotheses. Note that  $Y*Y^\circ=X_1$ and $Y^\circ*Y=X_2$ by Lemma~\ref{lem comp form}(a), since $p_1(Y)=G$ and $p_2(Y)=H$. Set $\theta_1:=[\KK Me]\in R(\KK X_1(e\otimes e^*))$. Then Proposition~\ref{prop ext hom duals}(c) and Corollary~\ref{cor ext tp and hom}(b) imply
\begin{equation}\label{eqn (mu,mu)}
  (\mu,\mu)_Y = (\mu,\theta_1\dott{X_1}{Y}{G}\mu)_Y = (\mu\dott{Y}{Y^\circ}{H}\mu^\circ,\theta_1)_{X_1}\,.
\end{equation}

Now write $\mu=a_1\mu_1+\cdots+a_r\mu_r$ with pairwise distinct $\mu_1,\ldots,\mu_r\in \Irr(\KK Y)$ and non-zero integers $a_1,\ldots, a_r$. Since $\res^Y_{M\times N}(\mu_i)$ is a multiple of the sum of a $Y$-orbit of $\Irr(\KK[M\times N])$, (\ref{eqn res mu}) and (\ref{eqn H-orbit Y-orbit bijection}) imply that
\begin{equation}\label{eqn (mu,mu) 2}
  (\mu,\mu)_Y = a_1^2+\cdots+a_r^2 \ge r \ge |\calJ| = |\Irr(\KK Nf)/H|\,.
\end{equation}
Moreover, one has $(\mu,\mu)_Y = |\Irr(\KK Nf)/H|$, if and only if there is a bijection $\{1,\ldots,r\}\myiso \calJ$, $i\mapsto \chi_i$, such that 
\begin{equation}\label{eqn for (d)}
   a_i= \varepsilon_{\chi_i}\quad \text{and} \quad 
   \res^Y_{M\times N}(\mu_i)=\sum_{\chi'\in[\chi_i]_H} \alpha(\chi')\times {\chi'}^\circ\,,
\end{equation}
for all $i=1,\ldots, r$, where $[\chi]_H$ denotes the $H$-orbit of $\chi\in\Irr(\KK Nf)$. This shows that $(iii)$ implies $(iv)$, since $|\Irr(\KK Ge)/G|=|\Irr(\KK Hf)/H|$ by Part~(a).

The same considerations apply to $\theta_1$ with $\res^{X_1}_{M\times M}(\theta_1)=\sum_{\zeta\in\Irr(\KK Me)} \zeta\times\zeta^\circ$. Since $\theta_1$ is the character of a $\KK X_1$-module, each $G$-orbit sum $\sum_{\zeta'} \zeta'\times {\zeta'}^\circ$ extends to an irreducible character and we obtain
\begin{equation}\label{eqn theta1}
  (\theta_1 , \theta_1)_{X_1} = |\Irr(\KK Me)/G|\,.
\end{equation}

Now, (i) implies (iii) by Equations~(\ref{eqn (mu,mu)}) and (\ref{eqn theta1}).

Finally, we assume (iv) and aim to show (i). Since (iv) implies $(\mu,\mu)_Y=|\calJ|$, we obtain (\ref{eqn for (d)}) and can write  $\mu=\sum_{\chi\in\calJ} \varepsilon_\chi \mu_\chi$, with $\mu_\chi\in\Irr(\KK Y)$ satisfying $\res^Y_{M\times N} (\mu_\chi)= \sum_{\chi'\in[\chi]_H} \alpha(\chi')\times{\chi'}^\circ$, for all $\chi\in\calJ$. If $\chi_1,\chi_2\in\calJ$ are distinct then $\mu_{\chi_1}\dott{Y}{Y^\circ}{H}\mu_{\chi_2}^\circ = 0$, by the definition of the extended tensor product (taken over $\KK N$ in this case). Thus
\begin{equation*}
  \mu\dott{Y}{Y^\circ}{H}\mu^\circ 
  = \sum_{\chi\in\calJ} \varepsilon_\chi \mu_\chi\dott{Y}{Y^\circ}{H} \varepsilon_\chi \mu_\chi^\circ
  = \sum_{\chi\in\calJ} \mu_\chi\dott{Y}{Y^\circ}{H} \mu_\chi^\circ\,.
\end{equation*}
Moreover, for each $\chi\in\calJ$, the character $\mu_\chi\dott{Y}{Y^\circ}{H} \mu_\chi^\circ$ of $X_1$ is irreducible, since its restriction to $M\times M$ is the sum $\sum_{\zeta'\in[\alpha(\chi)]_G} \zeta'\times {\zeta'}^\circ$ is the sum of an $X_1$-orbit of elements in $\Irr(\KK[M\times M])$. Furthermore, the irreducible characters $\mu_\chi\dott{Y}{Y^\circ}{H} \mu_\chi^\circ$, $\chi\in\calJ$, are pairwise orthogonal, since their restrictions to $M\times M$ are. Thus, both $\mu\dott{Y}{Y^\circ}{H}\mu^\circ$ and $\theta_1$ are multiplicity-free sums of $|\calJ|$ pairwise distinct irreducible characters of $X_1$. But by Equation~(\ref{eqn (mu,mu)}) and by (iv) we have  $(\mu\dott{Y}{Y^\circ}{H}\mu^\circ,\theta_1)_{X_1}=(\mu,\mu)_Y=|\calJ|$. This implies $\mu\dott{Y}{Y^\circ}{H}\mu^\circ = \theta_1$ and the proof of Part~(c) is complete.

\smallskip
(d) This was shown in the proof of Part~(c).

\smallskip
(e) This follows immediately from Part~(c) and Lemma~\ref{lem Schur for Y}.

\smallskip
(f) By Theorem~\ref{thm Bouc-Mackey} we have
\begin{equation}\label{eqn ind and mu}
   (\ind_Y^{G\times H}(\mu))\cdotH(\ind_Y^{G\times H}(\mu))^\circ =
   (\ind_Y^{G\times H}(\mu))\cdotH(\ind_{Y^\circ}^{H\times G}(\mu^\circ)) =
   \ind_{Y*Y^\circ}^{G\times G}(\mu\dott{Y}{Y^\circ}{H}\mu^\circ)\,,
\end{equation}
since $p_2(Y)=p_1(Y^\circ)=H$. Moreover,  $\mu\dott{Y}{Y^\circ}{H}\mu^\circ=[\KK M e]$ in $R(\KK X_1(e\otimes e^*))$ by hypothesis. Thus, the last expression in (\ref{eqn ind and mu}) equals $\ind_{X_1}^{G\times G}([\KK Me])$ which in turn equals $[\KK Ge]\in R(\KK Ge,\KK Ge)$. Similarly one shows that 
\begin{equation*}
   (\ind_Y^{G\times H}(\mu))^\circ\cdotG(\ind_Y^{G\times H}(\mu)) = [\KK Hf]
\end{equation*}
in $R(\KK Hf,\KK Hf)$. Now Remark~\ref{rem isometry}(a) implies the result.
\end{proof}

\begin{proposition}\label{prop local characters}
Let $A$ be a block of $\calO G$, $B$ a block of $\calO H$, $\gamma \in T^\Delta_l(A,B)$, and let $(\Delta(P,\phi,Q),e\otimes f^*)\in\calBP_\calO(\gamma)$. Set $I:=N_G(P,e)$, $J:=N_H(Q,f)$, and $Y:=N_{I\times J}(\Delta(P,\phi,Q))$. Then the element $\mu:=e\mu(P,\phi,Q)f\in R(\KK Y(e\otimes f^*))$ defined in \ref{not gamma mu nu} has the following properties:

\smallskip
{\rm (a)} The restriction of $\mu$ to $C_G(P)\times C_H(Q)$ is a perfect isometry between $\KK[C_G(P)]e$ and $\KK[C_H(Q)]f$.

\smallskip
{\rm (b)} The conditions (i)--(iv) in Part~(c), and therefore Parts (d), (e), and (f) of Proposition~\ref{prop diagonal extension} hold for the groups $I$ and $J$, the subgroup $Y\le I\times J$, their normal subgroups $C_G(P)$ and $C_H(Q)$, and the character $\mu$. 
\end{proposition}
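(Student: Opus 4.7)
The plan is to derive both parts as consequences of Theorem~\ref{thm local ppeqs}, applied to two different choices of the intermediate subgroups $S$ and $T$: the ``small'' choice $S = C_G(P)$, $T = C_H(Q)$ gives Part~(a), and the ``large'' choice $S = I$, $T = J$ gives Part~(b) via the criterion of Proposition~\ref{prop diagonal extension}(c).

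For Part~(a), I would take $S = C_G(P)$ and $T = C_H(Q)$ in Theorem~\ref{thm local ppeqs}. Then the group denoted ``$Y$'' there specializes to $N_{C_G(P) \times C_H(Q)}(\Delta(P,\phi,Q)) = C_G(P) \times C_H(Q)$, and $N_{S\times S}(\Delta(P))$ becomes $C_G(P) \times C_G(P)$. The theorem's key equation, after extending scalars from $\calO$ to $\KK$, reads
\begin{equation*}
   [\KK[C_G(P)]e] \ =\ \mu' \mathop{\cdot}\limits_{C_H(Q)} (\mu')^\circ
   \quad\text{in } R(\KK[C_G(P)]e,\KK[C_G(P)]e),
\end{equation*}
where $\mu' := \res^Y_{C_G(P)\times C_H(Q)}(\mu)$. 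By Remark~\ref{rem emuf}(b), $\mu'$ is perfect, and hence quasi-perfect. Applying Corollary~\ref{cor isometry separation 3} to $\mu'$ then yields a unique primitive idempotent $f'$ of $Z(\calO[C_H(Q)]f)$ with $\mu' = \mu' f'$, and asserts that $\mu'$ is an isometry between $\KK[C_G(P)]e$ and $\KK[C_H(Q)]f'$. Since $f$ is already a block idempotent of $\calO[C_H(Q)]$, hence primitive in $Z(\calO[C_H(Q)])$, one has $f' = f$, and Part~(a) is established.

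For Part~(b), I would apply Theorem~\ref{thm local ppeqs} with $S = I$ and $T = J$; this choice is permitted because Proposition~\ref{prop isomorphic inertia quotients} supplies the required isomorphism $J/C_H(Q) \myiso I/C_G(P)$. The group $N_{S\times T}(\Delta(P,\phi,Q))$ is then precisely the $Y$ appearing in our proposition, while $N_{S\times S}(\Delta(P)) = (C_G(P) \times C_G(P))\Delta(I)$ coincides with the group $X_1$ in the notation of Proposition~\ref{prop diagonal extension}(c). Extending scalars to $\KK$ in the theorem's equation gives
\begin{equation*}
   [\KK[C_G(P)]e] \ =\ \mu \mathop{\cdot}\limits^{Y,Y^\circ}_{H} \mu^\circ
   \quad\text{in } R(\KK X_1(e\otimes e^*)),
\end{equation*}
which is exactly condition~(i) of Proposition~\ref{prop diagonal extension}(c). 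To legitimately invoke that proposition I must verify its running hypotheses: $p_1(Y) = I$, $p_2(Y) = J$, $k_1(Y) = C_G(P)$, $k_2(Y) = C_H(Q)$ (which follow from Proposition~\ref{prop Nphi}(c) combined with Proposition~\ref{prop isomorphic inertia quotients}), that $e$ is $I$-stable and $f$ is $J$-stable (immediate from the definitions of $I$ and $J$), and that $\res^Y_{C_G(P) \times C_H(Q)}(\mu)$ is an isometry between $\KK[C_G(P)]e$ and $\KK[C_H(Q)]f$, which is precisely Part~(a). With these identifications in hand, Proposition~\ref{prop diagonal extension}(c) applies and all of its equivalent conditions hold, yielding Part~(b). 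The only mildly delicate step is bookkeeping the group-theoretic matches between Theorem~\ref{thm local ppeqs} and Proposition~\ref{prop diagonal extension}; once $Y*Y^\circ = X_1$ and the projection/kernel data of $Y$ are correctly aligned, the conclusion is automatic.
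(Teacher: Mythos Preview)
Your proposal is correct and follows essentially the same route as the paper. For Part~(b) your argument is identical to the paper's: verify the hypotheses of Proposition~\ref{prop diagonal extension} via Part~(a) and Proposition~\ref{prop isomorphic inertia quotients}, then read off condition~(i) of Proposition~\ref{prop diagonal extension}(c) from Equation~(\ref{eqn Delta(P)}) in Theorem~\ref{thm local ppeqs}. For Part~(a) the paper simply cites Theorem~\ref{thm local ppeqs} together with Proposition~\ref{prop p-perm implies perfect isom}; your use of Corollary~\ref{cor isometry separation 3} is arguably a touch more careful, since at this stage we only know $\gammatilde$ satisfies the \emph{left} equation, whereas Proposition~\ref{prop p-perm implies perfect isom} is stated for two-sided $p$-permutation equivalences---but the underlying logic is the same (and indeed Lemma~\ref{lem unique Lambdatilde lr} already contains this statement).
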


\begin{proof}
(a) This follows from Theorem~\ref{thm local ppeqs} and Proposition~\ref{prop p-perm implies perfect isom}.

\smallskip
(b) By Part~(a) and Proposition~\ref{prop isomorphic inertia quotients}, all the hypotheses of Proposition~\ref{prop diagonal extension} are satisfied. Moreover, the condition in Proposition~\ref{prop diagonal extension}(c)(i) holds by Equation~(\ref{eqn Delta(P)}) in Theorem~\ref{thm local ppeqs}. Thus, the proof is complete.
\end{proof}

As an immediate consequence of the above proposition, we obtain the following finiteness result.

\begin{theorem}\label{thm finiteness}
Let $A$ be a block of $\calO G$ and $B$ a block of $\calO H$. The set $T^\Delta_l(A,B)$ of left $p$-permutation equivalences between $A$ and $B$ is finite. In particular, the set $T_o^\Delta(A,B)$ of $p$-permutation equivalences between $A$ and $B$ is finite and the group $T_o^\Delta(A,A)$ of $p$-permutation self equivalences of $A$ is finite.
\end{theorem}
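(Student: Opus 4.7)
The plan is to exhibit $T_l^\Delta(A,B)$ as a subset of a finite set via the character-valued Brauer construction map. Specifically, applying Proposition~\ref{prop ghost group}(b) block by block to the sum of blocks $A\otimes B^*$ of $\calO[G\times H]$ yields an injective group homomorphism
\begin{equation*}
  \Theta\colon T(A\otimes B^*)\ \hookrightarrow\ \prod_{(X,d)\in\calBPtilde(A\otimes B^*)} R(\KK[N_{G\times H}(X,d)]d)\,,\quad \omega\mapsto \bigl(\omega(X,d)^\KK\bigr)_{(X,d)}\,,
\end{equation*}
where $\calBPtilde(A\otimes B^*)$ is a (finite) set of representatives of $G\times H$-conjugacy classes of $A\otimes B^*$-Brauer pairs. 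Restricting $\Theta$ to $T^\Delta(A,B)$ and invoking Remark~\ref{rem emuf}(b), only those components indexed by twisted diagonal $X$ can be non-zero; this still leaves finitely many components to control.

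Now fix $\gamma\in T_l^\Delta(A,B)$ and a representative $(X,d)=(\Delta(P,\phi,Q),e\otimes f^*)$ with $X$ twisted diagonal. If $(X,d)$ is not a $\gamma$-Brauer pair, then $\gamma(X,d)=0$ and so is $\gamma(X,d)^\KK$, leaving a unique option. If $(X,d)$ is a $\gamma$-Brauer pair, set $I:=N_G(P,e)$, $J:=N_H(Q,f)$, and $Y:=N_{I\times J}(\Delta(P,\phi,Q))$, and let $\mu:=e\mu(P,\phi,Q)f\in R(\KK Y(e\otimes f^*))$. By Proposition~\ref{prop local characters}(b), the character $\mu$ satisfies condition (iv) of Proposition~\ref{prop diagonal extension}(c): it is a signed sum of exactly $|\Irr(\KK[C_G(P)]e)/I|$ pairwise distinct elements of $\Irr(\KK Y(e\otimes f^*))$ with multiplicities in $\{\pm 1\}$. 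Since $\Irr(\KK Y(e\otimes f^*))$ is a finite set, there are only finitely many possibilities for $\mu$, and hence only finitely many possibilities for $\gamma(X,d)^\KK$.

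Combining these bounds component by component, the image $\Theta(T_l^\Delta(A,B))$ lies in the product over the finite index set of (twisted diagonal representatives in) $\calBPtilde(A\otimes B^*)$ of finite sets of allowed components. Hence $\Theta(T_l^\Delta(A,B))$ is finite, and injectivity of $\Theta$ forces $T_l^\Delta(A,B)$ itself to be finite. The remaining assertions follow from the inclusions $T_o^\Delta(A,B)\subseteq T_l^\Delta(A,B)$ and $T_o^\Delta(A,A)\subseteq T_l^\Delta(A,A)$. The main obstacle has already been overcome in establishing Proposition~\ref{prop local characters}(b), namely the rigid local character structure of a $p$-permutation equivalence; once that is in place, the finiteness statement reduces to a straightforward counting argument over the ghost-ring embedding.
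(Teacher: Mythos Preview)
Your proof is correct and follows essentially the same approach as the paper: both use Proposition~\ref{prop ghost group} to embed $T^\Delta(A,B)$ into a product of character groups and then invoke Proposition~\ref{prop local characters} together with Proposition~\ref{prop diagonal extension}(c) to bound the possible values at each $\gamma$-Brauer pair. The only cosmetic difference is that the paper phrases the bound via condition~(iii) (bounded norm $(\mu,\mu)_Y$), whereas you use the equivalent condition~(iv) (a signed sum of a prescribed number of irreducibles); either yields the desired finiteness.
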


\begin{proof}
By Proposition~\ref{prop local characters}, for each $A\otimes B^*$-Brauer pair $(Y, e\otimes f^*)$, there are only finitely many choices for the virtual character of $\gamma(Y,e\otimes f^*)\in T(\calO Y(e\otimes f^*))$, since it has bounded norm by the condition in Proposition~\ref{prop diagonal extension}(c)(iii). On the other hand, Proposition~\ref{prop ghost group} implies that these characters determine $\gamma$.
\end{proof}

%%%%%%%%%%%%%%% SECTION 12 %%%%%%%%%%%%%%%%%%%%%%%%%%%%%%%%%%%%%
%\newpage
\section{A character theoretic criterion and moving from left to right}\label{sec char criterion}

Throughout this section we assume that $G$ and $H$ are finite groups and that the $p$-modular system $(\KK,\calO,F)$ is large enough for $G$ and $H$. Further, we assume that $A$ is a block algebra of $\calO G$ and that $B$ is a block algebra of $\calO H$.

\smallskip
In this section we will prove a character theoretic criterion for an element $\gamma\in T^\Delta(A,B)$ to be in $T^\Delta_l(A,B)$, see Theorem~\ref{thm char criterion}. Since one of the conditions in this criterion is symmetric, we can derive that $T^\Delta_l(A,B)=T^\Delta_o(A,B)=T^\Delta_r(A,B)$, see Theorem~\ref{thm left equals right}.

\begin{lemma}\label{lem fusion consequence}
Suppose that $(D,e_D)$ is a maximal $A$-Brauer pair, $(E,f_E)$ is a maximal $B$-Brauer pair, and $\phi\colon E\myiso D$ is an isomorphism between the fusion systems $\calB$ and $\calA$ associated with $(E,f_E)$ and $(D,e_D)$, respectively. Suppose further that one has $A$-Brauer pairs $(P,e)$ and $(R,d)$ and a $B$-Brauer pair $(Q,f)$ with $(P,e)\le (D,e_D)$, $(R,d)\le (D,e_D)$ and $(Q,f)\le (E,f_E)$, and isomorphisms $\alpha\colon Q\myiso P$ and $\beta\colon Q\myiso R$ such that 
\begin{gather*}
  (\Delta(P,\alpha,Q), e\otimes f^*)\le_{G\times H} (\Delta(D,\phi,E), e_D\otimes f_E)\rlap{\quad\text{and}\quad}\\
  (\Delta(R,\beta,Q), d\otimes f^*)\le_{G\times H} (\Delta(D,\phi,E), e_D\otimes f_E)\,.
\end{gather*}
Then $(\Delta(P,\alpha\beta^{-1},R),e\otimes d^*)=_{\{1\}\times G} (\Delta(P),e\otimes e^*)$ and there exists $g\in G$ such that $c_g\circ\alpha=\beta\colon Q\myiso P$ and $\lexp{g}{(P,e)} = (R,d)$.
\end{lemma}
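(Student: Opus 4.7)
My plan is to exhibit $\beta\alpha^{-1}\colon P\to R$ as a morphism in the fusion system $\calA$; the defining property of $\calA$ then furnishes the required $g\in G$, and the displayed Brauer-pair equality follows by a direct conjugation calculation.

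First I would unpack the two subconjugacies via Remark~\ref{rem emuf}(c): each produces a pair $(g_i,h_i)\in G\times H$ ($i=1,2$) satisfying $(\lexp{g_1}{P},\lexp{g_1}{e}),(\lexp{g_2}{R},\lexp{g_2}{d})\le(D,e_D)$, $(\lexp{h_i}{Q},\lexp{h_i}{f})\le(E,f_E)$, and the twisted-diagonal matching conditions $c_{g_1}\alpha c_{h_1}^{-1}=\phi|_{\lexp{h_1}{Q}}$ and $c_{g_2}\beta c_{h_2}^{-1}=\phi|_{\lexp{h_2}{Q}}$. Combined with the standing inclusions $(P,e),(R,d)\le(D,e_D)$ and $(Q,f)\le(E,f_E)$, the definition of the fusion systems in \ref{noth fusion system of B} tells me that $c_{g_1}\colon P\to\lexp{g_1}{P}$ and $c_{g_2}\colon R\to\lexp{g_2}{R}$ are $\calA$-morphisms while $c_{h_1},c_{h_2}$ are $\calB$-morphisms.

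Solving the two matching conditions for $\alpha$ and $\beta$ and composing then exhibits $\beta\alpha^{-1}\colon P\to R$ as
\[
P\xrightarrow{c_{g_1}}\lexp{g_1}{P}\xrightarrow{\phi^{-1}}\lexp{h_1}{Q}\xrightarrow{c_{h_2h_1^{-1}}}\lexp{h_2}{Q}\xrightarrow{\phi}\lexp{g_2}{R}\xrightarrow{c_{g_2^{-1}}}R.
\]
The first and last arrows are $\calA$-morphisms by the previous paragraph; the inner three compose to $\phi\circ c_{h_2h_1^{-1}}\circ\phi^{-1}$, which, since $\phi\colon\calB\myiso\calA$ is a fusion-system isomorphism by hypothesis, is an $\calA$-morphism $\lexp{g_1}{P}\to\lexp{g_2}{R}$. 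Hence $\beta\alpha^{-1}\in\Hom_\calA(P,R)$.

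By the very definition of $\calA$, there exists $g\in G$ with $c_g|_P=\beta\alpha^{-1}$ and $\lexp{g}{(P,e'_P)}\le(R,e'_R)$, where $e'_P,e'_R$ are the unique idempotents satisfying $(P,e'_P),(R,e'_R)\le(D,e_D)$; by the uniqueness in Proposition~\ref{prop Brauer pairs}(a) applied to the hypotheses, $e'_P=e$ and $e'_R=d$, and since $\beta\alpha^{-1}$ is a bijection we get $\lexp{g}{P}=R$, whence $\lexp{g}{(P,e)}=(R,d)$ and $c_g\circ\alpha=\beta$ on $Q$. This proves the second assertion. For the first, I would conjugate $(\Delta(P,\alpha\beta^{-1},R),e\otimes d^*)$ by $(1,g^{-1})\in\{1\}\times G$: using $\lexp{g^{-1}}{R}=P$, $c_g|_P=\beta\alpha^{-1}$, and $\lexp{g^{-1}}{d}=e$, a one-line calculation gives $(\Delta(P),e\otimes e^*)$. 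The only real obstacle is bookkeeping the domains and codomains of the five arrows through the $\phi$-sandwich in the middle of the composite; everything else is immediate from \ref{rem emuf}(c), \ref{noth fusion system of B}, and Proposition~\ref{prop Brauer pairs}(a).
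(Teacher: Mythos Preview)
Your proposal is correct and follows essentially the same approach as the paper's proof: both unpack the two subconjugacies to obtain $(g_i,h_i)$, transport the $\calB$-morphism $c_{h_2h_1^{-1}}$ to an $\calA$-morphism via the fusion isomorphism $\phi$, and then read off the desired $g$ and the Brauer-pair conjugacy. The only cosmetic difference is that you first assemble the full composite $\beta\alpha^{-1}\in\Hom_\calA(P,R)$ and then invoke the definition of $\calA$ once, whereas the paper applies the definition of $\calA$ to $\phi c_{h_2h_1^{-1}}\phi^{-1}$ and afterwards adjusts by $g_1,g_2$ to land on $g_2^{-1}gg_1$.
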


\begin{proof}
By assumption there exist $(g_1,h_1),(g_2,h_2)\in G\times H$ such that 
\begin{gather*}
  \lexp{(g_1,h_1)}{(\Delta(P,\alpha,Q),e\otimes f^*)} \le (\Delta(D,\phi,E), e_D\otimes f_E^*)\rlap{\quad\text{and}}\\
  \lexp{(g_2,h_2)}{(\Delta(R,\beta,Q), d\otimes f^*)} \le (\Delta(D,\phi,E), e_D\otimes f_E^*)\,,
\end{gather*}
or equivalently,
\begin{gather*}
  \lexp{g_1}{(P,e)}\le (D,e_D)\,,\quad  \lexp{h_1}{(Q,f)}\le (E,f_E)\,, \quad 
  \lexp{g_2}{(R,d)}\le (D,e_D)\,, \quad \lexp{h_2}{(Q,f)}\le (E,f_E)\,, \\
  \lexp{g_1}P= \phi(\lexp{h_1}Q)\,,\quad 
  c_{g_1}\alpha c_{h_1^{-1}} = \phi\colon \lexp{h_1}{Q}\myiso\lexp{g_1}{P}\,,\quad%\\
  \lexp{g_2}{R}=\phi(\lexp{h_2}{Q})\,,\quad
  c_{g_2}\beta c_{h_2^{-1}} = \phi \colon \lexp{h_2}{Q}\myiso\lexp{g_2}{R}\,.
\end{gather*}
Thus, $c_{h_2h_1^{-1}}\in\Hom_{\calB}(\lexp{h_1}{Q},\lexp{h_2}{Q})$. Since $\phi\colon E\myiso D$ is an isomorphism between $\calB$ and $\calA$, there exists $g\in G$ such that 
\begin{equation*}
  \lexp{gg_1}{(P,e)} = \lexp{g_2}{(R,d)}\quad\text{and}\quad 
  \phi c_{h_2h_1^{-1}} = c_g\phi\colon \lexp{h_1}{Q}\myiso \phi(\lexp{h_2}{Q}) = \lexp{g_2}{R}\,.
\end{equation*}
From this we obtain $\lexp{g_2^{-1}gg_1}{(P,e)} = (R,d)$, $c_{g_2^{-1}gg_1}\alpha = \beta\colon Q\myiso R$, which implies
\begin{equation*}
  \lexp{(1,g_1^{-1}g^{-1}g_2)}{\bigl(\Delta(P,\alpha\beta^{-1},R),e\otimes d^*\bigr)} = (\Delta(P),e\otimes e^*)\,,
\end{equation*}
and the proof of the lemma is complete.
\end{proof}

\begin{theorem}\label{thm char criterion}
Let $\gamma\in T^\Delta(A,B)$. Then $\gamma$ belongs to $T^\Delta_l(A,B)$ if and only if the following hold:

\smallskip
{\rm (i)} There exists a  maximal element $(\Delta(D,\phi,E), e_D\otimes f_E^*)$ in $\calBP_\calO(\gamma)$ such that $(D,e_D)$ is a maximal $A$-Brauer pair, $(E,f_E)$ is a maximal $B$-Brauer pair and $\phi\colon E\to D$ is an isomorphism between the fusion systems $\calB$ and $\calA$ of $B$ and $A$ associated with $(E,f_E)$ and $(D,e_D)$, respectively;

\smallskip
{\rm (ii)} Any two maximal elements in $\calBP_\calO(\gamma)$ are $G\times H$-conjugate; and

\smallskip
{\rm (iii)} For every $(\Delta(P,\psi,Q), e\otimes f^*)\in\calBP_\calO(A\otimes B^*)$ with $(\Delta(P,\psi,Q),e\otimes f^*)\le (\Delta(D,\phi,E),e_D\otimes f_E^*)$, setting $I:=N_G(P,e)$, $J:=N_H(Q,f)$, and $Y:=N_{I\times J}(\Delta(P,\psi,Q))$, the element $\mu:=\gamma(\Delta(P,\psi,Q),e\otimes f^*)^{\KK}$ in $R(\KK Y (e\otimes f^*))$ satisfies
\begin{equation*}
  \mu\dott{Y}{Y^\circ}{H} \mu^\circ = [\KK[C_G(P)]e] \quad \text{in $R(\KK[N_{I\times I}(\Delta(P))])$.}
\end{equation*}
\end{theorem}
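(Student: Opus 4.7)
The \emph{only if} direction combines previously established results. Condition (i) follows from Theorem~\ref{thm gamma is uniform}(c) together with Theorem~\ref{thm isomorphic fusion systems}; condition (ii) is Theorem~\ref{thm gamma is uniform}(b); and condition (iii) is obtained by applying the character version of equation~(\ref{eqn Delta(P)}) in Theorem~\ref{thm local ppeqs} at every $A\otimes B^*$-Brauer pair $\le (\Delta(D,\phi,E), e_D\otimes f_E^*)$, using Theorem~\ref{thm gamma is uniform}(a) to ensure those are indeed $\gamma$-Brauer pairs so that Theorem~\ref{thm local ppeqs} applies with $S=N_G(P,e)$ and $T=N_H(Q,f)$.

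For the \emph{if} direction, assume (i)--(iii). By Proposition~\ref{prop ghost group}(b), applied to $A\otimes A^*$, it suffices to verify that the $\KK$-characters of the Brauer constructions of $\gamma\cdotH\gamma^\circ$ and $[A]$ agree at each $G\times G$-orbit of $A\otimes A^*$-Brauer pair. Since both sides lie in $T^\Delta(A,A)$, only twisted diagonal pairs matter, and any such is $G\times G$-conjugate to one of the form $(\Delta(P), e\otimes e'^*)$ with $(P,e),(P,e')\in\calBP_\calO(A)$. Proposition~\ref{prop Brauer con of block}(b) gives $[A](\Delta(P), e\otimes e'^*)\cong F[C_G(P)]e$ if $e=e'$ and $0$ otherwise. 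The plan is to expand $e(\gamma\cdotH\gamma^\circ)(\Delta(P))e'$ via Theorem~\ref{thm BP decomp}(c) (with $G=K$, $R=P$, $d=e'$, $\sigma=\id_P$, $S=N_G(P,e)=:I$, $T=N_G(P,e')=:I'$) as a direct sum over triples $\omega=(\phi,(Q,f),\phi^{-1})$; a summand is non-vanishing exactly when both $(\Delta(P,\phi,Q), e\otimes f^*)$ and $(\Delta(P,\phi,Q), e'\otimes f^*)$ are $\gamma$-Brauer pairs. By (iii) together with the diagram in \ref{noth T(G)}(c), every Brauer pair appearing in (iii) has $\gamma(-)^\KK\ne 0$; combined with (i) and (ii) this shows that the $\gamma$-Brauer pairs are precisely the $A\otimes B^*$-Brauer pairs which are $G\times H$-subordinate to $(\Delta(D,\phi,E), e_D\otimes f_E^*)$.

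In the case $e\ne e'$, Lemma~\ref{lem fusion consequence} applied with $\alpha=\beta=\phi$ and $d=e'$ produces $g\in G$ satisfying $c_g\phi=\phi$ (forcing $g\in C_G(P)$) and $\lexp{g}{(P,e)}=(P,e')$, whence $e=\lexp{g}{e}=e'$, a contradiction. Hence no $\omega$ contributes and the left side vanishes, matching $[A]$. In the case $e=e'$ (so $I=I'$), a second application of Lemma~\ref{lem fusion consequence} (after WLOG $H$-conjugating two competing pairs to share the same $(Q,f)$, which is possible by the uniqueness in Proposition~\ref{prop Brauer pairs}(a) applied below $(E,f_E)$) shows that the contributing $\omega$'s form a single $I\times H$-orbit; fix a representative $\omega_0=(\phi,(Q,f),\phi^{-1})$ with $(\Delta(P,\phi,Q), e\otimes f^*)\le (\Delta(D,\phi,E), e_D\otimes f_E^*)$, so $Q\le E$, $P\le D$, $\phi=\phi|_Q$ and $f$ is determined.

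The crucial step is to show, using only (i), that $N_{(I,\phi,J)}=I$ where $J:=N_H(Q,f)$, so that the induction in the Theorem~\ref{thm BP decomp}(c) formula becomes trivial. This follows from the fusion isomorphism $\phi\colon\calB\myiso\calA$ in (i): standard fusion system theory identifies $\mathrm{Aut}_{\calA}(P)=I/C_G(P)$ and $\mathrm{Aut}_{\calB}(Q)=J/C_H(Q)$, and $\phi$ induces an isomorphism $J/C_H(Q)\myiso I/C_G(P)$. Hence every $g\in I$ has $c_g|_P=\phi c_h \phi^{-1}|_P$ for some $h\in J$, which translates into $(g,h^{-1})\in Y:=N_{I\times J}(\Delta(P,\phi,Q))$; thus $p_1(Y)=N_{(I,\phi,J)}=I$. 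By Proposition~\ref{prop Nphi}(c) and Lemma~\ref{lem comp form}(a), $Y*Y^\circ=\Delta(I)\cdot(C_G(P)\times\{1\})=N_{I\times I}(\Delta(P))$ by Proposition~\ref{prop Nphi}(a). So the induction is trivial, and Theorem~\ref{thm BP decomp}(c) yields
\[
 e\,\mu_{\gamma\cdotH\gamma^\circ}(\Delta(P))\,e \ =\ \mu \dott{Y}{Y^\circ}{H} \mu^\circ \quad\text{in}\ R\bigl(\KK[N_{I\times I}(\Delta(P))](e\otimes e^*)\bigr),
\]
where $\mu=\gamma(\Delta(P,\phi,Q), e\otimes f^*)^\KK$. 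Condition (iii) identifies the right-hand side with $[\KK[C_G(P)]e]$, matching $[A](\Delta(P), e\otimes e^*)^\KK$. This completes the character check, and Proposition~\ref{prop ghost group}(b) delivers $\gamma\cdotH\gamma^\circ=[A]$. The main obstacle is precisely this re-derivation of the inertia quotient identity from the bare fusion isomorphism hypothesis, since we cannot invoke Proposition~\ref{prop isomorphic inertia quotients} whose proof itself presumed $\gamma\in T^\Delta_l(A,B)$.
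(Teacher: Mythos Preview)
Your \emph{only if} direction is correct and matches the paper's proof.

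For the \emph{if} direction, there is a genuine gap in your reduction step. You assert that every twisted diagonal $A\otimes A^*$-Brauer pair is $G\times G$-conjugate to one of the form $(\Delta(P), e\otimes e'^*)$, but this is false: a twisted diagonal $p$-subgroup $\Delta(P,\sigma,R)\le G\times G$ is $G\times G$-conjugate to a straight diagonal $\Delta(P')$ if and only if $\sigma\colon R\to P$ is realized by conjugation with an element of $G$, which need not hold (take $G$ cyclic of odd prime order and $\sigma$ a nontrivial automorphism). So you still owe a proof that $(\gamma\cdotH\gamma^\circ)(\Delta(P,\sigma,R), e\otimes d^*)^\KK = 0$ for an \emph{arbitrary} twisted diagonal $A\otimes A^*$-Brauer pair not of this special form.

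The good news is that your own argument for the case $e\ne e'$ is exactly the tool needed in general, and this is how the paper proceeds. Expand $e(\gamma\cdotH\gamma^\circ)(\Delta(P,\sigma,R))d$ via Theorem~\ref{thm BP decomp}(c); the non-vanishing summands are indexed by triples $(\alpha,(Q,f),\beta)$ with $\alpha\beta=\sigma$ such that both $(\Delta(P,\alpha,Q), e\otimes f^*)$ and $(\Delta(R,\beta^{-1},Q), d\otimes f^*)$ are $\gamma$-Brauer pairs, hence $\le_{G\times H}(\Delta(D,\phi,E), e_D\otimes f_E^*)$ by your characterization of $\calBP_\calO(\gamma)$. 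Lemma~\ref{lem fusion consequence} (applied as stated, with general $\alpha,\beta$, not with $\alpha=\beta$) then forces $(\Delta(P,\sigma,R), e\otimes d^*) =_{\{1\}\times G}(\Delta(P), e\otimes e^*)$. This is the paper's ``Claim~1'', and it simultaneously handles non-straight diagonals and the case $e\ne e'$. Once reduced to $(\Delta(P), e\otimes e^*)$ with $(P,e)\le(D,e_D)$, your remaining argument (single $I\times H$-orbit of contributing triples, triviality of the induction from the bare fusion isomorphism, then condition~(iii)) agrees with the paper's ``Claim~2'' and conclusion.
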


\begin{proof}
First assume that $\gamma\in T_l^\Delta(A,B)$. Let $(\Delta(D,\phi,E), e_D\otimes f_E)$ be any maximal $\gamma$-Brauer pair. Then the conditions in (i) are satisfied by Theorem~\ref{thm gamma is uniform}(c) and by Theorem~\ref{thm isomorphic fusion systems}. Moreover, Theorem~\ref{thm gamma is uniform}(b), implies the condition in (ii), and Equation~(\ref{eqn Delta(P)}) in Theorem~\ref{thm local ppeqs} implies the condition in (iii). 

Next we assume that Conditions (i)--(iii) hold for $\gamma\in T^\Delta(A,B)$. We aim to show that $\gamma\cdotH \gamma^\circ=[A]$ in $T^\Delta(A,A)$. By Proposition~\ref{prop ghost group}(b), it suffices to show that
\begin{equation}\label{eqn (X,c)}
  \bigl((\gamma\cdotH\gamma^\circ)(X,c)\bigr)^{\KK} = \bigl(A(X,c)\bigr)^\KK\quad
  \text{in } T(\KK[N_{G\times G}(X,c)])\,,
\end{equation}
for all $(X,c)$ running through a set of representatives of the $G\times G$-orbits of $\calBP(A\otimes A^*)$.

\smallskip
{\it Claim 1: Both sides of (\ref{eqn (X,c)}) are $0$ unless $(X,c)=_{G\times G}(\Delta(P), e\otimes e^*)$ for some  Brauer pair $(P,e)$ with $(P,e)\le (D,e_D)$.} This is clear for the right hand side, since $(\Delta(D), e_D\otimes e_D^*)$ is a maximal Brauer pair of the indecomposable $p$-permutation $A\otimes A^*$-module $A$ (see Proposition~\ref{prop Brauer pairs for M}(c)). To prove this for the left hand side of (\ref{eqn (X,c)}), assume that $(X,c)\in\calBP(A\otimes A^*)$ is such that the left hand side of (\ref{eqn (X,c)}) is non-zero. Then also $(\gamma\cdotH\gamma^\circ)(X,c)\neq 0$ in $T(F[N_{G\times G}(X,c)])$. Since every indecomposable $p$-permutation $\calO[G\times G]$-module $M$ appearing in $\gamma\cdotH\gamma^\circ$ has twisted diagonal vertices (see Lemma~\ref{lem ext tp and vertices}(b)) and since every $M$-Brauer pair is also an $A\otimes A^*$-Brauer pair (see Proposition~\ref{prop Brauer pairs for M}(a)), we can conclude that $(X,c)$ is $G\times G$-conjugate to a Brauer pair of the form $(\Delta(P,\sigma,R),e\otimes d^*)$ with $(P,e)\le (D,e_D)$ and $(R,d)\le (D,e_D)$. Using Theorem~\ref{thm BP decomp}(c) with $I:=N_G(P,e)$ for $S$ and $K:=N_G(R,d)$ for $T$, this implies that there exists a Brauer pair $(Q,f)$ of $\calO H$ and isomorphisms $\alpha\colon Q\myiso P$ and $\beta\colon R\to Q$ such that $\alpha\beta=\sigma$, $\gamma(\Delta(P,\alpha,Q), e\otimes f^*)\neq 0$ in $T(F[N_{I\times J}(\Delta(P,\alpha,Q))])$, and $\gamma(\Delta(R,\beta^{-1},Q), d\otimes f^*)\neq 0$ in $T(F[N_{K\times J}(\Delta(R,\alpha,Q)]))$, where $J:=N_H(Q,f)$. By Condition~(ii) this implies $(\Delta(P,\alpha,Q),e\otimes f^*)\le_{G\times H}(\Delta(D,\phi,E),e_D\otimes f_E^*)$ and $(\Delta(R,\beta^{-1},Q), d\otimes f^*)\le_{G\times H}(\Delta(D,\phi,E),e_D\otimes f_E^*)$. Now Lemma~\ref{lem fusion consequence} implies Claim 1.

\smallskip
With Claim~1 it now suffices to show that
\begin{equation}\label{eqn (P,e)}
  \bigl((\gamma\cdotH\gamma^\circ)(\Delta(P),e\otimes e^*)\bigr)^{\KK} = 
  \bigl([A](\Delta(P), e\otimes e^*)\bigr)^{\KK} \quad \text{in }R(\KK[N_{I\times I}(\Delta(P))])\,,
\end{equation}
for all $A$-Brauer pairs $(P,e)$ with $(P,e)\le (D,e_D)$, where $I:=N_G(P,e)$. By Proposition~\ref{prop Brauer con of block}(b), the right hand side of (\ref{eqn (P,e)}) equals $[\KK[C_G(P)]e]$. We use the second part of Theorem~\ref{thm BP decomp}(d) with $S=I$ to compute the left hand side of (\ref{eqn (P,e)}). For $Q\le E$ let $f_Q$ denote the unique block idempotent of $\calO[C_H(Q)]$ such that $(Q,f_Q)\le (E,f_E)$ and set $Q_0:=\phi^{-1}(P)$ and $f_0:=f_{Q_0}$. Then, with the notation in Theorem~\ref{thm BP decomp}(d), the summand for $\lambda_0=(\phi,(Q_0,f_0))$ contributes the element
\begin{equation*}
  \ind_{\Delta(I(\lambda_0))(C_G(P)\times\{1\})}^{N_{I\times I}(\Delta(P))} 
  \bigl(\gamma(\Delta(P,\phi,Q_0), e\otimes f_0^*) \dott{Y_0}{Y_0^\circ}{H} 
  \gamma(\Delta(P,\phi,Q_0), e\otimes f_0^*)^\circ\bigr)\,,
\end{equation*}
where $Y_0=N_{G\times H}(\Delta(P\phi,Q_0), e\otimes f_0^*)$. Since $\phi$ is an isomorphism between the fusion systems $\calB$ and $\calA$ by Condition~(i), we have $I(\lambda_0)=I$ and therefore $\Delta(I(\lambda_0))(C_G(P)\times \{1\})=N_{I\times I}(\Delta(P))$. Thus, after applying $(-)^{\KK}$, the contribution of the summand parametrized by $\lambda_0$ to the left hand side of (\ref{eqn (P,e)}) is equal to $\mu_0\dott{Y_0}{Y_0^\circ}{H} \mu_0^\circ$ with $\mu_0:=\gamma(\Delta(P,\phi,Q_0), e\otimes f_0^*)^{\KK}$. Condition~(iii) now implies that this expression is also equal to $[\KK[C_G(P)]e]\in R(\KK[N_{I\times I}(\Delta(P))])$. Thus, it suffices to show {\it Claim 2: Let $(Q,f)$ be an $\calO H$-Brauer pair and $\alpha\colon Q\myiso P$ an isomorphism with $\gamma(\Delta(P,\alpha,Q),e\otimes f^*)\neq 0$ in $T(F[N_{G\times H}(\Delta(P,\alpha,Q), e\otimes f^*)])$ then $\lambda=(\alpha, (Q,f))$ belongs to the same $I\times H$-orbit as $\lambda_0$.} In order to prove this claim, note first that $\gamma(\Delta(P,\alpha,Q),e\otimes f^*)\neq 0$ implies that $(Q,f)$ is a $B$-Brauer pair. Without loss of generality we may assume that $(Q,f)\le (E,f_E)$. Condition~(ii) now implies that $(\Delta(P,\alpha,Q), e\otimes f^*)\le_{G\times H}(\Delta(D,\phi,E), e_D\otimes f_E^*)$. Applying the symmetric statement of Lemma~\ref{lem fusion consequence} to $(\Delta(P,\alpha,Q), e\otimes f^*)$ and $(\Delta(P,\phi,Q_0), e\otimes f_0^*)$, we obtain $(\alpha,(Q,f))=_H(\phi,(Q_0,f_0))$. This completes the proof of the theorem.
\end{proof}

\begin{theorem}\label{thm left equals right}
Let $A$ be a block of $\calO G$, $B$ a block of $\calO H$ and $\gamma\in T^\Delta(A,B)$. The following are equivalent:

\smallskip
{\rm (i)} $\gamma\in T^\Delta_l(A,B)$.

\smallskip
{\rm (ii)} $\gamma\in T^\Delta_r(A,B)$.

\smallskip
{\rm (iii)} $\gamma\in T^\Delta_o(A,B)$.

\smallskip
{\rm (iv)} $\gamma$ satisfies the conditions (i)--(iii) in Theorem~\ref{thm char criterion}.
\end{theorem}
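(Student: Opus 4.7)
The equivalence (i)$\iff$(iv) is precisely Theorem~\ref{thm char criterion}, and it is immediate from the definitions that (iii) implies both (i) and (ii), while (i) together with (ii) yields (iii). The heart of the theorem is therefore the implication (i)$\Rightarrow$(ii); the reverse (ii)$\Rightarrow$(i) then follows by the same argument applied to $\gamma^\circ \in T^\Delta(B,A)$, since (ii) for $\gamma$ is precisely (i) for $\gamma^\circ$.

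My plan is to derive (ii) by showing that $\gamma^\circ$ satisfies the three conditions of Theorem~\ref{thm char criterion}. Conditions (i) and (ii) of that theorem are manifestly stable under the involution $\gamma\mapsto\gamma^\circ$: the set of $\gamma^\circ$-Brauer pairs is the image of $\calBP_\calO(\gamma)$ under $(\Delta(P,\psi,Q),e\otimes f^*)\mapsto(\Delta(Q,\psi^{-1},P),f\otimes e^*)$, and $\phi^{-1}\colon D\myiso E$ is the required fusion-system isomorphism in the reverse direction. What requires work is the local condition (iii): writing $\mu:=\gamma(\Delta(P,\psi,Q),e\otimes f^*)^\KK$ and $Y:=N_{I\times J}(\Delta(P,\psi,Q))$ for each $(\Delta(P,\psi,Q),e\otimes f^*)\le(\Delta(D,\phi,E),e_D\otimes f_E^*)$, the equation
\[
  \mu\dott{Y}{Y^\circ}{H}\mu^\circ = [\KK[C_G(P)]e]\qquad\text{must be shown equivalent to}\qquad \mu^\circ\dott{Y^\circ}{Y}{G}\mu = [\KK[C_H(Q)]f].
\]

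This symmetrization is exactly the content of Proposition~\ref{prop diagonal extension}(c), applied with its $G,H,M,N,Y$ replaced respectively by $I,J,C_G(P),C_H(Q),Y$. By Proposition~\ref{prop Nphi}(a) together with Proposition~\ref{prop isomorphic inertia quotients} one has $p_1(Y)=I$ and $p_2(Y)=J$, so that the groups $X_1,X_2$ of Proposition~\ref{prop diagonal extension}(c) become precisely $N_{I\times I}(\Delta(P))$ and $N_{J\times J}(\Delta(Q))$, and the two equations (i), (ii) of that proposition correspond exactly to the two desired versions of condition (iii) above. To invoke the proposition I need its hypothesis that $\res^Y_{C_G(P)\times C_H(Q)}(\mu)$ be an isometry between $\KK[C_G(P)]e$ and $\KK[C_H(Q)]f$; this is delivered by Proposition~\ref{prop local characters}(a) as soon as $\gamma\in T_l^\Delta(A,B)$ is available.

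Putting it all together: assuming (i), Theorem~\ref{thm gamma is uniform}(a) guarantees that every $A\otimes B^*$-Brauer pair below $(\Delta(D,\phi,E),e_D\otimes f_E^*)$ is already a $\gamma$-Brauer pair; Proposition~\ref{prop local characters}(a) then secures the isometry hypothesis at each such pair, and Proposition~\ref{prop diagonal extension}(c) converts condition (iii) of Theorem~\ref{thm char criterion} for $\gamma$ into the analogous condition for $\gamma^\circ$. Applying Theorem~\ref{thm char criterion} to $\gamma^\circ$ then yields $\gamma^\circ\in T_l^\Delta(B,A)$, which is (ii). The main obstacle is largely bookkeeping: verifying that the subgroup identifications $X_1=N_{I\times I}(\Delta(P))$, $X_2=N_{J\times J}(\Delta(Q))$ and the swap of Brauer pairs align cleanly so that the two conditions of Proposition~\ref{prop diagonal extension}(c) really do correspond to condition (iii) of Theorem~\ref{thm char criterion} applied to $\gamma$ and to $\gamma^\circ$, respectively.
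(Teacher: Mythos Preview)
Your proposal is correct and follows essentially the same route as the paper: both arguments use Theorem~\ref{thm char criterion} for (i)$\iff$(iv), note the trivial implications from (iii), and establish (i)$\Rightarrow$(ii) by verifying that the three conditions of Theorem~\ref{thm char criterion} pass from $\gamma$ to $\gamma^\circ$, invoking Proposition~\ref{prop local characters} for the isometry hypothesis and the equivalence (i)$\iff$(ii) in Proposition~\ref{prop diagonal extension}(c) to flip the local condition~(iii). The bookkeeping you flag (identifying $X_1,X_2$ with $N_{I\times I}(\Delta(P))$, $N_{J\times J}(\Delta(Q))$) is exactly what the paper handles via Proposition~\ref{prop Nphi} and Proposition~\ref{prop isomorphic inertia quotients}.
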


\begin{proof}
By Theorem~\ref{thm char criterion}, (i) is equivalent with (iv). Morover, clearly (iii) implies (i) and (ii). Next we show that (i) implies (iii). In fact, assuming (i), Theorem~\ref{thm char criterion} shows that the conditions (i)--(iii) in Theorem~\ref{thm char criterion} are satisfied. Since conditions (i) and (ii) are symmetric and since Condition~(iii) for $\gamma$ is equivalent to Condition~(iii) for $\gamma^\circ$, by Proposition~\ref{prop local characters} and the equivalence of (i) and (ii) in Proposition~\ref{prop diagonal extension}(c), we see that the conditions (i)--(iii) also hold for $\gamma^\circ\in T^\Delta(B,A)$. Now Theorem~\ref{thm char criterion} implies that $\gamma^\circ\in T^\Delta_l(B,A)$. But this is equivalent to $\gamma\in T^\Delta_r(A,B)$. Thus, (i) implies (iii). 
Finally, if (ii) holds then $\gamma^\circ\in T_l^\Delta(B,A)$, and, since (i) implies (iii), we obtain $\gamma^\circ\in T_o^\Delta(B,A)$ and $\gamma\in T_o^\Delta(A,B)$. Thus, (ii) implies (iii), and the proof is complete.
\end{proof}

%%%%%%%%%%%%%%%% SECTION 13 %%%%%%%%%%%%%%%%%%%%%%%%%%%%%%%%%%%
%\newpage
\section{K\"ulshammer-Puig classes}\label{sec KP classes preserved}

Throughout this section we assume that $G$ and $H$ are finite groups and that the $p$-modular system $(\KK,\calO,F)$ is large enough for $G$ and $H$. Further, we assume that $A$ is a block algebra of $\calO G$, that $B$ is a block algebra of $\calO H$, and that $\gamma\in T^\Delta(A,B)$ is a $p$-permutation equivalence.

The main goal of this section is to show that K\"ulshammer-Puig classes are \lq preserved\rq\ by $\gamma$, see Theorem~\ref{thm K-P classes} for the precise statement. In order to prove this we need to take a closer look at the elements $e\gamma(P,\psi,Q)f$, for a $\gamma$-Brauer pair $(\Delta(P,\psi,Q), e\otimes f^*)$ with $P$ and $Q$ centric in the associated fusion systems. This is done in Proposition~\ref{prop modules for centric case}.

\begin{lemma}\label{lem local defect group}
Let $(\Delta(D,\phi,E), e\otimes f^*)$ be a maximal $\gamma$-Brauer pair and let $\calF$ denote the fusion system associated to the maximal $A\otimes B^*$-Brauer pair $(D\times E,e\otimes f^*)$. Then the subgroup $\Delta(D,\phi,E)$ of $D\times E$ is fully $\calF$-normalized. Moreover, setting $Y:=N_{G\times H}(\Delta(D,\phi,E), e\otimes f^*)$, the block $\calO Y(e\otimes f^*)$ has the normal subgroup $(Z(D)\times Z(E))\cdot\Delta(D,\phi,E)$ of $Y$ as defect group.
\end{lemma}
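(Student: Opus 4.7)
The plan is to deduce both assertions from Proposition~\ref{prop fully centralized and centric}(b), applied to the block $A\otimes B^*$ of $F[G\times H]$ with maximal Brauer pair $(D\times E,e\otimes f^*)$, its fusion system $\calF$ on $D\times E$, and the subgroup $\Delta(D,\phi,E)\le D\times E$. For this I need to verify three points: an explicit formula for $N_{D\times E}(\Delta(D,\phi,E))$; that $\Delta(D,\phi,E)$ is fully $\calF$-normalized; and that the unique $F[G\times H]$-Brauer pair $(\Delta(D,\phi,E),e_\Delta)\le(D\times E,e\otimes f^*)$ has idempotent $e_\Delta = e\otimes f^*$, so that $N_{G\times H}(\Delta(D,\phi,E),e_\Delta) = Y$.

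A direct computation shows that $(d,e')\in D\times E$ normalizes $\Delta(D,\phi,E)$ iff $c_d\circ\phi = \phi\circ c_{e'}$ on $E$, equivalently iff $d\phi(e')^{-1}\in Z(D)$. Hence
\begin{equation*}
  N_{D\times E}(\Delta(D,\phi,E)) = (Z(D)\times\{1\})\cdot\Delta(D,\phi,E) = (Z(D)\times Z(E))\cdot\Delta(D,\phi,E)\,,
\end{equation*}
the two products being equal since $\phi(Z(E))=Z(D)$, both having order $|Z(D)|\cdot|E|$. Any $\calF$-isomorphism $\Delta(D,\phi,E)\myiso U\le D\times E$ is induced by some conjugation $c_{(g,h)}$ satisfying the Brauer-pair condition; its image $\lexp{(g,h)}{\Delta(D,\phi,E)} = \Delta(\lexp{g}{D},c_g\phi c_h^{-1},\lexp{h}{E})$ lies in $D\times E$, so by order $\lexp{g}{D} = D$ and $\lexp{h}{E} = E$, and $U = \Delta(D,\psi,E)$ for some isomorphism $\psi\colon E\myiso D$. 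The identical computation then gives $|N_{D\times E}(U)| = |Z(D)|\cdot|E| = |N_{D\times E}(\Delta(D,\phi,E))|$, proving that $\Delta(D,\phi,E)$ is fully $\calF$-normalized.

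The main obstacle is the identification of $e_\Delta$. For this I consider the chain $\Delta_i := (Z_i(D)\times Z_i(E))\cdot\Delta(D,\phi,E)$, where $Z_i$ denotes the $i$-th term of the upper central series, so $\Delta_0 = \Delta(D,\phi,E)$ and $\Delta_c = D\times E$ for $c$ the nilpotence class of the $p$-group $D\times E$. Since $p_1(\Delta_i) = D$ and $p_2(\Delta_i) = E$ for every $i$, one has $C_{G\times H}(\Delta_i) = C_G(D)\times C_H(E)$ throughout; in particular $e\otimes f^*$ is a block idempotent of $FC_{G\times H}(\Delta_i)$ for every $i$, and $\br^{G\times H}_{\Delta_i}$ restricts to the identity on $F(C_G(D)\times C_H(E))$. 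I then verify $(\Delta_{i-1},e\otimes f^*)\unlhd(\Delta_i,e\otimes f^*)$ for each $i$: $\Delta_i$ normalizes $\Delta_{i-1}$ by a direct commutator calculation using $[Z_i(D),D]\subseteq Z_{i-1}(D)$ and the analogous inclusion in $E$; it fixes $e\otimes f^*$ because $\Delta_i\subseteq D\times E$ acts trivially by conjugation on $C_G(D)\times C_H(E)$; and the required congruence $\br^{G\times H}_{\Delta_i}(e\otimes f^*)\cdot(e\otimes f^*) = e\otimes f^*$ reduces to $(e\otimes f^*)^2 = e\otimes f^*$. Concatenating these $\unlhd$-relations yields $(\Delta(D,\phi,E),e\otimes f^*)\le(D\times E,e\otimes f^*)$, and the uniqueness in Proposition~\ref{prop Brauer pairs}(a) forces $e_\Delta = e\otimes f^*$. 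Since $Y$ normalizes both $\Delta(D,\phi,E)$ and the characteristic subgroup $Z(D)\times Z(E)$ of $D\times E$, the subgroup $(Z(D)\times Z(E))\cdot\Delta(D,\phi,E)$ is normal in $Y$, and Proposition~\ref{prop fully centralized and centric}(b) gives that it is a defect group of $FY(e\otimes f^*)$, and hence also of $\calO Y(e\otimes f^*)$.
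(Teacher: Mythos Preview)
Your proof is correct and follows essentially the same route as the paper: compute $N_{D\times E}(\Delta(D,\psi,E))$ to see that all $\calF$-conjugates of $\Delta(D,\phi,E)$ inside $D\times E$ have normalizer of the same order, and then invoke Proposition~\ref{prop fully centralized and centric}(b). The paper's proof is considerably terser and simply asserts that the second statement ``follows immediately'' from that proposition, whereas you explicitly identify the idempotent $e_\Delta$ and check normality of the defect group in $Y$; this extra care is appropriate.

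One remark on efficiency: your upper-central-series chain works, but it is more than is needed. Since $p_1(\Delta(D,\phi,E))=D$ and $p_2(\Delta(D,\phi,E))=E$, every subgroup $X$ with $\Delta(D,\phi,E)\le X\le D\times E$ satisfies $C_{G\times H}(X)=C_G(D)\times C_H(E)$. Hence along \emph{any} chain of Brauer pairs interpolating between $(\Delta(D,\phi,E),e_\Delta)$ and $(D\times E,e\otimes f^*)$ the centralizer is constant, the Brauer maps $\br_{X}$ fix $e\otimes f^*$, and the $\unlhd$-condition forces each idempotent in the chain to equal $e\otimes f^*$. This avoids having to exhibit a specific subnormal series. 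Also, for the normality claim it may be worth recording explicitly why $Y$ normalizes $D\times E$ (and hence its characteristic subgroup $Z(D)\times Z(E)$): any $(g,h)\in N_{G\times H}(\Delta(D,\phi,E))$ satisfies $\lexp{g}{D}=D$ and $\lexp{h}{E}=E$, so $Y\le N_G(D)\times N_H(E)$.
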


\begin{proof}
Any $G\times H$-conjugate subgroup of $\Delta(D,\phi,E)$ which is contained in $D\times E$ must be again a twisted diagonal subgroup and therefore of the form $\Delta(D,\psi,E)$ for some isomorphism $\psi\colon E\myiso D$. By Lemma~\ref{prop Nphi}(b), one has $N_{D\times E}(\Delta(D,\psi,E))=(Z(D)\times Z(E))\cdot\Delta(D,\psi,E)$, whose order is independent of $\psi$. This proves the first statement. The second statement follows immediately from Proposition~\ref{prop fully centralized and centric}(b).
\end{proof}

\begin{proposition}\label{prop modules for centric case}
Let $(\Delta(P,\psi,Q), e\otimes f^*)\in\calBP_\calO(\gamma)$ and suppose that $Z(P)$ is a defect group of $\calO[C_G(P)]e$. Set
\begin{gather*}
  I:=N_G(P,e),\quad J:=N_H(Q,f),\quad Y:=N_{I\times J}(\Delta(P,\psi,Q)),\quad C:=C_G(P)\times C_H(Q),\\
  Z:=Z(P)\times Z(Q),\quad 
  \text{and}\quad Z':=\Delta(Z(P),\psi, Z(Q))\,.
\end{gather*}

\smallskip
{\rm (a)} One has
\begin{equation*}
  \res^Y_C(e\gamma(P,\psi,Q)f) = \varepsilon[N'] \quad\text{in $T(\calO C(e\otimes f^*))$}\,,
\end{equation*}
where $\varepsilon\in\{\pm 1\}$ and $N'$ is the unique indecomposable $p$-permutation $\calO C(e\otimes f^*)$-module with vertex $Z'$ (cf.~Proposition~\ref{prop central defect group}(b)). Moreover, the $p$-permutation $\calO C(e\otimes f^*)$-module $V':=\Inf_{C/Z}^C\Def^C_{C/Z}(N')$ satisfies:

\smallskip
\quad {\rm (i)} $F\otimes_{\calO}V'$ is the unique simple $FC(e\otimes f^*)$-module;

\smallskip
\quad {\rm (ii)} $\KK \otimes_{\calO}V'$ is a simple $\KK C(e\otimes f^*)$-module.

\smallskip
{\rm (b)} Set $\zeta:=\infl^Y_{Y/Z}\defl^Y_{Y/Z}(e\gamma(P,\psi,Q)f)\in T(\calO Y(e\otimes f^*))$. There exists a simple $\KK Y(e\otimes f^*)$-module $W$ and a simple $FY(e\otimes f^*)$-module $\Wbar$ such that $d_Y([W])=[\Wbar]$ in $R(FY(e\otimes f^*))$ and such that the following hold with $\varepsilon$ from Part~(a):

\smallskip
\quad {\rm (i)} $\kappa_Y(\zeta)=\varepsilon[W]$ in $R(\KK Y(e\otimes f^*))$ and $\Res^Y_C(W)\cong \KK\otimes_{\calO} V'$;

\smallskip
\quad {\rm (ii)} $\eta_Y(\zetabar)=\varepsilon[\Wbar]$ in $R(FY(e\otimes f^*))$ and $\Res^Y_C(\Wbar)\cong F\otimes_{\calO} V'$.

\smallskip
{\rm (c)} If $(\Delta(P,\psi,Q), e\otimes f^*)$ is a maximal $\gamma$-Brauer pair then there exists an indecomposable $p$-permutation $\calO Y(e\otimes f^*)$-module $N$ such that
\begin{equation*}
  e\gamma(P,\psi,Q)f = \varepsilon [N] \text{ in $T(\calO Y(e\otimes f^*))$}\quad\text{and}\quad \Res^Y_C(N)\cong N'\,,
\end{equation*}
where $\varepsilon$ is from Part~(a).
\end{proposition}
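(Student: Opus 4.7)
For Part~(a), the plan is to reduce to the centralizer level $C$ and exploit the central-defect structure that appears there. By Theorem~\ref{thm local ppeqs} and Theorem~\ref{thm left equals right}, $\gamma':=\res^Y_C(e\gamma(P,\psi,Q)f)$ is a two-sided $p$-permutation equivalence between $\calO C_G(P)e$ and $\calO C_H(Q)f$. The hypothesis that $Z(P)$ is a defect group of $\calO C_G(P)e$, combined with Theorem~\ref{thm isomorphic fusion systems} applied to $\gamma'$, forces $Z(Q)$ to be a defect group of $\calO C_H(Q)f$ and $\psi|_{Z(Q)}$ to be the induced (necessarily trivial) fusion-system isomorphism. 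Consequently $\calO C(e\otimes f^*)$ has central defect group $Z=Z(P)\times Z(Q)$, and Proposition~\ref{prop central defect group}(b) furnishes, for each subgroup $W\le Z$, a unique indecomposable $p$-permutation $\calO C(e\otimes f^*)$-module with vertex $W$; in particular, $N'$ is this unique module when $W=Z'$. Properties (a)(i) and (a)(ii) of $V'$ drop out of Proposition~\ref{prop central defect group}(a) together with the fact that the quotient block $\calO[C/Z]\overline{(e\otimes f^*)}$ has defect $0$, so its unique simple lifts uniquely to $\calO$ and remains simple under scalar extension to $\KK$.

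The heart of (a) is the equality $\gamma'=\varepsilon[N']$. First identify $(Z',e\otimes f^*)$ as a maximal $\gamma'$-Brauer pair: Theorem~\ref{thm gamma is uniform}(c) applied to $\gamma'$ forces its first component to have the shape $\Delta(Z(P),\sigma,Z(Q))$ for some isomorphism $\sigma$, and tracking the $\gamma$-Brauer pair $(\Delta(P,\psi,Q),e\otimes f^*)$ down to its centralizer-level counterpart, together with the uniqueness in Corollary~\ref{cor connecting Brauer pairs}, pins down $\sigma=\psi|_{Z(Q)}$. Write $\gamma'=\sum_W a_W[L_W]$ indexed by twisted diagonal subgroups $W\le Z'$, where $L_W$ is the unique indecomposable $p$-permutation $\calO C(e\otimes f^*)$-module with vertex $W$. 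The Brauer construction at $Z'$ combined with Lemma~\ref{lem Brauer con and vertex}(c), noting that among twisted diagonal $W\le Z$ only $W=Z'$ satisfies $Z'\le W$, yields $\gamma'(Z')=a_{Z'}[\overline{N'}]$, so $a_{Z'}\neq 0$. The main obstacle is showing $a_W=0$ for every $W<Z'$; I intend to handle this by downward induction on $|W|$, at each stage comparing $\gamma'(W,e\otimes f^*)$ as computed from the module-decomposition side via Lemma~\ref{lem Brauer con and vertex}(c) against the $p$-permutation equivalence between centralizer blocks supplied by Theorem~\ref{thm local ppeqs} applied to $\gamma'$ at $(W,e\otimes f^*)$. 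Central-defect rigidity at each level, carried out via Proposition~\ref{prop diagonal extension}(c), forces the newly available coefficient to vanish.

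For (b), taking $F\otimes_\calO(-)$ of (a) and applying $\Inf_{C/Z}^C\Def^C_{C/Z}$---which commutes with $\Res^Y_C$ since $Z$ is normal in both $C$ and $Y$---gives $\Res^Y_C(\zetabar)=\varepsilon[F\otimes V']$ by Proposition~\ref{prop head and def}(a). Because $F\otimes V'$ is $Y$-stable (its $Y$-stabilizer equals all of $Y$ by Proposition~\ref{prop isomorphic inertia quotients}) and $\zetabar$ is already a signed class in $T(FY(e\otimes f^*))$, $\zetabar$ must equal $\varepsilon[\Wbar]$ for a simple $FY(e\otimes f^*)$-module $\Wbar$ restricting to $F\otimes V'$. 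Since $\calO[Y/Z]\overline{(e\otimes f^*)}$ is a defect-$0$ block, $\Wbar$ lifts uniquely to a simple $\KK Y(e\otimes f^*)$-module $W$ with $d_Y([W])=[\Wbar]$, and $\kappa_Y(\zeta)=\varepsilon[W]$ follows from the commutative diagram in~\ref{noth T(G)}(c).

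Finally, for (c), assuming the $\gamma$-Brauer pair $(\Delta(P,\psi,Q),e\otimes f^*)$ is maximal, Theorem~\ref{thm gamma is uniform}(b) lets me conjugate it to $(\Delta(D,\phi,E),e_D\otimes f_E^*)$. Theorem~\ref{thm B}(b) writes $\gamma=\varepsilon[M]+\sum b_i[M_i]$ with each $M_i$ of vertex strictly inside $\Delta(P,\psi,Q)$, so $M_i(\Delta(P,\psi,Q))=0$ by Lemma~\ref{lem Brauer con and vertex}(a); applying the Brauer construction at $\Delta(P,\psi,Q)$ and multiplying by $e\otimes f^*$ thus collapses $\gamma$ to $\varepsilon[N]$ with $N:=eM(\Delta(P,\psi,Q))f$, which is indecomposable by Proposition~\ref{prop isomorphic test}. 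The identification $\Res^Y_C(N)\cong N'$ then follows from (a) and Krull--Schmidt.
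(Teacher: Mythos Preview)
Your argument for Part~(c) is circular. You invoke Theorem~\ref{thm B}(b) to decompose $\gamma=\varepsilon[M]+\sum b_i[M_i]$ with the $M_i$ of strictly smaller vertex, but Theorem~\ref{thm B}(b) is established in the paper as Theorem~\ref{thm maximal module}, whose uniqueness step explicitly relies on Proposition~\ref{prop modules for centric case}(c). The paper's own route avoids this: it first uses Lemma~\ref{lem local defect group} to identify $Z\cdot\Delta(P,\psi,Q)$ as the defect group of $\calO Y(e\otimes f^*)$, then observes that $\defl^Y_{Y/Z\cdot\Delta(P,\psi,Q)}(\omegabar)$ lands in a sum of defect-zero blocks, so $\zetabar$ is a $\ZZ$-linear combination of classes of \emph{simple} $p$-permutation $FY$-modules; combined with $\eta_Y(\zetabar)=\varepsilon[\Wbar]$ from~(b) this pins down $\zetabar=\varepsilon[\Wbar]$ in $T(FY)$, and Proposition~\ref{prop head and def}(b) then lifts this to $\omegabar=\varepsilon[\Nbar]$.

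Part~(b) also has real gaps. The assertion that ``$\zetabar$ is already a signed class in $T(FY(e\otimes f^*))$'' is precisely what needs proof; knowing only $\Res^Y_C(\zetabar)=\varepsilon[F\otimes_\calO V']$ does not force $\zetabar$ (or even $\eta_Y(\zetabar)$) to be $\pm$ a single irreducible, since several simple $FY$-modules can lie over the $Y$-stable module $F\otimes_\calO V'$ and a $\ZZ$-linear combination of them can restrict to $\varepsilon[F\otimes_\calO V']$. Moreover your claim that $\calO[Y/Z]\overline{(e\otimes f^*)}$ has defect zero is false in general: $\Delta(P,\psi,Q)\trianglelefteq Y$ lies in every defect group but is not contained in $Z$ unless $P$ is abelian. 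The paper instead rewrites $\zeta$ via Proposition~\ref{prop ext hom duals}(d) as $[\calO[C_G(P)/Z(P)]\ebar]\dott{Y*Y^\circ}{Y}{G}\omega$, checks that $\KK[C_G(P)/Z(P)]\ebar$ is irreducible over $\KK[N_{I\times I}(\Delta(P))]$, and then applies Lemma~\ref{lem unique Lambdahat lr 2} to conclude $\kappa_Y(\zeta)\in\pm\Irr(\KK Y(e\otimes f^*))$ directly.

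For Part~(a), your induction is unnecessary and its ``central-defect rigidity'' step is not spelled out enough to be convincing. The paper's argument is a one-line observation you missed: every indecomposable $\calO Y$-module appearing in $\omega=e\gamma(P,\psi,Q)f$ is a Brauer construction at $\Delta(P,\psi,Q)$, so $\Delta(P,\psi,Q)$ (and hence $Z'=Z\cap\Delta(P,\psi,Q)$) acts trivially on it and on every summand of its restriction to $C$. Thus any vertex $X$ of a summand of $\omega'$ already contains $Z'$; since $X$ is twisted diagonal inside the central defect group $Z$, this forces $X=Z'$ outright, with no induction needed. The sign $\varepsilon\in\{\pm1\}$ then drops out of the isometry in Lemma~\ref{lem unique Lambdatilde lr}.
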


We visualize the situation via the diagrams of elements
\begin{equation}\label{eqn zeta diagrams}
\text{
\parbox{4.5cm}{
\begin{diagram}[50]
  \zeta & \mapsto & \varepsilon[W] &&
  {\begin{sideways}{\small $\lmapsto$} \end{sideways}} & & {\begin{sideways}{\small $\lmapsto$} \end{sideways}} &&
  \zetabar & \mapsto & \varepsilon[\Wbar] &&
\end{diagram}}}
\quad\text{and} \quad
\text{
\parbox{5cm}{
\begin{diagram}[50]
  \movevertex(-12,0){\varepsilon[V']} & \mapsto & \movevertex(20,0){\varepsilon[\KK\otimes_\calO V']} &&
  \movearrow(-12,-2){\begin{sideways}{\small $\lmapsto$} \end{sideways}} & & 
  \movearrow(20,-2){\begin{sideways}{\small $\lmapsto$} \end{sideways}} &&
  \movevertex(-17,0){\varepsilon[F\otimes_\calO V']} & \mapsto & \movevertex(20,0){\varepsilon[F\otimes_\calO V']} &&
\end{diagram}}}
\end{equation}
in the diagrams of groups
\begin{equation*}
\text{
\parbox{6cm}{
\begin{diagram}[50]
  \llap{$T(\calO Y(e\otimes f^*))$} & \Ear{\kappa_Y} & \rlap{$R(\KK Y(e\otimes f^*))$} &&
  \movearrow(-40,0){\Sar{\wr}} & & \movearrow(40,0){\saR{d_Y}} &&
  \llap{$T(FY(e\otimes f^*))$} & \Ear{\eta_Y} & \rlap{$R(FY(e\otimes f^*))$} &&
\end{diagram}}}
\qquad\text{and}\qquad 
\text{
\parbox{6cm}{
\begin{diagram}[50]
  \llap{$T(\calO C(e\otimes f^*))$} & \Ear{\kappa_C} & \rlap{$R(\KK C(e\otimes f^*))$} &&
  \movearrow(-40,0){\Sar{\wr}} & & \movearrow(40,0){\saR{d_C}} &&
  \llap{$T(FC(e\otimes f^*))$} & \Ear{\eta_C} & \rlap{$R(FC(e\otimes f^*))$\,,} &&
\end{diagram}}}
\end{equation*}
where the right diagram of elements is the image of the left one under the restriction maps $\res^Y_C$, as shown in the proof. Before we prove the proposition, we mention the following negative result:

\begin{remark}\label{rem non-maximal case}
Assume the situation in Proposition~\ref{prop modules for centric case}. In general, the element $e\gamma(P,\psi,Q)f\in T(\calO Y(e\otimes f^*))$ is not plus or minus the class of an indecomposable $p$-permutation $\calO Y$-module. In fact, let $G=H$ be the dihedral group of order $8$, $P$ the cyclic subgroup of order $4$ and $\gamma:=[\calO[G\times G/\Delta(G)]]-[\calO[G\times G/\Delta(P)]]\in T^\Delta(\calO G,\calO G)$. Then $\gamma$ is a $p$-permutation equivalence, $P=Z(P)=C_G(P)$ is a defect group of $\calO[C_G(P)]$, but $\gamma(\Delta(P))= [\calO[Y/\Delta(G)]]-[\calO[Y/\Delta(P)]]$ in $T(\calO Y)$.
\end{remark}

\begin{proof}{\sl of Proposition~\ref{prop modules for centric case}}.\quad We
set $\omega:= e\gamma(P,\psi,Q)f\in T(\calO Y(e\otimes f^*))$ and $\omega':=\res^Y_C(\omega)\in T(\calO C(e\otimes f^*))$. 

\smallskip
(a) Since $Z(P)$ is a defect group of $\calO[C_G(P)]e$, $Z(Q)$ is a defect group of $\calO[C_H(Q)]f$. In fact, we can choose a maximal $\gamma$-Brauer pair $(\Delta(D,\phi,E), e'\otimes {f'}^*)$ such that $(\Delta(P,\psi,Q), e\otimes f^*)\le (\Delta(D,\phi,E),e'\otimes {f'}^*)$. Then, $\psi$ is the restriction of $\phi$ and $\phi\colon E\myiso D$ is an isomorphism between the fusion systems $\calA$ and $\calB$ associated with $(D,e')$ and $(E,f')$, cf.~Theorem~\ref{thm isomorphic fusion systems}, and since $P$ is centric in the fusion system $\calA$, $Q$ must be centric in the fusion system $\calB$. Applying now Proposition~\ref{prop fully centralized and centric} the claim follows. Thus, $Z$ is a defect group of $\calO C(e\otimes f^*)$.

Let $U$ be an indecomposable $\calO C$-module appearing in $\omega'$. We will show that $Z'$ is a vertex of $U$. In fact, let $X\le C$ denote a vertex of $U$. Since $\Delta(P,\psi,Q)$ acts trivially on every indecomposable $\calO Y$-module appearing in $\omega$, the group $Z'$ acts trivially on $U$. Thus, $Z'\le X$. Moreover, $X$ must be a twisted diagonal subgroup of $Z$, since $Z$ is the unique defect group of $\calO C(e\otimes f^*)$. This implies $X=Z'$.

Since $\calO C(e\otimes f^*)$ has the central defect group $Z$, Proposition~\ref{prop central defect group}(b) implies that there exists a unique $p$-permutation $\calO C(e\otimes f^*)$-module $N'$ with vertex $Z'$. Thus, by the previous paragraph, there exists $\varepsilon\in \ZZ$ with $\omega'=\varepsilon[N']\in T(\calO C)$. Since Lemma~\ref{lem unique Lambdatilde lr} implies that $\kappa_C(\omega')=\varepsilon[\KK\otimes_\calO N']$ is an isometry between $\KK[C_G(P)]e$ and $\KK[C_H(E)]f$, we obtain $\varepsilon\in\{\pm1\}$.
Moreover, by Proposition~\ref{prop central defect group}(b), the $p$-permutation $\calO C(e\otimes f^*)$-module $V':=\Inf_{C/Z}^C\Def^C_{C/Z}(N')$ has the property that $F\otimes_\calO V'$ is the unique simple $FC(e\otimes f^*)$-module. Note also that $F\otimes_\calO V'$ can be viewed as the unique simple $F[C/Z]\pi(e\otimes f^*)$-module, where $\pi\colon\calO C\to \calO[C/Z]$ denotes the canonical map $\calO C\to \calO[C/Z]$. Since $\calO[C/Z]\pi(e\otimes f^*)$ is a block of defect $0$ (see Proposition~\ref{prop central defect group}(a)), the $\KK C(e\otimes f^*)$-module $\KK\otimes_{\calO}V'$ is simple, namely the inflation of the unique simple $\KK[C/Z]\pi(e\otimes f^*)$-module. This establishes all the statements in Part~(a).

%\smallskip
%Next we prove that $\epsilon\in\{\pm1\}$. Since $Z'$ acts trivially on $N'$ and $Z=Z'\cdot(Z(P)\times\{1\})$, Proposition~\ref{prop ext hom duals}(d) implies
%\begin{equation*}
%  V' =\Inf_{C/Z'}^C\Def^C_{C/Z'}(N') \cong \Inf_{C/(Z(P)\times\{1\})}^C\Def^C_{C/(Z(P)\times\{1\})}(N') \cong
%  \calO[C_G(P)/Z(P)]\ebar \tens{C*C^\circ}{C}{\calO[C_G(P)]} N'
%\end{equation*}
%as $\calO C(e\otimes f^*)$-modules, where, in the last term, $\calO[C_G(P)/Z(P)]\ebar$ is viewed as $(\calO[C_G(P)]e,\calO[C_G(P)]e)$-bimodule, $N'$ is viewed as $(\calO[C_G(P)]e,\calO[C_H(Q)]f)$-bimodule, and $\ebar$ is the image of $e$ under the canonical map $\calO[C_G(P)]\to \calO[C_G(P)/Z(P)]$. Since $\calO[C_G(P)]e$ has central defect group $Z(P)$, $\calO[C_G(P)/Z(P)]\ebar$ is a block of defect $0$. Thus, $\KK[C_G(P)/Z(P)]\ebar$ is an irreducible $(\KK[C_G(P)/Z(P)]\ebar, \KK[C_G(P)/Z(P)]\ebar)$-bimodule and via inflation an irreducible $\KK[C_G(P)\times C_G(P)](\ebar\otimes\ebar^*)$-module with character $\chi\times \chi^\circ$, where $\chi\in\Irr(\KK[C_G(P)]e)$ denotes the inflation of the unique element of $\Irr(\KK[C_G(P)/Z(P)]\ebar)$. Now, Lemma~\ref{lem unique Lambdatilde lr} implies that $\kappa_C(\omega')=\epsilon[\KK\otimes_\calO N']$ is a perfect isometry between $\KK[C_G(P)]e$ and $\KK[C_H(E)]f$. The formula in Remark~\ref{rem isometry} implies that $\epsilon\in\{\pm1\}$. This establishes all the statements in Part~(a)

\smallskip
(b) Since $Z'\le \Delta(P,\psi,Q)$, $Z'$ acts trivially on any indecomposable $\calO Y$-module appearing in $\omega$. Using $Z=Z'\cdot(Z(P)\times\{1\})$ and Proposition~\ref{prop ext hom duals}(d), we obtain
\begin{equation}\label{eqn inf def}
  \zeta=\infl_{Y/Z}^Y\defl^Y_{Y/Z}(\omega) = \infl_{Y/(Z(P)\times\{1\})}^Y\defl^Y_{Y/(Z(P)\times\{1\})}(\omega) =
  [\calO[C_G(P)/Z(P)]\ebar]\mathop{\cdot}\limits^{Y*Y^\circ, Y}_{G}\omega
\end{equation}
in $T(\calO Y(e\otimes f^*))$, where $\ebar$ denotes the image of $e$ under the canonical map $\calO[C_G(P)]\to\calO[C_G(P)/Z(P)]$. Note that $Y*Y^\circ=\Delta(I)\cdot (C_G(P)\times C_G(P))$ by Lemma~\ref{lem comp form}(a) and Proposition~\ref{prop isomorphic inertia quotients}. We claim that the character of the $\calO[\Delta(I)\cdot (C_G(P)\times C_G(P))]$-module $\calO[C_G(P)/Z(P)]\ebar$ in (\ref{eqn inf def}) is irreducible. In fact, since $\calO[C_G(P)]e$ has central defect group $Z(P)$, $\calO[C_G(P)/Z(P)]\ebar$ is a block of defect $0$. Thus, $\KK[C_G(P)/Z(P)]\ebar$ is an irreducible $(\KK[C_G(P)/Z(P)]\ebar, \KK[C_G(P)/Z(P)]\ebar)$-bimodule and via inflation an irreducible $\KK[C_G(P)\times C_G(P)](e\otimes e^*)$-module, which is a fortiori irreducible as $\KK[\Delta(I)\cdot(C_G(P)\times C_G(P))]$-module. Now, (\ref{eqn inf def}) 
and Lemma~\ref{lem unique Lambdahat lr 2} imply that $\kappa_Y(\zeta)=\varepsilon'\cdot[W]$, for some $\varepsilon'\in\{\pm 1\}$ and some simple $\KK Y(e\otimes f^*))$-module $W$. Since clearly
\begin{equation*}
  \res^Y_C\infl_{Y/Z}^Y\defl^Y_{Y/Z}=\infl_{C/Z}^C\defl^C_{C/Z}\res^Y_C\colon T(\calO Y)\to T(\calO C)\,,
\end{equation*}
we have $\res^Y_C(\zeta)=\infl^C_{C/Z}\defl^C_{C/Z}\res^Y_C(\omega)= \infl_{C/Z}^C\defl^C_{C/Z}(\omega') = \varepsilon[V']$ and therefore $\res^Y_C(\varepsilon'[W])=\res^Y_C(\kappa_Y(\zeta)) = \kappa_C(\res^Y_C(\zeta))=\varepsilon\kappa_C[V'] = \varepsilon[\KK\otimes_{\calO}V']$. This implies that $\varepsilon'=\varepsilon$ and establishes Statement~(i) of Part~(b). Moreover, we have $\res^Y_C(d_Y([W]))= d_C(\res^Y_C([W])) = d_C([\KK\otimes_\calO V']) = [F\otimes_{\calO}V']\in R(FC)$. Since $F\otimes_{\calO}V'$ is an irreducible $FC(e\otimes f^*)$-module, the previous equation implies that $d_Y([W])=[\Wbar]$, for some irreducible $FY(e\otimes f^*)$-module $\Wbar$. This completes the proof of Part~(b).

\smallskip
(c) By Lemma~\ref{lem local defect group}, the block algebra $\calO Y(e\otimes f^*)$ has defect group $Z\cdot\Delta(P,\psi,Q)$, a normal subgroup of $Y$. Thus, by \cite[Theorems~V.8.7(ii) and V.8.10]{NagaoTsushima1989}, $\calO[Y/Z\cdot\Delta(P,\psi,Q)]\pi(e\otimes f^*)$ is a sum of blocks of defect $0$, where $\pi\colon\calO Y\to\calO[Y/Z\cdot\Delta(P,\psi,Q)]$ denotes the canonical homomorphism. Since $\Delta(P,\psi,Q)$ acts trivially on every indecomposable $\calO Y$-module appearing in $\omega$, we have
\begin{equation*}
  \zetabar=\infl_{Y/Z}^Y\defl^Y_{Y/Z}(\omegabar) = 
  \infl_{Y/Z\cdot\Delta(P,\psi,Q)}^Y\defl^Y_{Y/Z\cdot\Delta(P,\psi,Q)}(\omegabar)\,,
\end{equation*}
where $\defl^Y_{Y/Z\cdot\Delta(P,\psi,Q)}(\omegabar)\in T(F[Y/Z\cdot\Delta(P,\psi,Q)])$ is a $\ZZ$-linear combination of classes of simple $p$-permutation modules.This property is preserved under inflation, so that also $\zetabar$ is a $\ZZ$-linear combination of classes of simple $p$-permutation $FY$-modules. But since $\eta_Y(\zetabar)=\varepsilon[\Wbar]$, we obtain that $\Wbar$ is a simple $p$-permutation $FY(e\otimes f^*)$-module and that $\zetabar=\varepsilon[\Wbar]$ in $T(FY(e\otimes f^*))$. 

Since every indecomposable $FY$-module appearing in $\omegabar\in T(FY)$ has vertex $\Delta(P,\psi,Q)$ and $\infl_{Y/Z}^Y\defl^Y_{Y/Z}(\omegabar) = \zetabar = \varepsilon [\Wbar]\in T(FY)$, Proposition~\ref{prop head and def}(b) implies that $\omegabar=\varepsilon[\Nbar]\in T(FY(e\otimes f^*))$ for an indecomposable $p$-permutation $FY(e\otimes f^*)$-module $\Nbar$ with vertex $\Delta(P,\psi,Q)$. Let $N$ be the indecomposable $p$-permutation $\calO Y(e\otimes f^*)$-module corresponding to $\Nbar$. Then $\omega=\varepsilon[N]$ and $N'\cong \Res^Y_C(N)$, since $\omega'=\res^Y_C(\omega)$. This completes the proof of the proposition.
\end{proof}

Next we show that $p$-permutation equivalences \lq preserve\rq\ K\"ulshammer-Puig classes.

\begin{theorem}\label{thm K-P classes}
Let $A$ be a block of $\calO G$, $B$ a block of $\calO H$, and $\gamma\in T^\Delta_o(A,B)$. Further, let $(\Delta(P,\psi,Q),e\otimes f^*)\in\calBP_\calO(\gamma)$ and suppose that $Z(P)$ is a defect group of the block $\calO[C_G(P)]e$. Then $Z(Q)$ is a defect group of the block $\calO[C_H(Q)]f$, by Theorem~\ref{thm isomorphic fusion systems} and the preservation of centric subgroups. Set $I:=N_G(P,e)$, $\Ibar:=I/PC_G(P)$, $J:=N_H(Q,f)$, and $\Jbar:=J/QC_H(Q)$. Furthermore, let $\kappa\in H^2(\Ibar,F^\times)$ and $\lambda\in H^2(\Jbar, F^\times)$ denote the K\"ulshammer-Puig classes of $(P,\ebar)$ and $(Q,\fbar)$, respectively. Then
\begin{equation*}
  \lambda= \etabar^*(\kappa)\in H^2(\Jbar,F^\times)\,,
\end{equation*}
where $\etabar\colon \Jbar\myiso \Ibar$ is induced by the isomorphisms in Proposition~\ref{prop isomorphic inertia quotients} and $\etabar^*:=H^2(\etabar,F^\times)$.
\end{theorem}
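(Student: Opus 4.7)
My plan is to apply Lemma~\ref{lem Schur for Y} to a suitable intermediate subgroup $Y'\le I\times J$ and an explicitly constructed $FY'(e\otimes f^*)$-module $U$ extending $V\otimes W^\circ$, where $V$ and $W$ are the unique simple modules of the blocks $F[PC_G(P)]\ebar$ and $F[QC_H(Q)]\fbar$, respectively (which are $I$- resp.~$J$-stable since they are unique). The cohomology classes $\kappa$ and $\lambda$ are by definition their Schur classes for the normal inclusions $PC_G(P)\trianglelefteq I$ and $QC_H(Q)\trianglelefteq J$.

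\smallskip
First I would set $Y:=N_{I\times J}(\Delta(P,\psi,Q))$ and define
\[
  Y':= Y\cdot (P\times \{1\}) \ \le\ I\times J\,.
\]
A short verification (using $P\trianglelefteq I$ and the description of $Y$ in Proposition~\ref{prop Nphi}) shows that $Y'$ is a subgroup, and that in fact $Y'=Y\cdot(\{1\}\times Q) = Y\cdot (P\times Q)$. Using Proposition~\ref{prop isomorphic inertia quotients} one reads off $p_1(Y')=I$, $p_2(Y')=J$, $k_1(Y')=PC_G(P)$, $k_2(Y')=QC_H(Q)$, and that the induced isomorphism $\eta_{Y'}\colon J/QC_H(Q)\myiso I/PC_G(P)$ coincides with $\bar\eta$.

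\smallskip
Next I would apply Proposition~\ref{prop modules for centric case}(b) to obtain the simple $FY(e\otimes f^*)$-module $\Wbar$ satisfying $\Res^Y_{C_G(P)\times C_H(Q)}(\Wbar)\cong F\otimes_\calO V' \cong V_0\otimes W_0^\circ$, where $V_0$ and $W_0$ are the unique simple modules of the defect-zero quotient blocks $F[C_G(P)/Z(P)]\bar\bar e$ and $F[C_H(Q)/Z(Q)]\bar\bar f$ (viewed via inflation). The key additional observation, which works even without assuming $(\Delta(P,\psi,Q),e\otimes f^*)$ is maximal, is that $\Delta(P,\psi,Q)$ acts trivially on $\Wbar$. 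This follows because $\omega=e\gamma(P,\psi,Q)f\in T(FY(e\otimes f^*))$ is by construction a Brauer construction, hence every indecomposable module appearing in $\omega$ factors through $Y/\Delta(P,\psi,Q)$; the operators $\defl^Y_{Y/Z}$ and $\infl^Y_{Y/Z}$ preserve this triviality, so the simple module $\Wbar$ determined by $\eta_Y(\bar\zeta)=\varepsilon[\Wbar]$ also has $\Delta(P,\psi,Q)$ in its kernel.

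\smallskip
Having $\Wbar$ with $Z(P)\times\{1\}$, $\{1\}\times Z(Q)$, and $\Delta(P,\psi,Q)$ all acting trivially, I would extend $\Wbar$ to a module $U$ for $Y'$ by declaring $P\times\{1\}$ to act trivially. Well-definedness follows because any two decompositions $(i_1p_1,j)=(i_2p_2,j)$ in $Y'$ with $(i_k,j)\in Y$ and $p_k\in P$ differ by an element $(z,1)$ with $z\in P\cap C_G(P)=Z(P)$, and such $(z,1)\in Z(P)\times\{1\}\le Y$ acts trivially on $\Wbar$. Multiplicativity uses $P\trianglelefteq I$. To verify $\Res^{Y'}_{PC_G(P)\times QC_H(Q)}(U)\cong V\otimes W^\circ$, I would note that for $(1,q)\in Y'$ one can write $(1,q)=(\psi(q),q)\cdot(\psi(q)^{-1},1)$: the first factor lies in $\Delta(P,\psi,Q)\le Y$ and therefore acts trivially on $\Wbar$, and the second factor is declared to act trivially. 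Consequently $P\times Q$ acts trivially on $U$, and any $(c_Gp,c_Hq)\in PC_G(P)\times QC_H(Q)$ acts via its image $(c_G,c_H)\in C_G(P)\times C_H(Q)$, matching the $V\otimes W^\circ$-action (where $P$ acts trivially on $V$ and $Q$ on $W$ by construction).

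\smallskip
Finally, Lemma~\ref{lem Schur for Y} applied with $\kk=F$, $(G,H,Y)$ replaced by $(I,J,Y')$, $(M,N)$ replaced by $(PC_G(P),QC_H(Q))$, and the extension $U$ yields $\lambda=\eta_{Y'}^*(\kappa)=\bar\eta^*(\kappa)$, completing the proof. The main obstacle is the verification that $\Delta(P,\psi,Q)$ acts trivially on $\Wbar$ in the general (non-maximal) case, since the statement of Proposition~\ref{prop modules for centric case}(b) only provides $\Wbar$ as a simple module without directly asserting its vertex; this forces the detour through the factorization properties of $\omega$ via the Brauer construction.
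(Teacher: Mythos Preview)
Your proposal is correct and follows essentially the same approach as the paper: both define $Y':=Y\cdot(P\times Q)$, both use the simple $FY(e\otimes f^*)$-module $\Wbar$ from Proposition~\ref{prop modules for centric case}(b), both extend it to $Y'$, and both finish with Lemma~\ref{lem Schur for Y}.

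One remark: the ``main obstacle'' you identify is not an obstacle at all, and your detour through the Brauer construction is unnecessary. Since $Y=N_{I\times J}(\Delta(P,\psi,Q))$, the group $\Delta(P,\psi,Q)$ is a \emph{normal $p$-subgroup} of $Y$; hence it acts trivially on every simple $FY$-module, in particular on $\Wbar$. The paper makes exactly this observation, noting that $Y\cap(P\times Q)=(Z(P)\times Z(Q))\Delta(P,\psi,Q)$ is a normal $p$-subgroup of $Y$, and then obtains the extension to $Y'$ in one line via the canonical isomorphism $Y'/(P\times Q)\cong Y/(Y\cap(P\times Q))$ rather than by the explicit well-definedness check you carry out.
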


\begin{proof}
Let $M$ denote the unique simple $F[PC_G(P)]e$-module and let $N$ denote the unique simple $F[QC_H(Q)]f^*$-module. Then, by \ref{noth KP classes}, $\kappa$ is the Schur class of $M$ with respect to $PC_G(P)\trianglelefteq I$, and $\lambda^{-1}$ is the Schur class of $N$ with respect to $QC_H(Q)\trianglelefteq J$. Set $C:=C_G(P)\times C_H(Q)$, $Y:=N_{I\times J}(\Delta(P,\psi,Q))$ and $\Ytilde:=Y(P\times Q)\le I\times J$. By Proposition~\ref{prop isomorphic inertia quotients} and Lemma~\ref{lem Schur for Y}, it suffices to show that the irreducible $F[C(P\times Q)]$-module $M\otimes_F N$ can be extended to $\Ytilde$.

By Proposition~\ref{prop modules for centric case}(b), there exists an irreducible $FY(e\otimes f^*)$-module $W$ (it is denoted by $\Wbar$ there) such that $\Res^Y_C(W)\cong M\otimes N$. Consider the canonical isomorphism $\Ytilde/(P\times Q)\cong Y/Y\cap(P\times Q)$ and note that $Y\cap(P\times Q)=(Z(P)\times Z(Q))\Delta(P,\psi,Q)$ is a normal $p$-subgroup of $Y$ and therefore acts trivially on $W$. Using the above isomorphism we see that $W$ extends to an $F\Ytilde$-module $\Wtilde$. Then, $\Res^{\Ytilde}_{C(P\times Q)}(\Wtilde)\cong M\otimes N$, since $P\times Q$ acts trivially on $\Wtilde$ and on $M\otimes N$ and since $\Res^{\Ytilde}_C(\Wtilde)=\Res^Y_C(W)\cong M\otimes N$. Thus, the $I\times J$-stable simple $F[C(P\times Q)]$-module $M\otimes N$ can be extended to $\Ytilde$ and the proof is complete.
\end{proof}

%%%%%%%%%%%%%%% SECTION 14 %%%%%%%%%%%%%%%%%%%%%%%%%%%%%%%%%%%%%
%\newpage

\section{The maximal module of a $p$-permutation equivalence}\label{sec maximal module}

Throughout this section we assume that $G$ and $H$ are finite groups and that the $p$-modular system $(\KK,\calO,F)$ is large enough for $G$ and $H$. Further, we assume that $A$ is a block algebra of $\calO G$, that $B$ is a block algebra of $\calO H$, and that $\gamma\in T^\Delta_o(A,B)$ is a $p$-permutation equivalence.

\smallskip
We will take a closer look at the Brauer pairs (and therefore vertices) of the indecomposable modules appearing in $\gamma$. We establish that there is a unique indecomposable module $M$ appearing in $\gamma$ such that $\calBP_{\calO}(M)=\calBP_{\calO}(\gamma)$ and $\calBP_{\calO}(M)\supset \calBP_{\calO}(N)$ for all other indecomposable modules $N$ appearing in $\gamma$. Moreover, we show that the Brauer construction of $M$ with respect to a vertex of $M$ yields a $p$-permutation bimodule that induces a Morita equivalences between the Brauer correspondents of $A$ and $B$.

\smallskip
All results and definitions in this section have obvious analogues if $A$ and $B$ are block algebras over $F$.

\begin{theorem}\label{thm maximal module}
Up to isomorphism, there exists a unique indecomposable $(A,B)$-bimodule $M$ appearing in $\gamma$ whose vertex is of the form $\Delta(D,\phi,E)$ where $D$ is a defect group of $A$. Moreover, the multiplicity of $[M]$ in $\gamma$ equals $1$ or $-1$.
\end{theorem}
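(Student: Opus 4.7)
The plan is to reduce the statement to the behavior of $\gamma$ at a single fixed maximal $\gamma$-Brauer pair, where Proposition~\ref{prop modules for centric case}(c) supplies decisive rigidity.

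Fix a maximal $\gamma$-Brauer pair $(\Delta(D,\phi,E),e_D\otimes f_E^*)$ and set $Y:=N_{G\times H}(\Delta(D,\phi,E),e_D\otimes f_E^*)$. By Theorem~\ref{thm gamma is uniform}(c), $(D,e_D)$ is a maximal $A$-Brauer pair, so $D$ is a defect group of $A$; since $D$ is then $\calF$-centric, Proposition~\ref{prop fully centralized and centric}(a) yields that $Z(D)$ is a defect group of $\calO[C_G(D)]e_D$, verifying the hypothesis of Proposition~\ref{prop modules for centric case}(c). That proposition produces a sign $\varepsilon\in\{\pm 1\}$ and an indecomposable $p$-permutation $\calO Y(e_D\otimes f_E^*)$-module $N$ with
\begin{equation*}
  e_D\gamma(D,\phi,E)f_E=\varepsilon[N]\quad\text{in }T(\calO Y(e_D\otimes f_E^*))\,.
\end{equation*}
This single identity will carry the whole argument.

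For existence, the implication (i)$\Rightarrow$(vii) in Proposition~\ref{prop equiv gamma Brpair cond} produces an indecomposable $M\in\ltriv{A}_B$ appearing in $\gamma$ with $(\Delta(D,\phi,E),e_D\otimes f_E^*)$ an $M$-Brauer pair; the maximality of this pair in $\calBP(\gamma)$, combined with Proposition~\ref{prop Brauer pairs for M}(c), upgrades it to a maximal $M$-Brauer pair, so $\Delta(D,\phi,E)$ is a vertex of $M$. For uniqueness, let $M'$ be any indecomposable appearing in $\gamma$ whose vertex has the form $\Delta(D',\phi',E')$ with $D'$ a defect group of $A$. A maximal $M'$-Brauer pair $(\Delta(D',\phi',E'),d'\otimes {f'}^*)$ is automatically a $\gamma$-Brauer pair by Proposition~\ref{prop equiv gamma Brpair cond}, and since $(D',d')$ is an $A$-Brauer pair whose first component is a defect group of $A$, it is a maximal $A$-Brauer pair; Theorem~\ref{thm gamma is uniform}(c) then makes it a maximal $\gamma$-Brauer pair, and Theorem~\ref{thm gamma is uniform}(b) $G\times H$-conjugates it to $(\Delta(D,\phi,E),e_D\otimes f_E^*)$. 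Because the class of maximal $M'$-Brauer pairs is a single $G\times H$-orbit (Proposition~\ref{prop Brauer pairs for M}(c)), $(\Delta(D,\phi,E),e_D\otimes f_E^*)$ is itself a maximal $M'$-Brauer pair.

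The final step applies Lemma~\ref{lem equal coefficients}(a) to $\gammabar$ at $(\Delta(D,\phi,E),e_D\otimes f_E^*)$: for every indecomposable $M''$ appearing in $\gamma$ with this pair as a maximal $M''$-Brauer pair, the coefficient of $[M'']$ in $\gamma$ equals the multiplicity of the indecomposable $F[Y]$-module $M''(\Delta(D,\phi,E),e_D\otimes f_E^*)$ in $e_D\gammabar(D,\phi,E)f_E=\varepsilon[\Nbar]$, and by Proposition~\ref{prop isomorphic test} distinct such $M''$ give non-isomorphic Brauer constructions. Since $\varepsilon[\Nbar]$ is a single standard basis element with coefficient $\pm 1$, exactly one such $M''$ can contribute, and its multiplicity is $\varepsilon$. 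Applied to the $M$ from the existence step and to any candidate $M'$, this forces $M\cong M'$ and pins the multiplicity at $\varepsilon\in\{\pm 1\}$. The main obstacle is verifying that $(\Delta(D,\phi,E),e_D\otimes f_E^*)$ really is maximal among the Brauer pairs supporting any indecomposable constituent of $\gammabar$, as required by Lemma~\ref{lem equal coefficients}; but every such Brauer pair is a $\gamma$-Brauer pair, so this follows from the uniformity of $\calBP(\gamma)$ in Theorem~\ref{thm gamma is uniform}.
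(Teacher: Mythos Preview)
Your proof is correct and follows essentially the same approach as the paper's: both reduce the question to the single identity $e_D\gamma(D,\phi,E)f_E=\varepsilon[N]$ furnished by Proposition~\ref{prop modules for centric case}(c), then combine Lemma~\ref{lem equal coefficients}(a) with Proposition~\ref{prop isomorphic test} to control the constituents with maximal vertex. The only minor difference is in the existence step, where the paper argues that the vertex of $M$ cannot strictly contain $\Delta(D,\phi,E)$ because it is twisted diagonal inside the defect group $D\times E$ of $A\otimes B^*$, while you instead observe directly that $\calBP(M)\subseteq\calBP(\gamma)$ forces the chosen maximal $\gamma$-Brauer pair to be maximal for $M$ as well; your route is arguably cleaner.
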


\begin{proof}
{\em Existence:} Let $(\Delta(D,\varphi,E),e\otimes f^*)\in\calBP_\calO(\gamma)$ be a maximal $\gamma$-Brauer pair. Then, by Theorem~\ref{thm gamma is uniform}(c), $D$ is a defect group of $A$. Since $e\gamma(\Delta(D,\phi,E))f\neq 0$, there exists an indecomposable $(A,B)$-bimodule $M$ appearing in $\gamma$ with $e M(\Delta(D,\phi,E))f\neq \{0\}$. Thus, $(\Delta(D,\phi,E), e\otimes f^*)\in\calBP_\calO(M)$ and $\Delta(D,\phi,E)$ is contained in a vertex of $M$, by Proposition~\ref{prop Brauer pairs for M}. Since $M$ belongs to the block $A\otimes B^*$ with defect group $D\times E$ and $M$ has twisted diagonal vertex, $\Delta(D,\phi,E)$ must be a vertex of $M$.

\smallskip
{\em Uniqueness:} Let $N$ be an indecomposable $(A,B)$-bimodule appearing in $\gamma$ which has a vertex of the form $\Delta(\Dtilde,\phitilde,\Etilde)$, where $\Dtilde$ is a defect group of $A$. We will show that $N$ is isomorphic to $M$ from above. Choose block idempotents $\etilde$ and $\ftilde$ of $\calO C_G(\Dtilde)$ and $\calO C_H(\Etilde)$, respectively, such that  $(\Delta(\Dtilde,\phitilde,\Etilde),\etilde\otimes \ftilde^*)$ is a maximal $N$-Brauer pair.  By Lemma~\ref{lem M Bp implies gamma Bp} and Proposition~\ref{prop equiv gamma Brpair cond}(i)$\iff$(iv), $(\Delta(\Dtilde,\phitilde,\Etilde),\etilde\otimes \ftilde^*)$ is a $\gamma$-Brauer pair. Moreover, $(\Dtilde,\etilde)$ is a maximal $A$-Brauer pair and Theorem~\ref{thm gamma is uniform}(c) implies that $(\Delta(\Dtilde,\phitilde,\Etilde),\etilde\otimes\ftilde^*)$ is a maximal $\gamma$-Brauer pair. By Theorem~\ref{thm gamma is uniform}(b) and Proposition~\ref{prop Brauer pairs for M}(c), we may choose $(\Delta(\Dtilde,\phitilde,\Etilde),\etilde\otimes\ftilde^*)$ to be equal to $(\Delta(D,\phi,E),e\otimes f^*)$. By Lemma~\ref{lem equal coefficients}(a), the coefficients of $[M]$ (resp. $[N]$) in $\gamma$ equals the coefficient of $[eM(\Delta(D,\phi,E)f]$ (resp.~$eN(\Delta(D,\phi,E))f$) in $e\gammabar(\Delta(D,\phi,E))f\in T(FN_{G\times H}(\Delta(D,\phi,E),e\otimes f^*))$. But, by Proposition~\ref{prop modules for centric case}(c), the element $e\gammabar(D,\phi,E)f\in T(FN_{G\times H}(\Delta(D,\phi,E),e\otimes f^*))$ is of the form $\varepsilon\cdot [L]$ for some $\varepsilon\in\{\pm 1\}$ and some indecomposable $FN_{G\times H}(\Delta(D,\phi,E),e\otimes f^*))$-module $L$. Thus, since both $eM(\Delta(D,\phi,E))f$ and $eN(\Delta(D,\phi,E))f$ appear in $e\gammabar(\Delta(D,\phi,E))f$, we have
\begin{equation*}
  eM(\Delta(D,\phi,E))f \cong L \cong eN(\Delta(D,\phi,E))f\,.
\end{equation*}
Now, Proposition~\ref{prop isomorphic test} implies that $M\cong N$. Finally, the above statement about multiplicities implies that the multiplicity of $M$ in $\gamma$ is equal to $\varepsilon$.
\end{proof}

\begin{definition}\label{def maximal module}
The module $M$ from Theorem~\ref{thm maximal module} is called the {\em maximal module} of $\gamma$ and its multiplicity $\varepsilon\in\{\pm 1\}$ in $\gamma$ is called the {\em sign} of $\gamma$.
\end{definition}

\begin{theorem}\label{thm Brauer pairs of appearing modules}
Let $N$ be an indecomposable $(A,B)$-bimodule appearing in $\gamma$ and let $M$ be the maximal module of $\gamma$. Then $\calBP_\calO(N)\subseteq \calBP_\calO(M)=\calBP_\calO(\gamma)$. If $M\not\cong N$ then $\calBP_\calO(N)\subset \calBP_\calO(M)$. In particular, every vertex of $N$ is contained in a vertex of $M$, with strict containment if $N\not\cong M$.
\end{theorem}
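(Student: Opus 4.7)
The plan is to deduce the three assertions in sequence from the structural facts about $\gamma$-Brauer pairs established earlier (especially Proposition~\ref{prop equiv gamma Brpair cond}, Theorem~\ref{thm gamma is uniform}, and Theorem~\ref{thm maximal module}), and from the local rigidity provided by Proposition~\ref{prop modules for centric case}(c).

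First I would prove the equality $\calBP_\calO(M)=\calBP_\calO(\gamma)$ together with the containment $\calBP_\calO(N)\subseteq\calBP_\calO(\gamma)$. The inclusion $\calBP_\calO(N)\subseteq\calBP_\calO(\gamma)$ (and analogously for $M$) is immediate from the implication (vii)$\Rightarrow$(i) in Proposition~\ref{prop equiv gamma Brpair cond}, which says that a Brauer pair nonvanishing on some indecomposable constituent of $\gamma$ is a $\gamma$-Brauer pair. For the reverse inclusion $\calBP_\calO(\gamma)\subseteq\calBP_\calO(M)$, pick any maximal $\gamma$-Brauer pair $(\Delta(D,\phi,E),e\otimes f^*)$ realising a vertex of $M$ (this exists by the existence part of Theorem~\ref{thm maximal module}); in particular this pair is an $M$-Brauer pair. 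Now, given any $(X,d)\in\calBP_\calO(\gamma)$, Theorem~\ref{thm gamma is uniform}(b) gives $(g,h)\in G\times H$ with $(X,d)\le\lexp{(g,h)}{(\Delta(D,\phi,E),e\otimes f^*)}$, and the latter pair, being $G\times H$-conjugate to an $M$-Brauer pair, is itself an $M$-Brauer pair. Since $\calBP_\calO(M)$ is an ideal in the poset of Brauer pairs by Proposition~\ref{prop Brauer pairs for M}(b), it follows that $(X,d)\in\calBP_\calO(M)$, as required.

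Next I would treat the strict inclusion when $N\not\cong M$. Suppose toward a contradiction that $\calBP_\calO(N)=\calBP_\calO(M)$. Then $N$ and $M$ have the same maximal Brauer pairs, so by Proposition~\ref{prop Brauer pairs for M}(c) some vertex of $N$ is $G\times H$-conjugate to the vertex $\Delta(D,\phi,E)$ of $M$; after conjugating $N$'s maximal Brauer pair I may assume $(\Delta(D,\phi,E),e\otimes f^*)$ is a common maximal Brauer pair of both $M$ and $N$. Since $D$ is trivially $\calA$-centric in the fusion system of $A$, Proposition~\ref{prop fully centralized and centric}(a) ensures that $Z(D)$ is a defect group of $\calO C_G(D)e$, so Proposition~\ref{prop modules for centric case}(c) applies and forces $e\gammabar(D,\phi,E)f=\varepsilon[L]$ in $T(FY(e\otimes f^*))$ for some indecomposable $L$ and $\varepsilon\in\{\pm1\}$. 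Both $eM(\Delta(D,\phi,E))f$ and $eN(\Delta(D,\phi,E))f$ are nonzero indecomposable summands of this element, so both are isomorphic to $L$. Proposition~\ref{prop isomorphic test} then yields $M\cong N$, contradicting our assumption.

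Finally, for the vertex consequence I would observe that any vertex $X$ of $N$ occurs as the first coordinate of some maximal $N$-Brauer pair by Proposition~\ref{prop Brauer pairs for M}(c); this pair lies in $\calBP_\calO(\gamma)$ and is therefore, by Theorem~\ref{thm gamma is uniform}(b), contained in a $G\times H$-conjugate of $(\Delta(D,\phi,E),e\otimes f^*)$, so $X$ is contained in a vertex of $M$. The strictness when $N\not\cong M$ is immediate from the previous paragraph: if $X$ coincided with (a $G\times H$-conjugate of) $\Delta(D,\phi,E)$, then $(X,d)$ would be a common maximal Brauer pair of $M$ and $N$, and the Proposition~\ref{prop modules for centric case}(c) argument above would again force $N\cong M$. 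The only real obstacle is bookkeeping the passage from \emph{equality of Brauer-pair ideals} to \emph{coincidence of a maximal Brauer pair}, which is handled cleanly by the conjugation-transitivity in Theorem~\ref{thm gamma is uniform}(b) combined with the indecomposability statement of Proposition~\ref{prop modules for centric case}(c); once this is in place, the rest is a direct application of Proposition~\ref{prop isomorphic test}.
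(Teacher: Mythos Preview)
Your argument is correct. The proof of $\calBP_\calO(N)\subseteq\calBP_\calO(\gamma)=\calBP_\calO(M)$ matches the paper's almost verbatim (the paper phrases the equality by matching maximal elements rather than via the ideal property, but this is cosmetic). For the strict inclusion when $N\not\cong M$, however, you take a longer route than the paper does: you suppose $\calBP_\calO(N)=\calBP_\calO(M)$, line up a common maximal Brauer pair, and then invoke Proposition~\ref{prop modules for centric case}(c) together with Proposition~\ref{prop isomorphic test} to force $N\cong M$. That is precisely the uniqueness argument already carried out inside Theorem~\ref{thm maximal module}, so you are reproving it. The paper instead simply cites Theorem~\ref{thm maximal module}: since $M$ is the \emph{unique} constituent of $\gamma$ whose vertex has first projection a defect group of $A$, and since every vertex of $N$ sits inside a vertex of $M$ by the inclusion $\calBP_\calO(N)\subseteq\calBP_\calO(M)$, the vertex of $N$ is strictly smaller whenever $N\not\cong M$, whence $\calBP_\calO(N)\subsetneq\calBP_\calO(M)$ immediately. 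Your approach is self-contained but redundant; the paper's is a one-line appeal to work already done.

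One small point worth tightening in your version: when you say that $eM(\Delta(D,\phi,E))f$ and $eN(\Delta(D,\phi,E))f$ ``are nonzero indecomposable summands'' of $e\gammabar(D,\phi,E)f$, you should note (via Lemma~\ref{lem equal coefficients}(a) or the injectivity in Proposition~\ref{prop isomorphic test}) that no cancellation can occur at this maximal Brauer pair, so these modules really do appear with the same nonzero coefficients they have in $\gamma$.
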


\begin{proof}
By Lemma~\ref{lem M Bp implies gamma Bp}, we have $\calBP_{\calO}(N)\subseteq\calBP_\calO(\gamma)$. To see that $\calBP_\calO(M)=\calBP_\calO(\gamma)$, it suffices to show that the maximal $\gamma$-Brauer pairs and the maximal $M$-Brauer pairs coincide, see Proposition~\ref{prop Brauer pairs for M}(b) and Theorem~\ref{thm gamma is uniform}(a). If $(\Delta(D,\phi,E),e\otimes f^*)$ is a maximal $M$-Brauer pair then it is a $\gamma$-Brauer pair by the first statement, and even a maximal $\gamma$-Brauer pair by Theorem~\ref{thm gamma is uniform}(c). Conversely, if $(\Delta(D,\phi,E), e\otimes f^*)$ is a maximal $\gamma$-Brauer pair then by the existence part of the proof of Theorem~\ref{thm maximal module}, it is also an $M$-Brauer pair, and by the already established inclusion also a maximal $M$-Brauer pair. Finally, if $M\not\cong N$, then, by the definition of $M$, the vertices of $N$ have smaller order than the vertices of $M$ so that the inclusion $\calBP_\calO(N)\subset\calBP_\calO(M)$ is proper.
\end{proof}

\begin{proposition}\label{prop Morita on local level}
Let $(\Delta(D,\phi,E), e\otimes f^*)$ be a maximal $\gamma$-Brauer pair, let $M$ be the maximal module of $\gamma$, and let $C_G(D)\le S\le N_G(D,e)$ and $C_H(E)\le T\le N_H(E,f)$ be intermediate groups that correspond under the isomorphism in Proposition~\ref{prop isomorphic inertia quotients}. 
Let $L\in\ltriv{\calO[N_{G\times H}(\Delta(D,\phi,E))]}$ be the Green correspondent of $M$.
Then the $p$-permutation $(FSe, FTf)$-bimodule
\begin{equation*}
  \Ind_{N_{S\times T}(\Delta(D,\phi,E))}^{S\times T}\bigl((e\otimes f^*)L\bigr)
\end{equation*}
induces a Morita equivalence between $\calO Se$ and $\calO Tf$.
\end{proposition}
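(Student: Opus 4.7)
The plan is to identify the bimodule $\Mtilde := \Ind_{Y}^{S\times T}((e\otimes f^*)L)$, where I set $Y := N_{S\times T}(\Delta(D,\phi,E))$ and interpret $(e\otimes f^*)L$ as restricted from its natural group of definition down to $Y$, with $\varepsilon$ times the virtual module underlying the local $p$-permutation equivalence produced from $\gamma$ by Theorem~\ref{thm local ppeqs}; then the Morita equivalence will be read off from the defining orthogonality relations. Throughout I will write $\Ytilde := N_{G\times H}(\Delta(D,\phi,E))$ and $Y_{\max} := N_{I\times J}(\Delta(D,\phi,E))$, noting that $Y_{\max}$ is precisely the $\Ytilde$-stabilizer of $e\otimes f^*$, so that $(e\otimes f^*)L$ is naturally a module for $\calO Y_{\max}(e\otimes f^*)$.

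The first step sets up the local equivalence. Since $(D,e)$ is a maximal $A$-Brauer pair, $D$ is $\calA$-centric, and Proposition~\ref{prop fully centralized and centric}(a) gives that $Z(D)$ is a defect group of $\calO[C_G(D)]e$; hence the hypothesis of Proposition~\ref{prop modules for centric case}(c) is satisfied at the maximal $\gamma$-Brauer pair, producing a sign $\varepsilon\in\{\pm1\}$ together with an indecomposable $p$-permutation $\calO Y_{\max}(e\otimes f^*)$-module $N$ with $e\gamma(D,\phi,E)f = \varepsilon[N]$ in $T(\calO Y_{\max}(e\otimes f^*))$. Theorem~\ref{thm local ppeqs}, applied to the intermediate groups $S$ and $T$, then gives that $\gammatilde := \ind_Y^{S\times T}(e\gamma(D,\phi,E)f)$ lies in $T^\Delta_o(\calO Se, \calO Tf)$ and is a $p$-permutation equivalence between $\calO Se$ and $\calO Tf$.

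The heart of the argument is the identification $N \cong (e\otimes f^*)L$ as $\calO Y_{\max}(e\otimes f^*)$-modules. By Proposition~\ref{prop p-perm equiv}(c) the $F\Ytilde$-module $\Mbar(\Delta(D,\phi,E))$ is the Green correspondent of $\Mbar$, so $F\otimes_\calO L \cong \Mbar(\Delta(D,\phi,E))$; then Lemma~\ref{lem Brauer map compatibility} yields $(e\otimes f^*)(F\otimes_\calO L) \cong e\Mbar(\Delta(D,\phi,E))f$. On the other hand, by Theorem~\ref{thm Brauer pairs of appearing modules} every indecomposable bimodule $M'\not\cong M$ appearing in $\gamma$ has vertex strictly contained in $\Delta(D,\phi,E)$, so $\Delta(D,\phi,E)$ is not subconjugate to any vertex of $M'$ and Lemma~\ref{lem Brauer con and vertex}(a) forces $M'(\Delta(D,\phi,E))=0$. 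Consequently $e\gammabar(D,\phi,E)f = \varepsilon \cdot e\Mbar(\Delta(D,\phi,E))f$, and lifting to $\calO$ via Proposition~\ref{prop p-perm equiv}(b) together with Krull-Schmidt gives $N \cong (e\otimes f^*)L$.

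Substituting, $\gammatilde = \varepsilon[\Mtilde]$ in $T^\Delta(\calO Se, \calO Tf)$, and the orthogonality relations $\gammatilde\dottt{T}\gammatilde^\circ = [\calO Se]$, $\gammatilde^\circ\dottt{S}\gammatilde = [\calO Tf]$ translate into $[\Mtilde \otimes_{\calO Tf}\Mtilde^\circ] = [\calO Se]$ and $[\Mtilde^\circ \otimes_{\calO Se}\Mtilde] = [\calO Tf]$ in the respective $T^\Delta$-groups. All four modules are $p$-permutation bimodules with twisted diagonal vertices (the tensor products by Lemma~\ref{lem ext tp and vertices}, and the algebras $\calO Se$, $\calO Tf$ by the defect-group characterization in~\ref{noth blocks}(c)), so Krull-Schmidt promotes these equalities in $T^\Delta$ to bimodule isomorphisms $\Mtilde \otimes_{\calO Tf}\Mtilde^\circ \cong \calO Se$ and $\Mtilde^\circ \otimes_{\calO Se}\Mtilde \cong \calO Tf$, establishing the Morita equivalence. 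The hard part will be the identification $N \cong (e\otimes f^*)L$: it requires simultaneously combining the Green-correspondent description of the Brauer construction, the idempotent-compatibility of Brauer maps, and the uniqueness afforded by Theorem~\ref{thm maximal module} to pin down that only $M$ contributes to $e\gammabar(D,\phi,E)f$.
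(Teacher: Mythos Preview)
Your proof is correct and follows essentially the same route as the paper's: both arguments combine Theorem~\ref{thm local ppeqs} with the observation that $e\gamma(\Delta(D,\phi,E))f=\pm[(e\otimes f^*)L]$, the latter following from Proposition~\ref{prop p-perm equiv}(c), Theorem~\ref{thm maximal module}, and Theorem~\ref{thm Brauer pairs of appearing modules}. Your detour through Proposition~\ref{prop modules for centric case}(c) to first obtain the abstract form $\varepsilon[N]$ before identifying $N$ with $(e\otimes f^*)L$ is unnecessary---the paper proceeds directly---but harmless.
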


\begin{proof}
This follows immediately from Theorem~\ref{thm local ppeqs} noting that $e\gamma(\Delta(D,\phi,E))f=\pm[(e\otimes f^*)L]\in T(\calO N_{G\times H}(\Delta(D,\phi,E),e\otimes f^*))$ by Proposition~\ref{prop p-perm equiv} (c), Theorem~\ref{thm maximal module}, and Theorem~\ref{thm Brauer pairs of appearing modules}.
\end{proof}

\begin{theorem}\label{thm Morita between Brauer correspondents}
Let $\Delta(D,\phi,E)$ be a vertex of the maximal module $M$ of $\gamma$ and let $A'$ (resp.~$B'$) be the block algebra of $\calO N_G(D)$ (resp.~$\calO N_H(E)$) that is in Brauer correspondence with $A$ (resp.~$B$) via Brauer's First Main Theorem. Furthermore, let $L\in\ltriv{\calO[N_{G\times H}(\Delta(D,\phi,E))]}$ be the Green correspondent of $M$.
Then the $p$-permutation $(A',B')$-bimodule
\begin{equation}\label{eqn Morita bimodule}
  \Ind_{N_{G\times H}(\Delta(D,\phi,E))}^{N_G(D)\times N_H(E)} (L)
\end{equation}
induces a Morita equivalence between $A'$ and $B'$.
\end{theorem}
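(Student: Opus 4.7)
The plan is to extend the local Morita equivalence of Proposition~\ref{prop Morita on local level} to a global one between the Brauer correspondents by composing it with the covering Morita equivalences of \ref{noth covering} on both sides. As a setup, I choose block idempotents $e$ of $\calO C_G(D)$ and $f$ of $\calO C_H(E)$ such that $(\Delta(D,\phi,E),e\otimes f^*)$ is simultaneously a maximal $\gamma$-Brauer pair and a maximal $M$-Brauer pair; such a choice exists by Theorem~\ref{thm gamma is uniform}(c), Proposition~\ref{prop Brauer pairs for M}(c), and Theorem~\ref{thm Brauer pairs of appearing modules}. Set $I:=N_G(D,e)$ and $J:=N_H(E,f)$. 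By Brauer's First Main Theorem, the block idempotents of $A'$ and $B'$ are $e_{A'}=\tr_I^{N_G(D)}(e)$ and $e_{B'}=\tr_J^{N_H(E)}(f)$, respectively.

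First, applying Proposition~\ref{prop Morita on local level} with $S=I$ and $T=J$ (these correspond under Proposition~\ref{prop isomorphic inertia quotients}) produces a Morita equivalence between $\calO Ie$ and $\calO Jf$ induced by the bimodule
\[
L':=\Ind_{N_{I\times J}(\Delta(D,\phi,E))}^{I\times J}\bigl((e\otimes f^*)L\bigr).
\]
Next, two applications of the covering Morita equivalence of \ref{noth covering}, one to the Brauer pair $(D,e)$ of $\calO G$ and one to $(E,f)$ of $\calO H$, furnish Morita equivalences between $A'$ and $\calO Ie$, and between $B'$ and $\calO Jf$, whose inverse functors are $\Ind_I^{N_G(D)}$ and $\Ind_J^{N_H(E)}$. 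Composing these three Morita equivalences yields a Morita equivalence between $A'$ and $B'$ whose inducing bimodule is obtained from $L'$ by inducing up on both sides. By transitivity of induction this equals
\[
\Ind_{I\times J}^{N_G(D)\times N_H(E)}(L')=\Ind_{N_{I\times J}(\Delta(D,\phi,E))}^{N_G(D)\times N_H(E)}\bigl((e\otimes f^*)L\bigr).
\]

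To identify the latter with the prescribed bimodule $\Ind_{N_{G\times H}(\Delta(D,\phi,E))}^{N_G(D)\times N_H(E)}(L)$, I use transitivity of induction through the intermediate subgroup $N_{G\times H}(\Delta(D,\phi,E))$ and reduce to the isomorphism
\[
\Ind_{N_{I\times J}(\Delta(D,\phi,E))}^{N_{G\times H}(\Delta(D,\phi,E))}\bigl((e\otimes f^*)L\bigr)\cong L\,.
\]
This is a third instance of the covering Morita equivalence of \ref{noth covering}, now applied to the $\calO[G\times H]$-Brauer pair $(\Delta(D,\phi,E),e\otimes f^*)$ whose $(G\times H)$-inertia group is $N_{I\times J}(\Delta(D,\phi,E))$ by Remark~\ref{rem emuf}(a). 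Since $(e\otimes f^*)L\neq 0$ (as $(\Delta(D,\phi,E),e\otimes f^*)$ is an $M$-Brauer pair, so that $(e\otimes f^*)L\cong eM(\Delta(D,\phi,E))f$ is non-zero), the indecomposable module $L$ belongs to the block of $\calO N_{G\times H}(\Delta(D,\phi,E))$ that covers $e\otimes f^*$, and it corresponds to the indecomposable module $(e\otimes f^*)L$ under that covering Morita equivalence; this gives the required isomorphism.

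The main potential obstacle is the bookkeeping in the last step: one must verify that the three inductions compose cleanly via transitivity, and that the covering Morita equivalence indeed identifies $L$ with its truncation $(e\otimes f^*)L$ as abstract bimodules rather than merely on isomorphism classes. Both points are routine consequences of \ref{noth covering} once one keeps careful track of which block idempotent covers which, but they are where the argument deserves explicit attention rather than being left to the reader.
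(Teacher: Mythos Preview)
Your proposal is correct and follows essentially the same route as the paper: apply Proposition~\ref{prop Morita on local level} with $S=I$, $T=J$, sandwich the resulting Morita equivalence between the two covering Morita equivalences of \ref{noth covering}, and then identify the composite bimodule with $\Ind_{N_{G\times H}(\Delta(D,\phi,E))}^{N_G(D)\times N_H(E)}(L)$ via transitivity of induction and a third covering argument showing $\Ind_{N_{I\times J}(\Delta(D,\phi,E))}^{N_{G\times H}(\Delta(D,\phi,E))}((e\otimes f^*)L)\cong L$. The paper phrases step~6 as tensoring with the bimodules $\calO N_G(D)e$ and $f\calO N_H(E)$ rather than as inducing on both sides, but this is the same thing; if anything, your justification of the final isomorphism via \ref{noth covering} is slightly more explicit than the paper's one-line assertion.
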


\begin{proof}
There exist block idempotents $e$ of $\calO C_G(D)$ and $f$ of $\calO C_H(E)$ such that $(\Delta(D,\phi,E),e\otimes f^*)$ is a maximal $M$-Brauer pair and therefore a maximal $\gamma$-Brauer pair by Theorem~\ref{thm Brauer pairs of appearing modules}. Set $I:=N_G(D,e)$ and $J:=N_H(E,f)$. By Proposition~\ref{prop Morita on local level}, the $p$-permutation $(\calO N_G(D,e),\calO N_H(E,f))$-bimodule $\Ind_{N_{I\times J}(\Delta(D,\phi,E))}^{I\times J}\bigl((e\otimes f^*)L\bigr)$ induces a Morita equivalence between $\calO Ie$ and $\calO Jf$. Moreover, if $e'$ (resp.~$f'$) denotes the identity element of $A'$ (resp.~$B'$) then the $(\calO N_G(D)e', \calO Ie)$-bimodule $\calO N_G(D)e=e'\calO N_G(D)e$ (resp.~the $(\calO Jf,\calO N_H(E)f')$-bimodule $f\calO N_H(E)=f\calO N_H(E)f'$) induces a Morita equivalence between $\calO N_G(D)e'=A'$ and $\calO Ie$ (resp.~between $\calO Jf$ and $\calO N_H(E)f'=B'$). Thus, the $(A',B')$-bimodule
\begin{equation*}
  \calO N_G(D)e \otimes_{\calO Ie} \Ind_{N_{I\times J}(\Delta(D,\phi,E))}^{I\times J}\bigl((e\otimes f^*)L \bigr) \otimes_{\calO Jf} f\calO N_H(E)
\end{equation*}
induces a Morita equivalence between $A'$ and $B'$. However, the latter $(A',B')$-bimodule is isomorphic to
\begin{equation*}
  \Ind_{N_{I\times J}(\Delta(D,\phi,E))}^{N_G(D)\times N_H(E)} \bigl((e\otimes f^*)L\bigr) 
   \cong \Ind_{N_{G\times H}(\Delta(D,\phi,E))}^{N_G(D)\times N_H(E)} \Ind_{N_{I\times J}(\Delta(D,\phi,E))}^{N_{G\times H}(\Delta(D,\phi,E))}\bigl((e\otimes f^*)L\bigr)
\end{equation*}
which is isomorphic to the $(A',B')$-bimodule in (\ref{eqn Morita bimodule}), since $\Ind_{N_{I\times J}(\Delta(D,\phi,E))}^{N_{G\times H}(\Delta(D,\phi,E))}\bigl((e\otimes f^*)L\bigr)\cong L$.
\end{proof}

%%%%%%%%%%%%%%% SECTION 15 %%%%%%%%%%%%%%%%%%%%%%%%%%%%%%%%%%%%%
%\newpage
\section{Connection with isotypies and splendid Rickard equivalences}\label{sec isotypies}

Throughout this section we assume that $G$ and $H$ are finite groups and that the $p$-modular system $(\KK,\calO,F)$ is large enough for $G$ and $H$. Further, we assume that $A$ is a block algebra of $\calO G$, that $B$ is a block algebra of $\calO H$.
We will establish that a splendid Rickard equivalence between $A$ and $B$ induces a $p$-permutation equivalence between $A$ and $B$ and that a $p$-permutation equivalence between $A$ and $B$ induces an isotypy between $A$ and $B$.

\begin{definition}\label{def splendid Rickard equivalence}
A {\em splendid Rickard equivalence} between $A$ and $B$ is a bounded chain complex $C_\bullet$ of $p$-permutation $(A,B)$-bimodules satisfying the following properties:

\smallskip
(i) For every $n\in\ZZ$, the vertices of indecomposable direct summands of $C_n$ are twisted diagonal subgroups of $G\times H$.

\smallskip
(ii) One has $C_\bullet \otimes_B C_\bullet^\circ\cong A$ in the homotopy category of chain complexes of $(A,A)$-bimodules and $C_\bullet^\circ\otimes_A C_\bullet\cong B$ in the homotopy category of chain complexes of $(B,B)$-bimodules. Here, $A$ denotes the chain complex  with only one non-zero term $A$ in degree $0$, and $C_\bullet^\circ$ denotes the $\calO$-dual of the chain complex $C_\bullet$.
\end{definition}

The proof of the following theorem can be easily adapted from the proof of \cite[Theorem~1.5]{BoltjeXu2008}.

\begin{theorem}\label{thm Rickard implies ppeq}
If $C_\bullet$ is a splendid Rickard equivalence between $A$ and $B$ then
\begin{equation*}
  \gamma:=\sum_{n\in\ZZ} (-1)^n[C_n] \in T^\Delta(A,B)
\end{equation*}
is a $p$-permutation equivalence between $A$ and $B$.
\end{theorem}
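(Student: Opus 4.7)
The plan is to interpret both equalities defining a $p$-permutation equivalence as identities between Euler characteristics of bounded chain complexes in the Krull-Schmidt category of $p$-permutation bimodules with twisted diagonal vertices, and then to invoke the homotopy-invariance of the Euler characteristic in such a category.

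First, I would check that $\gamma\in T^\Delta(A,B)$: by Definition~\ref{def splendid Rickard equivalence}, each $C_n$ is a $p$-permutation $(A,B)$-bimodule all of whose indecomposable summands have twisted diagonal vertices, and $C_\bullet$ is bounded, so the alternating sum is finite. Next I would form the total complex $D_\bullet:=C_\bullet\otimes_B C_\bullet^\circ$. Each component $D_k$ is a finite direct sum of modules of the form $C_i\otimes_B (C_j)^\circ$; since $\calO$-duality preserves the class of $p$-permutation bimodules with twisted diagonal vertices (Remark~\ref{rem p-perm equiv} together with the fact that taking $\calO$-duals preserves vertices), and since the tensor product over $B$ of two such modules is again of this form by Lemma~\ref{lem ext tp and vertices}, each $D_k$ belongs to $\ltriv{A}_A$ with twisted diagonal vertices. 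A direct bookkeeping of signs yields $\chi(D_\bullet):=\sum_k(-1)^k[D_k]=\gamma\cdotH\gamma^\circ$ in $T^\Delta(A,A)$, while trivially $\chi(A_\bullet)=[A]$ for the complex $A_\bullet$ concentrated in degree $0$.

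The heart of the argument is the following general fact, which I would state and prove as a lemma: in an additive Krull-Schmidt category $\calT$, every bounded null-homotopic chain complex is isomorphic, as a chain complex, to a finite direct sum of \emph{trivial} complexes of the form $\cdots\to 0\to N\xrightarrow{\id}N\to 0\to\cdots$ concentrated in two adjacent degrees. Granted this, two homotopy equivalent bounded complexes in $\calT$ become isomorphic after adding trivial complexes to both sides, and since each trivial complex has Euler characteristic zero in the split Grothendieck group of $\calT$, homotopy equivalent bounded complexes in $\calT$ have equal Euler characteristics. Applied to $\calT=\ltriv{A}_A^\Delta$ (indecomposables restricted to twisted diagonal vertices), which is Krull-Schmidt by \ref{noth Brauer con}(d) and Proposition~\ref{prop p-perm equiv}, and to the homotopy equivalence $D_\bullet\simeq A_\bullet$ coming from the definition of a splendid Rickard equivalence, we obtain $\gamma\cdotH\gamma^\circ=\chi(D_\bullet)=\chi(A_\bullet)=[A]$ in $T^\Delta(A,A)$. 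The symmetric argument, using $C_\bullet^\circ\otimes_A C_\bullet\simeq B$ and the analogous Krull-Schmidt category of $p$-permutation $(B,B)$-bimodules with twisted diagonal vertices, gives $\gamma^\circ\cdotG\gamma=[B]$ in $T^\Delta(B,B)$.

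The main obstacle is the splitting lemma for bounded null-homotopic complexes in a Krull-Schmidt additive category. One would verify it by induction on the length of the complex: given a chain contraction $h$ with $dh+hd=\id$ on a bounded complex $X_\bullet$, the composition $hd$ at the top non-zero degree is an idempotent in an endomorphism ring in $\calT$; using that idempotents split in a Krull-Schmidt category, one extracts a direct summand isomorphic to a trivial complex supported in the top two degrees, and continues inductively on the shorter residual complex. Once this splitting is in hand, everything else is formal, and the argument follows the pattern of \cite[Theorem~1.5]{BoltjeXu2008} almost verbatim, with the only change that the ambient Grothendieck group is $T^\Delta$ rather than the one used there, a distinction that is harmless because Lemma~\ref{lem ext tp and vertices} ensures all constructions remain inside the twisted diagonal vertex category.
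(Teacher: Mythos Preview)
Your proposal is correct and takes essentially the same approach as the paper, which does not give an independent argument but simply notes that the proof ``can be easily adapted from the proof of \cite[Theorem~1.5]{BoltjeXu2008}.'' Your write-up is precisely that adaptation: Euler characteristic of the tensor-product complex, homotopy invariance via the splitting of null-homotopic bounded complexes in a Krull--Schmidt category, and Lemma~\ref{lem ext tp and vertices} to keep everything inside the twisted-diagonal category. One cosmetic remark: the Krull--Schmidt property you need is not really a consequence of \ref{noth Brauer con}(d) or Proposition~\ref{prop p-perm equiv}; it holds because finitely generated $\calO[G\times G]$-modules form a Krull--Schmidt category ($\calO$ being complete local), and the $p$-permutation bimodules with twisted diagonal vertices are closed under direct summands.
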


Next we will show that $p$-permutation equivalences induce isotypies. The following definition is due to Brou\'e, cf.~Definition~4.6 and the subsequent Remark~2 in \cite{Broue1990} and Definition~2.1 in \cite{Broue1995}.

\begin{definition}\label{def isotypy}
An {\em isotypy} between $A$ and $B$ consists of the following data:

\smallskip
$\bullet$ Maximal Brauer pairs $(D,e)\in\calBP_\calO(A)$ and $(E,f)\in\calBP_\calO(B)$; 

\smallskip
$\bullet$ an isomorphism $\phi\colon E\myiso D$ which is also an isomorphism between the fusion system $\calB$ of $B$ associated to $(E,f)$ and the fusion system $\calA$ of $A$ associated to $(D,e)$; and 

\smallskip
$\bullet$ a family of perfect isometries $\mu_Q\in R\bigl(\KK C_G(\phi(Q)) e_{\phi(Q)},\KK C_H(Q) f_Q\bigr)$, $Q\le E$, where $f_Q$ denotes the unique block idempotent of $\calO C_H(Q)$ with $(Q,f_Q)\le (E,f)$ and, for $P\le D$, $e_P$ denotes the unique block idempotent of $\calO C_G(P)$ with $(P,e_P)\le (D,e)$.

\smallskip
These data are subject to the following conditions:

\smallskip
(i) (Equivariance) For every Brauer pair $(\Delta(P,\psi,Q), e_P\otimes f_Q^*)\le (\Delta(D,\phi,E),e\otimes f^*)$ and every $(g,h)\in G\times H$ such that also $\lexp{(g,h)}{(\Delta(P,\psi,Q), e_P\otimes f_Q^*)}\le (\Delta(D,\phi,E),e\otimes f^*)$, one has $\lexp{(g,h)}\mu_Q=\mu_{(\lexp{h}{Q})}$.

\smallskip
(ii) (Compatibility) For every $Q\le E$ and every $y\in C_E(Q)$, setting $P:=\phi(Q)$, $x:=\phi(y)$, $Q':=Q\langle y\rangle$, and $P':=P\langle x\rangle$, the diagram
\begin{diagram}[75]
  \movevertex(-30,0){\KK R(\KK C_H(Q)f_Q)} & \Ear[40]{I_Q} & \movevertex(30,0){\KK R(\KK C_G(P)e_P)} &&
  \movearrow(-30,0){\Sar{d_{C_H(Q)}^{(y,f_{Q'})}}} & & \movearrow(30,0){\saR{d_{C_G(P)}^{(x,e_{P'})}} }&&
  \movevertex(-30,0){\KK R(FC_H(Q')f_{Q'})} & \Ear[35]{I_{Q'}} & \movevertex(30,0){\KK R(F C_G(P')e_{P'})} &&
\end{diagram}
commutes, where $I_Q$ denotes the $\KK$-linear extension of the group homomorphism $I_{\mu_Q}=\mu_Q\cdotH-\colon R(\KK C_H(Q) f_Q) \to R(\KK C_G(P)e_P)$; cf.~\ref{not character groups}(b),(c),(d) for notation and Remark~\ref{rem isometry}(e) for the bottom map in the above diagram.
\end{definition}

\begin{theorem}\label{thm ppeq implies isotypy}
Let $\gamma\in T^\Delta(A,B)$ be a $p$-permutation equivalence between $A$ and $B$ and let $(\Delta(D,\phi,E),e\otimes f^*)\in\calBP_{\calO}(\gamma)$ be a maximal $\gamma$-Brauer pair. For every $P\le D$ (resp.~ $Q\le E$) let $(P,e_P)\in\calBP_\calO(A)$ (resp.~$(Q,f_Q)\in\calBP_\calO(B)$) denote the unique Brauer pair with $(P,e_P)\le (D,e)$ (resp.~$(Q,f_Q)\le (E,f)$).

Then the data $(D,e)$, $(E,f)$, $\phi$, and, for $Q\le E$, the restriction $\mu_Q$ to $C_G(\phi(Q))\times C_H(Q)$ of the element
\begin{equation*} 
  e_{\phi(Q)}\mu(\Delta(\phi(Q),\phi,Q))f_Q\in
  R(\KK N_{G\times H}(\Delta(\phi(Q),\phi,Q), e_{\phi(Q)}\otimes f_Q^*))
\end{equation*}
form an isotypy between $A$ and $B$.
\end{theorem}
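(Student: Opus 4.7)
\textbf{The plan is to verify} the data-level conditions and the two axioms of an isotypy one at a time. Theorems~\ref{thm gamma is uniform}(c) and \ref{thm isomorphic fusion systems} give the maximality of $(D,e)$ and $(E,f)$ as Brauer pairs and the fact that $\phi$ is an isomorphism between the fusion systems $\calB$ and $\calA$. For each $Q\le E$, the pair $(\Delta(\phi(Q),\phi,Q), e_{\phi(Q)}\otimes f_Q^*)$ is dominated by $(\Delta(D,\phi,E), e\otimes f^*)$ and is therefore a $\gamma$-Brauer pair by Theorem~\ref{thm gamma is uniform}(a); Proposition~\ref{prop local characters}(a) then shows that its restriction $\mu_Q$ to $C_G(\phi(Q))\times C_H(Q)$ is a perfect isometry between $\KK C_G(\phi(Q))e_{\phi(Q)}$ and $\KK C_H(Q)f_Q$.

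\textbf{For the equivariance condition} (i), naturality of the Brauer construction under $G\times H$-conjugation gives $\lexp{(g,h)}{\gamma(\Delta(P,\psi,Q))}=\gamma(\Delta(\lexp{g}{P},c_g\psi c_h^{-1},\lexp{h}{Q}))$. When $(\Delta(P,\psi,Q),e_P\otimes f_Q^*)$ and its $(g,h)$-conjugate both lie under $(\Delta(D,\phi,E),e\otimes f^*)$, the uniqueness statement of Proposition~\ref{prop Brauer pairs}(a) forces $\lexp{g}{e_P}=e_{\lexp{g}{P}}$ and $\lexp{h}{f_Q}=f_{\lexp{h}{Q}}$. Extracting the perfect isometry parts and restricting to the centralizer product yields $\lexp{(g,h)}{\mu_Q}=\mu_{\lexp{h}{Q}}$.

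\textbf{The heart of the argument is the compatibility condition} (ii). Fix $Q\le E$ and $y\in C_E(Q)$, and set $P:=\phi(Q)$, $x:=\phi(y)$, $Q':=Q\langle y\rangle$, $P':=\phi(Q')=P\langle x\rangle$. The element $(x,y)$ is a $p$-element of $\Delta(P',\phi,Q')$ and hence of $Y:=N_{G\times H}(\Delta(P,\phi,Q),e_P\otimes f_Q^*)$. I plan to apply Equation~(\ref{eqn BX result}) of Remark~\ref{rem BX result} to the $p$-permutation $\calO Y$-module underlying $e_P\gamma(\Delta(P,\phi,Q))f_Q$ at this $p$-element. Proposition~\ref{prop more on p-perm}(b) gives $\gamma(\Delta(P,\phi,Q))(\langle(x,y)\rangle)=\gamma(\Delta(P',\phi,Q'))$, and Lemma~\ref{lem Brauer map compatibility} combined with the Brauer-map relations arising from $(P,e_P)\le(P',e_{P'})$ and $(Q,f_Q)\le(Q',f_{Q'})$ shows that after cutting by $e_{P'}\otimes f_{Q'}^*$ and restricting to $C_G(P')\times C_H(Q')=C_{C_G(P)\times C_H(Q)}(\langle(x,y)\rangle)$, the Brauer construction reduces to $e_{P'}\gamma(\Delta(P',\phi,Q'))f_{Q'}$. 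Reading off the Brauer character on the right-hand side via the commutativity of the diagram in \ref{noth T(G)}(c) produces the central identity
\begin{equation*}
  d^{(x,y),\,e_{P'}\otimes f_{Q'}^*}_{C_G(P)\times C_H(Q)}(\mu_Q)\ =\ d_{C_G(P')\times C_H(Q')}(\mu_{Q'})
\end{equation*}
in $R(F[C_G(P')\times C_H(Q')](e_{P'}\otimes f_{Q'}^*))$.

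\textbf{From this identity the commuting square of condition (ii) follows} by a bilinear computation. Write $\mu_Q=\sum_{\psi}\varepsilon_\psi\chi_\psi\times\psi^\circ$ and $\mu_{Q'}=\sum_{\rho}\varepsilon'_\rho\chi'_\rho\times\rho^\circ$ as in Remark~\ref{rem isometry}(a). Since the generalized decomposition on a direct product factors as the tensor of those on each factor, the left-hand side of the identity expands in terms of $d^{(x,e_{P'})}(\chi_\psi)$ and the generalized decomposition of $\psi^\circ$ at $(y,f_{Q'}^*)$; comparing $\rho$-coefficients against the expansion of $d(\mu_{Q'})$ then allows the equality $d^{(x,e_{P'})}\circ I_Q=I_{Q'}\circ d^{(y,f_{Q'})}$ to be read off. \textbf{The main obstacle} will be the careful bookkeeping in this last step, in particular the sign conventions relating $d^{(y,f_{Q'}^*)}(\psi^\circ)$ and $d^{(y,f_{Q'})}(\psi)^\circ$ and the interaction of the bilinear pairing $\cdot_{C_H(Q')}$ on virtual Brauer characters with the idempotent cuts. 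Once the central identity is established via (\ref{eqn BX result}) and the Brauer construction formulas, the translation to the commuting diagram is essentially formal.
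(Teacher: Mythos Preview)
Your treatment of the data-level conditions and the equivariance axiom~(i) matches the paper's proof essentially verbatim.

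For the compatibility axiom~(ii) you take a genuinely different route from the paper. The paper does \emph{not} work directly with characters: it sets up a cube-shaped diagram whose front face is the desired square, whose back face lives in the linear source rings $L(\calO\,-)$, and whose side faces are instances of \cite[Theorem~2.4]{BoltjeXu2008}. Brauer induction makes $\kappa_{C_H(Q)}$ surjective, so the front square follows once the back and bottom faces commute; these are then verified module-theoretically using a variant of \cite[Lemma~3.5(d)]{BoltjeXu2008} together with Proposition~\ref{prop more on p-perm}(b), Lemma~\ref{lem Brauer map compatibility}, and Corollary~\ref{cor connecting Brauer pairs}. Your idea of applying~(\ref{eqn BX result}) directly to the $\calO C$-restriction of $e_P\gamma(\Delta(P,\phi,Q))f_Q$ at the element $(x,y)$ is natural and does produce the ``central identity'' you state.

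However, the passage from that identity to the commuting square has a real gap. The identity equates two elements of $\KK R\bigl(F[C_G(P')\times C_H(Q')]\bigr)$, i.e.\ two \emph{kernels}. But the square you want compares two maps out of $R(\KK C_H(Q)f_Q)$, one of which (namely $d^{(x,e_{P'})}\circ I_Q$) integrates over \emph{all} of $C_H(Q)$, not just the $y$-section. Concretely, for $\psi\in R(\KK C_H(Q)f_Q)$ and $g'\in C_G(P')_{p'}$ one has
\[
   d^{(x,e_{P'})}\bigl(I_Q(\psi)\bigr)(g')=\frac{1}{|C_H(Q)|}\sum_{h\in C_H(Q)}\mu_Q(xg'e_{P'},h)\,\psi(h),
\]
and nothing in your central identity tells you that only the terms with $h_p$ conjugate to $y$ survive. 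The Brauer characters $d(\rho^\circ)$ for $\rho\in\Irr(\KK C_H(Q')f_{Q'})$ are not linearly independent, so ``comparing $\rho$-coefficients'' in the expansion of $d(\mu_{Q'})$ is not a legitimate move. What is actually needed here is precisely the content of \cite[Theorem~2.4 and Lemma~3.5(d)]{BoltjeXu2008}: the $p$-permutation (indeed trivial-source) nature of $\gamma_Q$ forces the generalized Brauer constructions $\gamma_Q(\Delta(\langle x\rangle,\phi,\langle y\rangle),\tilde\rho)$ to vanish for $\rho\neq 1$, which is exactly what kills the unwanted $p$-sections. The paper's linear-source cube packages this cleanly; in your framework you would have to re-derive it at the character level, and that is substantially more than ``essentially formal'' bookkeeping.
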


\begin{proof}
By Theorem~\ref{thm gamma is uniform}(c), $(D,e)$ is a maximal $A$-Brauer pair and $(E,f)$ is a maximal $B$-Brauer pair. By Theorem~\ref{thm isomorphic fusion systems} the isomorphism $\phi\colon E\myiso D$ is an isomorphism between the fusion system $\calB$ of $B$ associated to $(E,f)$ and the fusion system $\calA$ of $A$ associated to $(D,e)$. Let $Q\le E$ and set $P:=\phi(Q)$. Then $(\Delta(P,\phi,Q), e_P\otimes f_Q^*)\le (\Delta(D,\phi,E), e\otimes f^*)$ (see Remark~\ref{rem emuf}(c)) and, by Theorem~\ref{thm gamma is uniform}(a), $(\Delta(P,\phi,Q),e_P\otimes f_Q^*)$ is a $\gamma$-Brauer pair. Now, Proposition~\ref{prop local characters}(a), implies that 
$\mu_Q$ is a perfect isometry between $\KK C_G(P)e_P$ and $\KK C_H(Q)f_Q$. Therefore the data have the required properties.

\smallskip
To see that the perfect ismometries $\mu_Q$, $Q\le E$, satisfy the equivariance axiom (i), let $Q\le E$, $P:=\phi(Q)$, and $(g,h)\in G\times H$ such that 
\begin{equation}\label{eqn subconjugate}
\lexp{(g,h)}{(\Delta(P,\phi,Q), e_P\otimes f_Q^*)}\le (\Delta(D,\phi,E),e\otimes f^*)\,.
\end{equation} 
In order to prove that $\lexp{(g,h)}{\mu_Q}=\mu_{(\lexp{h}{Q})}$, it suffices to show that
\begin{equation}\label{eqn Brauer pair equation}
  \lexp{(g,h)}{\gammabar(\Delta(P,\phi,Q),e_P\otimes f_Q^*)} = 
  \gammabar(\Delta(\phi(\lexp{h}{Q}), \phi, \lexp{h}{Q}), e_{\phi(\lexp{h}{Q})}\otimes f_{(\lexp{h}{Q})}^*)\,.
\end{equation}
Since the Brauer construction commutes with conjugation, the left hand side is equal to 
$\gammabar(\lexp{(g,h)}{(\Delta(P,\phi,Q),e_P\otimes f_Q^*)})$ and 
$\lexp{(g,h)}{(\Delta(P,\phi,Q),e_P\otimes f_Q^*)}= (\Delta(\lexp{g}{P},c_g\phi c_h^{-1}, \lexp{h}{Q}), \lexp{g}{e_P}\otimes \lexp{h}{(f_Q^*)})$. Moreover, by (\ref{eqn subconjugate}) and Remark~\ref{rem emuf}(c), we obtain 
$\Delta(\lexp{g}{P},c_g\phi c_h^{-1}, \lexp{h}{Q})=\Delta(\phi(\lexp{h}{Q}),\phi, \lexp{h}{Q})$, 
$(\lexp{g}{P},\lexp{g}{e_P})\le (D,e)$, and $(\lexp{h}{Q},\lexp{h}{f_Q})\le (E,f)$. Thus, $\lexp{g}{P}=\phi(\lexp{h}{Q})$, $\lexp{g}{e_P}=e_{(\lexp{g}{P})}=e_{\phi(\lexp{h}{Q})}$, and $\lexp{h}{f_Q}=f_{(\lexp{h}{Q})}$. This establishes Equation~(\ref{eqn  Brauer pair equation}).

\smallskip
Finally, we show that the perfect isometries $\mu_Q$, $Q\le E$, satisfy the compatibility axiom (ii). Let $Q\le E$ and $y\in C_E(Q)$ and set $P:=\phi(Q)$, $x:=\phi(y)$, $Q':=Q\langle y\rangle$, and $P':=P\langle x\rangle = \phi(Q')$.

Let $\gamma_Q\in T^\Delta(\calO C_G(P)e_P,\calO C_H(Q)f_Q)$ denote the restriction of $e_P\gamma(\Delta(P,\phi,Q))f_Q$ to $C_G(P)\times C_H(Q)$ and let $\gamma_{Q'}$ be defined similarly. Recall from \cite[2.1]{BoltjeXu2008} the definition of a linear source $\calO G$-module and its associated representation group $L(\calO G)$. Moreover, recall from \cite[2.3]{BoltjeXu2008} the map $-(\langle x\rangle, x)\colon L(\calO C_G(P)e_P)\to L(\calO C_G(P')\br_{\langle x\rangle}(e_P))$. 
The map $-(\langle x\rangle, x)e_{P'}$ in the diagram below is defined as the composition of the map $-(\langle x\rangle, x)$ with the natural projection from $\KK L(\calO C_G(P')\br_{\langle x\rangle}(e_P))\to \KK L(\calO C_G(P')e_{P'})$, noting that $\br_{\langle x\rangle}(e_P)e_{P'}=e_{P'}$. Similarly, we define the map $-(\langle y \rangle,y)f_{Q'}$. For any finite group $X$ let $\pi_X\colon L(\calO X)\to T(FX)\to R(F X)$ denote the composition of the homomorphism $L(\calO X)\to T(FX)$ induced by the functor $F\otimes_{\calO}-\colon \lmod{\calO X}\to\lmod{FX}$ and the map $\eta_X\colon T(FX)\to R(FX)$ from the diagram in \ref{noth T(G)}(c). Moreover, $\kappa_X\colon L(\calO X)\to R(\KK X)$ will denote the homomorphism induced by the functor $\KK\otimes_\calO-\colon\lmod{\calO X}\to\lmod{\KK X}$. Now consider the following diagram:
\begin{diagram}
  L(\calO C_H(Q)f_Q) & & \Ear[70]{\gamma_Q\dottt{C_H(Q)}-} & & L(\calO C_G(P)e_P) & &  &&
  \Sar{-(\langle y\rangle, y)f_{Q'}} & & & & \Sdotar{-(\langle x\rangle,x)e_{P'}} & &  &&
  \KK L(\calO C_H(Q')f_{Q'}) & \ssear & \Edotar[70]{\gamma_{Q'}\dottt{C_H(Q')}-} & & \KK L(\calO C_G(P')e_{P'}) & \ssear & &&
  & & \kappa_{C_H(Q)} & & & & \kappa_{C_G(P)} &&
  & \ssear & R(\KK C_H(Q)f_Q) & & \Ear[70]{\mu_Q\dottt{C_H(Q)}-} & \ssedotar & R(\KK C_G(P)e_P) &&
  & \pi_{C_H(Q')} & \saR{d_{C_H(Q)}^{(y,f_{Q'})}} & & & \pi_{C_G(P')}& \saR{d_{C_G(P)}^{(x,e_{P'})}} &&
  & &  \KK R(FC_H(Q')f_{Q'}) & & \Ear[70]{\overline{\gamma_{Q'}}\dottt{C_H(Q')}-} & & \KK R(FC_G(P')e_{P'}) &&
\end{diagram}
Note that the diagram in the compatibility axiom equals the front square diagram involving the two generalized decomposition maps in the cube-like diagram. By Brauer's induction theorem, the map $\kappa_{C_H(Q)}\colon L(\calO C_H(Q)f_Q)\to R(\KK C_H(Q)f_Q)$ is surjective. Therefore, it suffices to show that the front square diagram commutes after precomposing with $\kappa_{C_H(Q)}$. It follows that the front square diagram commutes if the left, right and ceiling square diagram in the cube-like diagram commute and the concatenation of the rear and floor square diagrams commute. Clearly, the ceiling square of the cube (involving the $\kappa$-maps) commutes. Moreover, the left and right squares commute by Theorem~2.4 in \cite{BoltjeXu2008}. Thus, it suffices now to show that the concatenation of the rear square and the floor square commutes. 

So let $V$ be an indecomposable linear source $\calO C_H(Q)f_Q$-module. We need to show that
\begin{equation*}
 e_{P'} \bigl(\overline{\gamma_Q\dottt{C_H(Q)}[V]}\bigr)(\langle x\rangle,x) = 
 \overline{\gamma_{Q'}}\dottt{C_H(Q')} (f_{Q'}[\Vbar(\langle y\rangle,y)])\in \KK R(F C_G(P')e_{P'})\,,
\end{equation*}
in the notation of \cite{BoltjeXu2008}.
By the definition of $-(\langle x\rangle,x)$ and $-(\langle y\rangle,y)$ in \cite[2.3]{BoltjeXu2008} it suffices to show that for every $\theta\in\Hom(\langle x\rangle,\calO^\times)$, one has
\begin{equation}\label{eqn gen Brauer}
   e_{P'}(\overline{\gamma_Q\dottt{C_H(Q)}[V]})(\langle x\rangle,\theta) = 
   \overline{\gamma_{Q'}}\dottt{C_H(Q')} [f_{Q'}\Vbar(\langle y\rangle,\theta\circ \phi)]\,.
\end{equation}
By a slight variation of Lemma~3.5(d) in \cite{BoltjeXu2008}, identifying $\langle y\rangle$ and $\langle x\rangle$ via $\phi$, we see that the left hand side of Equation~(\ref{eqn gen Brauer}) is equal to
\begin{equation*}
  \sum_{\substack{\rho,\sigma\in\Hom(\langle x\rangle,\calO^\times)\\ \rho\circ\sigma=\theta}} 
  e_{P'}\overline{\gamma_Q}(\Delta(\langle x\rangle, \phi,\langle y\rangle),\tilde{\rho}) \dottt{FC_H(Q')} 
  \Vbar(\langle y\rangle, \sigma\circ \phi)\in \KK R(F C_G(P')e_{P'})\,,
\end{equation*}
where $\tilde{\rho}$ is defined as $\rho\circ p_1$ on $\Delta(\langle x\rangle, \phi,\langle y\rangle)$. But since $\gamma_Q$ is a virtual $p$-permutation module, one has $\overline{\gamma_Q}(\Delta(\langle x\rangle, \phi,\langle y\rangle),\tilde{\rho})=0$, unless $\rho=1$ is the trivial homomorphism, and in this case one has $\overline{\gamma_Q}(\Delta(\langle x\rangle, \phi,\langle y\rangle),1)=\overline{\gamma_Q}(\Delta(\langle x\rangle, \phi,\langle y\rangle))$, the usual Brauer construction. Thus, the left hand side of Equation~(\ref{eqn gen Brauer}) is equal to 
\begin{equation*}
  e_{P'}\overline{\gamma_{Q}}(\Delta(\langle x\rangle,\phi,\langle y\rangle))\dottt{C_H(Q')} 
  [\Vbar(\langle y\rangle,\theta\circ\phi)]\,.
\end{equation*}
Since $\gamma_Q=e_P\gamma(\Delta(P,\phi,Q))f_Q$, Proposition~\ref{prop more on p-perm}(b) and Lemma~\ref{lem Brauer map compatibility} imply that
\begin{equation}\label{eqn end of isotypy proof}
  e_{P'}\overline{\gamma_Q}(\Delta(\langle x\rangle,\phi,\langle y\rangle)) =
  e_{P'}\br_{\langle x\rangle}(e_P)\gammabar(\Delta(P',\phi,Q'))\br_{\langle y\rangle}(f_Q) =
  e_{P'}\gammabar(\Delta(P',\phi,Q'))\br_{Q'}(f_Q)\,.
\end{equation}
Here we used that $e_{P'}\br_{\langle x\rangle}(e_P)=e_{P'}\br_{P'}(e_P)=e_{P'}$, since $P\trianglelefteq P'$ and $(P,e_P)\le (P', e_{P'})$, see Proposition~\ref{prop Brauer pairs}(b). We claim that the last expression in (\ref{eqn end of isotypy proof}) is equal to $e_{P'}\gammabar(\Delta(P',\phi,Q'))f_{Q'}$. In fact, assume that $f'$ is a primitive idempotent of $Z(\calO C_H(Q'))$ such that $e_{P'}\gammabar(\Delta(P',\phi,Q'))f'\neq 0$. Then Corollary~10.9 implies that there exists $h\in H$ such that $(\phi,(Q',f_{Q'}))=\lexp{h}{(\phi, (Q',f'))} = (\phi\circ c_h^{-1}, (\lexp{h}{Q'},\lexp{h}{f'}))$. But $Q'=\lexp{h}{Q'}$ and $\phi=\phi\circ c_h^{-1}$ imply that $h\in C_H(Q')$, and we obtain $f_{Q'}=\lexp{h}{f'}=f'$. Moreover, since $\br_{Q'}(f_Q)f_{Q'}=f_{Q'}$, the claim is proved. Thus, the left hand side of Equation~(\ref{eqn gen Brauer}) is equal to
\begin{equation*}
  e_{P'}\gammabar(\Delta(P',\phi,Q'))f_{Q'} \dottt{C_H(Q')} [\Vbar(\langle y\rangle,\theta\circ\phi)]\,,
\end{equation*}
as desired, and the proof is complete.
\end{proof}

%%%%%%%%%%%%%%% SECTION 16 %%%%%%%%%%%%%%%%%%%%%%%%%%%%%%%%%%%%%
%\newpage
%\section{Stable $p$-Permutation Equivalences}\label{sec stable}

%%%%%%%%%%%%% Acknowledgment %%%%%%%%%%%%%%%%%%%%%%%%%%%%%%%%%%
\bigskip\noindent
{\bf Acknowledgment.} The first author would like to express his gratitude to the {\sc Bernoulli Center} at the EPFL, where some part of this research was achieved during his stay in November and December 2016 for the program \lq Local representation theory and simple groups\rq.

%%%%%%%%%%%%% BIBLIOGRAPHY %%%%%%%%%%%%%%%%%%%%%%%%%%%%%%%%%%%%
%\newpage

\end{document}